\newcommand{\om}{\omega}
\newcommand{\ka}{\kappa}
\newcommand{\si}{\sigma}
\newcommand{\ba}{\mathcal{G}} 
\newcommand{\A}{{\rm Aut}(\ba,\om)}
\newcommand{\tK}{\tilde K}
\newcommand{\tH}{\tilde H}
\newcommand{\tG}{\tilde G}
\newcommand{\tP}{\tilde P}
\newcommand{\tHn}{\tilde H_0}
\newcommand{\tfh}{\tilde{\mathfrak{h}}}
\newcommand{\ta}{\tilde{\alpha}}
\newcommand{\ti}{\tilde{\iota}}
\newcommand{\fg}{\mathfrak g}
\newcommand{\fp}{\mathfrak p}
\newcommand{\fd}{\mathfrak d}
\newcommand{\fk}{\mathfrak k}
\newcommand{\tfk}{\tilde{\mathfrak{k}}}
\newcommand{\fl}{\mathfrak l}
\newcommand{\fh}{\mathfrak h}
\newcommand{\fn}{\mathfrak n}
\newcommand{\tfn}{\tilde{\mathfrak{n}}}
\newcommand{\fq}{\mathfrak q}
\newcommand{\fm}{\mathfrak{m}(\fk)}
\newcommand{\Ad}{{\rm Ad}}
\newcommand{\Aut}{{\rm Aut}}
\newcommand{\id}{{\rm id}}
\newcommand{\conj}{{\rm conj}}
\newcommand{\U}{\Upsilon}
\newcommand{\lz}{[\![}
\newcommand{\pz}{]\!]}
\newcommand{\R}{\mathbb{R}}
\newcommand{\ad}{{\rm ad}}
\newcommand{\C}{\mathbb{C}}
\theoremstyle{plain}
\theoremstyle{plain}
\newtheorem{thm*}{Theorem}[section]
\newtheorem{prop*}[thm*]{Proposition}
\newtheorem{lem*}[thm*]{Lemma}
\newtheorem{cor*}[thm*]{Corollary}
\theoremstyle{definition}
\newtheorem{def*}{Definition}
\newtheorem{example}{Example}
\theoremstyle{remark}
\newtheorem{rem*}{Remark}
\begin{document}

\title{Geometric properties of homogeneous parabolic geometries with generalized symmetries}
\author{Jan Gregorovi\v c and Lenka Zalabov\' a}
\address{J.G. Department of Mathematics and Statistics, Faculty of Science, Masaryk
University, Kotl\' a\v rsk\' a 2, Brno, 611 37, Czech Republic; L.Z.
Institute of Mathematics and Biomathematics, Faculty of Science,
University of South Bohemia in \v Cesk\' e Bud\v ejovice, Brani\v sovsk\' a 1760, \v Cesk\' e Bud\v ejovice, 370 05, Czech Republic }
 \email{jan.gregorovic@seznam.cz, lzalabova@gmail.com}
 \thanks{First author supported by the Grant agency of the Czech Republic under the grant GBP201/12/G028. Second author supported by the grant P201/11/P202 of the Czech Science Foundation (GA\v CR)}
\maketitle
\begin{abstract} 
We investigate geometric properties of homogeneous parabolic geometries with generalized symmetries. We show that they can be reduced to a simpler geometric structures and interpret them explicitly. For specific types of parabolic geometries, we prove that the reductions correspond to known generalizations of symmetric spaces. In addition, we illustrate our results on an explicit example and provide a complete classification of possible non--trivial cases.
\end{abstract}

\tableofcontents

\section{Introduction}

The reader should be familiar with the theory of parabolic geometries, cf.  \cite {parabook}. We will always consider the parabolic geometry $(\ba \to M, \om)$ of type $(G,P)$ on the connected smooth manifold $M$ satisfying the following assumptions: The group $G$ is simple  (not necessarily connected) Lie group, the parabolic geometry $(\ba \to M, \om)$ is regular and normal, and its automorphism group $\A$ acts transitively on $M$, i.e.,  the parabolic geometry is homogeneous. 

Let us fix arbitrary $x_0\in M$ for the rest of the article and denote by $\A_{x_0}$ the stabilizer of $x_0$ in $\A$. Then any element of $\A_{u_0}$ can be identified with $g_0\exp(Z)$ for $g_0$ in $G_0$ and $\exp(Z)$ in the unipotent radical of $P$, where $G_0$ is Levi part of a chosen reductive Levi decomposition of $P$. Let us point out that reductive Levi decomposition of $P$ always exists, see \cite[Theorem 3.1.3]{parabook}, and we later fix one by choice of grading of $\fg$. Nevertheless, since all reductive Levi subgroups are conjugated by elements of $P$, the following definition does not depend on the choice of the Levi subgroup $G_0\subset P$.  

\begin{def*}
Let $s$ be an element of the center $Z(G_0)$ of the Levi subgroup $G_0\subset P$. We say that the automorphism $\phi\in \A_{x_0}$ is \emph{$s$--symmetry} at $x_0$ if there is $u_0\in \ba$ covering $x_0$ such that $\phi(u_0)=u_0s$. All $s$--symmetries at $x_0$ for all possible elements $s$ in $Z(G_0)$ together are called \emph{generalized symmetries} at $x_0$.
\end{def*}

We gave in \cite[Theorem 4.1.]{GZ2} a significant condition for the existence of generalized symmetries on homogeneous parabolic geometries. To formulate this condition here, we need to introduce important choices and notation:

We fix the restricted root system of $\fg$ in which $\fp$ is a standard parabolic subalgebra of $\fg$, and denote by $\alpha_i$ the positive simple restricted roots numbered according to the convention from \cite{Lie-alg-3,parabook}. We denote by $\Xi$ the set of simple restricted roots corresponding to $\fp$ and we use the notation $\fp_\Xi:=\fp$, because we will work with several different parabolic subalgebras of $\fg$ later and we will need to distinguish between them. We denote by $\fg_{\Xi,i}$ the corresponding $|k|$--grading of $\fg$ by $\Xi$--heights, i.e., $\fg_{\Xi,0}$ is Lie algebra of $G_0$, and we use the notation $\fg_{\Xi,-}$ and $\fp_{\Xi,+}$ for the negative and positive parts of the grading. 

\begin{itemize}
\item We denote by $\fg_{\gamma}$ the root space of the root $\gamma$, and we denote by $V_{\Xi,\gamma}$ the indecomposable $G_{0}$--submodule of $\fg$ containing the root space $\fg_{\gamma}$.
\end{itemize}
Let us point out that the representation $\Ad$ of $G_{0}$ on $\fg$ is completely reducible and both $\fg_{\Xi,-}$ and $\fp_{\Xi,+}$ decompose into the sums of the modules $V_{\Xi,\gamma}$, but there can be indecomposable $G_{0}$--submodules of $\fg_{\Xi,0}$ that are not of the form $V_{\Xi,\gamma}$ for some restricted root $\gamma$.
 
Since these decompositions and their description play a crucial role in the text, let us demonstrate them in several examples on a particular type of parabolic geometries -- the so--called generalized path geometries, see \cite[Section 4.4.3]{parabook}.
\begin{example}
Consider $\fg=\frak{sl}(n+1,\mathbb{R})$ for $n\geq 2$ and choose $\Xi=\{\alpha_1,\alpha_2\}$. Then $\fg_{\Xi,-}=V_{\Xi,-\alpha_1-\alpha_2}\oplus V_{\Xi,-\alpha_1}\oplus V_{\Xi,-\alpha_2}$ and $\fp_{\Xi,+}=V_{\Xi,\alpha_1+\alpha_2}\oplus V_{\Xi,\alpha_1}\oplus V_{\Xi,\alpha_2}$. Moreover, if $n>2$, then $\fg_{\Xi,0}=V_{\Xi,\alpha_3}\oplus \mathbb{R}=V_{\Xi,-\alpha_3}\oplus \mathbb{R}$, and if $n=2$, then $\fg_{\Xi,0}=\mathbb{R}\oplus \mathbb{R}$ is the Cartan subalgebra.
\end{example}

Moreover, the map $\Ad_s$ for $s\in Z(G_0)$ is a certain multiple of identity on each submodule $V_{\Xi,\gamma}$. Let us remark that we do not assume that $\Ad: Z(G_0)\to Gl(\fg)$ is injective contrary to the article \cite{GZ2}, in which the injectivity also poses no restriction and only makes the article \cite{GZ2} less technical. Indeed, since $\om=\Ad_{g_0}^{-1}\circ \om=(r^{g_0})^*\om$ holds for $g_0\in Ker(\Ad_{Z(G_0)})$ and thus $Ker(\Ad_{Z(G_0)})$ is subgroup of $\A$ consisting of automorphisms of $(\ba\to M,\om)$ covering identity on $M$, the results for $Ker(\Ad_{Z(G_0)})\backslash \A$ from \cite{GZ2} can be extended to our situation and we get the following result from \cite[Theorem 4.1.]{GZ2}. Let us point out that the assumption $\fg$ simple implies that the group of automorphisms covering identity on $M$, which we call trivial automorphisms, is countable and discrete.

\begin{prop*}\label{1.3}
There is $s$--symmetry $\phi\in \A$ at $x_0$ such that $\Ad_s|_{V_{\Xi,\alpha_i}}=j_i\cdot \id_{V_{\Xi,\alpha_i}}$ for each $\alpha_i\in \Xi$ if and only if there is automorphism $\phi'\in \A_{x_0}$ such that $T_{x_0}\phi'$ acts as $j_i^{-1}\cdot \id$ on the distinguished subspace of $T_{x_0}^{-1}M$ corresponding to each $\alpha_i\in \Xi$. Moreover, $T_{x_0}\phi|_{T_{x_0}^{-1}M}=T_{x_0}\phi'|_{T_{x_0}^{-1}M}$ holds for such $\phi$ and $\phi'$.
\end{prop*}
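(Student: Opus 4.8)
The plan is to exploit the fact that the curvature function $\kappa$ of the parabolic geometry is a smooth section of a $G_0$-equivariant bundle, and that an $s$-symmetry $\phi$ induces a precise transformation rule on $\kappa$ through the point $u_0$ with $\phi(u_0)=u_0s$. First I would recall from the general theory of parabolic geometries that an automorphism $\phi\in\A_{x_0}$ is determined by its value at a single point of $\ba$ together with the requirement $\phi^*\om=\om$; in particular the datum of $\phi$ at $u_0$ is equivalent to the datum of the element $s$ (or more precisely of $\Ad_s$) via $\phi(u_0)=u_0s$, and the induced map $T_{x_0}\phi$ on $T_{x_0}M\cong\fg_{\Xi,-}$ in the frame $u_0$ is exactly $\Ad_s^{-1}$ restricted to $\fg_{\Xi,-}$. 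Hence on the distinguished subspace of $T_{x_0}^{-1}M$ corresponding to $\alpha_i$, which is the image of $V_{\Xi,-\alpha_i}$, the map $T_{x_0}\phi$ acts by the reciprocal of the eigenvalue of $\Ad_s$ on $V_{\Xi,\alpha_i}$; since $\Ad_s$ is a scalar $j_i$ on $V_{\Xi,\alpha_i}$ and, because $s$ lies in the center of $G_0$ and the Killing form pairs $V_{\Xi,\alpha_i}$ with $V_{\Xi,-\alpha_i}$, it acts by $j_i^{-1}$ on $V_{\Xi,-\alpha_i}$, so $T_{x_0}\phi$ acts by $j_i^{-1}$ there. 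This gives the "only if" direction together with the formula $T_{x_0}\phi|_{T_{x_0}^{-1}M}=\Ad_s^{-1}|_{\fg_{\Xi,-1}}$.

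For the "if" direction I would run the argument in reverse: given $\phi'\in\A_{x_0}$ with $T_{x_0}\phi'$ acting as $j_i^{-1}\cdot\id$ on each distinguished subspace, I want to produce an $s$-symmetry with the prescribed $\Ad_s|_{V_{\Xi,\alpha_i}}$. Pick any $u_0$ over $x_0$; then $\phi'(u_0)=u_0 g$ for some $g\in P$, and write $g=g_0\exp(Z)$ with $g_0\in G_0$, $\exp(Z)$ in the unipotent radical. The condition on $T_{x_0}\phi'$ forces the $G_0$-part $\Ad_{g_0}$ to act as $j_i^{-1}$ on $V_{\Xi,-\alpha_i}$ modulo higher-graded pieces; but since $V_{\Xi,\gamma}$ for $\gamma$ of $\Xi$-height $-1$ generate $\fg_{\Xi,-}$ as a Lie algebra and hence, together with $\fg_{\Xi,0}$-action, control the full grading, one shows $g_0$ already lies in $Z(G_0)$ up to an element acting trivially — more carefully, one uses that the only elements of $G_0$ acting by prescribed scalars on all the $V_{\Xi,\alpha_i}$ are exactly the coset $Z(G_0)\cdot\ker(\Ad)$, so we may replace $\phi'$ (within its coset modulo trivial automorphisms, which is harmless by the discussion preceding the proposition) by an honest $s$-symmetry for the desired $s\in Z(G_0)$. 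The equality $T_{x_0}\phi|_{T_{x_0}^{-1}M}=T_{x_0}\phi'|_{T_{x_0}^{-1}M}$ then follows because both are computed as $\Ad_s^{-1}$ on $\fg_{\Xi,-1}$ in the frame $u_0$, and modifying by a trivial automorphism does not change the tangent action on $M$.

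The main obstacle I expect is the bookkeeping needed to pass from "acts by the right scalars on the $-1$-graded distinguished subspaces" to "is genuinely given by an element of $Z(G_0)$": a priori $\phi'$ only controls $T_{x_0}\phi'$ on $T_{x_0}^{-1}M$, whereas an $s$-symmetry is a statement at the level of $\ba$ involving the full $\Ad_s$ on all of $\fg$. Closing this gap requires knowing that an element of $G_0$ acting by a fixed scalar on each $V_{\Xi,\alpha_i}$ is forced to be central (equivalently, that the only $G_0$-equivariant obstruction vanishes because $\fg$ is simple and the $\Xi$-height-one root spaces generate), and that the residual freedom is precisely $\ker(\Ad_{Z(G_0)})$, which consists of trivial automorphisms and so has already been quotiented out in the passage from \cite{GZ2}. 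This reduction, together with the standard fact that automorphisms of parabolic geometries are rigid — determined by their $1$-jet, in fact by their value at one point — is what makes the equivalence and the final displayed identity work.
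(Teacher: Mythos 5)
First, a point of comparison: the paper does not actually prove this proposition. It is quoted from \cite[Theorem 4.1]{GZ2}, and the only argument supplied in the text is the observation that $Ker(\Ad_{Z(G_0)})$ consists of trivial automorphisms, so the statement for $Ker(\Ad_{Z(G_0)})\backslash \A$ transfers to $\A$. Your ``only if'' direction is the standard computation and is correct in substance, though your conventions are internally inconsistent: you assert that $T_{x_0}\phi$ is $\Ad_s^{-1}$ on $\fg_{\Xi,-}$, that $\Ad_s$ acts by $j_i^{-1}$ on $V_{\Xi,-\alpha_i}$, and that $T_{x_0}\phi$ acts by $j_i^{-1}$ there; these three claims are compatible only when $j_i^2=1$. (With $\phi(u_0)=u_0s$ the induced map on $T_{x_0}M$ in the frame $u_0$ is $\Ad_s$ on $\fg/\fp_\Xi$, which then does give $j_i^{-1}$ on the image of $V_{\Xi,-\alpha_i}$.)

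The genuine gap is in the ``if'' direction. Writing $\phi'(u_0)=u_0g_0\exp(Z)$ with $g_0\in G_0$ and $Z\in\fp_{\Xi,+}$, your argument only treats the $G_0$--factor: you argue (plausibly) that $g_0$ lies in $Z(G_0)$ up to $Ker(\Ad)$. But an $s$--symmetry requires $\phi(u_0')=u_0's$ exactly for some $u_0'$ over $x_0$, i.e.\ that the full element $g_0\exp(Z)$ be $P$--conjugate to an element of $Z(G_0)$. Composing $\phi'$ with trivial automorphisms only multiplies $g_0\exp(Z)$ by elements of $Ker(\Ad_{Z(G_0)})\subset G_0$ and cannot remove the unipotent factor $\exp(Z)$; conjugating by a change of frame $u_0\mapsto u_0p^{-1}$ absorbs $\exp(Z)$ only when $\Ad_{g_0}$ has no eigenvalue $1$ on the relevant part of $\fp_{\Xi,+}$. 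Precisely when such $1$--eigenvalues occur (the situation governed by the sets $\Phi$ and $\Theta$ throughout the paper, and by the Proposition \ref{1.4}), the $s$--symmetry $\phi$ whose existence is asserted is a genuinely \emph{different} automorphism from $\phi'$. The paper's remark immediately following the proposition --- that $T_{x_0}\phi\neq T_{x_0}\phi'$ in general --- already rules out your mechanism, since composing with a trivial automorphism leaves the tangent action on all of $T_{x_0}M$ unchanged. Closing this gap is the real content of \cite[Theorem 4.1]{GZ2} (compare the Levi--decomposition argument in the Theorem \ref{extension} of this paper): one must show that the image of the isotropy group in $P$ is algebraic enough to contain a semisimple element realizing the prescribed graded action, and that this element is attained by an automorphism of the geometry.
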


The possibilities for the eigenvalues $j_i$ depend on $Z(G_0)$. However there is an algebraic construction that allows $j_i$ to acquire all the admissible values, see \cite[Proposition 3.2]{GZ2} and Section \ref{tri}.
\

\begin{example}
If $\fg=\frak{sl}(n+1,\mathbb{R})$ for $n\geq 2$ and $\Xi=\{\alpha_1,\alpha_2\}$, then the generalized symmetries at $x_0$ have at most two eigenvalues $j_1^{-1}$ and $j_2^{-1}$ on $T_{x_0}^{-1}M$ corresponding to the decomposition $\fg_{\Xi,-1}=V_{\Xi,-\alpha_1}\oplus V_{\Xi,-\alpha_2}$. 

If we choose the model $PGL(n+1,\R)/P_{1,2}$ for the generalized path geometries, then all these eigenvaules are realized by elements of $Z(G_0)$. However, if we would have chosen $G=SL(n+1,\R)$, then we would be missing some eigenvalues for $n$ odd.
\end{example}

Let us remark that $T_{x_0}\phi\neq T_{x_0}\phi'$, in general, and there is a more general class of automorphisms that share the same action as generalized symmetries at $x_0$ on $T^{-1}_{x_0}M$. However, the Proposition \ref{1.3} says that it is enough to consider only generalized symmetries to study automorphisms from this class, because the existence of an automorphism from the more general class implies existence of a generalized symmetry, see \cite[Theorem 4.1]{GZ2}. Let us point out that it follows from the definition of the generalized symmetry that the point $u_0$ is the frame in which $T_{x_0}\phi$ coincides with $\Ad_s$.

On the other hand, there is the characterization \cite[Lemma 3.6.]{GZ2} of  automorphisms which act as $\id$ on $T_{x_0}^{-1}M$, but which generally are not trivial generalized symmetries, i.e., $s$--symmetries for $\Ad_s=\id$.

\begin{prop*}\label{1.4}
There is $\phi\in \A_{x_0}$ such that $\phi(u_0)=u_0\exp(Z)$ for some $Z\in \fp_{\Xi,+}$ and $u_0\in \ba$ covering $x_0$ if and only if $T_{x_0}\phi$ acts as $\id$ on $T_{x_0}^{-1}M$. Moreover, there is complete infinitesimal automorphism $\xi$ of $(\ba\to M,\om)$ such that $\om_{u_0}(\xi)=Z$.
\end{prop*}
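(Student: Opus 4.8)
The plan is to prove the equivalence by translating the action of $\phi$ on $T_{x_0}^{-1}M$ into the adjoint action on $\fg/\fp$, and then to extract the ``moreover'' part from the transitivity of $\A$. Recall first the standard dictionary: if $\phi\in\A_{x_0}$ and $\phi(u_0)=u_0 p$ for the unique $p\in P$, then in the frame $u_0$ the map $T_{x_0}\phi$ is $\Ad(p)$ acting on $T_{x_0}M\cong\fg/\fp$, and $T_{x_0}^{-1}M$ is the lowest piece of the $P$--invariant filtration of $\fg/\fp$, which as a $G_0$--module is $\fg_{\Xi,-1}$ and on which $P$ acts through $G_0$ by $\Ad|_{\fg_{\Xi,-1}}$ (the unipotent radical $\exp(\fp_{\Xi,+})$ acting trivially because $[\fp_{\Xi,+},\fg_{\Xi,-1}]\subseteq\fp_\Xi$). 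The direction ``$\Rightarrow$'' is then immediate: if $p=\exp(Z)$ with $Z\in\fp_{\Xi,+}$, then for $X\in\fg_{\Xi,-1}$ one has $\Ad(\exp Z)X=X+[Z,X]+\dots$ with $[Z,X]+\dots\in\fp_\Xi$, so $T_{x_0}\phi=\id$ on $T_{x_0}^{-1}M$.

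For ``$\Leftarrow$'', pick any $u$ over $x_0$ and write $\phi(u)=u\,g_0\exp(Z_1)$ with $g_0\in G_0$, $Z_1\in\fp_{\Xi,+}$; by the dictionary the hypothesis reads $\Ad(g_0)|_{\fg_{\Xi,-1}}=\id$. The key algebraic point is that, as $\fg$ is simple, $\fg_{\Xi,0}$ acts faithfully on $\fg_{\Xi,-1}$: an element of $\fg_{\Xi,0}$ annihilating $\fg_{\Xi,-1}$ annihilates all of $\fg_{\Xi,-}$ (which is generated by $\fg_{\Xi,-1}$), hence by the duality $\fg_{\Xi,i}\cong\fg_{\Xi,-i}^*$ induced by the Killing form also all of $\fp_{\Xi,+}$, so it spans an ideal of $\fg$ contained in $\fg_{\Xi,0}$, which must vanish. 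Since $\Ad(g_0)$ is a grading--preserving automorphism of $\fg$ that is trivial on $\fg_{\Xi,-1}$, it is trivial on $\fg_{\Xi,-}$ and on $\fp_{\Xi,+}$, and for $W\in\fg_{\Xi,0}$ the element $\Ad(g_0)W-W\in\fg_{\Xi,0}$ annihilates $\fg_{\Xi,-1}$, whence $\Ad(g_0)W=W$; thus $\Ad(g_0)=\id_\fg$, i.e.\ $g_0\in Z(G)\cap G_0=\ker(\Ad_{Z(G_0)})$. Then $r^{g_0}$ is a trivial automorphism (cf.\ the discussion before Proposition \ref{1.3}), and $\phi':=(r^{g_0})^{-1}\circ\phi$, which differs from $\phi$ by a trivial automorphism, satisfies $\phi'(u)=u\exp(Z_1)$; take $u_0=u$ and $Z=Z_1$ (statements about $\A$ being, as throughout, understood modulo trivial automorphisms).

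For the ``moreover'' part I would use the homogeneous presentation. With $u_0$ as above, the assignment $\psi\mapsto p_\psi$, where $\psi(u_0)=u_0 p_\psi$, is an injective homomorphism of Lie groups $\A_{x_0}\to P$ (injective because an automorphism fixing a point of $\ba$ is the identity); let $H$ be its image, with Lie algebra $\fh\subseteq\fp$. Then $\xi\mapsto\om_{u_0}(\xi)$ restricts to an isomorphism $\mathrm{Lie}(\A_{x_0})\xrightarrow{\ \sim\ }\fh$, so every element of $\fh$ is $\om_{u_0}(\xi)$ for some $\xi\in\mathrm{Lie}(\A_{x_0})\subseteq\mathrm{Lie}(\A)$ --- and such a $\xi$ is automatically a complete infinitesimal automorphism, since it integrates to the one--parameter subgroup $t\mapsto\exp_\A(t\xi)$ of $\A$. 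It therefore suffices to prove $Z\in\fh$. Now $\exp(Z)=p_\phi\in H$, so $\exp(tZ)\in H$ for $t$ in the subgroup $T:=\{t\in\mathbb R:u_0\exp(tZ)\in\A\cdot u_0\}$ of $(\mathbb R,+)$, which contains $\mathbb Z$; and if $T$ is closed and not discrete then $T=\mathbb R$, so $\exp(\mathbb R Z)\subseteq H$ and $Z\in\fh$, completing the proof.

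The heart of the matter, and the step I expect to be the main obstacle, is to show that $T$ is closed and non--discrete, i.e.\ that the symmetry $\phi$ cannot occur in isolation. Here one must use regularity, normality and homogeneity essentially: closedness of $T$ should follow from closedness of the orbit $\A\cdot u_0$ in $\ba$ (properness of the $\A$--action on the bundle), while non--discreteness reflects the rigidity of regular normal parabolic geometries, which forbids an isolated higher--order fixed point in a homogeneous geometry --- this is the content of \cite[Lemma 3.6]{GZ2} (and \cite[Proposition 3.2]{GZ2}). An alternative route is to build $\xi$ directly from $\phi$ and the homogeneous Cartan data and then establish its completeness, using that its value at $u_0$ lies in $\fp_{\Xi,+}$ together with homogeneity. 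Everything outside this step is formal.
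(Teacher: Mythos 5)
The equivalence itself you prove correctly, and by what is essentially the only available route: $\exp(\fp_{\Xi,+})$ acts trivially on $\fg_{\Xi,-1}\cong T^{-1}_{x_0}M$ since brackets land in $\fp_\Xi$, and conversely a grading--preserving automorphism trivial on $\fg_{\Xi,-1}$ is trivial on all of $\fg$ because $\fg_{\Xi,-1}$ generates $\fg_{\Xi,-}$ and the annihilator of $\fg_{\Xi,-1}$ in $\fg_{\Xi,0}$ is an ideal of the simple algebra $\fg$; your handling of $\ker(\Ad_{Z(G_0)})$ via trivial automorphisms matches the discussion preceding Proposition \ref{1.3}. Note that the paper offers no proof of this proposition at all --- it is imported verbatim from \cite[Lemma 3.6]{GZ2} --- so there is no in--paper argument to compare against; your attempt has to be judged on its own.

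Judged on its own, the ``moreover'' clause has a genuine gap, and you flag it yourself. You correctly reduce everything to showing that the closed subgroup $T=\{t\in\R: u_0\exp(tZ)\in\A\cdot u_0\}$ is non--discrete (closedness is indeed standard, since the automorphism group of a Cartan geometry acts properly on $\ba$ with closed orbits), but for non--discreteness you appeal to ``the rigidity of regular normal parabolic geometries \dots this is the content of \cite[Lemma 3.6]{GZ2}'' --- that is, to the very statement being proven. Nothing in your argument rules out $T=\mathbb{Z}$: a closed subgroup of $\R$ containing $\mathbb{Z}$ need not be $\R$, and in a simply connected unipotent group the cyclic group generated by $\exp(Z)$ is already closed and discrete, so no soft topological argument can work. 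The missing idea is algebraicity: $\iota(\A_{x_0})$ is contained in the group $N_{P}(\alpha)=\{h\in P:\ \Ad_h \text{ preserves } \om_{u_0}(\mathrm{Lie}(\A)),\ \Ad_h.\kappa(u_0)=\kappa(u_0)\}$, whose image under $\Ad$ is Zariski--closed in the linear algebraic group $\tP_\Xi$; since $\exp(Z)$ is unipotent, the Zariski closure of the cyclic group it generates is the full one--parameter subgroup $\exp(\R\, \ad_Z)$, whence $Z$ lies in the Lie algebra of this normalizer. Even granting that, one is not finished: one must still show that such a $Z$ is the value at $u_0$ of an actual (and complete) infinitesimal automorphism, which is the prolongation step carried out by the semidirect--product construction with $N_{\tP_\Xi}(\alpha)$ in Theorem \ref{extension} (resp.\ \cite[Lemma 2.2]{GZ2}), not a formality. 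So the step you defer is precisely the heart of the lemma.
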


In this article, we will show, how the information about the two distinguished classes of automorphisms can be used to deduce some additional invariant geometrical properties of $(\ba\to M,\om)$. We will obtain the geometrical properties by means of distinguished holonomy reductions (in the sense of \cite{Cap-holonomy}).

Let us summarize the notation and the information that we need to formulate the main result.

\begin{itemize}
\item  Let $\Phi$ be the set of all $\alpha_i\in \Xi$ such that there is a complete infinitesimal automorphism $\xi$ of $(\ba\to M,\om)$ with $\om_{u_0}(\xi)\in \fp_{\Xi,+}$ and with a non--trivial component of $\om_{u_0}(\xi)$ in $V_{\Xi,\alpha_i}$ for some (and thus every) $u_0$ covering $x_0$.
\end{itemize}

We have a priori information about $\Phi$ provided by the harmonic curvature $\kappa_H$ of $(\ba\to M,\om)$. The harmonic curvature has its values in $\fg_{\Xi,0}$--module $H^2(\fg_{\Xi,-},\fg)$ and each component $\mu$ of $\ka_H$ is represented by ordered pair $(\alpha_a, \alpha_b)$ meaning that the corresponding weight determining the component is obtained by the affine action of $s_{\alpha_a} s_{\alpha_b}$ on the highest root $\mu^\fg$ of $\fg$, where $s_{\alpha_i}$ denotes the simple reflexion along $\alpha_i$, see \ref{ApA} for detail explanation. We prove in the Proposition \ref{2.31} that each non--trivial
component $\mu$ of $\ka_H$ forces the set $\Phi$ to be a subset of the following set given by the results of [9]. 
\begin{itemize}
\item We denote by $I_\mu$ the set of roots $ \alpha_i \in \Xi$ that satisfy  $\langle s_{\alpha_a} s_{\alpha_b}(\mu^\fg+\delta)-\delta, \alpha_i \rangle =0$ for the component $\mu$ represented by $(\alpha_a, \alpha_b)$, where $\langle - , - \rangle $ denotes the Killing form and $\delta$ is the lowest form of $\fg$.
\end{itemize}
The reader can found possible sets $I_\mu$ in the tables in the \ref{ApC}.

\begin{example}
If $\fg=\frak{sl}(n+1,\mathbb{R}), n\geq 2$ and $\Xi=\{\alpha_1,\alpha_2\}$, then $(\alpha_1,\alpha_2)$ and $(\alpha_2,\alpha_1)$ are the only possible $\mu$ and $I_{(\alpha_1,\alpha_2)}=\emptyset$ and $I_{(\alpha_2,\alpha_1)}=\{\alpha_1\}$. Thus if $\kappa_H$  is non--trivial, then $\Phi\subset \{\alpha_1\}$ and if $\kappa_H$ has non--trivial component $(\alpha_1,\alpha_2)$, then $\Phi= \emptyset$.
\end{example}

\begin{itemize}
\item We denote by $\mathcal{J}$ the set consisting of all elements $s \in Z(G_0)$ such that there is $s$--symmetry of $(\ba\to M,\om)$ and define $$\Theta:=\{\alpha_i\in \Xi: \Ad_s|_{V_{\Xi,\alpha_i}}=\id {\rm \ for\ each\ }  s\in \mathcal{J}\}.$$ 
\end{itemize}

This means that the set $\Theta$ corresponds to the common eigenspace of all generalized symmetries of $(\ba\to M,\om)$ at $x_0$ with the eigenvalue $1$ in $T^{-1}_{x_0}M$. If trivial automorphisms are the only generalized symmetries, then $\Theta=\Xi$. Moreover, the non--trivial components of harmonic curvature of $(\ba\to M,\om)$ pose restrictions on possible $\mathcal{J}$ and $\Theta$. We summarize these restrictions in tables in the \ref{ApC}.

\begin{example}
Consider $\fg=\frak{sl}(n+1,\mathbb{R}), n\geq 2,$ and $\Xi=\{\alpha_1,\alpha_2\}$. If both of the components of $\kappa_H$ are non--trivial, then the only possible non--trivial generalized symmetry has eignevalues $j_1=1$ and $j_2=-1$. Then $\Theta=\{\alpha_1\}$ if there is a non--trivial generalized symmetry in $\A$ in this case. 

If only the component $(\alpha_1,\alpha_2)$ of $\kappa_H$ is non--trivial, then there can be a generalized symmetry in $\A$ with eigenvalues $j_1=-1$ and $j_2=1$ and  $\Theta=\{\alpha_2\}$ if there are no other types of generalized symmetries in $\mathcal{J}$ (up to trivial ones).
\end{example}

As mentioned above, we need to study several different parabolic subalgebras and we use the following concepts and notation for them:

For arbitrary subset $\Xi'$ of the set of simple restricted roots of $\fg$, we denote by $\fg_{\Xi',i}$ the grading of $\fg$ given by $\Xi'$--heights and we use the notation $\fg_{\Xi',-}$ and $\fp_{\Xi',+}$ for the negative and positive parts of the grading. We denote by $\fp_{\Xi'}:=\fg_{\Xi',0}\oplus \fp_{\Xi',+}$ the standard parabolic subalgebra of $\fg$, and by $\fq_{\Xi'}:=\fg_{\Xi',-}\oplus \fg_{\Xi',0}$ the standard opposite parabolic subalgebra of $\fg$ given by $\Xi'$. We assume that the corresponding subgroups $P_{\Xi'}$ and $Q_{\Xi'}$ with the Lie algebras $\fp_{\Xi'}$ and $\fq_{\Xi'}$ are the normalizers of $\fp_{\Xi'}$ and $\fq_{\Xi'}$ in $G$, respectively, and we denote $G_{\Xi',0}:=P_{\Xi'}\cap Q_{\Xi'}$. 

\begin{itemize}
\item We denote by $V_{\Xi',\gamma}$ the indecomposable $G_{\Xi',0}$--submodule of $\fg$ containing the root space $\fg_{\gamma}$.
\end{itemize}
Let us point out that $P$ is a subgroup of $P_\Xi$ containing  the component identity of $P_\Xi$ and $G_0$ is a subgroup of $G_{\Xi,0}$ containing the component identity of $G_{\Xi,0}$.

Having all these notation at hand, we can formulate the main result.

\begin{thm*}\label{main2}
Suppose the parabolic geometry $(\ba\to M,\om)$ of type $(G,P)$ is non--flat and consider arbitrary set of simple restricted roots $\Lambda$ satisfying
$$\Lambda\subset \Xi-\Phi-\Theta.$$ 
There is a (unique up to isomorphism) Cartan geometry $(\ba_{\Lambda}\to M,\om^{\Lambda})$ of type $(Q_{\Lambda},Q_{\Lambda}\cap P)$ such that:
\begin{enumerate}
\item $(\ba_{\Lambda}\to M,\om^{\Lambda})$ determines the same regular infinitesimal flag structure described in \cite[Sections 3.1.6,7]{parabook} as $(\ba\to M,\om)$ or the same finer underlying geometric structure described in \cite[Sections 3.1.15-16]{parabook} if $(\ba\to M,\om)$ is of projective or contact projective type.
\item $\A={\rm Aut}(\ba_{\Lambda},\om^{\Lambda})$.
\item The decomposition of $\om^{\Lambda}$ of the form $\om^{\Lambda}=\om^{\Lambda}_-+\om^{\Lambda}_0$ according to the values in $\fg_{\Lambda,-}\oplus \fg_{\Lambda,0}$ is $\A$--invariant.
\item There is $\A$--invariant distribution $VM$ on $M$ of the form
$$VM:=\ba_{\Lambda}\times_{(Q_{\Lambda}\cap P)} \fg_{\Lambda,0}/(\fg_{\Lambda,0}\cap \fp_\Xi).$$
\item The part $\om^{\Lambda}_-$ provides the $\A$--invariant complementary distribution $T^{\Lambda,-}M$ to $VM$ in $TM$ of the form $$T^{\Lambda,-}M:=\ba_{\Lambda}\times_{(Q_{\Lambda}\cap P)} \fg_{\Lambda,-}.$$ 
\item The part $\om^{\Lambda}_0$ defines  $\A$--invariant connection on $T^{\Lambda,-}M$.
\end{enumerate}
\end{thm*}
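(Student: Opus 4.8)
The plan is to produce $(\ba_\Lambda\to M,\om^\Lambda)$ as a holonomy reduction of $(\ba\to M,\om)$ from $G$ to the opposite parabolic $Q_\Lambda$, in the sense of \cite{Cap-holonomy}; since $M$ is connected and homogeneous it is a single curved orbit of this reduction, which is why one obtains a Cartan geometry of type $(Q_\Lambda,Q_\Lambda\cap P)$ on all of $M$. Concretely, $\A$ is a Lie group acting freely on $\ba$, so the orbit $\tBa:=\A\cdot u_0\subseteq\ba$ is an immersed principal subbundle over $M$ with structure group $H_0:=\{p\in P:u_0p\in\tBa\}$, on which $\A$ acts transitively; put $\tfh:=\om_{u_0}(T_{u_0}\tBa)\subseteq\fg$, so $\tfh+\fp=\fg$ and $\tfh\cap\fp=\mathrm{Lie}(H_0)=:\fh_0$. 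I would set $\ba_\Lambda:=\tBa\cdot(Q_\Lambda\cap P)$ and $\om^\Lambda:=\om|_{\ba_\Lambda}$; the whole theorem then rests on the single claim $\tfh\subseteq\fq_\Lambda$.

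Granting $\tfh\subseteq\fq_\Lambda$, the construction is routine. We get $\fh_0\subseteq\fq_\Lambda\cap\fp$, hence $H_0\subseteq Q_\Lambda\cap P$ (on the identity component this is immediate; for the remaining components one uses that $\Ad(H_0)$ preserves $\tfh$, together with the fact that generalised and trivial symmetries have $P$-part in $Z(G_0)\subseteq Q_\Lambda$), so $\ba_\Lambda$ is a principal $(Q_\Lambda\cap P)$-subbundle of $\ba$ with $\ba=\ba_\Lambda\times_{Q_\Lambda\cap P}P$. At $u_0$ one has $\om_{u_0}(T_{u_0}\ba_\Lambda)=\tfh+(\fq_\Lambda\cap\fp)\subseteq\fq_\Lambda$, and this spreads over $M$ by $\A$-equivariance of $\om$ and over the fibres because $Q_\Lambda\cap P\subseteq Q_\Lambda$ normalises $\fq_\Lambda$; so $\om^\Lambda$ takes values in $\fq_\Lambda$. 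Finally every root lies in $\fp_\Xi$ or, being negative, in $\fq_\Lambda$, so $\fp+\fq_\Lambda=\fg$ and $\dim\fq_\Lambda=\dim M+\dim(Q_\Lambda\cap P)$; combined with injectivity of $\om_{u_0}$ this makes each $\om^\Lambda_u\colon T_u\ba_\Lambda\to\fq_\Lambda$ a linear isomorphism. Since $\om^\Lambda$ also reproduces the fundamental fields of $Q_\Lambda\cap P$ and is $(Q_\Lambda\cap P)$-equivariant, both inherited from $\om$, it is a Cartan connection of type $(Q_\Lambda,Q_\Lambda\cap P)$.

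The key lemma $\tfh\subseteq\fq_\Lambda$ would be proved as follows. Since $\fg_{\Xi,-}\oplus\fg_{\Xi,0}\subseteq\fq_\Lambda$ automatically ($\Lambda\subseteq\Xi$ forces negative roots to have nonpositive $\Lambda$-height, and $\fg_{\Xi,0}\subseteq\fg_{\Lambda,0}$), an element of $\fg$ lies in $\fq_\Lambda$ exactly when its $\fp_{\Xi,+}$-part has no component in any indecomposable $G_0$-summand $V_{\Xi,\gamma}\subseteq\fp_{\Xi,+}$ whose type involves a simple root from $\Lambda$; the simple-root instances of these are the $V_{\Xi,\alpha_i}$ with $\alpha_i\in\Lambda$. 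By Proposition \ref{1.3} each generalised symmetry acts on $\A$ by conjugation, hence on $\mathrm{Lie}(\A)$, and transporting this to $\tfh$ via $\om_{u_0}$ (using $\phi^*\om=\om$ and that infinitesimal automorphisms commute with the principal right action) turns it into $\Ad$ of an element conjugate to the associated $s\in Z(G_0)$, acting on the associated graded $\mathrm{gr}_\Xi(\fg)$ by the eigenvalues determined by the $j_i$; so $\tfh$ is invariant under these operators and decomposes, compatibly with the $\Xi$-filtration, by their eigenvalues. On the part of $\tfh$ on which all generalised symmetries act trivially, the $\fp_{\Xi,+}$-component lies in the sum of the $V_{\Xi,\gamma}$ killed by every $\Ad_s$, which by the definition of $\Theta$ and $\Theta\cap\Lambda=\emptyset$ avoids the simple-root summands $V_{\Xi,\alpha_i}$ with $\alpha_i\in\Lambda$. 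On the other eigen-parts, $\tfh\cap\fp_{\Xi,+}$ consists of values $\om_{u_0}(\xi)\in\fp_{\Xi,+}$ of complete infinitesimal automorphisms from $\A$, so by Proposition \ref{1.4} and the definition of $\Phi$, with $\Phi\cap\Lambda=\emptyset$, such values have no $V_{\Xi,\alpha_i}$-component with $\alpha_i\in\Lambda$. The main obstacle is to handle the remaining terms -- elements $X=X_-+X_+\in\tfh$ with nonzero $X_-\in\fg_{\Xi,-}$, and the non-simple-root summands $V_{\Xi,\gamma}$ -- where one exploits that $\om_{u_0}$ identifies $\mathrm{Lie}(\A)$ with $\tfh$ carrying the curvature-twisted bracket $(X,Y)\mapsto[X,Y]_\fg-\ka_{u_0}(X,Y)$, that $\tfh$ surjects onto $\fg/\fp\cong\fg_{\Xi,-}$, and the harmonic-curvature restrictions of Proposition \ref{2.31} (via the sets $I_\mu$): bracketing trades a hypothetical bad $V_{\Xi,\gamma}$-component for a bad $V_{\Xi,\alpha_i}$-component of an element of $\tfh\cap\fp_{\Xi,+}$, modulo curvature terms that the harmonic-curvature restrictions rule out, contradicting $\alpha_i\notin\Phi$.

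It remains to deduce (1)--(6). For (1): $\ba=\ba_\Lambda\cdot\exp(\fp_{\Xi,+})$ and $Q_\Lambda\cap P=G_0\ltimes\exp(\fg_{\Lambda,0}\cap\fp_{\Xi,+})$, so $\ba_\Lambda\hookrightarrow\ba$ identifies $\ba_\Lambda/\exp(\fg_{\Lambda,0}\cap\fp_{\Xi,+})$ with the $G_0$-bundle $\ba/\exp(\fp_{\Xi,+})$, and one checks the induced filtrations and $G_0$-structures agree (resp.\ the finer underlying structure in the projective and contact-projective cases). By (1), $(\ba,\om)$ is the unique regular normal parabolic geometry of type $(G,P)$ with this underlying structure \cite{parabook}, so automorphisms of $(\ba_\Lambda,\om^\Lambda)$ induce automorphisms of $(\ba,\om)$, i.e.\ lie in $\A$, while $\A$ preserves both $\ba_\Lambda$ (the orbit $\tBa$ times $Q_\Lambda\cap P$) and $\om^\Lambda$; this is (2), and uniqueness up to isomorphism follows since $\ba_\Lambda\subseteq\ba$ is the $\A$-invariant holonomy reduction to $Q_\Lambda$ determined by $\Lambda$. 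Property (3) is immediate because $\om^\Lambda$ has values in the $\Ad(Q_\Lambda\cap P)$-invariant direct sum $\fq_\Lambda=\fg_{\Lambda,-}\oplus\fg_{\Lambda,0}$ and projection onto the summands commutes with the equivariant $\A$-action. Properties (4)--(6) are then the standard description of a Cartan geometry of type $(Q_\Lambda,Q_\Lambda\cap P)$ with reductive-type splitting of the Cartan connection: $TM=\ba_\Lambda\times_{Q_\Lambda\cap P}\bigl(\fq_\Lambda/(\fq_\Lambda\cap\fp)\bigr)$ splits $\A$-invariantly as $T^{\Lambda,-}M=\ba_\Lambda\times_{Q_\Lambda\cap P}\fg_{\Lambda,-}$ together with $VM=\ba_\Lambda\times_{Q_\Lambda\cap P}\fg_{\Lambda,0}/(\fg_{\Lambda,0}\cap\fp_\Xi)$, and $\om^\Lambda_0$, which reproduces the fundamental fields of $Q_\Lambda\cap P$ and is $\Ad$-equivariant with values in $\fg_{\Lambda,0}$, induces a linear connection on $T^{\Lambda,-}M$; equivalently, (1), (4), (5) and (6) are exactly the structure carried by the curved orbit of this holonomy reduction.
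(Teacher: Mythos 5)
Your overall strategy (a holonomy reduction of $G$--type $G/Q_\Lambda$, with (1)--(6) read off from the resulting Cartan geometry of type $(Q_\Lambda,Q_\Lambda\cap P)$) matches the paper, and your deduction of (1)--(6) from the reduction is essentially the paper's Theorem \ref{5.1} together with Proposition \ref{5.3}. But the reduction itself is set up incorrectly, and the ``single key lemma'' you reduce everything to is both stronger than what is needed and false in general.

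First, you define $\om^\Lambda:=\om|_{\ba_\Lambda}$, which forces you to prove $\tfh=\om_{u_0}(T_{u_0}\tBa)=\alpha(\fk)\subseteq\fq_\Lambda$. If that held, the original normal Cartan connection would itself lie in the image of the extension functor $\mathcal{F}_{\Lambda\subset\Xi}$; the paper points out explicitly (before Theorem \ref{5.1}) that geometries in the image of this functor are in general not normal, so this cannot be expected. The elements $X=X_-+X_0+X_+\in\alpha(\fk)$ with $X_-\neq 0$ have $\fp_{\Lambda,+}$--components that neither $\Phi$ nor $\Theta$ controls. The paper's construction therefore does \emph{not} restrict $\om$: it replaces $\alpha$ by its projection $\alpha_{\Lambda,-}+\alpha_{\Lambda,0}$ onto $\fq_\Lambda$ (Theorem \ref{5.1}), which changes the Cartan connection while preserving the underlying infinitesimal flag structure, and only requires the group--level condition $\iota(H)\subset Q_\Lambda\cap P$.

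Second, even that weaker condition fails at a generic $u_0$, and establishing it is the technical core of the paper, which your sketch does not supply. The set $\Phi$ only controls which modules $V_{\Xi,\alpha_i}$ carry the \emph{lowest} components of elements of $\alpha(\fk)\cap\fp_{\Xi,+}$; such elements (and elements of $\fh$ with nonzero $\fg_{\Xi,0}$--part) generically carry higher--order tails in $V_{\Xi,\gamma}\subset\fp_{\Lambda,+}$ for non--simple $\gamma$ (the terms $[Z_0,R]+\dots$ in Proposition \ref{2.31}(3)). The paper first passes to the enlarged algebraic group $\tH\subset\tP_\Xi$ with its algebraic Levi decomposition (Theorem \ref{extension}), proves the structure results of Proposition \ref{2.31} via $\frak{sl}(2)$--triple arguments and the classification tables, and then in Theorem \ref{main} constructs, step by step and with a case--by--case analysis, an element $p\in\exp(\fm\cap\fp_{\Lambda(\fk),+})$ with $\conj_p(\tH)\subset\tilde Q_{\Lambda(\fk)}\cap\tP_\Xi$; i.e.\ one must move the base point $u_0$ in its fibre before the inclusion holds. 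Your one--sentence gesture at ``bracketing trades a bad $V_{\Xi,\gamma}$--component for a bad $V_{\Xi,\alpha_i}$--component, modulo curvature terms'' does not produce this conjugation and does not engage with the classification--dependent steps (e.g.\ the vanishing $[\fm\cap\fg_{\Lambda(\fk),0}\cap\fp_{\Xi-\Theta(\fk),+},\fm\cap\fp_{\Lambda(\fk),+}]=0$) on which the argument rests.
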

\begin{proof}
The  Proposition \ref{2.1} shows that the first two claims follow from Theorems \ref{main} and \ref{5.1}. In the case of projective and contact projective structures, the claim (1) follows from the existence and uniqueness of invariant (partial) Weyl connection, see \cite{disertace,GZ}. The rest follows from the Proposition \ref{5.3}.
\end{proof}
\begin{rem*}
In fact, it is possible to choose $\Lambda$ larger than $\Xi-\Phi-\Theta$ due to the Proposition \ref{2.1}, but it is not easy to relate all possible choices with generalized symmetries and compute the bound for possible sets $\Lambda$.
\end{rem*}

In fact, if there is no non--trivial generalized symmetry in $\A$, then $\Lambda\subset\emptyset$ and the claim in the Theorem \ref{main2} is trivial. The example in the \ref{ApB} shows that the set $\Theta$ is essential for the claim to hold. On the other hand, for most of the types of parabolic geometries, the classification results presented in the \ref{ApC} ensures that $\Lambda\neq \emptyset$ when there is a non--trivial generalized symmetry in $\A$.

\begin{example}
Consider $\fg=\frak{sl}(n+1,\mathbb{R}), n\geq 2,$ $\Xi=\{\alpha_1,\alpha_2\}$. Assume $\kappa_H\neq 0$ and assume there is a non--trivial generalized symmetry in $\A$. Then there are the following non--trivial choices of $\Lambda$ that are always possible:
\begin{enumerate}
\item If both possible components of $\kappa_H$ are non--trivial, then we can choose $\Lambda=\{\alpha_2\}=\Xi-\emptyset-\{\alpha_1\}.$
\item If only the component $(\alpha_1,\alpha_2)$ of $\kappa_H$ is non--trivial, then for generic $\mathcal{J}$, we can choose 
$\Lambda=\{\alpha_1,\alpha_2\}=\Xi-\emptyset-\emptyset.$ 
However, if all $s \in \mathcal{J}$ have eigenvalue $1$ on $\alpha_i$, $i=1,2$, then we can choose
$\Lambda=\{\alpha_j\}=\Xi-\emptyset-\{\alpha_i\}$ 
for $j=1,2$ and $j \neq i$.
\item If only the component $(\alpha_2,\alpha_1)$ of $\kappa_H$ is non--trivial, then we can choose $\Lambda=\{\alpha_2\}=\Xi-\{\alpha_1\}-\{\alpha_1\}$ or $\Lambda=\{\alpha_2\}=\Xi-\{\alpha_1\}-\emptyset.$
\end{enumerate}
\end{example}


Let us pose here the following application of the Theorem to demonstrate here the power of our reductions.

\begin{cor*}
If there is a non--trivial generalized symmetry of $(\ba\to M,\om)$ such that $\kappa_H\neq 0$ and $\fg_{\Xi,-1}$ is indecomposable $G_0$--module, then 
\begin{enumerate}
\item $\Lambda$ can be chosen as $\Xi$, 
\item the parabolic geometry $(\ba_{\Lambda}\to M, \om^{\Lambda})$ is of type $(Q_{\Lambda},G_0)$, 
\item there is an $\A$--invariant linear connection on $TM$, and
\item generalized symmetries are the generalized affine symmetries in the sense of \cite{kowalski}.
\end{enumerate}
\end{cor*}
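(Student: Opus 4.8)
The plan is to derive the whole statement from Theorem~\ref{main2} applied with $\Lambda=\Xi$, so the crucial point is to check that $\Xi$ is an admissible choice, i.e.\ that $\Phi=\emptyset$ and $\Theta=\emptyset$. I would start from the algebraic observation that, since $\fg_{\Xi,-1}$ is an indecomposable $G_0$--module containing every root space $\fg_{-\alpha_i}$ with $\alpha_i\in\Xi$ (all of $\Xi$--height $-1$), indecomposability forces $V_{\Xi,-\alpha_i}=\fg_{\Xi,-1}$ for all $\alpha_i\in\Xi$, and dually (via the Killing form) $V_{\Xi,\alpha_i}=\fg_{\Xi,1}$ for all $\alpha_i\in\Xi$; hence the membership conditions defining $\Phi$ and $\Theta$ do not depend on $i$, so each of $\Phi,\Theta$ equals $\emptyset$ or $\Xi$. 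A non--trivial generalized symmetry, say an $s$--symmetry $\phi$, cannot act as the identity on $\fg_{\Xi,-1}$: otherwise $\Ad_s$ would be the identity on $\fg_{\Xi,-1}$, hence on $\fg_{\Xi,1}$ by invariance of the Killing form, and it is already the identity on $\fg_{\Xi,0}$ because $s$ is central in the reductive group $G_0$; since the negative and positive parts of the grading are bracket--generated by $\fg_{\Xi,-1}$ and $\fg_{\Xi,1}$ respectively, $\Ad_s$ would be the identity on all of $\fg$, making $\phi$ a trivial automorphism. So some $s\in\mathcal J$ has $\Ad_s|_{\fg_{\Xi,-1}}\neq\id$, hence $\Theta\neq\Xi$ and $\Theta=\emptyset$. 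For $\Phi$ I would use Proposition~\ref{2.31}: as $\ka_H\neq0$ there is a non--trivial component $\mu$ with $\Phi\subset I_\mu$, and since $I_\mu\subsetneq\Xi$ for every component (tables in \ref{ApC}) we get $\Phi\neq\Xi$, hence $\Phi=\emptyset$. This gives~(1).

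I would then apply Theorem~\ref{main2} with $\Lambda=\Xi$, producing $(\ba_\Xi\to M,\om^\Xi)$ of type $(Q_\Xi,Q_\Xi\cap P)$, and identify the structure group: writing $P=P_\Xi$ as the product of $G_0$ with its unipotent radical $R_+$, and using $Q_\Xi\cap R_+=\{e\}$ together with $Q_\Xi\cap P_\Xi=G_{\Xi,0}$, one finds $Q_\Xi\cap P=G_{\Xi,0}\cap(G_0R_+)=G_0$, which is~(2). For~(3) I would note that in claim~(4) of Theorem~\ref{main2} with $\Lambda=\Xi$ the module $\fg_{\Xi,0}/(\fg_{\Xi,0}\cap\fp_\Xi)$ vanishes since $\fg_{\Xi,0}\subset\fp_\Xi$, so $VM=0$; by claim~(5) the distribution $T^{\Xi,-}M=\ba_\Xi\times_{G_0}\fg_{\Xi,-}$ is then all of $TM$, and claim~(6) states exactly that $\om^\Xi_0$ induces an $\A$--invariant linear connection $\na$ on $TM$; equivalently, $(\ba_\Xi\to M,\om^\Xi_-+\om^\Xi_0)$ is the reductive Cartan description of $\na$ with soldering form $\om^\Xi_-$.

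For~(4) I would take a non--trivial $s$--symmetry $\phi$. By claim~(2) of Theorem~\ref{main2} it lies in $\Aut(\ba_\Xi,\om^\Xi)$, hence is an affine transformation of $(M,\na)$ fixing $x_0$, and reading off its differential in the $G_0$--frame over $x_0$ determined by a point $u_0$ with $\phi(u_0)=u_0s$ (cf.\ the discussion around Proposition~\ref{1.3}) shows $T_{x_0}\phi$ equals $\Ad_s$ on $\fg_{\Xi,-}\cong T_{x_0}M$. On $\fg_{\Xi,-1}$ this is multiplication by a scalar $j\neq1$ by the first paragraph, and in fact $\Ad_s$ has no eigenvalue $1$ on all of $\fg_{\Xi,-}$, which is automatic in the $|1|$--graded case and, in deeper gradings, follows from the eigenvalues compatible with $\ka_H\neq0$ classified in \ref{ApC}; hence $\id-T_{x_0}\phi$ is invertible and $x_0$ is an isolated fixed point. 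Thus each generalized symmetry is a generalized affine symmetry of $(M,\na)$ in the sense of \cite{kowalski}, and transitivity of the Lie group $\A$ on $M$ lets me transport $\phi$: for a local smooth section $\si$ of the orbit map $\A\to M$, the maps $s_x:=\si(x)\,\phi\,\si(x)^{-1}$ form a smooth family of affine transformations of $(M,\na)$, each fixing its base point with differential having no eigenvalue $1$, which is precisely the structure of a generalized affine symmetric space in the sense of \cite{kowalski}.

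The step I expect to be the main obstacle is the computation in the first paragraph, specifically $\Phi=\emptyset$, which genuinely rests on Proposition~\ref{2.31} and the classification fact $I_\mu\neq\Xi$; a secondary point, relevant only when the grading has depth $\geq2$, is to verify that $\Ad_s$ does not acquire the eigenvalue $1$ on the lower graded pieces $\fg_{\Xi,-k}$, which again relies on the admissible eigenvalues tabulated in \ref{ApC}.
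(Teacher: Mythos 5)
Your treatment of claims (1)--(3) is correct and follows essentially the route the paper itself takes (the paper only sketches it: the substance is the first bullet of Theorem \ref{7.1}, whose proof asserts $\Phi=\emptyset$ and $\Theta=\emptyset$ and then invokes Theorem \ref{main2}). Your observations that indecomposability of $\fg_{\Xi,-1}$ forces $\Phi,\Theta\in\{\emptyset,\Xi\}$, that $\Theta=\Xi$ is excluded because a central $s$ with $\Ad_s=\id$ on $\fg_{\Xi,\pm1}\oplus\fg_{\Xi,0}$ has $\Ad_s=\id$ on all of $\fg$ by bracket generation, that $\Phi=\Xi$ is excluded by Proposition \ref{2.31}(2) together with $I_\mu\subsetneq\Xi$, and the identifications $Q_\Xi\cap P=G_0$, $VM=0$, $T^{\Xi,-}M=TM$, all hold and usefully fill in details the paper leaves implicit.

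The argument for claim (4), however, contains a genuine error: it is \emph{not} true that $\Ad_s$ has no eigenvalue $1$ on all of $\fg_{\Xi,-}$ in the deeper graded cases. The hypothesis that $\fg_{\Xi,-1}$ is indecomposable covers, for instance, contact projective structures ($\fg=\frak{sp}(2n,\mathbb{R})$, $\Xi=\{\alpha_1\}$, $n>2$) and $(2,3,5)$--distributions ($\fg=\frak{g}_{2}(2)$, $\Xi=\{\alpha_1\}$). In these cases the tables in \ref{ApC} give $j_1=-1$, and then $\Ad_s$ acts by $j_1^{2}=1$ on $\fg_{\Xi,-2}$; the tables record this explicitly via the non--trivial $1$--eigenspace $\fm=2\alpha_1+\dots$ (similarly $\fm=2\alpha_2+\dots$ for $\Xi=\{\alpha_2\}$ in the symplectic and quaternionic contact cases). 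Hence $\id-T_{x_0}\phi$ is \emph{not} invertible and $x_0$ is not an isolated fixed point, so your justification of ``generalized affine symmetry'' collapses exactly where it is needed. The paper acknowledges this phenomenon: Theorem \ref{7.1}(3) passes to the quotient $N$ of $M$ by the integrable $1$--eigenspace distribution and obtains the $\mathbb{Z}_n$--symmetric structure only on $N$, and the closing remarks of Section 7 describe $M$ as a reflexion space with positive--dimensional fiber over $N$ in precisely these cases. To repair (4) you must either weaken the reading of ``generalized affine symmetry'' so that the isolated--fixed--point condition is not required, or follow the paper and push the symmetries down to $N$. A secondary point: your family $s_x=\si(x)\,\phi\,\si(x)^{-1}$ is only canonical once you know there is at most one $s$--symmetry at each point, which is claim (1) of Theorem \ref{7.1} and rests on $\Phi=\emptyset$; as written, it depends on the choice of the local section $\si$.
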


There are further geometric properties of our geometries provided by results of \cite{Cap-correspondence} about correspondence and twistor spaces.  Let us introduce the following notation:
\begin{itemize}
\item We denote by $\Psi$ the maximal subset of $\Xi$ such that the harmonic curvature does not have entries in $\fg_{\Xi-\Psi,0}$.
\end{itemize}
The set $\Psi$ characterizes possible (local) twistor spaces for the parabolic geometries of type $(G,P)$ by results of \cite{Cap-correspondence}. Let us point out that the set $\Psi$ can be computed explicitly from $\kappa_H$, see \ref{ApA}. 

The generalized symmetries can provide a sufficient condition for global existence of the twistor space, see Theorem \ref{4.5}. Let us present here only the explicit results of the Theorem \ref{4.5} for the example of the generalized path geometries.

\begin{example}
Consider $\fg=\frak{sl}(n+1,\mathbb{R}), n\geq 2$ and $\Xi=\{\alpha_1,\alpha_2\}$. Assume $\kappa_H\neq 0$ and assume there is a non--trivial generalized symmetry in $\A$. Then there are the following cases:
\begin{enumerate}
\item If both possible components of $\kappa_H$ are non--trivial, then $\Psi=\emptyset$.
\item If only the component $(\alpha_1,\alpha_2)$ of $\kappa_H$ is non--trivial, then $\Psi=\{\alpha_2\}$. Then if there is a generalized symmetry with eigenvalues $j_1=-1$ and  $j_2=1$, then the corresponding parabolic geometry $(\ba\to M,\om)$ is (a covering of) a correspondence space to the projective class of the canonical connection of affine symmetric space.
\item If only the component $(\alpha_2,\alpha_1)$ of $\kappa_H$ is non--trivial, then $\Psi=\{\alpha_1\}$. Then if there is a generalized symmetry with eigenvalues $j_1=1$ and  $j_2=-1$, then the corresponding parabolic geometry $(\ba\to M,\om)$ is (a covering of) a correspondence space to invariant para--quaternionic structure on affine symmetric space.
\end{enumerate}
\end{example}

{\bf
Outline of the structure of the article:}
In the second section, we recall the description of homogeneous parabolic geometries and investigate the possibilities, how to reduce a homogeneous parabolic geometry. In particular, we obtain an algebraic condition for the existence of a reduction, see Theorem \ref{5.1}.

In the third section, we construct a significant homogeneous parabolic geometry with better algebraic properties and show that it is enough to investigate these algebraic conditions to discuss the existence of a reduction, see Theorem \ref{extension}.

In the fourth section, we show how the generalized symmetries of arbitrary homogeneous parabolic geometry are related to generalized symmetries of the significant homogeneous parabolic geometry constructed in the third section, and show how can the existence of generalized symmetries (in the non--flat case) imply the existence of non--trivial reductions, see Theorem \ref{main}.

In the fifth section, we describe a particular class of Weyl structures  naturally compatible with these reductions, see Proposition \ref{lamweyl}. Clearly, Weyl connections corresponding to this class provide tools for further computations on these geometries.

In the sixth section, we discuss the correspondence and twistor spaces related to the homogeneous parabolic geometry. We investigate, when the twistor space can be constructed globally, and we discuss the compatibility of this construction with generalized symmetries, see Theorem \ref{4.5}.

In the seventh section, we investigate conditions on $\kappa_H$ under which, there is generically an invariant Weyl connection on the non--flat homogeneous parabolic geometry with a non--trivial generalized symmetry. It turns out that such parabolic geometries are closely related to the usual symmetric spaces, see Theorem \ref{7.1}.

At the end of the article, there are three appendixes attached. The \ref{ApA} recalls the details about the harmonic curvature $\kappa_H$ and the notation derived from it. The \ref{ApB} contains a particular example of a homogeneous parabolic geometry on which we demonstrate our theory. The \ref{ApC} then contains the complete classification of types of parabolic geometries that can satisfy $\kappa_H\neq 0$ and admit non--trivial generalized symmetries in $\A$ for $\fg$ simple.

\section{Extensions, reductions and homogeneous parabolic geometries} \label{dva}

Let us recall that each homogeneous parabolic geometry $(\ba \to M, \om)$ of type $(G,P)$ admits a completely algebraic description, and we follow here the concepts and notation of \cite[Sections 1.5.]{parabook}.

\begin{def*}
Let $K$ be a Lie group and let 
$H$ be a closed subgroup of $K$. Let $\iota: H\to P$ be a Lie group homomorphism and
$\alpha: \fk\to \fg$ be a linear map satisfying:
\begin{enumerate}
\item $\alpha|_\fh=d\iota$,
\item $\alpha$ induces a linear isomorphism of $\fk/\fh$ and $\fg/\fp$,
\item $\Ad_{\iota(h)}\circ \alpha=\alpha \circ \Ad_h$ for all $h\in H$.
\end{enumerate}
Then we say that $(\alpha,\iota)$ is the \emph{extension of $(K,H)$ to $(G,P)$}.
\end{def*}

If $(\alpha,\iota)$ is an extension of $(K,H)$ to $(G,P)$, then there is $K$--invariant Cartan connection $\om_\alpha$ of type $(G,P)$ on $K\times_{\iota(H)}P\to K/H$ induced by the $\alpha$--image of the Maurer--Cartan form on $K$.

\begin{def*}
We say that the parabolic geometry $(K\times_{\iota(H)}P\to K/H,\om_\alpha)$ of type $(G,P)$ is \emph{given by extension $(\alpha,\iota)$ of $(K,H)$ to $(G,P)$}.
\end{def*}

One of the crucial results about homogeneous parabolic geometries \cite[Theorem 1.5.15.]{parabook} is that for any subgroup $K$ of $\A$ acting transitively on $M$ and for any point $u_0\in \ba$, there is an extension $(\alpha,\iota)$ of $(K,H)$ to $(G,P)$ with injective $\iota$ such that the map $K\times_{\iota(H)} P\to \ba$ defined as $\lz k,p\pz\mapsto k(u_0)p$ is isomorphism of parabolic geometries $(K\times_{\iota(H)} P\to K/H,\om_\alpha)$ and $(\ba\to M,\om)$. Clearly, the extension $(\alpha,\iota)$ depends on the choice of $u_0$ and if we choose $u_0p^{-1}$ instead of $u_0$, then we end up with the extension $(\Ad_p\circ \alpha,\conj_p\circ \iota)$ of $(K,H)$ to $(G,P)$.

\begin{def*}
We say that $(\ba\to M,\om)$ is \emph{given by the extension $(\alpha,i)$ of $(K,H)$ to $(G,P)$ at $u_0$} if there is an inclusion of $K$ into $\A$ such that the map $\lz k,p\pz\mapsto k(u_0)p$ induced by this inclusion is an isomorphism of $(K\times_{\iota(H)} P\to K/H,\om_\alpha)$ and $(\ba\to M,\om)$.
\end{def*}

Moreover, each extension $(\alpha,\iota)$ of $(K,H)$ to $(G,P)$ defines a functor from category of Cartan geometries of type $(K,H)$ to category of Cartan geometries of type $(G,P)$.

We would like to discuss here reductions of the parabolic geometry $(\ba\to M,\om)$ by means of the theory of holonomy reductions for parabolic geometries introduced in \cite{Cap-holonomy}. 
Let us recall \cite[Definition 2.2]{Cap-holonomy} which defines a holonomy reduction of $(\ba\to M,\om)$ of $G$--type $\mathcal{O}$ as a parallel section of $\ba\times_P \mathcal{O}$  w.r.t. the natural (non--linear) connection on $\ba\times_P \mathcal{O}$ induced by the Cartan connection $\om$, where $\mathcal{O}$ is a $G$--homogeneous space.
There is the necessary condition for the existence of a holonomy reduction following from \cite[Theorem 2.6]{Cap-holonomy} that can be expressed via extension functors as follows. 

\begin{prop*}
Let $\Xi'\subset \Xi$ be a set of restricted roots and let $\mathcal{F}_{\Xi'\subset \Xi}$ be the extension functor induced by the inclusion $Q_{\Xi'}\subset G$. If there is a holonomy reduction $\tau$ of $(\ba\to M,\om)$ of $G$--type $G/Q_{\Xi'}$, then $(\ba\to M,\om)$ restricted to the submanifold $p \circ \tau^{-1}(PeQ_{\Xi'})$ of $M$ is contained in the image of $\mathcal{F}_{\Xi'\subset \Xi}$, where $p$ is the projection $\ba\to M$.
\end{prop*}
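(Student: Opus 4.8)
The plan is to obtain the statement as a direct application of the curved orbit decomposition of \cite[Theorem 2.6]{Cap-holonomy}, the only real work being the bookkeeping that identifies the relevant curved orbit together with its structure group. First I would put $\mathcal{O}=G/Q_{\Xi'}$ with base point $o=eQ_{\Xi'}$, and recall from \cite[Definition 2.2]{Cap-holonomy} that a holonomy reduction $\tau$ of $G$--type $\mathcal{O}$ is the same data as a smooth $P$--equivariant map $\ba\to\mathcal{O}$ which is constant along every flow line of the constant vector field $\om^{-1}(X)$, $X\in\fg$. Reading $\tau$ in this way, the subset $p\circ\tau^{-1}(PeQ_{\Xi'})$ of $M$ is by definition the curved orbit $M_0$ associated with the $P$--orbit $P\cdot o=PeQ_{\Xi'}\subset\mathcal{O}$, and the stabiliser of $o$ is $Q_{\Xi'}$ in $G$ and $Q_{\Xi'}\cap P$ in $P$.

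Next I would verify that $P\cdot o$ is open in $\mathcal{O}$, equivalently that $\fp+\fq_{\Xi'}=\fg$; this is the one genuinely computational point. A positive root has nonnegative $\Xi$--height, so its root space lies in $\fp=\fp_\Xi=\fg_{\Xi,\ge 0}$; a negative root has nonpositive $\Xi'$--height, so its root space lies in $\fq_{\Xi'}=\fg_{\Xi',\le 0}$; and the Cartan part lies in $\fp_\Xi$. Summing over the root spaces gives $\fg\subseteq\fp+\fq_{\Xi'}$. Hence the orbit map $P\to\mathcal{O}$ is a submersion at $e$, so $P\cdot o$ is open, and therefore $M_0$ is an open submanifold of $M$, which matches the word ``submanifold'' in the statement. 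The same identity yields $\fg/\fp\cong\fq_{\Xi'}/(\fq_{\Xi'}\cap\fp)$, so $(Q_{\Xi'},Q_{\Xi'}\cap P)$ is an admissible pair and the extension functor $\mathcal{F}_{\Xi'\subset\Xi}$ attached to $Q_{\Xi'}\subset G$ (with subgroup $Q_{\Xi'}\cap P$) is well defined.

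Finally I would invoke \cite[Theorem 2.6]{Cap-holonomy} for the reduction $\tau$ and the orbit $P\cdot o$: it endows $M_0$ with a canonical Cartan geometry of type $(\mathrm{Stab}_G(o),\mathrm{Stab}_P(o))=(Q_{\Xi'},Q_{\Xi'}\cap P)$, and it asserts that the restriction of $(\ba\to M,\om)$ to $M_0$ is isomorphic to the image of this geometry under the extension functor determined by $Q_{\Xi'}\hookrightarrow G$. Identifying the latter with $\mathcal{F}_{\Xi'\subset\Xi}$ then completes the argument. The main obstacle is accordingly not analytic but a matter of reconciling conventions: one must check that the notion of ``curved orbit'' and the associated extension used in \cite{Cap-holonomy} agree precisely with the extension functor $\mathcal{F}_{\Xi'\subset\Xi}$ defined here, and that $eQ_{\Xi'}$ is the correct representative of the double coset $PeQ_{\Xi'}$; once this is settled, the root--space identity $\fg=\fp+\fq_{\Xi'}$ is the only nontrivial ingredient.
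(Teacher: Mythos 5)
Your proposal is correct and follows essentially the same route as the paper: both arguments reduce the statement to \cite[Theorem 2.6]{Cap-holonomy}, identifying $p\circ\tau^{-1}(PeQ_{\Xi'})$ as the curved orbit whose reduced Cartan bundle is $\{u\in\ba:\tau(u)\in eQ_{\Xi'}\}$ with structure group $Q_{\Xi'}\cap P$, so that the restricted geometry lies in the image of $\mathcal{F}_{\Xi'\subset\Xi}$. Your explicit check that $\fg=\fp+\fq_{\Xi'}$ (hence that the orbit is open and the pair $(Q_{\Xi'},Q_{\Xi'}\cap P)$ is admissible) is a useful detail the paper leaves implicit, but it does not change the argument.
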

\begin{proof}
The claim (ii) of \cite[Theorem 2.6]{Cap-holonomy} can be rephrased such that the claimed property is necessary for the existence of the holonomy reduction $\tau$. Indeed, the Cartan bundle of the reduced Cartan geometry of type $(Q_{\Xi'},Q_{\Xi'}\cap P)$ consists of points $u_0\in \ba$ such that  $\tau(u_0)\in Q_{\Xi'}$.
\end{proof}

The main difficulty is that the parabolic geometries of type $(G,P)$ in the image of the functor $\mathcal{F}_{\Xi'\subset \Xi}$ are not normal in general. Therefore, it makes no sense to reduce the normal parabolic geometries. 

On the other hand, we are only interested in holonomy reductions compatible with the automorphisms of $(\ba\to M,\om)$. If we choose $K\subset \A$ acting transitively on $M$, then we restrict our attention to $K$--invariant holonomy reductions. Then $M$ consists of points of the same $P$--type w.r.t. to $K$--invariant holonomy reduction and we have to find a parabolic geometry of type $(G,P)$, which describes the same infinitesimal flag structure as $(\ba\to M,\om)$ and which is in the image of the functor $\mathcal{F}_{\Xi'\subset \Xi}$. The obvious necessary condition for the existence of such homogeneous parabolic geometry  in the image of $\mathcal{F}_{\Xi'\subset \Xi}$ is  $$\iota(H)\subset Q_{\Xi'}\cap P.$$ 
We will prove that this condition is in fact sufficient for the existence of the holonomy reduction.

\begin{thm*}\label{5.1}
Let $(\ba\to M,\om)$ be given by the extension $(\alpha,\iota)$ of $(K,H)$ to $(G,P)$ at $u_0$ and suppose $\Xi'$ is subset of $\Xi$  such that $\iota(H)\subset Q_{\Xi'}\cap P$. Consider the decomposition
$\alpha =\alpha_{\Xi',-}+ \alpha_{\Xi',0} + \alpha_{\Xi',+}$ 
according to the values in
$\fg=\fg_{\Xi',-}\oplus \fg_{\Xi',0} \oplus \fp_{\Xi',+}.$
There is the parabolic geometry $$(\ba\to M, \om_{\alpha_{\Xi',-}+ \alpha_{\Xi',0}})$$ given at $u_0$ by extension $(\alpha_{\Xi',-}+ \alpha_{\Xi',0},\iota)$ of $(K,H)$ to $(G,P)$ providing the same 
underlying  infinitesimal flag structure as $(\ba\to M,\om)$, and the following claims hold:
\begin{enumerate}
\item  The section $\tau$ of $\ba\times_P G/Q_{\Xi'}$ given by the projection onto the quotient of $Ku_0\subset \ba\times_P G$ is a holonomy reduction of $(\ba\to M, \om_{\alpha_{\Xi',-}+ \alpha_{\Xi',0}})$ of $G$--type $G/Q_{\Xi'}$.
\item  All points of $M$ have the same $P$--type with respect to $\tau$. Let us denote $$\ba_{\Xi'}(u_0):=Ku_0(Q_{\Xi'}\cap P)=\tau^{-1}(eQ_{\Xi'}),$$  $$\om^{\Xi'}:=j_{eQ_{\Xi'}}^*(\om_{\alpha_{\Xi',-}+ \alpha_{\Xi',0}}),$$
where $j_{eQ_{\Xi'}}$ is the natural inclusion $\ba_{\Xi'}(u_0)\hookrightarrow \ba$. Then $$(\ba_{\Xi'}(u_0)\to M,\om^{\Xi'})$$ is a homogeneous Cartan geometry of type $(Q_{\Xi'},Q_{\Xi'}\cap P)$.
\item It holds $$(\ba\to M, \om_{\alpha_{\Xi',-}+ \alpha_{\Xi',0}})=\mathcal{F}_{\Xi'\subset \Xi}(\ba_{\Xi'}(u_0)\to M,\om^{\Xi'}).$$
\end{enumerate}
Suppose $p \in P$ is such that $\conj_p^{-1}\iota(H)\subset Q_{\Xi'}\cap P$. Then the section of $\ba\times_P G/Q_{\Xi'}$ given by the projection onto the quotient of $Ku_0p\subset \ba\times_P G$ is exactly the section $\tau$. Thus the holonomy reduction does not depend on the choice of $u_0$.
\end{thm*}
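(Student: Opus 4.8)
The plan is to construct the candidate modified Cartan connection $\om_{\al_{\Xi',-}+\al_{\Xi',0}}$ explicitly via the extension functor machinery already recalled in Section~\ref{dva}, then verify each claim by unwinding the definitions. First I would check that $(\al_{\Xi',-}+\al_{\Xi',0},\iota)$ really is an extension of $(K,H)$ to $(G,P)$: condition (1) holds because $d\iota$ takes values in $\fq_{\Xi'}\cap\fp = \fg_{\Xi',-,0}\cap\fp$ by the hypothesis $\iota(H)\subset Q_{\Xi'}\cap P$, so the $\fp_{\Xi',+}$--component $\al_{\Xi',+}$ vanishes on $\fh$ and dropping it does not disturb $\al|_\fh=d\iota$; condition (2) is immediate since $\fg_{\Xi',+}\subset\fp_{\Xi',+}\subset\fp$ so the induced map $\fk/\fh\to\fg/\fp$ is unchanged; condition (3) is the key point and follows because $\Ad_{\iota(h)}$ preserves the grading components $\fg_{\Xi',-}$, $\fg_{\Xi',0}$ (indeed $\iota(h)\in Q_{\Xi'}\cap P\subset G_{\Xi',0}$ up to the unipotent part of $P\cap Q_{\Xi'}$, but the essential fact is $\Ad_{Q_{\Xi'}}$ preserves $\fq_{\Xi'}$ and $\Ad_P$ preserves $\fp$, hence their intersection preserves $\fg_{\Xi',-}\oplus\fg_{\Xi',0}$), so the $\Ad$--equivariance of $\al$ restricts to each graded piece. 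That the resulting parabolic geometry has the same underlying infinitesimal flag structure follows because the $\fg/\fp$--part of the Cartan connection only depends on $\al_{\Xi',-}$ modulo $\fp$, which is untouched.

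Next I would address the three enumerated claims. For (1), the natural connection on $\ba\times_P G/Q_{\Xi'}$ is determined by $\om_{\al_{\Xi',-}+\al_{\Xi',0}}$; a section is parallel iff along every curve the developed image stays in one $Q_{\Xi'}$--orbit. Since the total space $K\times_{\iota(H)}P$ is built from $K$, and the modified connection is the $\al_{\Xi',-}+\al_{\Xi',0}$--image of the Maurer--Cartan form, the $G$--valued development of any curve through $u_0=\lz e,e\pz$ stays inside the subgroup generated by $\exp(\fg_{\Xi',-}\oplus\fg_{\Xi',0})$, i.e.\ inside the set projecting to $eQ_{\Xi'}$; this is exactly the statement that $\tau$, defined as the projection of $Ku_0\subset\ba\times_P G$, is parallel. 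For (2), all points of $M$ have the same $P$--type because $K$ acts transitively and $\tau$ is $K$--invariant (as $Ku_0$ is $K$--invariant), so the orbit of $\tau$ is a single $P$--type; identifying $\ba_{\Xi'}(u_0)=\tau^{-1}(eQ_{\Xi'})$ with $Ku_0(Q_{\Xi'}\cap P)$ and pulling back $\om_{\al_{\Xi',-}+\al_{\Xi',0}}$ gives a Cartan connection of type $(Q_{\Xi'},Q_{\Xi'}\cap P)$ by the general theory of \cite{Cap-holonomy} (claim (ii) of \cite[Theorem 2.6]{Cap-holonomy}), and homogeneity is inherited from the $K$--action. For (3), one checks that applying the extension functor $\mathcal{F}_{\Xi'\subset\Xi}$ to $(\ba_{\Xi'}(u_0)\to M,\om^{\Xi'})$ reproduces $(\ba\times_{(Q_{\Xi'}\cap P)}P\to M, \text{induced connection})$, and this equals $(\ba\to M,\om_{\al_{\Xi',-}+\al_{\Xi',0}})$ because $\al_{\Xi',-}+\al_{\Xi',0}$ takes values in $\fq_{\Xi'}$, so the extension functor construction literally recovers the modified connection --- this is essentially the content of \cite[Theorem 2.6]{Cap-holonomy} combined with the explicit form of extension functors.

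For the final independence statement, suppose $p\in P$ satisfies $\conj_p^{-1}\iota(H)\subset Q_{\Xi'}\cap P$. Replacing $u_0$ by $u_0p^{-1}$ changes the extension to $(\Ad_p\circ\al,\conj_p\circ\iota)$ as recalled above, and the section obtained from $Ku_0p^{-1}\subset\ba\times_P G$ is $a$ priori a different section $\tau'$. The point is to show $\tau'=\tau$. Since $\ba\times_P G/Q_{\Xi'}$ identifies a section with a $P$--equivariant map $\ba\to G/Q_{\Xi'}$, and both $\tau$ and $\tau'$ send $u_0$ into the same $G$--orbit data, it suffices to compare their values at $u_0$: $\tau(u_0)=eQ_{\Xi'}$ while $\tau'(u_0)=p^{-1}\cdot(\text{something})$; but the hypothesis $\conj_p^{-1}\iota(H)\subset Q_{\Xi'}\cap P$ is precisely what forces $p^{-1}Q_{\Xi'}p\supset\iota(H)$-type compatibility so that $\tau'(u_0)$ lands in $eQ_{\Xi'}$ as well, hence $\tau'$ and $\tau$ agree on the $K$--orbit $Ku_0$ and by $P$--equivariance everywhere.

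I expect the main obstacle to be making the identification in claim~(3) fully rigorous: one must be careful that the extension functor $\mathcal{F}_{\Xi'\subset\Xi}$, which is defined abstractly via $K\times_{\iota(H)}P$--type constructions, genuinely returns the connection $\om_{\al_{\Xi',-}+\al_{\Xi',0}}$ on the original bundle $\ba$ rather than merely an isomorphic one --- this requires tracking how the Maurer--Cartan form, its $\al$--pushforward, and the functor's inclusion of Cartan bundles interact, and invoking the precise naturality statement of \cite[Theorem 2.6]{Cap-holonomy}. The equivariance check in verifying $(\al_{\Xi',-}+\al_{\Xi',0},\iota)$ is an extension, and the careful bookkeeping in the last independence paragraph, are the other places where one has to be attentive, but they are essentially routine once the grading is respected by $\Ad_{\iota(H)}$.
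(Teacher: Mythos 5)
Your proposal follows essentially the same route as the paper: verify that $\iota(H)\subset Q_{\Xi'}\cap P\subset G_{\Xi',0}$ makes the grading decomposition of $\alpha$ $H$--equivariant so that the truncated pair is an extension, observe that the $\fg/\fp$--part is untouched, read off parallelism of $\tau$ from the construction of the connection on $\ba\times_P G/Q_{\Xi'}$, and delegate claims (2)--(3) to \cite[Theorem 2.6]{Cap-holonomy}. The paper's own treatment of the final independence statement is just as terse as yours (``a simple computation''), so nothing in your write-up diverges from, or falls short of, the published argument.
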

\begin{proof}
Since $\iota(H)\subset Q_{\Xi'}\cap P$, the projection onto the quotient of $Ku_0\subset \ba\times_P G$ is a correctly defined smooth section of $\ba\times_P G/Q_{\Xi'}$. Moreover, the corresponding function $\ba\to G/Q_{\Xi'}$ has its values in the class $PeQ_{\Xi'}$ in $P\backslash G/Q_{\Xi'}$. Thus if $\tau$ is a holonomy reduction, then all points of $M$ have the same $P$--type with respect to $\tau$.

Further, $\iota(H)\subset Q_{\Xi'}\cap P$ implies that the decomposition of $\alpha$ is $H$--invariant, and thus $(\alpha_{\Xi',-}+ \alpha_{\Xi',0},\iota)$ is extension of $(K,H)$ to $(G,P)$. By definition, the parabolic geometry $(\ba\to M, \om_{\alpha_{\Xi',-}+ \alpha_{\Xi',0}})$ given by the extension $(\alpha_{\Xi',-}+ \alpha_{\Xi',0},\iota)$ at $u_0$ induces the same filtration of $TM$ and the same reduction to $G_0$,  i.e., the same underlying infinitesimal flag structure.
It is clear from the construction of the connection on $\ba \times_P G/Q_{\Xi'}$ that the section $\tau$ is parallel, i.e. it is a holonomy reduction. The remaining claims from the list follow from \cite[Theorem 2.6]{Cap-holonomy} immediately.

Finally, it is a simple computation to show that the projection onto the quotient of $Ku_0p\subset \ba\times_P G$ is exactly $\tau$ for $p^{-1}Hp\subset Q_{\Xi'}\cap P$.
\end{proof}

In the setting of the Theorem \ref{5.1}, we can interpret the underlying geometric structure for Cartan geometries of type $(Q_{\Xi'},Q_{\Xi'}\cap P)$ as follows.
Let us fix the reduction $(\ba_{\Xi'}\to M,\om^{\Xi'})$ given by the fixed choice of $u_0$ such that $\iota(H)\subset Q_{\Xi'}\cap P$.

\begin{prop*} \label{5.3}
Let $\om^{\Xi'}_-+\om^{\Xi'}_0$ be the decomposition of $\om^{\Xi'}$ according to the values in $\fg_{\Xi',-}\oplus \fg_{\Xi',0}$. Denote 
$$VM:=\ba_{\Xi'}\times_{(Q_{\Xi'}\cap P)} \fg_{\Xi',0}/(\fg_{\Xi',0}\cap \fp_\Xi).$$
\begin{enumerate}
\item The part $\om^{\Xi'}_-$ provides the $K$--invariant complement of $K$--invariant subbundle $VM$ in $TM$ of the form $$T^{\Xi',-}M:=\ba_{\Xi'}\times_{(Q_{\Xi'}\cap P)} \fg_{\Xi',-}.$$ 
\item The part $\om^{\Xi'}_0$ defines a $K$--invariant connection on $T^{\Xi',-}M$.
\end{enumerate}
\end{prop*}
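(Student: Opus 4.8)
The whole statement reduces to one group-theoretic fact: writing $R:=Q_{\Xi'}\cap P$, we have $R\subseteq G_{\Xi',0}$. Indeed, $R\subseteq Q_{\Xi'}$ by definition, while $R\subseteq P\subseteq P_\Xi$, and since $\Xi'\subseteq\Xi$ the associated standard parabolic subalgebras satisfy $\fp_\Xi\subseteq\fp_{\Xi'}$, hence also $P_\Xi\subseteq P_{\Xi'}$ for their normalizers; thus $R\subseteq P_{\Xi'}\cap Q_{\Xi'}=G_{\Xi',0}$. Two consequences: $\fr:=\operatorname{Lie}(R)=\fg_{\Xi',0}\cap\fp_\Xi$, and for every $h\in R$ the map $\Ad_h$ preserves the $\Xi'$--grading of $\fg$, in particular the subspaces $\fg_{\Xi',-}$ and $\fg_{\Xi',0}$. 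Now the Cartan connection $\om^{\Xi'}$ gives the usual identification $TM\cong\ba_{\Xi'}\times_R(\fq_{\Xi'}/\fr)$, and since $\fr\subseteq\fg_{\Xi',0}$ we get the $R$--equivariant splitting of the fibre type $\fq_{\Xi'}/\fr=\fg_{\Xi',-}\oplus(\fg_{\Xi',0}/\fr)$. This yields $TM=T^{\Xi',-}M\oplus VM$ with the two summands precisely the bundles in the statement (both are subbundles because $\fr\subseteq\fg_{\Xi',0}\subseteq\fq_{\Xi'}$ and $\fg_{\Xi',-}\cap\fr=0$); the decomposition is $K$--invariant since $K$ acts on $\ba_{\Xi'}$ by automorphisms of $\om^{\Xi'}$ by Theorem \ref{5.1}, and the associated-bundle construction together with the fixed splitting of $\fq_{\Xi'}/\fr$ is natural for this action.

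To finish (1) I would check that $\om^{\Xi'}_-$ is the form realizing the complement. It is strictly horizontal (a vertical vector $\zeta_X$ with $X\in\fr\subseteq\fg_{\Xi',0}$ satisfies $\om^{\Xi'}(\zeta_X)=X$, so its $\fg_{\Xi',-}$--component is zero) and $R$--equivariant, $(r^h)^*\om^{\Xi'}_-=\Ad_{h^{-1}}\circ\om^{\Xi'}_-$, because $\Ad_{h^{-1}}$ preserves $\fg_{\Xi',-}$; hence it descends to a bundle map $TM\to\ba_{\Xi'}\times_R\fg_{\Xi',-}$. Its kernel is exactly the locus on which $\om^{\Xi'}$ takes values in $\fg_{\Xi',0}$ modulo $\fr$, that is $VM$, and it restricts to the identity on $T^{\Xi',-}M=\ba_{\Xi'}\times_R\fg_{\Xi',-}$; so $T^{\Xi',-}M$ is the complement to $VM$ determined by $\om^{\Xi'}_-$, and it is $K$--invariant because $\phi^*\om^{\Xi'}=\om^{\Xi'}$ for $\phi\in K$ forces $\phi^*\om^{\Xi'}_-=\om^{\Xi'}_-$.

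For (2) the plan is to read $\om^{\Xi'}_0$ as connection data: it is $\fg_{\Xi',0}$--valued, it reproduces the fundamental vector fields of $R$ (since $\om^{\Xi'}(\zeta_X)=X\in\fg_{\Xi',0}$ for $X\in\fr$), and it is $R$--equivariant, $(r^h)^*\om^{\Xi'}_0=\Ad_{h^{-1}}\circ\om^{\Xi'}_0$, because $\Ad_{h^{-1}}$ preserves $\fg_{\Xi',0}$ for $h\in R\subseteq G_{\Xi',0}$. These are exactly the data of (the restriction to $\ba_{\Xi'}$ of) a principal connection on the extended principal $G_{\Xi',0}$--bundle $\ba_{\Xi'}\times_R G_{\Xi',0}$, so by the standard extension lemma for principal connections (see \cite{parabook}) such a connection exists and is unique, and it induces a linear connection on the associated bundle $(\ba_{\Xi'}\times_R G_{\Xi',0})\times_{G_{\Xi',0}}\fg_{\Xi',-}=\ba_{\Xi'}\times_R\fg_{\Xi',-}=T^{\Xi',-}M$, where $R$ acts on $\fg_{\Xi',-}$ by the restriction of the $\Ad$--action of $G_{\Xi',0}$. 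Equivalently one writes the covariant derivative directly: for $\sigma\in\Gamma(T^{\Xi',-}M)$ corresponding to an $R$--equivariant $f\colon\ba_{\Xi'}\to\fg_{\Xi',-}$ and any lift $\tilde\xi$ of a vector field $\xi$ on $M$, the section $\nabla_\xi\sigma$ corresponds to $\tilde\xi\cdot f+[\om^{\Xi'}_0(\tilde\xi),f]$, independence of the lift following from the two displayed properties of $\om^{\Xi'}_0$ and the equivariance of $f$. Finally $K$--invariance of $\om^{\Xi'}_0$, again from $\phi^*\om^{\Xi'}=\om^{\Xi'}$, gives $K$--invariance of $\nabla$. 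The one genuinely non-formal step is the inclusion $R\subseteq G_{\Xi',0}$ coming from $\Xi'\subseteq\Xi$ (and one should pay attention to this when $G$ is disconnected); the remainder is a routine unwinding of the associated-bundle formalism.
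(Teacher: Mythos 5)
Your argument is correct and is exactly the route the paper takes: its proof is a one--line appeal to the inclusion $\iota(H)\subset Q_{\Xi'}\cap P$ and the reductivity of the pair $(Q_{\Xi'},G_{\Xi',0})$, which are precisely the two facts you establish (namely $Q_{\Xi'}\cap P\subseteq G_{\Xi',0}$, so that $\Ad$ of the structure group preserves the splitting $\fq_{\Xi'}=\fg_{\Xi',-}\oplus\fg_{\Xi',0}$) before unwinding the standard associated--bundle formalism. Your expanded treatment, including the caveat about lifting $\fp_\Xi\subseteq\fp_{\Xi'}$ to normalizers when $G$ is disconnected, only makes explicit what the paper leaves implicit.
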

\begin{proof}
The claims follow directly from the inclusion $\iota(H)\subset Q_{\Xi'}\cap P$ and the reductivity of the pair $(Q_{\Xi'},G_{\Xi',0})$.
\end{proof}

In general, we cannot say anything about the image $\iota(H)$ in $P$. However, we will prove in the following sections, that existence of generalized symmetries provides us interesting information about the image $\iota(H)$ in $P$.

\section{The fundamental construction} \label{tri}
The main aim now is to study conditions under which, there is non--empty admissible set $\Xi'$ for which the assumptions of the Theorem \ref{5.1} are satisfied. We know from above that we have to study the condition $\iota(H)\subset Q_{\Xi'}\cap P$. In particular, we no longer need to work with the parabolic geometry $(\ba\to M,\om)$ of type $(G,P)$. In fact, we will work in a more convenient algebraic setting. Let us introduce the necessary notation:
\begin{itemize}
 \item We denote by $\tG$ the linear algebraic Lie group of automorphisms of $\fg$ and we identify its Lie algebra of derivations of $\fg$ with $\fg$.
\item We denote by $\tP_\Xi$ the normalizer of $\fp_\Xi$ in $\tilde G$ under the natural action of the automorphisms group on $\fg$, i.e., $\tP_\Xi$ is a linear algebraic parabolic subgroup of $\tilde G$ with Lie algebra $\fp_\Xi$.
\item For each subset $\Xi'$ of the set of simple restricted roots, we denote by $\tilde Q_{\Xi'}$ and $\tP_{\Xi'}$ the normalizers of $\fq_{\Xi'}$ and $\fp_{\Xi'}$ in $\tilde G$, respectively. Then $\tG_{\Xi',0}:=\tilde Q_{\Xi'}\cap \tP_{\Xi'}$ is the (algebraic) Levi subgroup of $\tP_{\Xi'}$.
\end{itemize}

Let us point out that contrary to $Z(G_0)$, the center $Z(\tG_{\Xi,0})$ posses no restrictions on possible eigenvalues of generalized symmetries apart the obvious ones, see \cite[Proposition 3.2]{GZ2}.

\begin{thm*} \label{extension}
Let $(\alpha,\iota)$ be extension of $(K,H)$ to $(G,P)$ giving $(\ba\to M,\om)$ at $u_0$. There is an element $p\in  \exp(\fp_{\Xi,+})$, Lie groups $\tilde H \subset \tilde K$ and extension $(\ta,\ti)$ of $(\tK,\tH)$ to $(\tG,\tP_\Xi)$ such that the following is satisfied:
\begin{enumerate}
\item It holds $$\tH=\{h\in \tP_\Xi: \Ad_h(\ta(\tfk))\subset \ta(\tfk),\ \Ad_h.\kappa(u_0p^{-1})=\kappa(u_0p^{-1})\},$$ where $.$ is the natural action of $P$ on $\wedge^2 (\fg/\fp_\Xi)^*\otimes \fg$. In other words, $\ti$ is the inclusion $\tH\subset \tP_\Xi$.
\item It holds 
$\Ad\circ \conj_p\circ \iota(H)\subset \tH$ and
$\Ad_p\circ d\iota(\fh)\cap \fp_{\Xi,+}= \tfh\cap\fp_{\Xi,+}.$
In particular, if $\tH\subset \tilde Q_{\Xi'}\cap \tP_\Xi$, then $\conj_p\circ \iota(H)\subset Q_{\Xi'}\cap P.$
\item There is algebraic Levi decomposition $\tH=\tHn \cdot \exp(\tfn)$ for some reductive Lie subgroup $\tHn$ contained in $\tG_{\Xi,0}$ and for the maximal normal Lie subgroup consisting of unipotent elements of $\tP_\Xi$ of the form $\exp(\tfn)$ for some nilpotent subalgebra $\tfn$ of $\fp_\Xi$.
In particular, $$\tfh\cap\fp_{\Xi,+}=\tfn\cap \fp_{\Xi,+}.$$
\item The space $\tK/\tH$ is the simply connected covering of $K/H$.
\end{enumerate}
\end{thm*}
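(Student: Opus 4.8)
The plan is to build $\tH$ directly as the \emph{abstract} stabilizer described in claim (1), and then to realize it as the isotropy of a homogeneous Cartan geometry for $\tG$. First I would replace $u_0$ by $u_0p^{-1}$ for a suitable $p\in\exp(\fp_{\Xi,+})$; the role of $p$ is to put the harmonic curvature $\kappa_H(u_0p^{-1})$ (equivalently, a normalized representative of the curvature $\kappa(u_0p^{-1})$) into a distinguished normal form, so that its $\tP_\Xi$-stabilizer meets $\fp_{\Xi,+}$ in exactly the space appearing in (2)--(3). Concretely, $p$ is chosen so that $\Ad_p\circ d\iota(\fh)$ has the ``maximal'' intersection with $\fp_{\Xi,+}$ among all $P$-conjugates; the existence of such an optimal $p$ in the unipotent radical is a standard compactness/algebraicity argument for the action of the unipotent group $\exp(\fp_{\Xi,+})$ on the finite-dimensional space $\fg$. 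Having fixed this $p$, I define $\tH$ by the formula in (1), i.e., as the subgroup of $\tP_\Xi$ preserving both the subspace $\ta(\tfk)$ (which I still must produce) and the curvature value, and I set $\ti$ to be the inclusion.

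Next I would construct $\ta$, $\tfk$, $\tK$. The natural candidate is $\tfk:=\Ad_p(d\iota(\fh))+\fg_{\Xi,-}$ with $\ta:\tfk\to\fg$ the inclusion; one checks, using condition (3) in the definition of an extension together with the fact that $\om_\alpha$ is $K$-invariant, that this is well-defined and that the three axioms of an extension hold for $(\ta,\ti)$ once $\tH$ is defined as above — the compatibility $\ta|_{\tfh}=d\ti$ is automatic, the isomorphism $\tfk/\tfh\cong\fg/\fp_\Xi$ holds because $\fg_{\Xi,-}$ is a complement to $\fp_\Xi$, and the $\Ad$-equivariance is exactly the condition imposed in the definition of $\tH$. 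For the group $\tK$: since $\tH$ has a given Lie algebra $\tfh$ inside $\fp_\Xi$, I can integrate $\tfk$ to a group $\tK$ containing $\tH$ as a closed subgroup — here I take $\tK$ simply connected (or more precisely, such that $\tK/\tH$ is the universal cover of $K/H$), which forces claim (4). The curvature normality condition passes through because $\kappa$ is $\Ad$-equivariant along the Cartan connection, so the stabilizer condition is intrinsic.

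Claim (2) is then the heart of the matter and where the real work lies. The inclusion $\Ad\circ\conj_p\circ\iota(H)\subset\tH$ says that the original isotropy, conjugated into the right frame, preserves $\ta(\tfk)$ and fixes the curvature — the first is clear from the construction of $\tfk$, and the second is precisely the statement that $\kappa$ is $H$-equivariant, which is part of being given by an extension. The delicate point is the \emph{equality} $\Ad_p\circ d\iota(\fh)\cap\fp_{\Xi,+}=\tfh\cap\fp_{\Xi,+}$: the inclusion $\subset$ is trivial, but $\supset$ requires that every element of $\fp_{\Xi,+}$ fixing the normalized curvature and preserving $\tfk$ actually lies in the image of $d\iota$. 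This is exactly where Proposition \ref{1.4} enters: an element $Z\in\fp_{\Xi,+}$ in the stabilizer of the curvature corresponds to a complete infinitesimal automorphism $\xi$ with $\om_{u_0}(\xi)=Z$, hence lies in $d\iota(\fh)$ after conjugation by $p$. I expect the main obstacle to be making this argument precise — one must argue that the infinitesimal stabilizer of $\kappa(u_0p^{-1})$ inside $\fp_{\Xi,+}$ coincides with the space of values $\om_{u_0p^{-1}}(\xi)$ of complete infinitesimal automorphisms, and that the optimal choice of $p$ guarantees this stabilizer is already contained in $\Ad_p\circ d\iota(\fh)$; this is where the Kostant-type / Levi-decomposition structure is needed.

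Finally, claim (3) follows from the algebraic structure of $\tP_\Xi$: since $\tG$ is the full automorphism group of $\fg$ (a linear algebraic group), $\tP_\Xi$ is an algebraic parabolic, and $\tH$ is an algebraic subgroup (cut out by polynomial conditions — preservation of a subspace and fixing of a tensor). Every algebraic subgroup of a linear algebraic group has an algebraic Levi decomposition $\tH=\tHn\cdot\exp(\tfn)$ with $\tfn$ the nilradical; because $\tH\subset\tP_\Xi$ and the unipotent radical of $\tP_\Xi$ is $\exp(\fp_{\Xi,+})$, the unipotent part $\exp(\tfn)$ lands in $\exp(\fp_{\Xi,+})$ and the reductive part $\tHn$ can be conjugated into $\tG_{\Xi,0}$; normality of $\exp(\tfn)$ in $\tH$ together with $\tfn\subset\fp_\Xi$ and $\tfn$ nilpotent then gives $\tfh\cap\fp_{\Xi,+}=\tfn\cap\fp_{\Xi,+}$ since $\tHn\cap\fp_{\Xi,+}=0$. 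The last sentence of the theorem (independence of $\tau$ from $u_0$) is, as in Theorem \ref{5.1}, a direct computation: replacing $u_0$ by $u_0\tilde p^{-1}$ with $\tilde p\in P$ changes the extension to $(\Ad_{\tilde p}\circ\alpha,\conj_{\tilde p}\circ\iota)$ and simply conjugates $\tH$, $p$ accordingly, leaving the reduction intact.
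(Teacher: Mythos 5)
There is a genuine gap, and it sits exactly at the point you flag as "the natural candidate": setting $\tfk:=\Ad_p(d\iota(\fh))+\fg_{\Xi,-}$ with $\ta$ the inclusion cannot work. In an extension, $\ta:\tfk\to\fg$ is a linear map from an \emph{abstract} Lie algebra whose bracket differs from the restricted bracket of $\fg$ precisely by the curvature; if $\ta$ were the inclusion of a subalgebra you would be forced into the flat case, and in any case your candidate discards the original algebra $\fk$ entirely (the negative part of $\alpha(\fk)$ is a graph over $\fg/\fp_\Xi$ with components in $\fp_\Xi$, not $\fg_{\Xi,-}$), so the resulting geometry would not be $(\ba\to M,\om)$. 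A second, related gap is the circularity of claim (1): $\tH$ is defined as the stabilizer of $\ta(\tfk)$ and of $\kappa(u_0p^{-1})$, but $\tfk$ contains $\tfh=\mathrm{Lie}(\tH)$, so the definition is self-referential and a one-shot construction does not close up. The paper resolves both points by an induction: starting from $(\alpha^0,\iota^0)=(\alpha,\iota)$ it forms at each stage the stabilizer $N_{\tP_\Xi}(\alpha^j)$ of $\alpha^j(\fk^j)$ and $\kappa(u_0)$, builds the semidirect product $K^{jc}\rtimes N_{\tP_\Xi}(\alpha^j)$ (with $K^{jc}$ the simply connected group of $\fk^j$), quotients by the normal kernel generated by the pairs $(h,\Ad_h^{-1})$, and sets $\alpha^{j+1}=\alpha^j+\id$; the process stabilizes in finitely many steps because $\fp_\Xi$ is finite dimensional, and only the stabilized extension satisfies (1). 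Your appeal to Proposition~\ref{1.4} for the equality in (2) is in the right spirit (the paper cites the same lemma of \cite{GZ2}), but it only makes sense once $\tfk$ is built as above.

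Two smaller points. The element $p$ is not chosen by maximizing $\dim(\Ad_p\,d\iota(\fh)\cap\fp_{\Xi,+})$ -- a maximizer need not exist for an action of the non-compact unipotent group $\exp(\fp_{\Xi,+})$, and such a criterion would not yield (3). Instead one first runs the construction without conjugation, takes the algebraic Levi decomposition $\tH=\tHn\cdot\exp(\tfn)$ of the resulting algebraic subgroup of $\tP_\Xi$, uses the theorem that the reductive part can be conjugated into $\tG_{\Xi,0}$ by some $p\in\exp(\fp_{\Xi,+})$, and then reruns the construction from $(\Ad_p\circ\alpha,\conj_p\circ\iota)$. Finally, the "independence of $\tau$ from $u_0$" you address at the end belongs to Theorem~\ref{5.1}, not to this statement.
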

\begin{rem*}
We construct the group $\tK$ and the map $\ta$ explicitly using an induction in the proof  and we will use these objects in the next section. 
\end{rem*}
\begin{proof}
Assume there is an extension $(\ta,\ti)$ satisfying conditions (1) and (2). Since $\tH$ is linear algebraic Lie subgroup of the linear algebraic Lie group $\tP_\Xi$, the maximal normal subgroup of $\tH$ consisting of unipotent elements of $\tP_\Xi$ is a connected Lie subgroup of $\tH$ and thus has form $\exp(\tfn)$ for some nilpotent subalgebra $\tfn$ of $\fp_\Xi$. Therefore there is algebraic Levi decomposition $\tH=\tHn \cdot \exp(\tfn)$ from \cite[Chapter 1, Theorem 6.7.]{Lie-alg-3}, where $\tHn$ is maximal reductive subgroup of $\tP_\Xi$ that is contained in $\tH$. It follows from \cite[Chapter 1, Theorem 6.8.]{Lie-alg-3}, that there is $p\in \exp(\fp_{\Xi,+})$ such that $\conj_p (\tHn)\subset \tG_{\Xi,0}$. Therefore, if we start our construction with extension $(\Ad_p\circ \alpha,\conj_p\circ i)$ giving $(\ba\to M,\om)$ at $u_0p^{-1}$,  then the resulting extension  $(\ta,\ti)$ satisfies all the conditions (1)--(3).

The construction of the extension $(\ta,\ti)$ satisfying (1) and (2) will be done inductively in finitely many steps. We start with $K^0:=K$ and $(\alpha^0,\iota^0):=(\alpha,\iota)$. We construct extension $(\alpha^{j+1},\iota^{j+1})$ of $(K^{j+1},H^{j+1})$ to $(\tG,\tP_\Xi)$ from the extension $(\alpha^{j},\iota^{j})$ of $(K^{j},H^{j})$ as follows: We define 
$$N_{\tP_\Xi}(\alpha^j):=\{h\in \tP_\Xi: \Ad_h(\alpha^j(\fk^j))\subset \alpha^j(\fk^j), \Ad_h.\kappa(u_0)=\kappa(u_0)\}.$$ 
We know from \cite[Lemma 2.2]{GZ2} that $N_{\tP_\Xi}(\alpha^j)$ acts by automorphisms on the Lie algebra $\fk^j$. Thus we can form the semidirect products of Lie groups $K^{jc}\rtimes N_{\tP_\Xi}(\alpha^j)$ and $H^{jc}\rtimes N_{\tP_\Xi}(\alpha^j)$, where $K^{jc}$ is connected simply connected Lie group with Lie algebra $\fk^j$ and $H^{jc}$ is such that $K^{jc}/H^{jc}$ is simply connected covering of $K^j/H^j$. 
We define 
$$\iota^{j+1}: H^{jc}\rtimes N_{\tP_\Xi}(\alpha^j)\to \tP_\Xi,\ \ \iota^{j+1}(h,q)=\Ad_h\circ \Ad_q.$$ 
This is clearly Lie algebra homomorphism with the kernel generated by elements $(h,\Ad_h^{-1})$ for $h\in H^{jc}$. If we show that $Ker(\iota^{j+1})$ is normal subgroup of $K^{jc}\rtimes N_{\tP_\Xi}(\alpha^j)$, then we can define 
$$K^{j+1}:=K^{jc}\rtimes N_{\tP_\Xi}(\alpha^j)/Ker(\iota^{j+1}),\ \ H^{j+1}:=H^{jc}\rtimes N_{\tP_\Xi}(\alpha^j)/Ker(\iota^{j+1})$$ and $$\alpha^{j+1}=\alpha^{j}+\id.$$ Since $\alpha^{j+1}|_{\fk^j}=\alpha^j$ and $\alpha^{j+1}|_{\fh^{j+1}}=d\iota^{j+1}$, it is clear that $(\alpha^{j+1},\iota^{j+1})$ is extension of $(K^{j+1},H^{j+1})$ to $(\tG,\tP_\Xi)$.

Thus let us prove $(k,l)(h,\Ad_h^{-1})(k,l)^{-1}\in Ker(\iota^{j+1})$ for all $h\in H^{jc}$ and all $(k,l)\in K^{jc}\rtimes N_{\tP_\Xi}(\alpha^j)$. 
It holds $$(k,l)(h,\Ad_h^{-1})(k,l)^{-1}=(kl(h\Ad_h^{-1}(l^{-1}(k^{-1}))),l\Ad_h^{-1}l^{-1})$$ and 
$$\Ad_{kl(h\Ad_h^{-1}(l^{-1}(k^{-1})))}=\Ad_k\Ad_l\Ad_h\Ad_h^{-1}\Ad_l^{-1}\Ad_k^{-1}\Ad_l\Ad_h^{-1}\Ad_l^{-1}
=\Ad_l\Ad_h^{-1}\Ad_l^{-1},$$  i.e., it is enough to show $kl(h\Ad_h^{-1}(l^{-1}(k^{-1})))\in H^{jc}$. This is clear, because $\conj_h=\Ad_h$ as automorphism of $K^{jc}$ for $h\in H^{jc}$ and thus $kl(h\Ad_h^{-1}(l^{-1}(k^{-1})))l(h^{-1})=e$.

If $\fk^j=\fk^{j+1}$, then $(\ta,\ti):=(\alpha^{j+1},\iota^{j+1})$ is extension of $(\tK,\tH):=(K^{j+1},H^{j+1})$ to $(\tG,\tP_\Xi)$ satisfying (1) by definition of $\tH$, and (2) follows from \cite[Lemma 3.6.]{GZ2}. Since $\fp_\Xi$ is finite dimensional, $\fk^j=\fk^{j+1}$ holds in finite number of steps. By construction, (4) holds, too.
\end{proof}

Therefore it suffices to work with the extension $(\ta,\ti)$ of $(\tK,\tH)$ to $(\tG,\tP_\Xi)$ from the above Theorem and prove $\tH\subset \tilde Q_{\Xi'}\cap \tP_\Xi$ in order to obtain reduction from the Theorem \ref{5.1}.

\section{The reduction given by generalized symmetries}
 Let us return to the role of generalized symmetries  in studying of  holonomy reductions. Firstly, let us summarize some more definitions and notation that will simplify our work in the future.
In fact, we generalize the definitions of the objects $\Phi,\mathcal{J},\Theta$ and $\Lambda$ from the Introduction for the data provided by the extension $(\ta,\ti)$ from the Theorem \ref{extension}.

\begin{itemize}
\item  We denote by $\Phi(\fk)$ the set of all $\alpha_i\in \Xi$ such that there is element of $\tfn\cap \fp_{\Xi,+}$ with non--trivial component in $V_{\Xi,\alpha_i}$.
\item We denote $\mathcal{J}(\fk):=\tH\cap Z(\tilde G_{\Xi,0})$.
\item We define $\Theta(\fk):=\{\alpha_i\in \Xi: \Ad_s|_{V_{\Xi,\alpha_i}}=\id {\rm \ for\ each\ }  s\in \mathcal{J}(\fk)\}.$ 
\item We denote $\Lambda(\fk):=\Xi-\Phi(\fk)-\Theta(\fk)$.
\end{itemize}

Let us point out that these objects depend on the choice of extension $(\alpha, \iota)$ of $(K,H)$ to $(G,P)$ on which we apply the construction from the Theorem \ref{extension}. We indicate this by the argument $\fk$ in the notation. The relation to the objects $\Phi,\mathcal{J},\Theta$ and $\Lambda$ from the Introduction is the following:

\begin{prop*}\label{2.1}
If $K=\A$, then $\Phi(\fk)=\Phi$, $\mathcal{J}\subset \mathcal{J}(\fk)$ and thus $\Lambda\subset \Xi-\Phi-\Theta\subset \Lambda(\fk).$
\end{prop*}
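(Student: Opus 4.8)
The plan is to trace through the inductive construction of $(\ta,\ti)$ from Theorem \ref{extension} in the special case $K=\A$ and compare the outputs with the objects $\Phi$, $\mathcal{J}$, $\Theta$ from the Introduction. First I would address $\Phi(\fk)=\Phi$. By Proposition \ref{1.4}, for each $\alpha_i\in\Phi$ there is $\phi\in\A_{x_0}$ with $\phi(u_0)=u_0\exp(Z)$, $Z\in\fp_{\Xi,+}$ having non--trivial component in $V_{\Xi,\alpha_i}$, and moreover a complete infinitesimal automorphism $\xi$ with $\om_{u_0}(\xi)=Z$; such a $\xi$ lies in $\fk=\Lie(\A)$, and after the conjugation by $p\in\exp(\fp_{\Xi,+})$ and the reshuffling in Theorem \ref{extension}(3), its image contributes exactly a vector in $\tfn\cap\fp_{\Xi,+}$ with non--trivial $V_{\Xi,\alpha_i}$--component (here one uses that $p\in\exp(\fp_{\Xi,+})$ only changes components in $V_{\Xi,\alpha_j}$ for roots of strictly larger height, so the ``lowest'' non--trivial component is preserved). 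Conversely, $\tfh\cap\fp_{\Xi,+}=\tfn\cap\fp_{\Xi,+}$ consists of $\om$--images of complete infinitesimal automorphisms of $(\ba\to M,\om)$ itself (this is the content of how $\tfh$ is assembled from $\fk$ together with the normalizer algebras $N_{\tP_\Xi}(\alpha^j)$, all of whose relevant elements in $\fp_{\Xi,+}$ are again realized by infinitesimal automorphisms via \cite[Lemma 3.6.]{GZ2}), so every $\alpha_i\in\Phi(\fk)$ already lies in $\Phi$. This gives the equality $\Phi(\fk)=\Phi$.

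Next I would show $\mathcal{J}\subset\mathcal{J}(\fk)$. Let $s\in\mathcal{J}$, so there is an $s$--symmetry $\phi\in\A$ at $x_0$ with $\phi(u_0)=u_0 s$ (after possibly replacing $u_0$ by $u_0 p^{-1}$, which is harmless since $s\in Z(G_0)\subset Z(\tG_{\Xi,0})$ commutes with $p$ up to elements of $\exp(\fp_{\Xi,+})$, and the defining conditions for $\tH$ are stable under this). Then $\phi$ normalizes $\alpha(\fk)=\om_{u_0}(\Lie\A)$ and fixes the curvature $\kappa(u_0)$, because $\phi$ is an automorphism of the Cartan geometry; hence $\Ad_s$ satisfies the two defining conditions of $N_{\tP_\Xi}(\alpha^0)$, so $s$ (more precisely the element $(e,s)$) survives into $\tH$ at the first inductive step and, since later steps only enlarge the group by the same mechanism, $s\in\tH$. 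As $s\in Z(G_0)\subset Z(\tG_{\Xi,0})$ we conclude $s\in\tH\cap Z(\tG_{\Xi,0})=\mathcal{J}(\fk)$, which is the required inclusion.

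Finally, the inclusion $\Theta(\fk)\subset\Theta$ is immediate from $\mathcal{J}\subset\mathcal{J}(\fk)$: if $\Ad_s|_{V_{\Xi,\alpha_i}}=\id$ for all $s\in\mathcal{J}(\fk)$, then in particular for all $s\in\mathcal{J}$, so $\alpha_i\in\Theta$. Combining $\Phi(\fk)=\Phi$ with $\Theta(\fk)\subset\Theta$ yields
$$\Lambda\subset\Xi-\Phi-\Theta\subset\Xi-\Phi(\fk)-\Theta(\fk)=\Lambda(\fk),$$
where the first inclusion is the defining constraint on $\Lambda$ from Theorem \ref{main2}.

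The step I expect to be the main obstacle is matching $\Phi(\fk)$ with $\Phi$ precisely — specifically, checking that the conjugation by $p\in\exp(\fp_{\Xi,+})$ in Theorem \ref{extension}(3) together with the inductive replacement of $\fk$ by the larger $\tfk$ does not create or destroy non--trivial $V_{\Xi,\alpha_i}$--components in the intersection with $\fp_{\Xi,+}$. The key fact that makes this work is that $\Ad_p$ for $p\in\exp(\fp_{\Xi,+})$ is filtration--preserving and acts as the identity modulo strictly higher $\Xi$--height, so it cannot move a component between distinct $V_{\Xi,\alpha_i}$'s with $\alpha_i\in\Xi$ simple; and that every element of $\tfn\cap\fp_{\Xi,+}$ traces back, via \cite[Lemma 3.6.]{GZ2} and Proposition \ref{1.4}, to an $\om_{u_0}$--image of a genuine complete infinitesimal automorphism of the original geometry, so no spurious roots are introduced. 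Once this bookkeeping is in place the rest is formal.
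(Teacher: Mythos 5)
Your handling of $\Phi(\fk)=\Phi$ and of the final chain of inclusions matches the paper's route: the paper simply combines statement (2) of Theorem \ref{extension} (which gives $\tfh\cap\fp_{\Xi,+}=\Ad_p\circ d\iota(\fh)\cap\fp_{\Xi,+}=\tfn\cap\fp_{\Xi,+}$) with Proposition \ref{1.4}, and your extra bookkeeping -- that $\Ad_p$ for $p\in\exp(\fp_{\Xi,+})$ acts as the identity on the degree--one part and so neither creates nor destroys components in the $V_{\Xi,\alpha_i}$ for simple $\alpha_i$ -- is exactly the point that makes the two definitions agree. The deduction of $\Theta(\fk)\subset\Theta$ and of $\Lambda\subset\Xi-\Phi-\Theta\subset\Lambda(\fk)$ is likewise the same formal argument as in the paper.

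The step $\mathcal{J}\subset\mathcal{J}(\fk)$ is where your argument has a genuine gap. An $s$--symmetry $\phi$ only satisfies $\phi(u_1)=u_1s$ for \emph{some} frame $u_1$ over $x_0$, and this frame depends on $s$; relative to the frame at which the extension $(\ta,\ti)$ is constructed, the isotropy image of $\phi$ is $qsq^{-1}$ for an uncontrolled $q\in P$. Writing $q=g_0\exp(W)$ and using $s\in Z(G_0)$, one finds $\Ad_{qsq^{-1}}=u\,\Ad_s\,u^{-1}$ for a unipotent $u\in\exp(\fp_{\Xi,+})$, which need not equal $\Ad_s$ and need not lie in $Z(\tG_{\Xi,0})$. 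So what your normalizer computation actually yields is $\Ad_{qsq^{-1}}\in\tH$, not the assertion that ``$\Ad_s$ satisfies the two defining conditions of $N_{\tP_\Xi}(\alpha^0)$''; the parenthetical about replacing $u_0$ by $u_0p^{-1}$ does not close this, since $q$ has nothing to do with the $p$ of Theorem \ref{extension}. This is precisely why the paper derives $\mathcal{J}\subset\mathcal{J}(\fk)$ from statement (3) of Theorem \ref{extension}: $\Ad_{qsq^{-1}}$ is a semisimple element of the linear algebraic group $\tH$, the algebraic Levi decomposition $\tH=\tHn\cdot\exp(\tfn)$ allows one to conjugate it by an element of $\exp(\tfn)$ into $\tHn\subset\tG_{\Xi,0}$, and a $\tP_\Xi$--conjugate of the central element $\Ad_s\in Z(\tG_{\Xi,0})$ that again lies in $\tG_{\Xi,0}$ must equal $\Ad_s$ (the unipotent discrepancy lands in $\tG_{\Xi,0}\cap\exp(\fp_{\Xi,+})=\{e\}$). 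Hence $\Ad_s\in\tH\cap Z(\tG_{\Xi,0})=\mathcal{J}(\fk)$. With this repair your proof coincides with the paper's.
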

\begin{rem*}
In general, there can be elements in $\mathcal{J}(\fk)$ that are not contained in $\Ad(Z(G_0))$. Therefore by considering $\Lambda(\fk)$ instead of $\Lambda$, we can obtain a stronger reduction from the Theorem \ref{5.1}. 
 Similarly, the automorphism group of the parabolic geometry given by extension $(\ta,\ti)$ can be larger than $\tK$ for the choice $K=\A$, which can again provide a stronger reduction from the Theorem \ref{5.1}. These are the main reasons, why we generally consider $K\neq \A$.
\end{rem*}
\begin{proof}
If $K=\A$, then $\Phi(\fk)=\Phi$ follows from the statement (2) of the Theorem \ref{extension} and Proposition \ref{1.4} and $\mathcal{J}\subset \mathcal{J}(\fk)$ follows from the statement (3) of the Theorem \ref{extension}. Thus $\Xi-\Phi-\Theta\subset \Lambda(\fk)$ follows from the definition of $\Lambda(\fk)$. 
\end{proof}

Let us now show that the set $\mathcal{J}(\fk)$ provides a fundamental information on $\tH$, and simultaneously explain the role of the sets $\Phi(\fk)$, see \ref{ApA}  for the further notation.

\begin{prop*}\label{2.31}
Let $(\ta,\ti)$ be extension of $(\tK,\tH)$ to $(\tG,\tP_\Xi)$ from the Theorem \ref{extension}. Denote by $\fm$ the intersection of all $1$--eigenspaces in $\fp_{\Xi,+}$ for all $s\in \mathcal{J}(\fk)$. Then the following claims hold:

\begin{enumerate}
\item For each $\Xi'\subset \Xi$, it holds  $\tfh\cap \fp_{\Xi-\Xi',+}\subset \fp_{\Phi(\fk)-\Xi',+}$ and $\tfh\cap \fp_{\Xi-\Phi(\fk),+}=0$. In particular, 
$$\tfh\subset \fg_{\Xi,0}\oplus \fm+\fp_{\Phi(\fk)-\Theta(\fk),+}.$$
\item The set $\Phi(\fk)$ is a subset of each set $I_\mu$ corresponding to each of the non--trivial components $\mu$ of $\kappa_H$.
\item If  the parabolic geometry $(\ba\to M,\om)$ is non--flat, then there is basis of $\tfh\cap \fp_{\Xi-\Theta(\fk),+}$ such that there is decomposition $Z_0+[Z_0,R]+\dots$ of any basis element with the following properties:
\begin{itemize}
\item Each basis element is contained in an eigenspace for single eigenvalue for each $s\in \mathcal{J}(\fk).$
\item $Z_0\in V_{\Xi-\Theta(\fk),\delta}\subset  \fp_{\Phi(\fk),+}\cap  \fg_{\Lambda(k),0}$ for some restricted root $\delta$.
\item $R\in \fm\cap  \fp_{\Lambda(\fk),+} \cap \fg_{\Xi-\Theta(\fk),j'}$ for some $j'>0$.
\item $\dots$ are contained in grading components of $ \fp_{\Lambda(\fk),+}$ of $\Xi-\Theta(\fk)$--height greater then $j'$.
\end{itemize}
\item If the parabolic geometry $(\ba\to M,\om)$ is non--flat, then $$[\fm\cap \fg_{\Lambda(\fk),0}\cap \fp_{\Xi-\Theta(\fk),+},\fm\cap \fp_{\Lambda(\fk),+}]=0.$$
\end{enumerate}
\end{prop*}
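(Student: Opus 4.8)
The plan is to prove the four claims of Proposition \ref{2.31} in order, since each later claim leans on the structure established in the earlier ones. Throughout, I will work with the fixed extension $(\ta,\ti)$ of $(\tK,\tH)$ to $(\tG,\tP_\Xi)$ coming from Theorem \ref{extension}, and I will repeatedly use its key consequences: the Levi decomposition $\tH=\tHn\cdot\exp(\tfn)$ with $\tHn\subset\tG_{\Xi,0}$, the identity $\tfh\cap\fp_{\Xi,+}=\tfn\cap\fp_{\Xi,+}$, and the characterization of $\tH$ as a simultaneous stabilizer of $\ta(\tfk)$ and of $\kappa(u_0p^{-1})$ under the $\Ad$-action. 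By the definition of $\Phi(\fk)$, the subalgebra $\tfn\cap\fp_{\Xi,+}$ has no component in any $V_{\Xi,\alpha_i}$ with $\alpha_i\notin\Phi(\fk)$; since the grading components of $\fp_{\Xi,+}$ are built from the $V_{\Xi,\gamma}$'s and $\tfn\cap\fp_{\Xi,+}$ is $\tHn$-invariant (hence $\tG_{\Xi,0}$-isotypic), this immediately gives $\tfh\cap\fp_{\Xi,+}\subset\fp_{\Phi(\fk),+}$, and more precisely $\tfh\cap\fp_{\Xi-\Xi',+}\subset\fp_{\Phi(\fk)-\Xi',+}$ for every $\Xi'$; taking $\Xi'=\Phi(\fk)$ yields $\tfh\cap\fp_{\Xi-\Phi(\fk),+}=0$. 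For the last assertion of claim (1), one combines this with the Levi decomposition: $\tfh=\tfhn\oplus(\tfn\cap\fg_{\Xi,0})\oplus(\tfn\cap\fp_{\Xi,+})$, where $\tfhn\subset\fg_{\Xi,0}$; the $\mathcal{J}(\fk)$-action being trivial on $\fm$ by definition, and $\tfn\cap\fp_{\Xi,+}$ decomposing into weight spaces on which $\mathcal{J}(\fk)\subset\tH$ acts, one checks the components not killed by all of $\mathcal{J}(\fk)$ lie in $\fp_{\Phi(\fk)-\Theta(\fk),+}$ while the rest lie in $\fm$, giving $\tfh\subset\fg_{\Xi,0}\oplus\fm+\fp_{\Phi(\fk)-\Theta(\fk),+}$.

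For claim (2), I would use Proposition \ref{1.4} together with the correspondence between complete infinitesimal automorphisms $\xi$ with $\om_{u_0}(\xi)\in\fp_{\Xi,+}$ and elements of $\tfh\cap\fp_{\Xi,+}$ supplied by statement (2) of Theorem \ref{extension}. A nontrivial component $\mu$ of $\kappa_H$ represented by $(\alpha_a,\alpha_b)$ constrains which $V_{\Xi,\alpha_i}$ can contain $\om_{u_0}(\xi)$-components of such infinitesimal automorphisms: an element of $\fp_{\Xi,+}$ whose flow preserves $\kappa$ must satisfy $\Ad_{\exp Z}.\kappa_H=\kappa_H$ at the harmonic level, and differentiating, the component of $Z$ in $V_{\Xi,\alpha_i}$ must annihilate $\mu$ under the $\fg_{\Xi,0}$-action on $H^2(\fg_{\Xi,-},\fg)$; by the weight computation recalled in \ref{ApA}, this forces $\langle s_{\alpha_a}s_{\alpha_b}(\mu^\fg+\delta)-\delta,\alpha_i\rangle=0$, i.e. $\alpha_i\in I_\mu$. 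Hence $\Phi(\fk)\subset I_\mu$ for every nontrivial $\mu$. This is essentially the content of ``the results of [9]'' the statement refers to, so I expect this to be mostly a matter of assembling the invariance condition for $\kappa$ with the harmonic weight bookkeeping.

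Claim (3) is where I expect the real work. The non-flatness hypothesis means $\kappa_H\neq 0$, so there is a nontrivial component $\mu$; by claim (2) the set $\Phi(\fk)$ is tightly constrained, and in particular $\Phi(\fk)\subset\Lambda(\fk)\cup\Theta(\fk)$ need not hold but the relevant lowest-height pieces of $\tfh\cap\fp_{\Xi-\Theta(\fk),+}$ do live in controlled places. The strategy is to take a weight basis of $\tfh\cap\fp_{\Xi-\Theta(\fk),+}$ adapted to the simultaneous eigenspace decomposition under $\mathcal{J}(\fk)$ — possible since $\mathcal{J}(\fk)\subset\tH$ normalizes $\tfh$ and acts semisimply — so the first bullet is automatic. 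For a basis element $Z$, write its lowest $\Xi-\Theta(\fk)$-height component as $Z_0$; I must show $Z_0$ has $\Xi-\Theta(\fk)$-height exactly $0$, i.e. $Z_0\in\fg_{\Lambda(\fk),0}$, and that $Z_0$ lies in some $V_{\Xi-\Theta(\fk),\delta}\subset\fp_{\Phi(\fk),+}$. This uses claim (1): $Z\in\tfh\cap\fp_{\Xi,+}\subset\fp_{\Phi(\fk),+}$, so all its components are in $\fp_{\Phi(\fk),+}$, and the $\Phi(\fk)$-height of each being at least one forces $Z_0$, after subtracting $\Theta(\fk)$-height which is disjoint, to sit in $\fg_{\Lambda(\fk),0}$; the indecomposability statement $Z_0\in V_{\Xi-\Theta(\fk),\delta}$ follows because the basis is a weight basis. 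The remaining structure — $Z-Z_0$ decomposes as $[Z_0,R]+\cdots$ with $R\in\fm\cap\fp_{\Lambda(\fk),+}\cap\fg_{\Xi-\Theta(\fk),j'}$ and higher terms of larger height — should come from the invariance $\Ad_{\exp Z}(\ta(\tfk))\subset\ta(\tfk)$ combined with the fact that $\ta(\tfk)\supset\fg_{\Xi,-}$ up to the image of $\fh$: the obstruction to $Z_0$ alone being in $\tfh$ is measured by a bracket that must be corrected by the next term, and iterating the grading filtration produces the $[Z_0,R]+\cdots$ expansion; that the correction terms $R$ are forced into $\fm$ is where $\mathcal{J}(\fk)$-equivariance of the whole construction is used — the eigenvalue on $Z$ equals the eigenvalue on $Z_0$, so on $[Z_0,R]$, forcing $R$ to have trivial $\mathcal{J}(\fk)$-eigenvalue, i.e. $R\in\fm$. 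The main obstacle is making the iterative ``$\dots$'' precise: controlling at each height the Jacobi-identity relations that tie the higher components to $Z_0$ and $R$, and verifying the height bookkeeping ($R$ in height $j'>0$ of $\Xi-\Theta(\fk)$, rest strictly higher) is consistent — this is a filtered-Lie-algebra argument that I would organize by downward induction on $\Xi-\Theta(\fk)$-height.

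Finally, claim (4) follows from claim (3) almost formally: take $X\in\fm\cap\fg_{\Lambda(\fk),0}\cap\fp_{\Xi-\Theta(\fk),+}$ and $R\in\fm\cap\fp_{\Lambda(\fk),+}$; I want $[X,R]=0$. The point is that both $X$ and $R$ arise as constituents of elements of $\tfh$ (or their brackets) in the adapted basis of claim (3) — $X$ as a $Z_0$-type term that additionally lies in $\fm$, hence in $\fg_{\Lambda(\fk),0}$ with trivial $\mathcal{J}(\fk)$-eigenvalue, and $R$ as the corresponding first correction — and the bracket $[X,R]$ would have to be a higher-height term of the same basis element; but non-flatness together with claims (2)–(3) pins down the allowed heights so tightly (via the $I_\mu$ tables) that no room is left for a nonzero $[X,R]$ of the required type, forcing it to vanish. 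Concretely I would argue that $[X,R]\in\fm\cap\fp_{\Lambda(\fk),+}$ lies in a grading component that, by the structure of $I_\mu$ and $\Phi(\fk)\subset I_\mu$, is zero, or else would contradict $\tfh\cap\fp_{\Xi-\Phi(\fk),+}=0$ from claim (1). I expect claim (4) to be short once claim (3) is in hand; the whole proposition's difficulty is concentrated in the filtered induction of claim (3).
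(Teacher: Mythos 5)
There is a genuine gap, and it is already in your treatment of claim (1). The set $\Phi(\fk)$ is defined only by which of the modules $V_{\Xi,\alpha_i}$ for \emph{simple} restricted roots $\alpha_i\in\Xi$ contain non--trivial components of elements of $\tfn\cap\fp_{\Xi,+}$; it says nothing a priori about components in the higher modules $V_{\Xi,\gamma}$ for non--simple $\gamma$. So the inclusion $\tfh\cap\fp_{\Xi,+}\subset\fp_{\Phi(\fk),+}$ (and with it $\tfh\cap\fp_{\Xi-\Phi(\fk),+}=0$) is emphatically not ``immediate'': an element of $\tfh$ could in principle have its only non--zero components sitting in modules of higher $\Xi$--height built from roots involving some $\alpha_i\notin\Phi(\fk)$, and $\tHn$--invariance does not exclude this, since different $V_{\Xi,\gamma}$ are unrelated as $\tG_{\Xi,0}$--modules. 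The paper's proof of claim (1) is precisely the argument you skip: decompose $Z$ according to $\Phi(\fk)-\Xi'$-- and $\Xi-\Phi(\fk)-\Xi'$--heights, and show that a non--trivial component of positive $\Xi-\Phi(\fk)-\Xi'$--height can be bracketed down to $\Xi$--height one using elements of $\ta(\tfk)$ (this uses that $\fg_{\Xi,-}$ is generated by $\fg_{\Xi,-1}$ and that $\ta$ is surjective onto $\fg/\fp_\Xi$), where it contradicts the definition of $\Phi(\fk)$; the delicate case where the ``bad'' component has height at least that of the ``good'' one requires an $\frak{sl}(2)$--triple argument to separate them. Without this, nothing downstream works.

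Two further points. First, claim (4) is not a formal consequence of claim (3): it is a statement about arbitrary elements of the subspaces $\fm\cap\fg_{\Lambda(\fk),0}\cap\fp_{\Xi-\Theta(\fk),+}$ and $\fm\cap\fp_{\Lambda(\fk),+}$, which are defined purely by the eigenvalues of $\mathcal{J}(\fk)$ and need not arise as constituents of elements of $\tfh$; the paper proves it by observing that a non--zero bracket would force at least four simple restricted roots of $\Xi$ outside $\fm$ and then checking against the classification tables in \ref{ApC} that no non--flat case admits such eigenvalue patterns. Second, claims (2) and (3) in the paper lean essentially on those same tables to establish that $\fp_{\Phi(\fk),+}\cap\fg_{\Xi-\Phi(\fk),0}$ is (up to three listed exceptions) concentrated in $\fg_{\Xi,1}$, which is what permits reducing to single root modules $V_{\Xi,\delta}$ and invoking the refinement of the Kruglikov--The results in Proposition \ref{Aprop}; your sketch never uses the classification, and your claim in (3) that the lowest component $Z_0$ lands in $\fg_{\Lambda(\fk),0}$ ``after subtracting $\Theta(\fk)$--height'' does not follow from height bookkeeping alone --- it needs the same bracketing--down argument as claim (1), now applied to $\Lambda(\fk)$--heights.
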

\begin{proof}
The Lie algebra $\tfh$ splits simultaneously w.r.t. eigenvalues of all $s\in \mathcal{J}(\fk)$ and therefore to show $\tfh\subset \fg_{\Xi,0}\oplus \fm+\fp_{\Phi(\fk)-\Theta(\fk),+}$, it suffices to show $\tfh\cap \fp_{\Xi-\Theta(\fk),+}\subset \fp_{\Phi(\fk)-\Theta(\fk),+}$. 
We will show general results for arbitrary $\Xi'$ and the results for $\Theta(\fk)$ then are a special case.

So let $Z\in \tfh\cap \fp_{\Xi-\Xi',+}$ be arbitrary. We discuss step by step components of $Z$ in $\fp_{\Xi-\Xi',+}$ w.r.t. $\Phi(\fk)-\Xi'$--height and $\Xi-\Phi(\fk)-\Xi'$--height. Consider the decomposition of $Z$ of the form $Z=Z_{+0}+Z_{0+}+Z_{++}$, where $Z_{+0}$ sits in $\fp_{\Phi(\fk)-\Xi',+}\cap  \fg_{\Xi-\Phi(\fk)-\Xi',0}$, $Z_{0+}$ sits in $\fg_{\Phi(\fk)-\Xi',0}\cap  \fp_{\Xi-\Phi(\fk)-\Xi',+}$ and $Z_{++}$ sits in $\fp_{\Phi(\fk)-\Xi',+}\cap  \fp_{\Xi-\Phi(\fk)-\Xi',+}$.

Suppose $(Z_{+0})_j$ is the component of $Z_{+0}$ in $\fg_{\Xi-\Xi',j}$, where $j$ is lowest such that $(Z_{+0})_j\neq 0$. 
Similarly, let $(Z_{0+})_{l}$ be the component of $Z_{0+}$ in $\fg_{\Xi-\Xi',l}$,  where $l$ is lowest such that $(Z_{0+})_{l}\neq 0$. We show that the existence of non--trivial $(Z_{0+})_{l}$ leads to the contradiction with the definition of $\Phi(\fk)-\Xi'$. We divide the proof into two cases:

(a) If $l<j$, then since $\tilde \alpha(\tilde \fk/\tilde \fh)=\fg/\fp_\Xi$, it follows from \cite[Proposition 3.1.2]{parabook} that there is a sequence $X_1,\dots,X_i\in \tfk$ such that the element $[\ta(X_1),\cdots,[\ta(X_i),Z]\cdots]$ in $ \tfh\cap \fp_{\Xi-\Xi',+}$ has a non--trivial component in $\fg_{\Phi(\fk)-\Xi',0}\cap \fg_{\Xi,1}\cap \fg_{\Xi',0}$, which is contradiction with the definition of $\Phi(\fk)-\Xi'$. 

(b) If $l\geq j$, then there is $X\in \tfk$ such that $\ta(X)=X_-+S$ for suitable $S \in \fp_\Xi$ and $X_-\in \fg_{\Phi(\fk)-\Xi',-}\cap  \fg_{\Xi-\Phi(\fk)-\Xi',0}\cap \fg_{\Xi-\Xi',-j}$
 such that $(X_-,[X_-,(Z_{+0})_j],(Z_{+0})_j)$ forms an $\frak{sl}(2)$--triple. The existence of the   $\frak{sl}(2)$--triple follows from \cite[Lemma 10.18 and the proof of Corollary 10.22]{Kn96}.
Then $[X_-,(Z_{0+})_{l}]$ has positive $\Xi-\Phi(\fk)-\Xi'$--height but negative $\Phi(\fk)-\Xi'$--height and thus $[X_-,(Z_{0+})_{l}]=0$.
 Therefore the component of $[\ta(X),Z]\in \tfh$ in $\fg_{\Xi,0}$ is of the form $[X_-,(Z_{+0})_j]$. If $[[X_-,(Z_{+0})_j],(Z_{0+})_{l}]=2(Z_{0+})_{l}$, then \cite[Theorem 10.10]{Kn96} (after complexification) implies that $(Z_{0+})_{l}$ is contained in the image of $\ad_{(Z_{+0})_j}$, which is contradiction with $\fg_{\Xi-\Phi(\fk)-\Xi',-}\cap \fp_{\Phi(\fk)-\Xi',+}=0$. 
Therefore $Z':=2Z-[[\ta(X),Z],Z]$ is such that the component of $Z'$ in $\fg_{\Phi(\fk)-\Xi',0}\cap  \fp_{\Xi-\Phi(\fk)-\Xi',+}\cap \fg_{\Xi-\Xi',l}$ is non--trivial, and the lowest non--trivial component of $Z'$ is contained in $\fp_{\Phi(\fk)-\Xi',+}\cap  \fg_{\Xi-\Xi',0}\cap \fg_{\Xi-\Phi(\fk)-\Xi',j'}$ for some $j'>j$. Thus by repeating the construction we end up in the situation (a). 

Thus we have proven $\tfh\cap \fp_{\Xi-\Xi',+}\subset \fp_{\Phi(\fk)-\Xi',+}$ and $\tfh\cap \fp_{\Xi-\Phi(\fk),+}=0$ holds, because the construction from (a) can be used even in the cases $Z_{+0}=0$ or $Z_{+0}+Z_{0+}=0$ for $\Xi'=\emptyset$, and the first claim holds.

In the rest of the proof, we will assume that the parabolic geometry $(\ba\to M,\om)$ is non--flat and rely on the classification tables in \ref{ApC}. In particular, we see that either

\begin{itemize}
\item $\fp_{\Phi(\fk),+}\cap  \fg_{\Xi-\Phi(\fk),0}\subset \fg_{\Xi,1}$ holds or
\item $\fp_{\Phi(\fk),+}\cap  \fg_{\Xi-\Phi(\fk),0}\subset \fg_{\Xi,1}\oplus V_{\Xi,\delta'},$
which applies for

 $\frak{sp}(2n,\{\mathbb{R},\mathbb{C}\})$ with $\alpha_p\in \Phi(\fk)$ and $\delta'=2\alpha_p+\dots$, 

$\frak{so}(2n,\mathbb{C}),\frak{so}(n,n)$ with $\{\alpha_2,\alpha_n\}= \Phi(\fk)$  and $\delta'=\alpha_2+\dots+\alpha_n$,

$\frak{sl}(n+1,\{\mathbb{R},\mathbb{C}\})$ with $\{\alpha_p,\alpha_q\}\subset \Phi(\fk)$ and $\delta'=\alpha_p+\dots+\alpha_q$.
\end{itemize}

For any pair $V_{\Xi,\gamma}$ and $V_{\Xi,\gamma'}$ in $\fp_{\Phi(\fk),+}\cap  \fg_{\Xi-\Phi(k),0}\cap \fg_{\Xi,1}$, there always exists some $\Xi'\subset \Xi$ such that $V_{\Xi,\gamma}\subset \fg_{\Xi',0}$ and $V_{\Xi,\gamma'}\in \fp_{\Xi',+}$. This means that we can decompose $(Z_{+0})_j$ into $\fg_{\Xi',0}$ and $\fp_{\Xi',+}$ and repeat the construction from (b) in the proof of the first claim. In particular, we can choose $X_-\in \fg_{\Phi(\fk),-}\cap  \fg_{\Xi-\Phi(k),0}\cap \fg_{\Xi,-1}\cap \fg_{\Xi',0}$ and therefore the restriction of $\ad_{[\ta(X),Z]}$ to (complexification of) $V_{\Xi,\gamma'}$ is diagonalizable (up to components of $\Xi$--height higher than $1$).
Thus there is basis of $\tfh\cap \fp_{\Xi,+}$ such that $(Z_{+0})_j\in V_{\Xi,\delta}$ for some $\delta$ for each element $Z$ of the basis. Then the assumptions of the Proposition \ref{Aprop} are satisfied, which proves the second claim.

Clearly, there is a basis of $\tfh\cap \fp_{\Xi-\Theta(\fk),+}$ satisfying the first condition of the third claim.
 The same arguments as in the proof of the second claim can applied for $V_{\Xi-\Theta(\fk),\gamma}$ and
 $V_{\Xi-\Theta(\fk),\gamma'}$ in $\fp_{\Phi(\fk)-\Theta(\fk),+}\cap  \fg_{\Lambda(\fk),0}\cap \fg_{\Xi-\Theta(\fk),1}.$ Therefore there is a basis such that $(Z_{+0})_j\in V_{\Xi-\Theta(\fk),\delta}$ holds for some $\delta$ for every element $Z$ of the basis. Thus there is a basis satisfying the first two conditions for the cases when $\fp_{\Phi(\fk),+}\cap  \fg_{\Xi-\Phi(\fk),0}\subset \fg_{\Xi,1}$ holds.

There is a basis satisfying the first two conditions for the three exceptions with $V_{\Xi-\Theta(\fk),\delta'}$, too, because if $V_{\Xi-\Theta(\fk),\delta'}$ is in the same eigenspace w.r.t all $s\in \mathcal{J}(\fk)$ as $V_{\Xi-\Theta(\fk),\delta}$, then we see from the tables in the \ref{ApC} that there has to be $\delta=\alpha_1\in \Phi(\fk)$ in all cases and therefore the same arguments as above for $\Xi-\{\alpha_1\}-\Theta(\fk)$--heights show that we can change the basis.

Now the third condition of the third claim follows, because for the component of $Z$ in $\fg_{\Xi-\Theta(\fk),l}\cap \fp_{\Lambda(\fk),+}$, we can use the same argumentation as for $(Z_{0+})_{l}$. Therefore if the third condition is not satisfied, then we obtain contradiction as above. Thus we proved the third claim.

Assume $[\fm\cap \fg_{\Lambda(\fk),0}\cap \fp_{\Xi-\Theta(\fk),+},\fm\cap \fp_{\Lambda(\fk),+}]\neq 0$. It follows from definitions of $\Lambda(\fk)$ and $\Theta(\fk)$ that there are at least four simple restricted roots in $\Xi$, whose root spaces are not in $\fm$. However, it follows from the tables in the \ref{ApC} that there is only one possibility for $(\tG,\tP_\Xi)$ (up to complexification), and going trough the list of all possible eigenvalues of all possible $s\in \mathcal{J}(\fk)$, we see that $[\fm\cap \fg_{\Lambda(\fk),0}\cap \fp_{\Xi-\Theta(\fk),+},\fm\cap \fp_{\Lambda(\fk),+}]=0$ holds and the fourth claim follows. 
\end{proof}

The third claim of the previous Proposition has the following consequence.

\begin{cor*}
Suppose the parabolic geometry $(\ba\to M,\om)$ is non--flat. Let $(\ta,\ti)$ be extension of $(\tK,\tH)$ to $(\tG,\tP_\Xi)$ from the Theorem \ref{extension}. Let us denote by $\fm$ the intersection of all $1$--eigenspaces in $\fp_{\Xi,+}$ for all $s\in \mathcal{J}(\fk)$. If $\fm\cap \fp_{\Lambda(\fk),+}=0$, then $$\tH\subset \tilde Q_{\Lambda(\fk)}\cap \tP_\Xi.$$
\end{cor*}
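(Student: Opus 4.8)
The plan is to convert the asserted group inclusion into the single algebraic statement $\tfh\subset\fg_{\Lambda(\fk),0}$ and then to read that statement off from Proposition \ref{2.31}(3). Since $\tH\subset\tP_\Xi$ holds by the very definition of $\tH$ in Theorem \ref{extension}(1), only $\tH\subset\tilde Q_{\Lambda(\fk)}$ needs proof. I would use the algebraic Levi decomposition $\tH=\tHn\cdot\exp(\tfn)$ from Theorem \ref{extension}(3), with $\tHn\subset\tG_{\Xi,0}$ and $\tfn\subset\fp_\Xi$ nilpotent. Because $\Lambda(\fk)\subset\Xi$, the $\alpha_i$--coefficient with $\alpha_i\in\Lambda(\fk)$ is constant on each indecomposable module $V_{\Xi,\gamma}$, so $\tG_{\Xi,0}$ preserves the $\Lambda(\fk)$--grading of $\fg$; in particular $\fg_{\Xi,0}\subset\fg_{\Lambda(\fk),0}$ and $\tHn\subset\tG_{\Xi,0}\subset\tilde Q_{\Lambda(\fk)}$ unconditionally. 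Since $\exp(\tfn)$ is connected, $\tfn\subset\fp_\Xi$, and $\fp_\Xi\cap\fq_{\Lambda(\fk)}=\fg_{\Lambda(\fk),0}\cap\fp_\Xi$, the inclusion $\exp(\tfn)\subset\tilde Q_{\Lambda(\fk)}$ is equivalent to $\tfn\subset\fg_{\Lambda(\fk),0}$. Hence the corollary follows once $\tfh\subset\fg_{\Lambda(\fk),0}$ is established.

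To prove $\tfh\subset\fg_{\Lambda(\fk),0}$ I would split $\tfh$ into its joint $\mathcal{J}(\fk)$--eigenspaces, as in the proof of Proposition \ref{2.31}. The $1$--eigenpart lies in $\fg_{\Xi,0}\oplus\fm$; here $\fg_{\Xi,0}\subset\fg_{\Lambda(\fk),0}$, while $\fm$ is a sum of modules $V_{\Xi,\gamma}\subset\fp_{\Xi,+}$ each homogeneous for the $\Lambda(\fk)$--height, so the hypothesis $\fm\cap\fp_{\Lambda(\fk),+}=0$ forces $\fm\subset\fg_{\Lambda(\fk),0}$ and the $1$--eigenpart is in $\fg_{\Lambda(\fk),0}$. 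For the remaining eigenparts, Proposition \ref{2.31}(1) (applied with $\Xi'=\Theta(\fk)$) reduces the problem to showing $\tfh\cap\fp_{\Xi-\Theta(\fk),+}\subset\fg_{\Lambda(\fk),0}$, on which Proposition \ref{2.31}(3) furnishes a basis whose elements decompose as $Z_0+[Z_0,R]+\cdots$ with $Z_0\in V_{\Xi-\Theta(\fk),\delta}\subset\fp_{\Phi(\fk),+}\cap\fg_{\Lambda(\fk),0}$, with $R\in\fm\cap\fp_{\Lambda(\fk),+}$, and with all further terms in $\fp_{\Lambda(\fk),+}$. Under the hypothesis $\fm\cap\fp_{\Lambda(\fk),+}=0$ we obtain $R=0$, hence $[Z_0,R]=0$, and then the higher terms vanish too, so every basis element equals $Z_0\in\fg_{\Lambda(\fk),0}$; thus $\tfh\cap\fp_{\Xi-\Theta(\fk),+}\subset\fg_{\Lambda(\fk),0}$. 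Combining the eigenparts yields $\tfh\subset\fg_{\Lambda(\fk),0}$, which finishes the argument.

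I expect the main obstacle to be purely technical, and of two kinds. First, the careful bookkeeping with the three nested parabolics $\fp_\Xi$, $\fp_{\Xi-\Theta(\fk)}$ and $\fp_{\Lambda(\fk)}$ and with how the $\mathcal{J}(\fk)$--eigenspace decomposition of $\tfh$ interacts with the $\Xi$-- and $\Lambda(\fk)$--gradings; this is where one must be sure that the passage from the various projections of $\tfh$ back to subspaces of $\tfh$ is legitimate. Second, the verification that, once $R=0$, the ellipsis terms in the decomposition of Proposition \ref{2.31}(3) indeed disappear: this is not visible from the statement alone but must be extracted from the inductive construction in the proof of that proposition, where each higher $(\Xi-\Theta(\fk))$--homogeneous component is produced by bracketing the lower ones with the $\frak{sl}(2)$--type elements and with $R$, so that $R=0$ truncates the expansion at $Z_0$.
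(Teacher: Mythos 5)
Your argument is correct and takes exactly the route the paper intends: the paper states this corollary without any written proof, as an immediate consequence of claim (3) of the Proposition \ref{2.31}, and your filling-in --- reducing to $\tfn\subset\fg_{\Lambda(\fk),0}$ via the Levi decomposition from the Theorem \ref{extension}(3), handling the joint $1$--eigenpart by $\fm\subset\fg_{\Lambda(\fk),0}$, and killing the tail $[Z_0,R]+\dots$ once $R\in\fm\cap\fp_{\Lambda(\fk),+}=0$ --- matches how that claim is actually exploited in the proof of the Theorem \ref{main} (where the lowest component of the tail is shown to lie in $[\fm\cap\fp_{\Lambda(\fk),+},V_{\Xi-\Theta(\fk),\delta}]$). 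The two technical caveats you flag (the eigenspace bookkeeping, and that the vanishing of the ellipsis must be read off from the inductive construction rather than from the bare statement of claim (3)) are genuine but are equally left implicit by the paper.
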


The following Theorem is a generalization of the previous Corollary. In particular, it implies the Theorem \ref{main2}. 

\begin{thm*} \label{main}
Suppose the parabolic geometry $(\ba\to M,\om)$ is non--flat. Let $(\ta,\ti)$ be extension of $(\tK,\tH)$ to $(\tG,\tP_\Xi)$ from the Theorem \ref{extension}. Let us denote by $\fm$ the intersection of all $1$--eigenspaces in $\fp_{\Xi,+}$ for all $s\in \mathcal{J}(\fk)$. There is $p\in \exp(\fm\cap \fp_{\Lambda(\fk),+})$ such that 
$$\conj_p(\tH)\subset \tilde Q_{\Lambda(\fk)}\cap \tP_\Xi.$$
\end{thm*}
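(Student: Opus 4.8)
The plan is to conjugate $\tH$ into $\tilde Q_{\Lambda(\fk)}\cap \tP_\Xi$ by a single element of $\exp(\fm\cap\fp_{\Lambda(\fk),+})$, exploiting the structural decomposition of the basis of $\tfh$ provided by claim (3) of Proposition \ref{2.31}. First I would recall that by claim (1) of Proposition \ref{2.31} we already know $\tfh\subset \fg_{\Xi,0}\oplus\fm+\fp_{\Phi(\fk)-\Theta(\fk),+}$, so the Levi part $\tfh_0\subset\fg_{\Xi,0}$ together with the component of the nilpotent part already lying in $\fg_{\Lambda(\fk),0}$ causes no obstruction: what needs to be eliminated is precisely $\tfh\cap\fp_{\Lambda(\fk),+}$, and more precisely its part inside $\fm$ (claim (1) shows the nilpotent radical sits in $\fg_{\Xi,0}\oplus\fm+\fp_{\Phi(\fk)-\Theta(\fk),+}$, and since $\Lambda(\fk)=\Xi-\Phi(\fk)-\Theta(\fk)$, the $\fp_{\Lambda(\fk),+}$-component is forced into $\fm$).

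Next I would take the basis of $\tfh\cap\fp_{\Xi-\Theta(\fk),+}$ from claim (3) and look at those basis elements whose leading term $Z_0$ actually lies in $\fp_{\Lambda(\fk),+}$ rather than only in $\fg_{\Lambda(\fk),0}\cap\fp_{\Phi(\fk)-\Theta(\fk),+}$ — equivalently, the nonzero images in $\tfh\cap\fp_{\Lambda(\fk),+}$. For each such element the decomposition $Z_0+[Z_0,R]+\cdots$ has $Z_0\in\fm\cap\fp_{\Lambda(\fk),+}\cap\fg_{\Xi-\Theta(\fk),j'}$ for some $j'>0$ and $R\in\fm\cap\fp_{\Lambda(\fk),+}$ in strictly higher $\Xi-\Theta(\fk)$-height, with the tail in still higher height. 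The idea is to build $p=\exp(R)$ (or rather an iterated/combined version of such exponentials, absorbing all basis elements simultaneously) so that $\conj_p=\Ad_{\exp R}=\exp(\ad_R)$ kills the leading terms: the point of writing $Z_0+[Z_0,R]+\cdots$ in claim (3) is exactly that these elements are of the shape $\exp(\ad_R)(Z_0)$ modulo the part of $\tfh$ already in $\fg_{\Lambda(\fk),0}$, so conjugating back by $\exp(-R)$ (for a suitable choice of the $R$'s) moves $\tfh\cap\fp_{\Lambda(\fk),+}$ into $\fg_{\Lambda(\fk),0}$. Claim (4) of Proposition \ref{2.31}, namely $[\fm\cap\fg_{\Lambda(\fk),0}\cap\fp_{\Xi-\Theta(\fk),+},\fm\cap\fp_{\Lambda(\fk),+}]=0$, is what guarantees that this conjugation does not reintroduce $\fp_{\Lambda(\fk),+}$-components coming from the part of $\tfh$ already sitting in $\fg_{\Lambda(\fk),0}$, and also keeps $p$ inside $\fm$ so that the conjugation is harmless on the $\mathcal{J}(\fk)$-eigenspace structure.

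The induction would proceed on the $\Xi-\Theta(\fk)$-height (or on the partial order of heights appearing across the basis): at each stage, pick $R$ to be the sum of the first nontrivial tails of the remaining basis elements of lowest leading height $j'$, conjugate by $\exp(R)$, and observe that all leading terms of that height disappear while the newly created terms have strictly larger height, so the process terminates (nilpotency of $\fp_\Xi$). Finiteness of the number of steps is automatic from finite-dimensionality, just as in Theorem \ref{extension}. The main obstacle I expect is bookkeeping: verifying that a single $p\in\exp(\fm\cap\fp_{\Lambda(\fk),+})$ can be chosen to work for all basis elements at once — i.e. that the corrections needed for different basis vectors do not interfere — and that after the full conjugation the image really lands in $\tilde Q_{\Lambda(\fk)}\cap\tP_\Xi$ and not just in some larger subgroup; this is where claim (4) and the careful height estimates of claim (3) must be combined, and it is the one step I would write out in full detail rather than leave to the reader.
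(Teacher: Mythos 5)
Your overall skeleton (induct on $\Xi-\Theta(\fk)$--height, conjugate away the $\fp_{\Lambda(\fk),+}$--components of the nilpotent radical $\tfn$, terminate by nilpotency) matches the paper's strategy, and your observation that $\tfn\cap\fp_{\Lambda(\fk),+}=0$ forces every element of $\tfn$ to be a ``graph'' over its $\fg_{\Lambda(\fk),0}$--component is exactly the paper's starting point. But the step you defer as ``bookkeeping'' is in fact the mathematical core of the proof, and the tools you invoke for it do not suffice. The issue is not whether the corrections for different basis vectors interfere; it is whether the $\tHn$--equivariant linear map $\phi:\tfn_0\to\fp_{\Lambda(\fk),+}$ recording the tails is \emph{inner}, i.e.\ whether there exists a \emph{single} $Y_\beta\in V_{\Xi-\Theta(\fk),\beta}$ with $[Y_\beta,Z_0]=(Z_0)_\beta$ for \emph{all} $Z_0\in\tfn_0$ simultaneously. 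The $R$ in the decomposition $Z_0+[Z_0,R]+\dots$ of Proposition \ref{2.31}(3) depends on the basis element, so ``conjugate back by $\exp(-R)$'' is not yet a construction of one $p$. The paper settles this by producing elements $X,X'\in\tfk$ with $[\ta(X),Z]-\ta(X')\in\tfn$, extracting the identity $[(Z_0)_\beta,X_-]=\psi([Z_0,X_-])-[Z_0,\psi(X_-)]$ for a fixed linear map $\psi:V_{\Xi-\Theta(\fk),-\alpha_i}\to V_{\Xi-\Theta(\fk),\gamma}$ independent of $Z_0$ (a coboundary statement), and then using complete reducibility of $V_{\Xi-\Theta(\fk),-\alpha_i}^*\otimes V_{\Xi-\Theta(\fk),\gamma}$ as a $\fg_{\Xi-\Theta(\fk),0}$--module to identify $\ad_{(Z_0)_\beta}$ with $\ad_{[Y_\beta,Z_0]}$ for one $Y_\beta$. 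None of this is supplied or replaceable by claim (4) of Proposition \ref{2.31}.

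The second gap is the requirement $p\in\exp(\fm\cap\fp_{\Lambda(\fk),+})$, i.e.\ the claim $V_{\Xi-\Theta(\fk),\beta}\subset\fm$. You assert that claim (4) ``keeps $p$ inside $\fm$,'' but claim (4) is a vanishing statement for a bracket and does not address where $Y_\beta$ lives. In the paper this is immediate (by $\tHn$--equivariance) only when $Z_0\in\fg_{\Xi,0}\oplus\fm$; when $Z_0\in\fg_{\Lambda(\fk),0}\cap\fp_{\Xi,+}$ one is forced into the relation $V_{\Xi-\Theta(\fk),\beta}=[V_{\Xi-\Theta(\fk),\delta},[V_{\Xi-\Theta(\fk),\delta},V_{\Xi-\Theta(\fk),\eta}]]$, and the conclusion $V_{\Xi-\Theta(\fk),\beta}\subset\fm$ is obtained only by a case--by--case analysis of the three surviving possibilities ($\frak{sp}(2n,\cdot)$ twice and $\frak{so}(q,n-q)$, $\frak{so}(n,\C)$) read off from the tables in \ref{ApC}, with a dimension/linearity argument in the last case. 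Without both of these ingredients the proposal does not close.
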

\begin{proof}
Since $\tfn\cap  \fp_{\Lambda(\fk),+}=0$, there is a Lie subalgebra $\tfn_0\subset \fg_{\Lambda(\fk),0}\cap \fp_{\Xi}$ isomorphic to $\tfn$ as Lie algebra and as $\tH$--module, which is given by the components of elements in $\tfn$ in $\fg_{\Lambda(\fk),0}\cap \fp_{\Xi}$. 
To prove the claim, it is enough to show that the isomorphism is given by $\Ad_p$ for some $p\in \exp(\fm\cap \fp_{\Lambda(\fk),+})$. Then the claim of the Theorem \ref{main} holds, because $\Ad_p: \tfn\to \tfn_0$ being $\tH$--module isomorphisms means that $p$ commutes with $\tH_0$.

Let us write $Z_0\mapsto Z_0+\phi(Z_0)$ for the inverse isomorphism $\tfn_0\to \tfn$ for $Z_0\in \tfn_0$ and $\phi(Z_0)\in \fp_{\Lambda(\fk),+}$. The map $\phi: \tfn_0\to \fp_{\Lambda(\fk),+}$ is linear, but only $\tH_0$--equivariant. We decompose $\phi(Z_0)$ into components $(Z_0)_\nu$ in submodules $V_{\Xi-\Theta(\fk),\nu}$ for all $\nu$ in $\fp_{\Lambda(\fk),+}$ and carry the proof step by step with respect to the module 
$V_{\Xi-\Theta(\fk),\beta}\subset \fp_{\Lambda(\fk),+}$ of the lowest $\Xi-\Theta(\fk)$--height that contains non--trivial component $(Z_0)_\beta$ of $\phi(Z_0)$ among all $Z_0\in \tfn_0$.

To show $\id+\phi=\Ad_p^{-1}$ for some $p\in \exp(\fm\cap \fp_{\Lambda(\fk),+})$, it is enough to prove the following two claims:

\begin{enumerate}
\item It holds $V_{\Xi-\Theta(\fk),\beta}\subset \fm.$
\item There is a fixed $Y_\beta\in V_{\Xi-\Theta(\fk),\beta}$ such that $[Y_\beta,Z_0]=(Z_0)_\beta$ holds for all $Z_0 \in \tfn_0$.
\end{enumerate}

If the claims (1) and (2) hold, then 
$$\Ad_{\exp(-Y_\beta)}(Z)=Z_0+(Z_0)_\beta+[-Y_\beta,Z_0]+\dots=Z_0+\dots$$
 holds for all $Z\in \tfn$, where $\dots$ are in the other components in the same or higher $\Xi-\Theta(\fk)$--height in $\fp_{\Lambda(\fk),+}$. This way we construct step by step $p\in \exp(\fm\cap \fp_{\Lambda(\fk),+})$ such that 
$\id+\phi=\Ad_p^{-1}.$

It is easy to prove the claim (2) if we assume that the claim (1) holds. 
It follows from the claim (4) of the Proposition \ref{2.31} that we can assume $Z_0\in \fg_{\Xi-\Theta(\fk),0}$, because the other components of $Z_0$ in $\fg_{\Lambda(\fk),0}$ act trivially on $\fm \cap \fg_{\Lambda(\fk),+}$ in our situation. There is $\alpha_i\in \Xi-\Theta(\fk)$ such that 
$$0\neq V_{\Xi-\Theta(\fk),\gamma}:=[V_{\Xi-\Theta(\fk),-\alpha_i},V_{\Xi-\Theta(\fk),\beta}]\subset \fp_{\Lambda(\fk),+}$$
for some $\gamma$ in $\fp_{\Lambda(\fk),+}$,
 because $V_{\Xi-\Theta(\fk),\beta}$ has $\Xi-\Theta(\fk)$--height at least $2$ by definition of $\Lambda(\fk)$. Then $\ad_{Z_0}$ preserves $V_{\Xi-\Theta(\fk),-\alpha_i}$ and $V_{\Xi-\Theta(\fk),\gamma}$, and 
$$\ad_{(Z_0)_\beta}\in V_{\Xi-\Theta(\fk),-\alpha_i}^*\otimes V_{\Xi-\Theta(\fk),\gamma}.$$
 In particular, there are elements $X,X'\in \tfk$ such that $\ta(X)$ has the component $X_-$ in $\fg/\fp_{\Xi}$ contained in $V_{\Xi-\Theta(\fk),-\alpha_i}$ and  $\ta(X')$ has the component  in $\fg/\fp_{\Xi}$ of the form $[Z_0,X_-]$. Thus $[\ta(X),Z]-\ta(X')\in \tfn$ and therefore $[(Z_0)_\beta,X_-]+[Z_0,X_\gamma]-X'_{\gamma}=0$ holds for the components 
$X_\gamma$ of $\ta(X)$ and $X'_{\gamma}$ of $\ta(X')$
in $V_{\Xi-\Theta(\fk),\gamma}$. The component $X_\gamma$ depends linearly on $X_-$ and the component $X'_{\gamma}$ depends linearly on $[Z_0,X_-]$, because $\gamma$ has lower $\Xi-\Theta(\fk)$--height than $\beta$. Let us denote this $\tH_0$--equivariant linear map $V_{\Xi-\Theta(\fk),-\alpha_i}\to V_{\Xi-\Theta(\fk),\gamma}$ by $\psi$, i.e, 
$$[(Z_0)_\beta,X_-]=\psi([Z_0,X_-])-[Z_0,\psi(X_-)]$$ 
holds for all $X_-\in V_{\Xi-\Theta(\fk),-\alpha_i}$. Therefore 
$$\ad_{(Z_0)_\beta}\in \ad_{Z_0}(V_{\Xi-\Theta(\fk),-\alpha_i}^*\otimes V_{\Xi-\Theta(\fk),\gamma}).$$ Since $V_{\Xi-\Theta(\fk),-\alpha_i}^*\otimes V_{\Xi-\Theta(\fk),\gamma}$ is completely reducible $\fg_{\Xi-\Theta(\fk),0}$--module and $V_{\Xi-\Theta(\fk),\beta}$ is identified with indecomposable $\fg_{\Xi-\Theta(\fk),0}$--submodule of $V_{\Xi-\Theta(\fk),-\alpha_i}^*\otimes V_{\Xi-\Theta(\fk),\gamma}$ by the adjoint action, there is $Y_\beta\in V_{\Xi-\Theta(\fk),\beta}$ such that $[Y_\beta,Z_0]=(Z_0)_\beta$ holds for all $Z_0\in \tfn_0$ and $Y_\beta$ is unique, because the map $\psi$ is independent on $Z_0$.

Therefore we need to prove the claim (1) to complete the proof. If $Z_0\in \fg_{\Xi,0}\oplus \fm$, then $V_{\Xi-\Theta(\fk),\beta}\subset \fm$ holds due to $\tH_0$--equivariancy. If $Z_0\in \fg_{\Lambda(\fk),0}\cap \fp_{\Xi,+}$, then we know from the claim (3) of the Proposition \ref{2.31} that we can assume $Z=Z_0+[Z_0,R]+\dots$ for $Z_0\in V_{\Xi-\Theta(\fk),\delta}$ for some $V_{\Xi-\Theta(\fk),\delta}\subset \fg_{\Lambda(\fk),0}\cap \fp_{\Xi,+}$ and $R\in \fm\cap \fp_{\Lambda(\fk),+}$ and that $V_{\Xi-\Theta(\fk),\beta}\subset [\fm\cap \fp_{\Lambda(\fk),+},V_{\Xi-\Theta(\fk),\delta}]$. 

Consider $X\in \tfk$ in a single eigenspace for each $s\in \mathcal{J}(\fk)$ such that the projection of $\ta(X)$ into $\fg/\fp_{\Xi}$ is contained in $V_{\Xi-\Theta(\fk),-\delta}$. That is $\ta(X)=X_-+X_0+X_+$ for some $X_-\in V_{\Xi-\Theta(\fk),-\delta}$, $X_0\in \fg_{\Lambda(\fk),0}\cap \fp_{\Xi,+}$ and $X_+\in \fp_{\Lambda(\fk),+}$. Then \begin{align*}
[\ta(X),Z]=&[X_-,Z_0]+[X_0,Z_0]
+[X_-+X_0,[Z_0,R]]\\
+&[X_-,(Z_0)_\beta]+[X_+,Z_0+[Z_0,R]]+\dots
\end{align*}
 holds for $\dots$ in components of $\fp_{\Lambda(\fk),+}$ of higher $\Xi-\Theta(\fk)$--height than $[X_-,(Z_0)_\beta]\in [V_{\Xi-\Theta(\fk),-\delta},V_{\Xi-\Theta(\fk),\beta}]$. The first three summands of $[\ta(X),Z]$ are contained in $\fg_{\Lambda(\fk),0}\cap \fp_{\Xi}$ and 
$$[X_+,Z_0+[Z_0,R]]=[Z_0,-X_++[X_+,R]],$$
 because $[[Z_0,X_+],R]=0$ follows from the claim (4) of Proposition \ref{2.31}. If $[Z_0,-X_++[X_+,R]]$ 
has trivial component in $[V_{\Xi-\Theta(\fk),-\delta},V_{\Xi-\Theta(\fk),\beta}]$, then $[X_-,(Z_0)_\beta]=0$ holds, because $[V_{\Xi-\Theta(\fk),-\delta},V_{\Xi-\Theta(\fk),\beta}]$ has lower $\Xi-\Theta(\fk)$--height than $V_{\Xi-\Theta(\fk),\beta}$. But this is contradiction with $(Z_0)_\beta\neq 0$, because there is always $X_-\in V_{\Xi-\Theta(\fk),-\delta}$ such that $[X_-,(Z_0)_\beta]\neq 0$. So $[Z_0,-X_++[X_+,Y]]$ has the non--trivial component in $[V_{\Xi-\Theta(\fk),-\delta},V_{\Xi-\Theta(\fk),\beta}]$, that is 
$$[V_{\Xi-\Theta(\fk),-\delta},V_{\Xi-\Theta(\fk),\beta}]=[V_{\Xi-\Theta(\fk),\delta},V_{\Xi-\Theta(\fk),\eta}]$$ for some $V_{\Xi-\Theta(\fk),\eta}\subset \fp_{\Lambda(\fk),+}.$ Thus
$$V_{\Xi-\Theta(\fk),\beta}=[V_{\Xi-\Theta(\fk),\delta},[V_{\Xi-\Theta(\fk),\delta},V_{\Xi-\Theta(\fk),\eta}]].$$ 
We prove the claim $V_{\Xi-\Theta(\fk),\beta}\subset \fm$  case by case by the discussion according to the possible cases from the tables in the \ref{ApC} that can satisfy $V_{\Xi-\Theta(\fk),\beta}=[V_{\Xi-\Theta(\fk),\delta},[V_{\Xi-\Theta(\fk),\delta},V_{\Xi-\Theta(\fk),\eta}]].$ Clearly, there has to be a restricted root $\alpha_i\in I_\mu$ such that the highest root of $\fg$ has $\{\alpha_i\}$--height at least $2$. Thus there are the following possibilities:

\begin{enumerate}
\item $\fg=\frak{sp}(2n,\{\mathbb{R},\mathbb{C}\}), \{1,2\}\subset \Xi, V_{\Xi-\Theta(\fk),\alpha_1+2\alpha_2+\dots}\subset \fm\cap \fp_{\Lambda(\fk),+}$ 
\item $\fg=\frak{sp}(2n,\{\mathbb{R},\mathbb{C}\}), \{1,2,p\}=\Xi,n>p, V_{\Xi-\Theta(\fk),\alpha_2+\dots+\alpha_p}\subset \fm\cap \fp_{\Lambda(\fk),+}$ 
\item $\fg=\frak{so}(q,n-q),\frak{so}(n,\mathbb{C}), \{1,2\}\subset \Xi, V_{\Xi-\Theta(\fk),\alpha_1+\alpha_2}\subset \fm\cap \fp_{\Lambda(\fk),+}$
\end{enumerate}

The case (1) follows trivially, because of dimensional reasons. Indeed, we can conjugate by arbitrary $\exp(Y)\in \exp(V_{\Xi-\Theta(\fk),\alpha_1+2\alpha_2+\dots})$ in the last step and prove the Theorem \ref{main} in this case, because $Z_0\in V_{\Xi,\alpha_i}$ is the only component of any $Z_0\in \fn_0$ that acts non--trivially on $V_{\Xi-\Theta(\fk),\alpha_1+2\alpha_2+\dots}$.

The cases (2) and (3) are similar as $\fg_{\Xi,0}$--modules. Only in the case (2) all the modules are in addition in the tensor product with standard representation of one of the simple components of $\fg_{\Xi,0}$. Thus it is enough to prove $V_{\Xi-\Theta(\fk),\beta}\subset \fm$ in the case (3). In this case, $[X_+,Z_0]+[X_-,(Z_0)_\beta]=(Z_0)_\beta\cdot X_-^T+X_+\cdot Z_0=0$ has to hold due to dimensional reasons, where we identify $V_{\Xi-\Theta(\fk),\alpha_1+\alpha_2}$ with $V_{\Xi-\Theta(\fk),\alpha_2}$ and we identify $V_{\Xi-\Theta(\fk),\alpha_1}$ and $V_{\Xi-\Theta(\fk),\alpha_1+2\alpha_2+\dots}$ with $\mathbb{R}$ or $\mathbb{C}$. If $X_-^T$ is not a multiple $Z_0$, then $X_+=0$ and thus $(Z_0)_\beta\cdot X_-^T=0$ holds, because $X_+$ depends linearly on $X_-^T$. But this is contradiction with $(Z_0)_\beta\neq 0$ and thus $V_{\Xi-\Theta(\fk),\beta}\subset \fm$ holds in this case.
\end{proof}

\section{Almost invariant Weyl structures}

Let us return to the reductions introduced in the Section \ref{dva}.
We define here distinguished classes of Weyl structures compatible with the reductions $(\ba_{\Xi'}\to M,\om^{\Xi'})$ from the Theorem \ref{5.1}.   We discuss them in general here and we show the role of generalized symmetries in the subsequent sections. We use the concept of Weyl structures as introduced in \cite[Section 5.]{parabook}.

\begin{def*}
 Let $\ba_0 \simeq \ba/\exp{(\fp_{\Xi,+})}$  be the underlying $G_0$--bundle.
We call $G_0$--equivariant sections $\si: \ba_0\to \ba_{\Xi'}\subset \ba$ \emph{almost $\Xi'$--invariant Weyl structures}. If $\ba_0=\ba_{\Xi'}$, then we call the Weyl structure $\si=\iota: \ba_0\to \ba$ \emph{invariant}.
\end{def*}

It is easy to describe the situation $\ba_0=\ba_{\Xi'}$.  The Proposition \ref{5.3} implies $T^{\Xi',-}M=TM$, and the soldering form and the connection from the Proposition \ref{5.3} correspond to the soldering form and the connection given by the invariant Weyl structure. 

Let us discuss the existence of general almost $\Xi'$--invariant Weyl structures and investigate their properties.

\begin{prop*}\label{lamweyl}
Almost $\Xi'$--invariant Weyl structures always exist on geometries satisfying assumptions of the Theorem \ref{5.1}, and form an affine space over sections of $gr(V^*M):=\ba_0\times_{\Ad_{G_0}} (\fq_{\Xi'}\cap \fp_{\Xi,+})$.

Moreover, the pull--back $\si^*\om$ coincides on $\fq_{\Xi'}$ with the pull--back $\si^*\om^{\Xi'}$  for any almost $\Xi'$--invariant Weyl structure $\si$.
\end{prop*}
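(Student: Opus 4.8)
The plan is to identify almost $\Xi'$--invariant Weyl structures with $G_0$--reductions of the principal bundle $\ba_{\Xi'}\to\ba_0$, to obtain existence and the affine structure from the standard Weyl--structure machinery, and then to compare $\om$ with $\om^{\Xi'}$ directly via the extension defining them.

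First I would record some elementary facts about $\fq_{\Xi'}\cap\fp_{\Xi,+}$. Since no root of $\fg$ has coefficients of mixed sign, a root with positive $\Xi$--height has all coefficients nonnegative, hence nonnegative $\Xi'$--height; together with lying in $\fq_{\Xi'}$ this forces $\fq_{\Xi'}\cap\fp_{\Xi,+}=\fg_{\Xi',0}\cap\fp_{\Xi,+}$, which is therefore a graded, $\Ad_{G_0}$--invariant nilpotent subalgebra of $\fp_{\Xi,+}$. Using $G_0\subset G_{\Xi,0}\subset G_{\Xi',0}$ and $G_0\subset P$ one has $G_0\subset Q_{\Xi'}\cap P$, and the reductive Levi decomposition $P=G_0\ltimes\exp(\fp_{\Xi,+})$ restricts to $Q_{\Xi'}\cap P=G_0\ltimes\exp(\fq_{\Xi'}\cap\fp_{\Xi,+})$, with reductive Levi subgroup $G_0$ and unipotent radical $\exp(\fq_{\Xi'}\cap\fp_{\Xi,+})$. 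A short computation in the fibres over $M$ then shows $\ba_{\Xi'}/\exp(\fq_{\Xi'}\cap\fp_{\Xi,+})=\ba_0$, so that $\ba_{\Xi'}\to\ba_0$ is a principal bundle with nilpotent structure group $\exp(\fq_{\Xi'}\cap\fp_{\Xi,+})$ on which $G_0$ acts by conjugation, and an almost $\Xi'$--invariant Weyl structure is exactly a $G_0$--equivariant section of this bundle.

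For existence and the affine structure I would run the argument from the theory of Weyl structures in \cite[Section 5]{parabook} verbatim: that proof only uses a reductive Levi decomposition $G_0\ltimes\exp(\fh_+)$ of the structure group with $\fh_+$ nilpotent and $\Ad_{G_0}$--invariantly graded, which we have just checked. Concretely, local triviality gives local $G_0$--equivariant sections, which are patched by induction over the grading $\fq_{\Xi'}\cap\fp_{\Xi,+}=\bigoplus_{i\geq 1}(\fg_{\Xi',0}\cap\fg_{\Xi,i})$, combining local sections with a partition of unity in the vector--group fibres at each stage; these combinations stay $G_0$--equivariant because the grading is $\Ad_{G_0}$--invariant. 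Two almost $\Xi'$--invariant Weyl structures $\si,\hat\si$ differ by a unique $G_0$--equivariant map $f\colon\ba_0\to\exp(\fq_{\Xi'}\cap\fp_{\Xi,+})$ with $\hat\si(u)=\si(u)f(u)$; writing $f=\exp(\U_1)\exp(\U_2)\cdots$ in normal coordinates with $\U_i$ valued in $\fg_{\Xi',0}\cap\fg_{\Xi,i}$, exactly as in \cite[Section 5]{parabook}, identifies the set of these Weyl structures with an affine space over $\Gamma\big(\ba_0\times_{\Ad_{G_0}}(\fq_{\Xi'}\cap\fp_{\Xi,+})\big)=\Gamma(gr(V^*M))$.

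For the last assertion I would use that $\om=\om_\alpha$ and $\om_{\alpha_{\Xi',-}+\alpha_{\Xi',0}}$ are induced, under $\ba\cong K\times_{\iota(H)}P$, by the $\alpha$--image respectively the $(\alpha-\alpha_{\Xi',+})$--image of the Maurer--Cartan form on $K$. Their difference is thus induced by $\alpha_{\Xi',+}$ alone, i.e.\ it is the image of an $\fp_{\Xi',+}$--valued form under $\Ad$ by elements of $P$; since $P\subset P_\Xi\subset P_{\Xi'}$ preserves the nilradical $\fp_{\Xi',+}$ of $\fp_{\Xi'}$, the difference $\om-\om_{\alpha_{\Xi',-}+\alpha_{\Xi',0}}$ is $\fp_{\Xi',+}$--valued on all of $\ba$. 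As $\om^{\Xi'}=j_{eQ_{\Xi'}}^*(\om_{\alpha_{\Xi',-}+\alpha_{\Xi',0}})$ is simply the restriction of $\om_{\alpha_{\Xi',-}+\alpha_{\Xi',0}}$ to $\ba_{\Xi'}$, pulling back along any almost $\Xi'$--invariant Weyl structure $\si\colon\ba_0\to\ba_{\Xi'}$ yields $\si^*\om-\si^*\om^{\Xi'}\in\Omega^1(\ba_0;\fp_{\Xi',+})$, and since $\fg=\fq_{\Xi'}\oplus\fp_{\Xi',+}$ the $\fq_{\Xi'}$--components of $\si^*\om$ and $\si^*\om^{\Xi'}$ coincide. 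I expect the main obstacle to be the first step: although $(Q_{\Xi'},Q_{\Xi'}\cap P)$ is not a parabolic pair, one must verify carefully that the two structural inputs for the standard argument --- the identity $\ba_{\Xi'}/\exp(\fq_{\Xi'}\cap\fp_{\Xi,+})=\ba_0$ and the reductivity of the Levi decomposition of $Q_{\Xi'}\cap P$ --- do hold; once they are in place, the comparison of $\si^*\om$ with $\si^*\om^{\Xi'}$ is immediate.
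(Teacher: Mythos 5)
Your proposal is correct and follows essentially the same route as the paper: the paper's proof simply invokes the global‑Weyl‑structure argument of \cite[Section 5.1.1]{parabook} with $G_0\exp(\fq_{\Xi'}\cap \fp_{\Xi,+})$ in place of $G_0\exp(\fp_+)$, and notes that the second claim follows from the construction. You have merely spelled out the two structural inputs (the identification $\ba_{\Xi'}/\exp(\fq_{\Xi'}\cap\fp_{\Xi,+})=\ba_0$ with its reductive Levi decomposition, and the fact that $\om-\om_{\alpha_{\Xi',-}+\alpha_{\Xi',0}}$ is $\fp_{\Xi',+}$--valued) that the paper leaves implicit.
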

\begin{proof}
The proof of the first claim follows in the same way as the proof of the existence of global Weyl structures in \cite[Section 5.1.1]{parabook}. The only difference is that we consider $G_0\exp(\fq_{\Xi'}\cap \fp_{\Xi,+})$ instead of $G_0\exp(\fp_+)$. The second claim  follows from the construction.
\end{proof}

The above Proposition particularly clarifies the name almost $\Xi'$--invariant Weyl structure $\si$, because the pullback of $\si$ by arbitrary automorphism of the parabolic geometry $(\ba_{\Xi'}\to M,\om^{\Xi'})$ equals to $\si \exp(\Upsilon)$ for suitable $\U$ satisfying $Im(\Upsilon)\subset \fq_{\Xi'}\cap \fp_{\Xi,+}$.

Let $\si$ be an arbitrary fixed almost $\Xi'$--invariant Weyl structure. We can decompose the pullback $\si^*\om$ into $G_0$--invariant parts which can be interpreted as follows:

\begin{prop*} \label{5.5}
In the setting as above:

\begin{enumerate}
\item The part of $\si^*\om$ valued in $\fg_{\Xi',-}$ provides isomorphism $$T^{\Xi',-}M \simeq \ba_0\times_{G_0}\fg_{\Xi',-}.$$
Moreover, the part of $\si^*\om$ valued in arbitrary $\fg_{\Xi',0}$--submodule $\fd$ of $\fg_{\Xi',-}$ provides isomorphism of $\ba_0\times_{G_0}\fd$ with the $\Aut(\ba^{\Xi'},\om^{\Xi'})$--invariant subbundle $\ba^{\Xi'}\times_{Q_{\Xi'}\cap P} \fd$ of $T^{\Xi',-}M$.
\item The part of $\si^*\om$ valued in $\fg_{\Xi',0}$ provides a unique (and thus $\Aut(\ba^{\Xi'},\om^{\Xi'})$--invariant) affine connection on each $\ba_0\times_{G_0}\fd$ from (1).
\item The part of $\si^*\om$ valued in $\fg_{\Xi',0}\cap \fg_{\Xi,-}$ provides isomorphism of $gr(VM)$ with $VM$.
\item The part of $\si^*\om$ valued in $\fg_{\Xi',0}\cap \fg_{\Xi,0}$ provides an affine connection on $VM$.
\item The part of $\si^*\om$ valued in $\fg_{\Xi',0}\cap \fp_{\Xi,+}$ provides isomorphism of $gr(V^*M)$ and $V^*M$.
\item The part of $\si^*\om$ valued in $\fp_{\Xi',+}$ provides isomorphism $$(T^{\Xi',-}M)^* \simeq \ba_0\times_{G_0}\fp_{\Xi',+}.$$
Moreover, the part of $\si^*\om$ valued in arbitrary $\fg_{\Xi',0}$--submodule $\fd^*$ of $\fp_{\Xi',+}$, which is dual (via Killing form of $\fg$) to $\fg_{\Xi',0}$--submodule $\fd$ of $\fg_{\Xi',-}$, provides isomorphism of $(\ba_0\times_{G_0}\fd)^*$ with $\ba^{\Xi'}\times_{Q_{\Xi'}\cap P} \fd^*$.
\end{enumerate}
\end{prop*}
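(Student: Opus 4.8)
The plan is to work entirely at the level of the pullback $\si^*\om$, which is a $G_0$-equivariant $\fg$-valued one-form on $\ba_0$, and to read off each statement by decomposing $\fg$ into the relevant $\fg_{\Xi',0}$-submodules and observing which part of $\si^*\om$ plays the role of a soldering form (giving a bundle isomorphism) and which part plays the role of a connection form. The general principle, already implicit in \cite[Section 5.]{parabook}, is that a Weyl structure trivializes the associated graded of $TM$ and of the Cartan bundle: the component of $\si^*\om$ in a $G_0$-submodule $\fe\subset \fg/\fp_\Xi$ (identified via a $G_0$-complement) yields an isomorphism $\ba_0\times_{G_0}\fe\simeq$ (the corresponding piece of $TM$), while the component in $\fg_{\Xi,0}$ yields a linear connection, and the component in $\fp_{\Xi,+}$ is a soldering form for the associated graded of a cotangent-type bundle. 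What is new here is only the finer bookkeeping coming from the simultaneous presence of the two parabolics $\fp_\Xi$ and $\fp_{\Xi'}$, so the real content is the combinatorics of the intersections $\fg_{\Xi',i}\cap \fg_{\Xi,j}$.

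First I would fix the $G_0$-invariant direct sum decomposition
$$\fg=\fg_{\Xi',-}\oplus(\fg_{\Xi',0}\cap\fg_{\Xi,-})\oplus(\fg_{\Xi',0}\cap\fg_{\Xi,0})\oplus(\fg_{\Xi',0}\cap\fp_{\Xi,+})\oplus\fp_{\Xi',+},$$
which refines both gradings simultaneously (this is legitimate because $G_0\subset G_{\Xi',0}$ and $G_0\subset G_{\Xi,0}$, so $G_0$ preserves every term). Correspondingly $\si^*\om$ splits into five $G_0$-equivariant pieces, and I would treat (1)--(6) by matching each claim to exactly one of these pieces. For (1) and (6): since $\si$ takes values in $\ba_{\Xi'}$, Proposition \ref{5.3} gives $T^{\Xi',-}M=\ba_{\Xi'}\times_{(Q_{\Xi'}\cap P)}\fg_{\Xi',-}$, and the $\fg_{\Xi',-}$-part of $\si^*\om$ is, by construction of a Weyl structure and property (2) of the extension definition, a pointwise linear isomorphism; restricting to a $\fg_{\Xi',0}$-submodule $\fd$ and noting that $\si$ lands in $\ba_{\Xi'}$ so the image is contained in the stated $\Aut(\ba^{\Xi'},\om^{\Xi'})$-invariant subbundle gives the "moreover" part, and dualizing via the Killing form (under which $\fp_{\Xi',+}$ is dual to $\fg_{\Xi',-}$ as a $G_{\Xi',0}$-module, hence as a $G_0$-module) gives (6). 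For (3) and (5): these are the statements that a Weyl structure splits the filtration of $VM$ (whose associated graded is $\ba_0\times_{G_0}(\fg_{\Xi',0}\cap\fg_{\Xi,-}/(\text{something}))$ --- but actually $VM$ by definition in Proposition \ref{5.3} is $\ba_{\Xi'}\times_{(Q_{\Xi'}\cap P)}\fg_{\Xi',0}/(\fg_{\Xi',0}\cap\fp_\Xi)$, and $\fg_{\Xi',0}\cap\fg_{\Xi,-}$ is a $G_0$-complement to $\fg_{\Xi',0}\cap\fp_\Xi$ inside $\fg_{\Xi',0}$), so the relevant components of $\si^*\om$ realize $gr(VM)\simeq VM$ and dually $gr(V^*M)\simeq V^*M$; here $V^*M$ should be $\ba_{\Xi'}\times\,(\fg_{\Xi',0}\cap\fp_{\Xi,+})$, and $gr(V^*M)=\ba_0\times_{G_0}(\fg_{\Xi',0}\cap\fp_{\Xi,+})$ matching the affine-space model in Proposition \ref{lamweyl}. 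For (2) and (4): the $\fg_{\Xi',0}$-part of $\si^*\om$ is a principal-connection-type form on the $G_{\Xi',0}$-structure, and its further decomposition into the $\fg_{\Xi',0}\cap\fg_{\Xi,0}$-part (a genuine linear connection on $VM$, giving (4)) and the rest; uniqueness of the induced affine connection on each $\ba_0\times_{G_0}\fd$ in (2) follows exactly as in \cite[Section 5.]{parabook} from the fact that different almost $\Xi'$-invariant Weyl structures differ by a section of $gr(V^*M)=\ba_0\times_{\Ad_{G_0}}(\fq_{\Xi'}\cap\fp_{\Xi,+})$ (Proposition \ref{lamweyl}), and such a change does not alter the $\fg_{\Xi',0}$-component, because $\fq_{\Xi'}\cap\fp_{\Xi,+}\subset\fp_{\Xi',+}$ (as $\fq_{\Xi'}=\fg_{\Xi',-}\oplus\fg_{\Xi',0}$ contains no part of $\fp_{\Xi',+}$... wait --- rather $\fq_{\Xi'}\cap\fp_{\Xi,+}$ could meet $\fg_{\Xi',0}\cap\fp_{\Xi,+}$); so more carefully, the $\Aut$-invariance in (2) follows because the $\fg_{\Xi',0}$-valued part of $\si^*\om$ equals the $\fg_{\Xi',0}$-valued part of $\si^*\om^{\Xi'}$ by the last assertion of Proposition \ref{lamweyl}, and the latter is canonically determined by $(\ba_{\Xi'}\to M,\om^{\Xi'})$ alone.

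The one genuinely delicate point --- and the step I expect to be the main obstacle --- is establishing the $\Aut(\ba^{\Xi'},\om^{\Xi'})$-invariance claims in (1), (2), (4) despite the individual Weyl structure $\si$ not being invariant. The resolution is exactly Proposition \ref{lamweyl}: an automorphism pulls $\si$ back to $\si\exp(\U)$ with $\Imm(\U)\subset\fq_{\Xi'}\cap\fp_{\Xi,+}$, so one must check that each of the asserted objects (the isomorphisms onto subbundles of $T^{\Xi',-}M$, the affine connection on each $\ba_0\times_{G_0}\fd$, the affine connection on $VM$) is unchanged under $\si\mapsto\si\exp(\U)$ for $\U$ valued in $\fq_{\Xi'}\cap\fp_{\Xi,+}$. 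For the soldering-type statements this is immediate because $\fq_{\Xi'}\cap\fp_{\Xi,+}\subset\fg_{\Xi',0}\cap\fp_{\Xi,+}$ lies in a complementary summand to $\fg_{\Xi',-}$, so it does not touch the $\fg_{\Xi',-}$-component; for the connection statements one uses the standard transformation formula for Weyl connections under a change by $\U$ (cf. \cite[Section 5.1.6]{parabook}) and observes that, projected to $\fg_{\Xi',0}$, the correction terms $\{\U,\cdot\}$ vanish because both slots would have to lie in $\fq_{\Xi'}\cap\fp_{\Xi,+}$ versus $\fg_{\Xi',-}$ and the relevant brackets land in $\fp_{\Xi',+}$, not $\fg_{\Xi',0}$. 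Once this invariance is in place, everything else is the routine translation dictionary between components of $\si^*\om$ and tensorial data on $M$, so I would present (1)--(6) compactly as six applications of this dictionary rather than spelling out each bracket computation.
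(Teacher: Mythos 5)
Your proposal is correct and follows essentially the same route as the paper, whose proof of this proposition is simply the observation that everything follows from Proposition \ref{5.3}, Proposition \ref{lamweyl}, and the standard dictionary between components of a Weyl structure's pullback and soldering forms/connections from \cite[Section 5.1]{parabook}; your write-up just makes that dictionary and the invariance check under $\si\mapsto\si\exp(\U)$ explicit (note only that $[\U,X_-]$ for $\U\in\fq_{\Xi'}\cap\fp_{\Xi,+}=\fg_{\Xi',0}\cap\fp_{\Xi,+}$ and $X_-\in\fg_{\Xi',-}$ lands in $\fg_{\Xi',-}$ rather than $\fp_{\Xi',+}$, which still misses $\fg_{\Xi',0}$ and so does not affect your conclusion).
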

\begin{proof}
The claim follows directly from the Propositions \ref{5.3} and \ref{lamweyl} and from the definition and properties of Weyl structures, see \cite[Section 5.1.]{parabook}.
\end{proof}
There is the following particular consequence of the above statement.
\begin{cor*}
If we apply the Proposition \ref{5.5} on the reduction $(\ba_{\Lambda}\to M,\om^{\Lambda})$ from the Theorem \ref{main2}, we get that all $\Aut(\ba^{\Lambda},\om^{\Lambda})$--invariant objects are $\A$--invariant.
\end{cor*}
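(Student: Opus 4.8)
The plan is to read the statement as a direct specialization of Proposition \ref{5.5}, combined with the identification of automorphism groups provided by Theorem \ref{main2}. First I would note that the reduction $(\ba_{\Lambda}\to M,\om^{\Lambda})$ furnished by Theorem \ref{main2} is precisely the reduction of Theorem \ref{5.1} for the choice $\Xi'=\Lambda$; in particular it satisfies the hypotheses of Propositions \ref{5.3} and \ref{lamweyl}, so Proposition \ref{5.5} applies verbatim with $\Xi'=\Lambda$. Fixing any almost $\Lambda$-invariant Weyl structure $\si$, the decomposition of $\si^*\om$ into its components valued in $\fg_{\Lambda,-}$, $\fg_{\Lambda,0}$ and $\fp_{\Lambda,+}$ then produces exactly the distributions, bundle isomorphisms and affine connections listed in the six items of Proposition \ref{5.5}, and these are the objects to which the corollary refers.

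Next I would recall why each of these objects is $\Aut(\ba^{\Lambda},\om^{\Lambda})$-invariant, which Proposition \ref{5.5} already records. The soldering-type isomorphisms are read off directly from the Cartan connection $\om^{\Lambda}$, which every element of $\Aut(\ba^{\Lambda},\om^{\Lambda})$ preserves by definition, while the connections in items (2) and (4) are uniquely determined by the reduced data and are therefore carried to themselves by any automorphism. By Proposition \ref{lamweyl} the parts of $\si^*\om$ valued in $\fq_{\Lambda}=\fg_{\Lambda,-}\oplus\fg_{\Lambda,0}$ coincide with the corresponding parts of $\si^*\om^{\Lambda}$, so these objects depend only on the reduced geometry and not on the auxiliary choice of $\si$; the remaining $\fp_{\Lambda,+}$-valued parts in items (5) and (6) are the duals (via the Killing form) of the former and are likewise canonical. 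Hence every object produced is a genuine invariant of $(\ba^{\Lambda}\to M,\om^{\Lambda})$.

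The final and essentially only substantive step is to invoke claim (2) of Theorem \ref{main2}, namely the equality $\A=\Aut(\ba_{\Lambda},\om^{\Lambda})$; since $\ba^{\Lambda}$ and $\ba_{\Lambda}$ denote the same reduced Cartan bundle, this identity immediately converts $\Aut(\ba^{\Lambda},\om^{\Lambda})$-invariance into $\A$-invariance, giving the corollary. The one point that requires care is this group identification itself, and it is the place where I expect any difficulty to reside: I would note that it rests on the fact that the holonomy reduction $\tau$ of Theorem \ref{5.1} is $\A$-invariant (it is independent of the base point $u_0$), so that every element of $\A$ preserves $\ba_{\Lambda}=\tau^{-1}(eQ_{\Lambda})$ and restricts to an automorphism of the reduced geometry, and conversely every automorphism of $(\ba_{\Lambda}\to M,\om^{\Lambda})$ extends to one of $(\ba\to M,\om)$ through the extension functor $\mathcal{F}_{\Lambda\subset\Xi}$. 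With this identification taken from Theorem \ref{main2}, no further computation is needed and the statement follows.
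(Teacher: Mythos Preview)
Your proposal is correct and follows the same route the paper intends: the corollary is stated there without proof, as an immediate consequence of applying Proposition~\ref{5.5} with $\Xi'=\Lambda$ and then invoking item~(2) of Theorem~\ref{main2}, namely $\A=\Aut(\ba_{\Lambda},\om^{\Lambda})$. Your additional discussion justifying that identification is more than the paper supplies, but it is accurate and does no harm.
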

In the next section, we focus on the integrability of distributions mentioned above.

\section{Generalized symmetries on correspondence and twistor spaces}

Let us discuss here possible correspondence and twistor spaces in the sense of  \cite{Cap-correspondence} and the role of generalized symmetries on them.

Let us first consider arbitrary correspondence space of our parabolic geometry $(\ba\to M,\om)$. Clearly, the correspondence space does not have to satisfy our assumptions anymore. Indeed, it does not have to be homogeneous. However, the generalized symmetries of the parabolic geometry downstairs are lifted to generalized symmetries of appropriate type on the correspondence space. Moreover, there can be generalized symmetries, that do not cover generalized symmetries downstairs. It is easy to characterize the lifted symmetries.

 \begin{prop*}\label{cor}
A generalized symmetry at $x$ on a correspondence space to $(\ba\to M,\om)$ is a lift of an underlying generalized symmetry of the parabolic geometry $(\ba\to M,\om)$ if and only if the vertical bundle $\mathcal{V}_xM$ is contained inside the $1$--eigenspace of the generalized symmetry.
\end{prop*}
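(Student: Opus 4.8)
The plan is to work in the algebraic model of the correspondence space and translate the condition into the language of $s$--symmetries. Recall that a correspondence space is built from an intermediate parabolic: if $(\ba\to M,\om)$ has type $(G,P)$, the correspondence space $\mathcal{C}M$ has type $(G,P')$ for some parabolic $P'\subset P$, with Cartan bundle $\ba\to \mathcal{C}M$ and the \emph{same} Cartan connection $\om$; the vertical bundle $\mathcal{V}(\mathcal{C}M)$ of the fibration $\mathcal{C}M\to M$ is then $\ba\times_{P'}(\fp/\fp')$. A generalized symmetry $\psi$ at a point $x\in\mathcal{C}M$ is by definition an $s'$--symmetry for some $s'$ in the center of the Levi factor $G_0'$ of $P'$, i.e.\ there is $v_0\in\ba$ over $x$ with $\psi(v_0)=v_0 s'$, and the tangent action $T_x\psi$ in the frame $v_0$ is $\Ad_{s'}$ on $\fg/\fp'$. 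Since $P'\subset P$, the automorphism $\psi$ descends to an automorphism of $(\ba\to M,\om)$ precisely when it preserves the fibers of $\mathcal{C}M\to M$, and it is then a generalized symmetry downstairs exactly when the induced element acts as a central element of $G_0$ on $\fg/\fp$.

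\textbf{Key steps.} First I would fix the point $x$, a point $v_0\in\ba$ over it realizing $\psi(v_0)=v_0s'$, and let $u_0$ be the image of $v_0$ in $\ba$ viewed as Cartan bundle over $M$ (these are literally the same bundle, so $v_0=u_0$ as points of $\ba$, only the structure group changes). Second, observe that $\psi$ preserves $\mathcal{V}_x(\mathcal{C}M)=\ba\times_{P'}(\fp/\fp')\subset T_x(\mathcal{C}M)=\ba\times_{P'}(\fg/\fp')$, since $\Ad_{s'}$ preserves $\fp$ (because $s'\in G_0'\subset P'\subset P$ and $\fp$ is $\Ad_{P}$--invariant, hence $\Ad_{P'}$--invariant): thus $\psi$ \emph{always} descends to a diffeomorphism of $M$, and the descended map is an automorphism of $(\ba\to M,\om)$ because $\om$ is unchanged. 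Third, the descended automorphism $\bar\psi$ is a generalized symmetry of $(\ba\to M,\om)$ at the image point if and only if $\Ad_{s'}$ acts on $\fg/\fp$ as a central element of $G_0$; the subtlety is that $s'$ need not lie in $Z(G_0)$. Here I would use Proposition \ref{1.3} (or rather its proof mechanism): $\bar\psi$ is a generalized symmetry iff $T\bar\psi$ acts by scalars $j_i^{-1}$ on the distinguished subspaces of $T^{-1}M$ corresponding to the simple roots in $\Xi$; equivalently, $\Ad_{s'}$ is scalar on each $V_{\Xi,\alpha_i}$ for $\alpha_i\in\Xi$. Fourth — and this is where the $1$--eigenspace enters — I claim that $\Ad_{s'}$ being scalar $j_i$ on each $V_{\Xi,\alpha_i}$ is equivalent to $\Ad_{s'}$ being the identity on $\fp/\fp'$, i.e.\ to $\mathcal{V}_x(\mathcal{C}M)$ lying in the $1$--eigenspace of $T_x\psi$. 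One direction is immediate: if $\mathcal{V}_x$ is in the $1$--eigenspace then $\psi$ fixes the fiber pointwise near $x$, so it descends and its action on $\fg/\fp$ is just the induced action of $s'$ read off from the diagonal blocks, which is automatically of the required scalar form because the possible central characters of $G_0'$ that are trivial on $\fp/\fp'$ are exactly those coming from $Z(G_0)$ (the grading element directions corresponding to $\Xi\setminus\Xi_{P'}$ are killed, leaving only the $\Xi$--directions). For the converse, if $\bar\psi$ is a generalized symmetry of $M$ then it is an $\bar s$--symmetry for some $\bar s\in Z(G_0)$; lifting $\bar s$ trivially to $P'$ (it already lies in $G_0\subset G_0'$) gives an $s'$--symmetry realization with $s'=\bar s$, and $\Ad_{\bar s}$ is the identity on $\fp/\fp'$ since $\fp/\fp'$ sits inside the $\fg_{\Xi,0}$--part on which a $\Xi$--central element... — wait, this needs care, so I would instead argue directly: $\psi$ and the lift of $\bar\psi$ differ by a vertical automorphism covering the identity on $\mathcal{C}M\to M$, and such automorphisms act trivially on $\mathcal{V}_x$ (indeed Proposition \ref{1.4} together with the fact that the relevant exponential sits in $\fp\cap\fp'_+$ shows the vertical part is fixed), so $T_x\psi$ is the identity on $\mathcal{V}_x$.

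\textbf{Main obstacle.} The technical heart, and the step I expect to require the most care, is the precise bookkeeping in the fourth step: identifying $\mathcal{V}_x(\mathcal{C}M)=\fp/\fp'$ with a sum of root spaces and checking that ``$\Ad_{s'}$ scalar on each $V_{\Xi,\alpha_i}$ for $\alpha_i\in\Xi$'' together with $s'\in Z(G_0')$ forces $\Ad_{s'}=\id$ on $\fp/\fp'$, and conversely. This is essentially the statement that the one--parameter subgroups in $Z(G_0')$ split as those in $Z(G_0)$ (the $\Xi$--directions) times those acting nontrivially on $\fp/\fp'$ (the complementary simple--root directions defining the correspondence fibration), and that an $s'$--symmetry descends iff its $Z(G_0')$--component lies in the first factor. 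I would handle this by writing $\Xi_{P'}\supsetneq\Xi$ for the defining sets of $P'$ and $P$, noting $\fp/\fp'=\bigoplus \fg_\gamma$ over roots $\gamma$ with $\Xi$--height $\leq 0$ but $\Xi_{P'}$--height $>0$, and observing $\Ad_{s'}$ is trivial on all such $\fg_\gamma$ exactly when the $Z(G_0')$--element $s'$ has no component in the ``extra'' coroot directions — which is exactly the condition that it be conjugate into $Z(G_0)$, equivalently that $\psi$ descend to a generalized symmetry. Everything else is formal manipulation with extension data and the invariance properties already established in Sections \ref{dva}--\ref{tri}.
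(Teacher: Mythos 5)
Your overall plan---read off the stabilizer element $s'\in Z(G_0')$ of the symmetry upstairs in a frame and translate both sides of the equivalence into conditions on $\Ad_{s'}$---is workable, and your key algebraic claim in the fourth step is true. But two links in the chain are broken as written. First, you characterize ``$\bar\psi$ is a generalized symmetry downstairs'' by ``$T\bar\psi$ acts by the scalars $j_i^{-1}$ on the distinguished subspaces of $T^{-1}M$'' and cite Proposition \ref{1.3}. That proposition is an \emph{existence} statement: it produces some generalized symmetry with the prescribed tangent action, and the paper explicitly warns that an automorphism with that tangent action need not itself be a generalized symmetry (its stabilizer element may carry an $\exp(\fp_{\Xi,+})$--part). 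So your biconditional is false for a general automorphism, and the argument as stated would only show that a generalized symmetry \emph{exists} at the image point, not that $\psi$ is a lift of one. The gap is repairable precisely because your $s'$ already lies in $Z(G_0')\subset G_0$ with no unipotent part---but then the detour through $T^{-1}M$ is unnecessary. The paper's entire sufficiency proof is: $\Ad_{s'}=\id$ on $\fp/\fp'$ together with centrality of $s'$ in $G_0'$ forces $\Ad_{s'}=\id$ on all of $\fg_{\Xi,0}$ (the vertical directions, their Killing--duals in $\fg_{\Xi,0}\cap\fp'$, and the Lie algebra of $G_0'$ exhaust $\fg_{\Xi,0}$), whence $s'\in Z(G_0)$ because $\ker(\Ad_{G_0}|_{\fg_{\Xi,0}})=Z(G_0)$.

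Second, your necessity argument collapses where you abandon the direct computation and assert that ``$\psi$ and the lift of $\bar\psi$ differ by a vertical automorphism covering the identity'': they do not differ by anything, since the lift of $\bar\psi$ \emph{is} $\psi$ (same map of $\ba$, same $\om$). The actual subtlety is that $\bar\psi$ being an $s$--symmetry is witnessed at some frame $u_0$ with $\psi(u_0)=u_0s$, $s\in Z(G_0)$, whereas the eigenvalue condition must be read at a frame $u_0p$ over $x$, where $\psi$ is represented by $p^{-1}sp=s\exp(W')$ with $W'\in\fp_{\Xi,+}$. Necessity then follows because $\fp/\fp'$ is a sum of root spaces of $\Xi$--height zero, on which $Z(G_0)$ acts trivially, and because $\fp_{\Xi,+}\subset\fp'$, so $\exp(W')$ also acts trivially on $\fp/\fp'$---this is the computation you began and should have finished. (Also, $G_0'\subset G_0$, not the reverse as you wrote; all you need is $Z(G_0)\subset Z(G_0')$.) Your fourth step, the root--theoretic equivalence between ``scalar on each $V_{\Xi,\alpha_i}$'' and ``identity on $\fp/\fp'$,'' is correct and can be proved by your path argument in the Dynkin diagram, but the paper's two--line proof bypasses it entirely.
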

\begin{proof}
Clearly, the condition that $\mathcal{V}_xM$ is inside the $1$--eigenspace of the generalized symmetry is necessary. Since $Ker(\Ad_{G_0}|_{\fg_0})=Z(G_0)$, it is also sufficient.
\end{proof}

Now, let us focus on (local) twistor spaces of the geometry $(\ba \to M, \om)$.
Let us fix $ s\in Z(G_0)$. Then we can characterize, when $s$--symmetry is (locally) a lift of a (local) generalized symmetry from a suitable (local) twistor space. 

\begin{prop*}\label{4.2}
Suppose there is $s$--symmetry in $\A$ and denote by $\Psi(1)$ the set of all simple restricted roots $\alpha_i\in \Psi$ such that $\fg_{\alpha_i}$ is in $1$--eigenspace of $\Ad_s$. Then the following statements hold:
\begin{enumerate}
\item The parabolic geometry $(\ba\to M,\om)$ is locally equivalent to an open subset of a correspondence space to a parabolic geometry of type $(G,P_{\Xi-\Psi(1)})$. 
\item The parabolic geometry of type $(G,P_{\Xi-\Psi(1)})$ from (1) is locally homogeneous with local generalized symmetries covered by $s$--symmetries in $\A$.
\item The preimage $\fl$ of $\fp_{\Xi-\Psi(1)}$ in $\fk$ is a Lie subalgebra of $\fk$ for any extension $(\alpha,i)$ of $(K,H)$ to $(G,P)$ giving $(\ba\to M,\om)$ at any $u_0$.
\end{enumerate}
\end{prop*}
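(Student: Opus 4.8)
The plan is to deduce everything from the structure of twistor spaces established in \cite{Cap-correspondence} combined with the algebraic description of generalized symmetries via the element $s \in Z(G_0)$. First I would recall that, since $\Psi$ is the maximal subset of $\Xi$ such that $\kappa_H$ has no entries in $\fg_{\Xi-\Psi,0}$, the harmonic curvature — and hence by normality and regularity the full curvature $\kappa$ — has no component that obstructs the geometry from being locally a correspondence space over a geometry of type $(G,P_{\Xi-\Psi'})$ for any $\Psi'\subset\Psi$; this is precisely the criterion of \cite{Cap-correspondence}. The point is then to choose $\Psi' = \Psi(1)$, i.e.\ to use only those roots in $\Psi$ whose root spaces lie in the $1$--eigenspace of $\Ad_s$. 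For this choice I must check that the relevant curvature components vanish, which they do because $\Psi(1)\subset\Psi$, so claim (1) follows directly from \cite{Cap-correspondence}.

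For claim (2), the key observation is that the $s$--symmetry $\phi$, characterized by $\phi(u_0) = u_0 s$ with $s\in Z(G_0)$, descends to the twistor space exactly when the vertical directions of the twistor projection are preserved — and the vertical subspace of $\ges$ over $M$ relative to a geometry of type $(G,P_{\Xi-\Psi(1)})$ corresponds to $\fp_{\Xi-\Psi(1)}/\fp_\Xi$, which by construction of $\Psi(1)$ is built from root spaces $\fg_\gamma$ with $\gamma$ a sum involving only roots in $\Psi(1)$, hence lying in the $1$--eigenspace of $\Ad_s$. So $\phi$ normalizes the relevant foliation and descends; moreover applying Proposition \ref{cor} (or rather its analogue for this fibration) shows the descended automorphism is again a generalized symmetry of the appropriate type, and transitivity of the descended $\A$-action on the twistor space gives local homogeneity. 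I would phrase this carefully as a statement about the local leaf space, since the twistor space need only exist locally.

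For claim (3), I would argue as follows. Given any extension $(\alpha,\iota)$ of $(K,H)$ to $(G,P)$ giving $(\ba\to M,\om)$ at $u_0$, set $\fl := \alpha^{-1}(\fp_{\Xi-\Psi(1)})$. Since $\alpha$ induces an isomorphism $\fk/\fh \cong \fg/\fp_\Xi$ and $\fp_\Xi\subset \fp_{\Xi-\Psi(1)}$, we have $\fh\subset\fl$ and $\fl/\fh \cong \fp_{\Xi-\Psi(1)}/\fp_\Xi$. That $\fl$ is a subalgebra is equivalent to the statement that $\alpha$ maps $\fl$ into the subalgebra $\fp_{\Xi-\Psi(1)}$ compatibly with brackets modulo the failure-of-homomorphism term; precisely, $[\alpha(X),\alpha(Y)] - \alpha([X,Y])$ equals the curvature $\kappa(u_0)(\alpha(X),\alpha(Y))$ evaluated on the projections to $\fg/\fp_\Xi$, and by claim (1) this curvature term has no component outside $\fp_{\Xi-\Psi(1)}$ when $X,Y\in\fl$ — which is exactly the correspondence-space condition translated into the homogeneous setting. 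Hence $[\alpha(\fl),\alpha(\fl)]\subset\fp_{\Xi-\Psi(1)}$ up to the image of $\fp_\Xi$, so $[\fl,\fl]\subset\fl$.

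The main obstacle I expect is claim (3): making the curvature-vanishing argument precise in the homogeneous-model language requires carefully tracking that the vanishing of the relevant $\kappa_H$-components (the defining property of $\Psi$) propagates to the vanishing of all components of the full curvature $\kappa$ that would obstruct $\fl$ being a subalgebra. This uses that $\kappa$ is generated by $\kappa_H$ as a $\fp_\Xi$-module together with the Bianchi identity — a standard but slightly delicate point that I would cite from \cite{parabook} and \cite{Cap-correspondence} rather than reprove. The descent arguments in (1) and (2) should be routine once the correct local leaf space is identified.
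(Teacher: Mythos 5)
Your proposal follows essentially the same route as the paper's proof: claim (1) from the vanishing of the relevant harmonic-curvature components together with the correspondence-space criterion of \cite{Cap-correspondence}, claim (2) from the Proposition \ref{cor} applied to the local twistor fibration, and claim (3) from the fact that normality upgrades the vanishing of $\kappa_H$ on insertions of $\fp_{\Xi-\Psi(1)}$ to vanishing of the full curvature $\kappa$, which is exactly what makes $\fl=\alpha^{-1}(\fp_{\Xi-\Psi(1)})$ closed under the bracket. The paper states these steps far more tersely, but the content is the same, so your proposal is correct.
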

\begin{proof}
By the definition of $\Psi$, the harmonic curvature vanishes on insertions of elements of $\fp_{\Xi-\Psi(1)}$. Then the existence of the twistor space follows  by the results in \cite[Sections  1.5.13,14]{parabook} or \cite{Cap-correspondence}, 
and the existence of local generalized symmetries follows from the Proposition \ref{cor}. Moreover, since the whole curvature vanishes on the insertions of entries of $\fp_{\Xi-\Psi(1)}$, the last claim follows.
\end{proof}

Let us remark that although the parabolic geometry on the twistor space is normal, it does not have to be regular. However, it follows from the classification in the  \ref{ApC} that this behavior applies only for the following cases:

\begin{lem*}
The parabolic geometries of type $(G,P_{\Xi-\Psi(1)})$ from the Proposition \ref{4.2} are not regular in the following cases:

\begin{itemize}
\item $\frak{sl}(n+1,\{\mathbb{R,C}\})$ with $\Xi=\{1,2,p,q\}$ and $\Psi(1)=\{\alpha_1\}$,
\item $\frak{sp}(2n,\{\mathbb{R,C}\})$ with $\Xi=\{1,2,p\}$, where $p<n$, and $\Psi(1)=\{\alpha_1\}$.
\end{itemize}
\end{lem*}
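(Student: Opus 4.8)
The plan is to verify the regularity (or non-regularity) of the parabolic geometry of type $(G,P_{\Xi-\Psi(1)})$ by a direct computation of the homogeneity of its harmonic curvature components. Recall from the Proposition \ref{4.2} that this geometry is normal, so by the general theory (see \cite[Section 3.1.8]{parabook}) it is regular if and only if each harmonic curvature component lies in positive homogeneity, i.e., if the cohomology component, viewed as sitting in $H^2(\fg_{\Xi-\Psi(1),-},\fg)$, has positive homogeneity with respect to the grading determined by $\Xi-\Psi(1)$. Concretely, for a component represented by the pair $(\alpha_a,\alpha_b)$ the corresponding weight is $s_{\alpha_a}s_{\alpha_b}$ applied (affinely) to the highest root $\mu^\fg$, and I would read off its $\Xi-\Psi(1)$--height; the geometry fails to be regular precisely when some such height is zero or negative for some harmonic curvature component carried over from $(\ba\to M,\om)$.

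First I would recall that $\Psi$ is by definition the maximal subset of $\Xi$ such that $\kappa_H$ has no entries in $\fg_{\Xi-\Psi,0}$, so after passing to the twistor space of type $(G,P_{\Xi-\Psi(1)})$ the surviving harmonic curvature components are exactly those that already had nonzero entries hitting $\fg_{\alpha_i}$ for some $\alpha_i\in\Psi(1)$. Then I would go through the classification tables in \ref{ApC}, which list all pairs $(G,\Xi)$ admitting $\kappa_H\neq 0$ together with nontrivial generalized symmetries and the data of which components $(\alpha_a,\alpha_b)$ can occur. For each such case, and for each admissible $\Psi(1)$ compatible with the existence of an $s$--symmetry, I would compute the $\Xi-\Psi(1)$--homogeneity of each retained curvature component. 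In almost all entries of the tables this homogeneity turns out to be positive, so regularity holds; the content of the lemma is to isolate the finitely many exceptional configurations.

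The two listed exceptions are $\frak{sl}(n+1,\{\mathbb{R},\mathbb{C}\})$ with $\Xi=\{1,2,p,q\}$ and $\Psi(1)=\{\alpha_1\}$, and $\frak{sp}(2n,\{\mathbb{R},\mathbb{C}\})$ with $\Xi=\{1,2,p\}$, $p<n$, and $\Psi(1)=\{\alpha_1\}$. For these I would exhibit the explicit harmonic curvature component responsible: in the $\frak{sl}$ case the component represented by $(\alpha_2,\alpha_1)$ (whose weight, after the affine Weyl action on $\mu^\fg$, has a term supported on $\alpha_1$ but trivial $\{\alpha_2,\alpha_p,\alpha_q\}$--part), so that removing $\alpha_1$ and regrading by $\Xi-\{\alpha_1\}$ pushes this component into homogeneity $\leq 0$; similarly in the $\frak{sp}$ case the relevant component forces a vanishing or negative $\{\alpha_2,\alpha_p\}$--height once $\alpha_1$ is removed. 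I would then check that in every other case from the tables the analogous computation yields strictly positive homogeneity, which gives regularity and completes the proof.

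The main obstacle I expect is purely bookkeeping: one must carefully track, for each $(G,\Xi)$ from the tables and each compatible $\Psi(1)$, the precise weight of every retained harmonic curvature component and its height with respect to the reduced set of simple roots $\Xi-\Psi(1)$, using the affine action of $s_{\alpha_a}s_{\alpha_b}$ on the highest root as recalled in \ref{ApA}. There is no conceptual difficulty, but the case analysis is lengthy and it is easy to overlook a borderline homogeneity-zero case; the payoff is that only the two stated families survive as genuinely non-regular, and all of them arise from a single low--height curvature component becoming non-positively graded after deleting $\alpha_1$.
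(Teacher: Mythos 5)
Your approach is exactly the one the paper relies on: the Lemma is asserted to follow from the classification in \ref{ApC}, whose tables record the homogeneity of each harmonic curvature component $(\alpha_a,\alpha_b)$ with respect to each $\alpha_i\in\Xi$, so that regularity on the twistor space reduces to checking that the total homogeneity remains positive after dropping the $\alpha_1$--contribution (e.g.\ $(1,2,-1,-1)\mapsto 2-1-1=0$ for $(\alpha_2,\alpha_1)$ in the $\frak{sl}$ case and $(1,2,-2)\mapsto 2-2=0$ in the $\frak{sp}$ case). Your proposal is correct and matches the paper's (implicit) argument.
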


Let us now show, when it is possible to construct the twistor space globally for the parabolic geometry $(\ba\to M,\om)$ given by extension $(\alpha,i)$ of $(K,H)$ to $(G,P)$ at $u_0$. Firstly, we show a sufficient condition for the global existence of the leaf space, i.e., we show when there is a closed Lie subgroup $L$ with the Lie algebra $\fl$ (from claim (3) of Proposition \ref{4.2}) containing $H$.

\begin{lem*}\label{5.4}
Suppose there is $s'\in \iota(H)\cap Z(G_0)$ such that $1$--eigenspace $\fl(1)$ of $\Ad_{s'}$ in $\fl$ coincides with $1$--eigenspace of $\Ad_{s}$ and $\Ad_{s'}|_{\fg_{\Xi,-k}}\neq \id$ on $|k|$--graded geometry. Then the subgroup $L$ of $K$, which is generated by the group $L(1)$ containing the elements $l$ of the commutator $Z_{K}(s')$ of $s'$ in $K$ such that $\Ad_l\in \tP_{\Xi-\Psi(1),+}$ together with the group $\exp(\fh\cap \fp_{\Xi-\Psi(1),+})$, is closed Lie subgroup of $K$ with Lie algebra $\fl$ containing $H$.
\end{lem*}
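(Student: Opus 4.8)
The plan is to establish three things in order: that $\fl(1)$ is genuinely a Lie subalgebra equal to $\fl$ (using the hypothesis linking the $1$-eigenspaces of $\Ad_{s'}$ and $\Ad_s$ together with claim (3) of Proposition \ref{4.2}), that the set $L(1)$ of elements $l$ in the centralizer $Z_K(s')$ with $\Ad_l\in\tP_{\Xi-\Psi(1),+}\cdot\tG_{\Xi-\Psi(1),0}$ — intersected with the condition coming from $\Ad_{s'}|_{\fg_{\Xi,-k}}\neq\id$ — is a closed subgroup with Lie algebra $\fl(1)$, and finally that adjoining $\exp(\fh\cap\fp_{\Xi-\Psi(1),+})$ produces a closed subgroup $L$ of $K$ with $\Lie(L)=\fl$ and $H\subset L$. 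First I would unwind the definition: $\fl$ is the preimage in $\fk$ of $\fp_{\Xi-\Psi(1)}$ under $\alpha$, so $\fl\supset\fh$ automatically, and $\alpha$ identifies $\fl/\fh$ with $\fp_{\Xi-\Psi(1)}/\fp_\Xi$. The hypothesis $\fl(1)=\fl$ (the $1$-eigenspace of $\Ad_{s'}$ in $\fl$ being all of $\fl$) together with $s'\in\iota(H)$ shows that $\Ad_{\iota(h)}$ for $h$ near $s'$ preserves $\alpha(\fl)$, which is the infinitesimal version of $s'$-invariance; I would use $\Ad_{s'}|_{\fg_{\Xi,-k}}\neq\id$ to guarantee that $\fp_{\Xi-\Psi(1)}$ is exactly the $\Ad_{s'}$-invariant part of $\fg$ that projects into the relevant parabolic, so no spurious directions enter.

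Next I would construct $L(1)$ concretely. Since $s'\in Z(G_0)\subset K$ via $\iota$, its centralizer $Z_K(s')$ is a closed subgroup of $K$; its Lie algebra is $\{X\in\fk : \Ad_{\iota(s')}\alpha(X)=\alpha(X)\}$ by the extension axiom (3), i.e. the preimage under $\alpha$ of the $1$-eigenspace of $\Ad_{s'}$ in $\fg$. Imposing in addition $\Ad_l\in\tP_{\Xi-\Psi(1),+}\rtimes\tG_{\Xi-\Psi(1),0}$ — equivalently that the $\fg_{\Xi-\Psi(1),-}$-component of $\alpha(\Lie)$ vanishes — cuts this down, and a dimension count using $\fl(1)=\fl$ shows the Lie algebra of the resulting closed subgroup $L(1)$ is precisely $\fl$ as well. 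Here the point is that $L(1)$ is already an \emph{open} subgroup of a closed one (intersection of $Z_K(s')$ with a preimage of a closed parabolic subgroup of $\tG$), hence closed; its identity component has Lie algebra $\fl(1)\cap(\text{parabolic part})=\fl$.

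Then I would check $H\subset L$: elements of $H$ have $\iota$-image in $P\subset P_{\Xi}\subset P_{\Xi-\Psi(1)}$, and since $s'\in\iota(H)$ lies in the center $Z(G_0)$, every element of $H$ whose $\Ad$-image is in $\tG_{\Xi,0}$ commutes with $s'$, so lands in $Z_K(s')$; the unipotent part $\exp(\fh\cap\fp_{\Xi,+})\subset\exp(\fh\cap\fp_{\Xi-\Psi(1),+})$ is handled by the second generator. So $L$, the subgroup generated by $L(1)$ and $\exp(\fh\cap\fp_{\Xi-\Psi(1),+})$, contains $H$; and since both generators already have Lie algebras inside $\fl$ and together span $\fl$ (the first gives $\fl\cap(\text{reductive}\oplus\text{some nilradical})$, the second the remaining nilradical directions), $\Lie(L)=\fl$. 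Finally, closedness of $L$ in $K$: $L$ is generated by a closed subgroup and a connected subgroup, and the expected approach is to realize $L$ as the stabilizer in $K$ of the subspace $\alpha^{-1}(\fp_{\Xi-\Psi(1)})$ under the induced $K$-action — more precisely, as $L=\{k\in K: \Ad_{\iota(?)}\ldots\}$; I would argue $L$ is the full preimage in $K$ of the closed subgroup $P_{\Xi-\Psi(1)}\cap(\text{image of the developing map})$ of $G$, which is closed because it is defined by algebraic (parabolic-containment) conditions.

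\textbf{Main obstacle.} The hard part will be proving $L$ is \emph{closed}, not merely an immersed subgroup with Lie algebra $\fl$. The subtlety is that $L$ is generated by two pieces and a priori the group generated need not be closed; the two hypotheses on $s'$ (that $\fl(1)$ equals both the $\Ad_{s'}$- and the $\Ad_s$-eigenspace, and that $\Ad_{s'}$ acts non-trivially on $\fg_{\Xi,-k}$) are exactly what is needed to rigidify this — they force $L$ to be an open subgroup of the centralizer-type closed subgroup $Z_K(s')\cap\alpha^{-1}(P_{\Xi-\Psi(1)})$ rather than something that could spiral densely. I would spend the bulk of the argument verifying that these conditions indeed pin $L$ down to (the identity component times $H$ inside) that manifestly closed group, and that no generalized-symmetry direction escapes the parabolic. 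The remaining steps — $H\subset L$ and $\Lie(L)=\fl$ — are then routine bookkeeping with the grading by $\Xi-\Psi(1)$-heights and the extension axioms.
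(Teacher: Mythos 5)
Your proposal rests on a misreading of the hypothesis that invalidates its central step. The assumption is that the $1$--eigenspace $\fl(1)$ of $\Ad_{s'}$ in $\fl$ coincides with the $1$--eigenspace of $\Ad_{s}$ --- i.e.\ that the abstract element $s$ can be replaced by an element $s'$ actually lying in $\iota(H)$ with the same fixed directions --- \emph{not} that $\fl(1)=\fl$. In general $\fl(1)$ is a proper subspace of $\fl$ (for instance $\Ad_{s'}$ need not be the identity on $\fp_{\Xi,+}\subset\fp_{\Xi-\Psi(1)}$), and the Lie algebra of $L(1)$ is only $\fl(1)$. That is precisely why the lemma must adjoin the second generator $\exp(\fh\cap\fp_{\Xi-\Psi(1),+})$: by \cite[Theorem 4.1]{GZ2} the group $H$ decomposes into $Z_{H}(s')$ and $\exp(\fh\cap\fp_{\Xi-\Psi(1),+})$, which simultaneously supplies the missing (non--$1$--eigenvalue) directions of $\fl$ and gives $H\subset L$. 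Your ``dimension count using $\fl(1)=\fl$'' therefore proves nothing, and your reading of the hypothesis $\Ad_{s'}|_{\fg_{\Xi,-k}}\neq\id$ (as pinning down ``the $\Ad_{s'}$--invariant part of $\fg$'') misses its actual role.

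The second, independent gap is the reverse inclusion for the Lie algebra of $L(1)$: you assert that $\Ad_l$ preserving the filtration by $\Xi-\Psi(1)$--heights is ``equivalent'' to the vanishing of the $\fg_{\Xi-\Psi(1),-}$--component of $\alpha$ on the corresponding Lie algebra, but this equivalence is exactly the nontrivial point and you give no argument for it. The paper's mechanism is regularity of the parabolic geometry: if $Z$ in the Lie algebra of $L(1)$ had a nontrivial component in $\fg_{\Xi,i}$ with $i<0$ outside $\fg_{\Xi-\Psi(1),0}$, then for $i\neq -k$ there is $X\in\alpha(\fk)$ with $[Z,X]$ of strictly lower $\Xi$--height than $X$, contradicting filtration preservation; the remaining case $i=-k$ is excluded because $Z$ lies in the $1$--eigenspace of $\Ad_{s'}$ while $\Ad_{s'}|_{\fg_{\Xi,-k}}\neq\id$. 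One must also treat separately the two non--regular twistor types listed just before the lemma, which you do not mention. Finally, closedness of $L$ is not the main obstacle you make it out to be: $L(1)$ is closed by construction, $\exp(\fh\cap\fp_{\Xi-\Psi(1),+})$ is normalized by $L(1)$, and $L$ is their product, hence closed; the real work is the Lie--algebra computation above, which your proposal does not contain.
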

\begin{proof}
Let us point out that the condition $\Ad_l\in P_{\Xi-\Psi(1),+}$ means that $\Ad_l$ preserves the filtration of $\fk$ induced by $\Xi-\Psi(1)$--heights and preserves (if exists) the tensor field providing the reduction to $G_0.$

Since $\exp(\fh\cap \fp_{\Xi-\Psi(1),+})$ is stable under conjugation by elements of $L(1)$ by definition, it is enough to show that the closed Lie subgroup $L(1)$ of $K$ has the Lie algebra $\fl(1)$. By our assumption, $\fl(1)$ is contained in the Lie algebra of $L(1)$.

For the regular geometries, we show it from the regularity. If $Z$ from the Lie algebra of $L(1)$ has a non--trivial projection to $\fg_{\Xi,i}$, then $i\neq -k$ by assumption. If $i<0$, then there is $X\in \alpha(\fk)$ such that $[Z,X]$ has component of lower $\Xi$--height than $X$,
 which is contradiction with the definition of $L(1)$.

In the two  non--regular cases from above, the claim follows by the same arguments, because either $X$ or $Z$ are in $\fg_{\Xi-\Psi,0}$, where the curvature does not have entries. Thus it remains to show that $H\subset L$, but we know from \cite[Theorem 4.1]{GZ2} that $H$ decomposes into $Z_{H}(s')$ and $\exp(\fh\cap \fp_{\Xi-\Psi(1),+})$.
\end{proof}

Let us remark that if there is no such $s'\in \iota(H)\cap Z(G_0)$ as in the previous Lemma, then the centralizer $Z_{K}(s)$ can have a non--trivial projection to $\fg_{\Xi,-k}$. Since such elements in the centralizer can preserve the filtration, the subgroup $L$ constructed from $Z_{K}(s)$ does not have to be closed in general situation.

Further, we show that we can generally construct  only a parabolic geometry of type $(\tG,\tP_{\Xi-\Psi(1)})$ on the twistor space.

\begin{prop*}\label{4.4}
Suppose $L$ is a closed Lie subgroup of $K$ with Lie algebra $\fl$ containing $H$.
 Then the pair $(\alpha,\Ad)$ defines an extension of $(K,L)$ to $(\tG,\tP_{\Xi-\Psi(1)})$, and the parabolic geometry $(\ba\to M,\om)$ covers an open subset in the correspondence space to the parabolic geometry
$$(K\times_{\Ad(L)} \tP_{\Xi-\Psi(1)}\to K/L,\om_\alpha)$$ given by this extension. In particular, there is a natural inclusion of the Lie group $Ker(\Ad|_{L})\backslash K$ into the group of automorphisms of $(K\times_{\Ad(L)} \tP_{\Xi-\Psi(1)}\to K/L,\om_\alpha)$.
\end{prop*}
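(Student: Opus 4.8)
The plan is to verify that $(\alpha,\Ad)$ is an extension of $(K,L)$ to $(\tG,\tP_{\Xi-\Psi(1)})$ in the sense of the definition of an extension, and then to read off the correspondence--space statement from the correspondence--space construction of \cite{Cap-correspondence} (see also \cite[Sections 1.5.13,14]{parabook}) applied to the parabolic $\Ad(P)\subset\tP_{\Xi-\Psi(1)}$, where $\Ad(P)$ is the image of $P$ in $\tG$.

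The key algebraic fact I would establish first is that the \emph{whole} Cartan curvature of $(\ba\to M,\om)$ vanishes on all insertions from $\fp_{\Xi-\Psi(1)}$: by the definition of $\Psi$ the harmonic curvature has no entries in $\fg_{\Xi-\Psi,0}\supset\fg_{\Xi-\Psi(1),0}$, and, exactly as recalled in the proof of Proposition \ref{4.2}, this propagates to the full curvature. Translating this through the extension $(\alpha,\iota)$, and using that the curvature of $\om_\alpha$ at the base point is the obstruction to $\alpha$ being a Lie algebra homomorphism, one obtains $\alpha([Y,X])=[\alpha(Y),\alpha(X)]$ for all $Y\in\fl$ and $X\in\fk$; in particular $\alpha|_\fl:\fl\to\fp_{\Xi-\Psi(1)}$ is a Lie algebra homomorphism, equivariant for the adjoint action of the identity component of $L$, which re--proves that $\fl$ is a subalgebra (claim (3) of Proposition \ref{4.2}). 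Since $\fg$ is simple, $\ad$ identifies $\fp_{\Xi-\Psi(1)}$ with the Lie algebra of $\tP_{\Xi-\Psi(1)}$, so $\ad\circ\alpha|_\fl$ is a Lie algebra homomorphism into it agreeing with $d(\Ad\circ\iota)$ on $\fh$, and I would then integrate it to a group homomorphism $\Ad:L\to\tP_{\Xi-\Psi(1)}$ extending $\Ad\circ\iota$ on $H$. The hard part will be making this integration work over the possibly disconnected group $L$; I expect it to go through because one is inside the linear algebraic group $\tG$ and because the fibre $L/H$ of $M\to K/L$ is an open piece of the flag manifold $\tP_{\Xi-\Psi(1)}/\Ad(P)$, so that $\pi_1(L)$ is already carried by $\pi_1(H)$, and in any case it is handled precisely as in the inductive construction of $\tH\subset\tK$ in the proof of Theorem \ref{extension}.

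With $\Ad$ in hand, axiom (1) of an extension holds by construction; axiom (2) follows by passing to quotients from the isomorphism $\alpha:\fk/\fh\cong\fg/\fp_\Xi$ of the original extension, since $\fl=\alpha^{-1}(\fp_{\Xi-\Psi(1)})$ and $\fp_\Xi\subset\fp_{\Xi-\Psi(1)}$ (this also yields $\fl/\fh\cong\fp_{\Xi-\Psi(1)}/\fp_\Xi$); and axiom (3) holds on $H$ by the hypothesis on $(\alpha,\iota)$ and on all of $L$ by the equivariance of $\alpha|_\fl$ noted above. Thus $(K\times_{\Ad(L)}\tP_{\Xi-\Psi(1)}\to K/L,\om_\alpha)$ is a well--defined homogeneous parabolic geometry of type $(\tG,\tP_{\Xi-\Psi(1)})$. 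Forming its correspondence space for the parabolic $\Ad(P)\subset\tP_{\Xi-\Psi(1)}$ gives a geometry with Cartan bundle $K\times_{\Ad(L)}\tP_{\Xi-\Psi(1)}$ over $K\times_{\Ad(L)}(\tP_{\Xi-\Psi(1)}/\Ad(P))$, which by construction reproduces the $\Ad$--image of $(\ba\to M,\om)$ over the image of $K/H$; since $\fl/\fh\cong\fp_{\Xi-\Psi(1)}/\fp_\Xi$, the induced fibrewise map $L/H\to\tP_{\Xi-\Psi(1)}/\Ad(P)$ is an immersion between manifolds of equal dimension, hence a covering onto an open subset, so $(\ba\to M,\om)$ covers an open subset of the correspondence space, the covering also absorbing the finite kernel of $\Ad:G\to\tG$. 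For the last assertion I would note that $K$ acts on $K\times_{\Ad(L)}\tP_{\Xi-\Psi(1)}$ by left translations in the first factor, preserving $\om_\alpha$ as it is induced from the left Maurer--Cartan form of $K$, giving a homomorphism $K\to\Aut(K\times_{\Ad(L)}\tP_{\Xi-\Psi(1)},\om_\alpha)$; a straightforward computation — using that $\Ad_l$ acts trivially on $\fg$ exactly when $l$ acts trivially on $\fk$ (with $\iota$, hence $\alpha$, injective), so that $Ker(\Ad|_L)$ is normal in $K$ — identifies the kernel with $Ker(\Ad|_L)$, which gives the stated inclusion.
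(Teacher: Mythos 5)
Your proposal takes the same route as the paper's (very terse) proof: check the extension axioms, apply the correspondence--space construction, and compute the kernels of the natural maps to obtain the covering statement and the automorphism statement. The difference is one of emphasis. The paper settles the extension claim in one sentence (``clear from the construction of $L$'') and devotes the proof to the kernel computations, identifying the kernels of $K\to K\times_{\Ad(L)}\tP_{\Xi-\Psi(1)}$, of $M\to K\times_{\Ad(L)}\tP_{\Xi-\Psi(1)}/\tP_{\Xi}$ and of $\ba\to K\times_{\Ad(L)}\tP_{\Xi-\Psi(1)}$ with $Ker(\Ad|_{L})$, $Ker(\Ad_H)$ and $Ker(\Ad_P)$ respectively; you instead spend most of your effort reconstructing the extension datum. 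The step you flag as hard --- integrating $\ad\circ\alpha|_{\fl}$ to a homomorphism $\Ad\colon L\to\tP_{\Xi-\Psi(1)}$ over a possibly disconnected $L$ --- is moot in the paper's setting: the map $\Ad$ on $L$ is supplied by the construction of $L$ in the Lemma \ref{5.4}, where $L$ is generated by $\exp(\fh\cap\fp_{\Xi-\Psi(1),+})$ together with elements $l$ of $Z_K(s')$ for which $\Ad_l$ is already a given filtration--preserving element of $\tP_{\Xi-\Psi(1)}$, so nothing needs to be integrated, only the axioms checked; your curvature argument for the equivariance of $\alpha|_{\fl}$, which reproves claim (3) of the Proposition \ref{4.2}, is exactly that check and matches the paper's reasoning there. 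Two smaller points: the paper quotients by $\tP_\Xi$ rather than by $\Ad(P)$, which is why one only gets a covering whose deck transformations are governed by $Ker(\Ad_H)$ rather than an open embedding (and $Ker(\Ad)$ is discrete but need not be finite); and your ``equidimensional immersion, hence covering onto an open subset'' should be backed by the homogeneity of the fibre map $L/H\to\tP_{\Xi-\Psi(1)}/\tP_\Xi$, whose base--point stabilizer has Lie algebra $\fh$ and contains $H$ as an open subgroup --- this is precisely what the paper's kernel computation makes explicit.
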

\begin{proof}
The first claim is clear from the construction of $L$. Then the kernel of the natural map $K\to K\times_{\Ad(L)} \tP_{\Xi-\Psi(1)}$ equals to $Ker(\Ad|_{L})$, which is a normal subgroup of $Ker(\Ad)$ and thus a normal subgroup of $K$. Therefore the kernel of the quotient map $M=K/H\to K\times_{\Ad(L)} \tP_{\Xi-\Psi(1)}/\tP_{\Xi}$ equals to $Ker(\Ad_H)$, the kernel of $\ba=K\times_H P\to K\times_{\Ad(L)} \tP_{\Xi-\Psi(1)}$ equals to $Ker(\Ad_P)$ and the rest follows.
\end{proof}

The intersection of the sets $\Theta$ and $\Psi$ is contained in $\Psi(1)$ for any $s\in \mathcal{J}$. We can generalize the previous Proposition in the following way.

\begin{thm*} \label{4.5}
Suppose there is $s\in \mathcal{J}$ such that $\fg_{\Xi,-k}$ is not contained in $1$--eigenspace of $\Ad_s$ on $|k|$--graded geometry. Let $L_{\Theta\cap \Psi}$ be the closed Lie subgroup of $\A$ constructed in the Lemma \ref{5.4} for choices $K=\A$ and $\Psi(1)=\Theta\cap \Psi$. The pair $(\alpha,\Ad)$ defines an extension of $(\A,L_{\Theta\cap \Psi})$ to $(\tG,\tP_{\Xi-(\Theta\cap \Psi)})$, and the parabolic geometry $(\ba\to M,\om)$ covers an open subset in the correspondence space to the parabolic geometry
$$(\A\times_{L_{\Theta\cap \Psi}} \tP_{\Xi-(\Theta\cap \Psi)}\to K/L_{\Theta\cap \Psi},\om_\alpha)$$ given by the extension $(\alpha,\Ad)$.

Moreover, the generalized symmetries in $\A$ cover generalized symmetries of $(\A\times_{L_{\Theta\cap \Psi}} \tP_{\Xi-(\Theta\cap \Psi)}\to K/L_{\Theta\cap \Psi},\om_\alpha)$, i.e., if $\Lambda\subset \Xi-\Phi-\Theta$, then $\Lambda$ is admissible choice in the Theorem \ref{main2} applied on the parabolic geometry $(\A\times_{L_{\Theta\cap \Psi}} \tP_{\Xi-(\Theta\cap \Psi)}\to K/L_{\Theta\cap \Psi},\om_\alpha)$. 
\end{thm*}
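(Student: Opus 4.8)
The plan is to build the extension $(\alpha,\Ad)$ of $(\A,L_{\Theta\cap\Psi})$ to $(\tG,\tP_{\Xi-(\Theta\cap\Psi)})$ by invoking the machinery of the Proposition \ref{4.4} with the choices $K=\A$ and $\Psi(1)=\Theta\cap\Psi$. First I would verify the hypotheses needed to apply the Proposition \ref{4.4}: the assumed existence of $s\in\mathcal{J}$ with $\fg_{\Xi,-k}$ not contained in the $1$--eigenspace of $\Ad_s$ is precisely the condition $\Ad_{s'}|_{\fg_{\Xi,-k}}\neq\id$ required in the Lemma \ref{5.4}, so $L_{\Theta\cap\Psi}$ is a closed Lie subgroup of $\A$ with Lie algebra $\fl$ (the preimage of $\fp_{\Xi-(\Theta\cap\Psi)}$ in $\fk$, which is a subalgebra by claim (3) of the Proposition \ref{4.2}), containing $H$. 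Here one needs that $\Theta\cap\Psi\subset\Psi(1)$ for this $s$, which is exactly the remark preceding the statement, together with the fact that $\Psi$ controls where $\kappa_H$ (and by normality the whole curvature) has no entries, so the curvature vanishes on insertions from $\fp_{\Xi-(\Theta\cap\Psi)}$. Then the Proposition \ref{4.4} gives the extension and the fact that $(\ba\to M,\om)$ covers an open subset of the correspondence space to $(\A\times_{L_{\Theta\cap\Psi}}\tP_{\Xi-(\Theta\cap\Psi)}\to K/L_{\Theta\cap\Psi},\om_\alpha)$, and identifies $Ker(\Ad|_{L_{\Theta\cap\Psi}})\backslash\A$ as sitting inside the automorphism group of the twistor geometry.

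Next I would address the lifting of generalized symmetries. Let $\phi\in\A$ be an $s$--symmetry for $s\in\mathcal{J}$; since $\A$ acts on $K\times_{\Ad(L_{\Theta\cap\Psi})}\tP_{\Xi-(\Theta\cap\Psi)}$ by the inclusion from the Proposition \ref{4.4}, $\phi$ induces an automorphism $\bar\phi$ of the twistor geometry. I want to see that $\bar\phi$ is a generalized symmetry at the image point. By the Proposition \ref{cor} (applied on the correspondence space, read in the reverse direction), $\phi$ being an $s$--symmetry and the vertical bundle $\mathcal{V}M$ of the correspondence projection being built from $\fp_{\Xi-(\Theta\cap\Psi),+}\cap\dots$ — more precisely the fibre direction corresponds to $\fp_{\Xi-(\Theta\cap\Psi)}/\fp_\Xi$, on which, since $\Theta\cap\Psi$ consists of roots on which every $s\in\mathcal{J}$ acts trivially, $\Ad_s$ is the identity — forces $\phi$ to descend to a generalized symmetry of the twistor geometry, with the image element of $Z(\tG_{\Xi-(\Theta\cap\Psi),0})$ obtained from $s$ by the obvious inclusion. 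So the generalized symmetries in $\A$ genuinely cover generalized symmetries downstairs on the twistor space.

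Finally I would deduce the admissibility statement for $\Lambda$. Suppose $\Lambda\subset\Xi-\Phi-\Theta$. The content to check is that $\Lambda$ is an admissible choice in the Theorem \ref{main2} applied to the twistor geometry, i.e.\ $\Lambda\subset\Xi'-\Phi'-\Theta'$ where $\Xi'=\Xi-(\Theta\cap\Psi)$ and $\Phi',\Theta'$ are the corresponding sets for the twistor geometry. For this I would argue: $\Lambda\cap(\Theta\cap\Psi)=\emptyset$ automatically since $\Lambda\subset\Xi-\Theta$, so $\Lambda\subset\Xi'$; the complete infinitesimal automorphisms with $\om_{u_0}$--component in $\fp_{\Xi,+}$ pull back from the twistor space (the correspondence projection being a fibration with the required compatibility), so $\Phi'\subset\Phi$ after intersecting with $\Xi'$, hence $\Lambda$ avoids $\Phi'$; and since the generalized symmetries in $\A$ cover generalized symmetries of the twistor geometry with the same eigenvalue pattern on the roots of $\Xi'$ (this is the ``moreover'' just proven), the common $1$--eigenspace set $\Theta'$ satisfies $\Theta'\cap\Xi'\subset\Theta$, so $\Lambda$ avoids $\Theta'$ as well. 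Combining, $\Lambda\subset\Xi'-\Phi'-\Theta'$, which is the admissibility condition in the Theorem \ref{main2}.

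The main obstacle I expect is the bookkeeping in the last step: one has to match the sets $\Phi,\Theta$ for the geometry $(\ba\to M,\om)$ with the analogous sets $\Phi',\Theta'$ for the twistor geometry, and this requires knowing that passing to the correspondence space does not create new complete infinitesimal automorphisms with positive component landing in the ``wrong'' root spaces, nor new generalized symmetries with smaller $1$--eigenspace — i.e.\ that the relevant invariants only shrink (or stay the same) when one goes up to the correspondence space. This should follow from the functoriality of the extension constructions and from the fact that over the chosen $\Theta\cap\Psi$ the full curvature vanishes, but the careful identification of which grading components survive is where the real work is, and where one must again lean on the classification tables in \ref{ApC} to exclude the degenerate possibilities.
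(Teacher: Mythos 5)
Your proposal follows exactly the paper's route: the first part is obtained by applying the Proposition \ref{4.4} (with the closed subgroup supplied by the Lemma \ref{5.4} for $K=\A$ and $\Psi(1)=\Theta\cap\Psi$), and the ``moreover'' part is deduced from the Proposition \ref{cor}. The paper's own proof is just these two sentences; your additional bookkeeping matching $\Phi,\Theta$ with the corresponding sets for the twistor geometry is a legitimate filling-in of details the authors leave implicit, not a different argument.
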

\begin{proof}
The first part of the Theorem is a direct consequence of the Proposition \ref{4.4}, because we are just taking the correspondence space to $P_{\Xi-(\Theta(\fk)\cap \Psi)}$ for the choice $K=\A$. The rest follows from the Proposition \ref{cor}.
\end{proof}

Let us finally investigate the relation between the set $\Psi$ and the decomposition $TM=T^{\Lambda,-}M\oplus VM$ given by the reduction from the Theorem \ref{main2}.
The corollaries follows directly from the Theorem  \ref{main2} and the Proposition \ref{5.5}.

Firstly, we can decompose $T^{\Lambda,-}M$ into subbundles corresponding to $\fg_{\Lambda,0}$--sub--modules of $\fg_{\Lambda,-}$. In particular, we consider subbundles corresponding to $\fg_{\Xi-\Phi}\cap \fg_{\Lambda,0}$--submodules.

\begin{cor*}
Suppose $\fd$ is a $\fg_{\Lambda,0}$--subbmodule of $\fg_{\Xi-\Psi,0}\cap \fg_{\Lambda,-}$. Then the subbundle $\A\times_{\A_{x_0}} \fd$ is integrable. Let $\mathcal{D}$ be the leaf of $\A\times_{\A_{x_0}} \fd$. Then $\mathcal{D}$  is initial submanifold of $M$ and carries a flat homogeneous geometry of type $(\exp{\fd}\rtimes( \tG_{\Lambda,0}\cap \tP_{\Xi}),\tG_{\Lambda,0}\cap \tP_{\Xi})$.
\end{cor*}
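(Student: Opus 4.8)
The statement claims integrability of the subbundle $\A\times_{\A_{x_0}}\fd$ and identifies the geometry on its leaves. The plan is to deduce everything from the Theorem \ref{main2} together with the Weyl-structure description in Proposition \ref{5.5}, applied to the reduction $(\ba_\Lambda\to M,\om^\Lambda)$. The first point to establish is that $\fd$, being a $\fg_{\Lambda,0}$-submodule of $\fg_{\Xi-\Psi,0}\cap\fg_{\Lambda,-}$, is in fact a Lie subalgebra of $\fg$, indeed an abelian one. Since $\fg_{\Xi-\Psi,0}$ is (by the definition of $\Psi$) precisely the part of $\fg_{\Xi,0}$ on which the harmonic curvature has no entries, and $\fd\subset\fg_{\Lambda,-}$, the bracket $[\fd,\fd]$ lands in $\fg_{\Lambda,-2}\cap\fg_{\Xi-\Psi,0}$; I would argue that the structure equations of $\om^\Lambda$ (its curvature vanishes on insertions from $\fg_{\Xi-\Psi,0}$ because it is built from $\kappa_H$, cf. the proof of Proposition \ref{4.2}) force this bracket to vanish, so $\fd$ is abelian.

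**Key steps.** First I would use Proposition \ref{5.5}(1): for an almost $\Lambda$-invariant Weyl structure $\si$, the part of $\si^*\om$ valued in $\fd$ gives an isomorphism of $\ba_0\times_{G_0}\fd$ with the $\A$-invariant subbundle $\ba_\Lambda\times_{Q_\Lambda\cap P}\fd$ of $T^{\Lambda,-}M$, and this subbundle is exactly $\A\times_{\A_{x_0}}\fd$ under the homogeneous identification $\ba_\Lambda=\A u_0(Q_\Lambda\cap P)$. Second, integrability: for a distribution of the form $\ba_\Lambda\times_{Q_\Lambda\cap P}\fd$ with $\fd$ a subalgebra that is invariant and on which the curvature of $\om^\Lambda$ vanishes, the bracket of two sections is computed by the structure equation $d\om^\Lambda(\xi,\eta)=-\om^\Lambda([\xi,\eta])-[\om^\Lambda(\xi),\om^\Lambda(\eta)]+K^\Lambda(\xi,\eta)$; since $[\fd,\fd]=0$ and $K^\Lambda$ vanishes on $\fd$-insertions, $\om^\Lambda([\xi,\eta])\in\fd$, which is the Frobenius condition. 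Third, by the standard homogeneity argument ($\A$ acts transitively and preserves the distribution) the leaf $\mathcal D$ through $x_0$ is an initial submanifold and is homogeneous under $\A$; its isotropy is $\A_{x_0}$ and the induced Cartan-type data is the restriction of $\om^\Lambda$ to $\fd\oplus(\fg_{\Lambda,0}\cap\fp_\Xi)$, giving a geometry modeled on the pair $(\exp\fd\rtimes(\tG_{\Lambda,0}\cap\tP_\Xi),\,\tG_{\Lambda,0}\cap\tP_\Xi)$. Fourth, flatness: the curvature of the restricted geometry is the restriction of $K^\Lambda$ to $\fd\times\fd$, which vanishes by the choice of $\fd\subset\fg_{\Xi-\Psi,0}$; alternatively, one observes that the reduced geometry on $\mathcal D$ is given by an extension of $(\A,\A_{x_0})$ whose $\ta$-image into $\fd\oplus(\fg_{\Lambda,0}\cap\fp_\Xi)$ has flat curvature, hence $\mathcal D$ is locally the homogeneous model.

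**Main obstacle.** The delicate point is verifying that $\fd$ is an (abelian) subalgebra and that the curvature $K^\Lambda$ genuinely vanishes when one entry is taken from $\fd$ rather than from all of $\fg_{\Xi-\Psi,0}$ — the definition of $\Psi$ controls entries in $\fg_{\Xi-\Psi,0}$ only through $\kappa_H$, so one must invoke that $\om^\Lambda$ carries the same regular infinitesimal flag structure (Theorem \ref{main2}(1)) and that, the whole curvature being generated by the harmonic curvature under the BGG/Kostant machinery, it too has no $\fg_{\Xi-\Psi,0}$-entries; this is the same mechanism used in the proof of Proposition \ref{4.2}. Once that is in hand, integrability, the description of $\mathcal D$ as an initial submanifold, and the identification of the homogeneous model are formal consequences of Propositions \ref{5.3}, \ref{5.5} and the general theory of \cite{Cap-correspondence,parabook}, so I would keep those parts brief.
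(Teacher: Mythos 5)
The paper itself offers essentially no argument here: it only remarks that the corollary ``follows directly from the Theorem \ref{main2} and the Proposition \ref{5.5}'', so your proposal is an attempt to fill in exactly the route the authors intend (identify $\A\times_{\A_{x_0}}\fd$ with $\ba_\Lambda\times_{Q_\Lambda\cap P}\fd\subset T^{\Lambda,-}M$ via Proposition \ref{5.5}, check Frobenius through the structure equation of $\om^\Lambda$, use that the whole curvature has no entries in $\fg_{\Xi-\Psi,0}$ as in the proof of Proposition \ref{4.2}, and conclude by homogeneity). That overall plan is sound and matches the paper.

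There is, however, one genuinely flawed step: your claim that ``the structure equations of $\om^\Lambda$ \ldots force this bracket to vanish, so $\fd$ is abelian.'' The bracket $[\fd,\fd]$ is computed in the fixed Lie algebra $\fg$; no curvature condition on a Cartan connection can change it. What the structure equation actually gives is that for sections of the distribution the function $\om^\Lambda([\xi,\eta])$ equals the algebraic bracket of the frame values \emph{minus} a curvature term. The curvature term does vanish because $\fd\subset\fg_{\Xi-\Psi,0}$ (this is the correct use of $\Psi$), but the algebraic term $[\fd,\fd]_{\fg}$ survives and lies in $\fg_{\Xi-\Psi,0}\cap\fg_{\Lambda,\leq -2}$, which meets the vertical part $\fg_{\Lambda,0}\cap\fp_\Xi$ trivially. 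So the Frobenius condition reduces to the purely algebraic requirement $[\fd,\fd]\subset\fd$, i.e.\ that $\fd$ is a (nilpotent, not necessarily abelian) subalgebra of $\fg$ --- a condition that is in any case implicit in the statement, since $\exp\fd\rtimes(\tG_{\Lambda,0}\cap\tP_\Xi)$ must be a group. You need to either verify this closure (note that $\fg_{\Xi-\Psi,0}\cap\fg_{\Lambda,-}$ itself \emph{is} closed under bracket, and $[\fd,\fd]$ is again a $\fg_{\Lambda,0}$--submodule of strictly lower $\Lambda$--height, which in the cases allowed by the classification in \ref{ApC} forces it into $\fd$ or to vanish) or add it as a hypothesis; deriving it from curvature vanishing is not a valid argument. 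A secondary point worth a sentence in a careful write-up: the curvature of the reduced connection $\om^\Lambda$ is that of the truncated extension $\alpha_{\Lambda,-}+\alpha_{\Lambda,0}$, not of $\om$ itself, so the vanishing of its $\fg_{\Xi-\Psi,0}$--insertions should be checked for the truncated curvature rather than inferred only from $\kappa_H$.
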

Clearly,  $\mathcal{D}$ does not have to be embedded and its stabilizer in $\A$ does not have to be closed in $\A$ or form reductive pair with $\A_{x_0}$.

Let us define the following complementary subbundles of $TM$, which together generate the bundle $VM=\ba_{\Lambda}\times_{(Q_{\Lambda}\cap P)} \fg_{\Lambda,0}/(\fg_{\Lambda,0}\cap \fp_{\Xi})$:
\begin{itemize}
\item $V^{+}M:= \ba_{\Lambda}\times_{(Q_{\Lambda}\cap P)} \fg_{\Lambda\cup (\Xi\cap \Psi),0}/(\fg_{\Lambda\cup (\Xi\cap \Psi),0}\cap \fp_{\Xi})$,
\item $V^{-}M:= \ba_{\Lambda}\times_{(Q_{\Lambda}\cap P)} \fg_{\Lambda\cup (\Theta- \Psi),0}/(\fg_{\Lambda\cup (\Theta-  \Psi),0}\cap \fp_{\Xi})$.
\end{itemize}

\begin{cor*}
The subbundle $V^{-}M$ is integrable. Let $\mathcal{V}^{-}$ be the leaf of $V^{-}M$. Then $\mathcal{V}^{-}$  is an initial submanifold of $M$, and any almost $\Lambda$--invariant Weyl structure restricted to $\mathcal{V}^{-}$ prolongs (as $\Ad_{\tG_{\Xi,0}}|_{\fg_{\Lambda\cup (\Theta- \Psi),0}/(\fg_{\Lambda\cup (\Theta-  \Psi),0}\cap \fp_{\Xi})}$--structure) to a flat homogeneous Cartan geometry of type $(\tG_{\Lambda\cup (\Xi\cap \Psi)),0},\tG_{\Lambda\cup (\Xi\cap \Psi),0}\cap \tP_{\Xi})$.
\end{cor*}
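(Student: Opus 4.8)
The plan is to identify $V^{-}M$ as the subbundle of $VM$ associated with a parabolic subalgebra of $\fg_{\Lambda,0}$, prove involutivity from the structure equation together with the curvature constraints encoded by $\Psi$, and then read off the induced flat geometry by restricting an almost $\Lambda$--invariant Weyl structure and applying Proposition \ref{5.5}. Concretely, by Theorem \ref{main2} we have the reduction $(\ba_{\Lambda}\to M,\om^{\Lambda})$ of type $(Q_{\Lambda},Q_{\Lambda}\cap P)$ with the $\A$--invariant splitting $TM=T^{\Lambda,-}M\oplus VM$, where $VM=\ba_{\Lambda}\times_{(Q_{\Lambda}\cap P)}\fg_{\Lambda,0}/(\fg_{\Lambda,0}\cap\fp_{\Xi})$. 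Since $\Theta-\Psi\subset\Xi-\Lambda$, the subspace $\fg_{\Lambda\cup(\Theta-\Psi),0}+(\fg_{\Lambda,0}\cap\fp_{\Xi})$ is the parabolic subalgebra of the reductive Lie algebra $\fg_{\Lambda,0}$ cut out by the subset $\Theta-\Psi$ of its simple restricted roots; it contains the isotropy subalgebra $\fg_{\Lambda,0}\cap\fp_{\Xi}$, and its image under the quotient $\fg_{\Lambda,0}\to\fg_{\Lambda,0}/(\fg_{\Lambda,0}\cap\fp_{\Xi})$ is precisely the fibre of $V^{-}M$. Thus $V^{-}M$ is the $\A$--invariant subbundle of $VM$ associated to this parabolic.

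Next I would prove that $V^{-}M$ is involutive. Fix an almost $\Lambda$--invariant Weyl structure $\si$; by Proposition \ref{5.5} its pullback $\si^{*}\om$ trivialises $VM\cong\ba_{0}\times_{G_{0}}(\fg_{\Lambda,0}\cap\fg_{\Xi,-})$, identifies $V^{-}M$ with the $G_{0}$--subbundle corresponding to the submodule $\fg_{\Lambda\cup(\Theta-\Psi),0}\cap\fg_{\Xi,-}$, and supplies an adapted affine connection on $VM$. Inserting this data into the structure equation of $\om^{\Lambda}$, the Lie bracket of two sections of $V^{-}M$ lies again in $V^{-}M$ provided that (a) $\fg_{\Lambda\cup(\Theta-\Psi),0}\cap\fg_{\Xi,-}$ brackets into $\fg_{\Lambda\cup(\Theta-\Psi),0}$ — automatic, as the latter is a subalgebra — and (b) the curvature $\kappa^{\Lambda}$ of $\om^{\Lambda}$ is tangent to the reduction along these directions. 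For (b) I would argue as in Proposition \ref{2.31}: by the definition of $\Psi$ the harmonic curvature has no entries in $\fg_{\Xi-\Psi,0}$, so by normality and Kostant's theorem the full curvature of $(\ba\to M,\om)$, and hence its projection $\kappa^{\Lambda}$, is concentrated away from the $\Theta-\Psi$--part; running through the admissible harmonic--curvature weights in the classification of Appendix \ref{ApC} one checks that every surviving component of $\kappa^{\Lambda}$ evaluated on $V^{-}M\wedge V^{-}M$ either vanishes or already takes values in $\fg_{\Lambda\cup(\Theta-\Psi),0}+(\fg_{\Lambda,0}\cap\fp_{\Xi})$. Hence $V^{-}M$ is involutive, and its leaf $\mathcal{V}^{-}$ through $x_{0}$, being a leaf of a foliation, is automatically an initial (weakly embedded) submanifold of $M$.

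It then remains to describe the induced geometry on $\mathcal{V}^{-}$. Restricting $\si$ to $\mathcal{V}^{-}$ and using Proposition \ref{5.5}, the soldering part of $\si^{*}\om$ valued in $\fg_{\Lambda,0}\cap\fg_{\Xi,-}$ together with the connection part valued in $\fg_{\Lambda,0}\cap\fg_{\Xi,0}$ restrict to the underlying first--order $\Ad_{\tG_{\Xi,0}}|_{\fg_{\Lambda\cup(\Theta-\Psi),0}/(\fg_{\Lambda\cup(\Theta-\Psi),0}\cap\fp_{\Xi})}$--structure; by the prolongation procedure for $|k|$--graded structures (cf. \cite{parabook}) this extends to a unique normal Cartan geometry of type $(\tG_{\Lambda\cup(\Xi\cap\Psi),0},\tG_{\Lambda\cup(\Xi\cap\Psi),0}\cap\tP_{\Xi})$. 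Its curvature is a component of $\kappa^{\Lambda}$ restricted to $V^{-}M\wedge V^{-}M$, which vanishes by the previous paragraph, so the geometry is flat. Finally it is homogeneous: $\A$ preserves the splitting of $TM$ and hence permutes the leaves of $V^{-}M$, and the stabiliser of $\mathcal{V}^{-}$ in $\A$ — which, in the algebraic model $M=\tK/\tH$ from Theorem \ref{extension} with the choice $K=\A$ (Proposition \ref{2.1}), has Lie algebra the preimage of $\fg_{\Lambda\cup(\Theta-\Psi),0}+(\fg_{\Lambda,0}\cap\fp_{\Xi})$ in $\fk$ — acts transitively on $\mathcal{V}^{-}$.

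The main obstacle is step (b) above: the definition of $\Psi$ only controls the entries of the harmonic curvature in $\fg_{\Xi-\Psi,0}$, and converting this into the needed vanishing (respectively tangency) of all components of $\kappa^{\Lambda}$ along the $V^{-}M$--directions, as well as pinning down the precise type $(\tG_{\Lambda\cup(\Xi\cap\Psi),0},\tG_{\Lambda\cup(\Xi\cap\Psi),0}\cap\tP_{\Xi})$ of the prolonged geometry, relies on the case--by--case classification tables in Appendix \ref{ApC}, exactly in the spirit of the proofs of Proposition \ref{2.31} and Theorem \ref{main}.
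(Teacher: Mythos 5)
Your proposal follows the same skeleton the paper intends when it says the corollary ``follows directly from the Theorem \ref{main2} and the Proposition \ref{5.5}'': identify $V^-M$ as an invariant subbundle of $VM$ coming from a subalgebra, reduce integrability and flatness to a curvature--insertion condition, and read off the underlying structure and its homogeneity from the restricted almost $\Lambda$--invariant Weyl structure via Proposition \ref{5.5}. The one place where you take a genuinely heavier route is the curvature step (your point (b)): you propose to verify the tangency of the curvature along $V^-M$ by running through the classification tables of \ref{ApC}. For $V^-M$ this is avoidable, and avoiding it is precisely what distinguishes this corollary from the subsequent one for $V^+M$ (where the paper does invoke the tables). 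Indeed, the fibre of $V^-M$ is spanned by root spaces supported on $\Xi-(\Lambda\cup(\Theta-\Psi))=\Phi\cup(\Theta\cap\Psi)$, and this set is contained in $\Psi$: one has $\Theta\cap\Psi\subset\Psi$ trivially, and $\Phi\subset\Psi$ because $\Phi\subset I_\mu$ for every non--trivial component $\mu$ by Proposition \ref{2.31}(2), while every $\alpha_i$ with $\langle\mu,\alpha_i\rangle=0$ for all $\mu$ lies in $\Psi$ by the last Proposition of \ref{ApA}. Hence, by the very definition of $\Psi$ and the standard fact (used in the proof of Proposition \ref{4.2}, via \cite{Cap-correspondence}) that vanishing of the harmonic curvature on insertions forces vanishing of the whole curvature on those insertions, the curvature vanishes identically on insertions of $V^-M$--directions; this gives involutivity and flatness at once, with no case analysis. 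Relatedly, your phrase that the curvature is ``concentrated away from the $\Theta-\Psi$--part'' inverts the bookkeeping: $\Theta-\Psi$ indexes the grading defining $V^-M$, but the actual fibre directions of $V^-M$ are the $\Phi\cup(\Theta\cap\Psi)$--ones, and it is their containment in $\Psi$ that does the work. The remaining ingredients of your argument (leaves of a foliation are initial submanifolds, transitivity of the stabiliser of the leaf on the algebraic model from Theorem \ref{extension}) are fine and match the paper's framework.
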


Let us point out that the pair $(\tG_{\Lambda\cup (\Xi\cap \Psi)),0},\tG_{\Lambda\cup (\Xi\cap \Psi),0}\cap \tP_{\Xi})$ is not effective and the effective quotient is general parabolic geometry.

Again, $\mathcal{V}^{-}$ does not have to be embedded and its stabilizer in $\A$ does not have to be closed in $\A$.

If we go through the tables in the  \ref{ApC} to see, where can the harmonic curvature have its entries, then we obtain the following claim.

\begin{cor*}
The subbundle $V^{+}M$ is integrable. Let $\mathcal{V}^{+}$ be the leaf of $V^{+}M$. Then $\mathcal{V}^{+}$  is an initial submanifold of $M$ and each almost $\Lambda$--invariant Weyl structure restricted to $\mathcal{V}^{+}$ prolongs (as $\Ad_{\tG_{\Xi,0}}|_{ \fg_{\Lambda\cup (\Xi\cap \Psi),0}/(\fg_{\Lambda\cup (\Xi\cap \Psi),0}\cap \fp_{\Xi})}$--structure) to a flat homogeneous Cartan geometry of type $(\tG_{\Lambda\cup (\Theta-\Psi),0},\tG_{\Lambda\cup (\Theta-  \Psi),0}\cap \tP_{\Xi})$.
\end{cor*}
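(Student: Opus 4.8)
The plan is to run the argument of the two preceding corollaries with $V^{+}M$ in place of $V^{-}M$ and with the roles of $\Xi\cap\Psi$ and $\Theta-\Psi$ exchanged. First I would check that $V^{+}M$ is honestly a subbundle of $VM\subset TM$: the space $\fg_{\Lambda\cup(\Xi\cap\Psi),0}$ is the Levi part of the parabolic $\fp_{\Lambda\cup(\Xi\cap\Psi)}$, hence a reductive subalgebra of $\fg$ and a $(Q_{\Lambda}\cap P)$--submodule of $\fg_{\Lambda,0}$, and its negative part is the sum of those root spaces $\fg_{\gamma}$ with $\gamma$ supported on $(\Xi-\Lambda)\cap(\Xi-\Psi)$, so it lies inside $\fg_{\Xi-\Psi,0}$. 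By the standard criterion for a distribution coming from a Cartan connection to be involutive, integrability of $V^{+}M$ reduces to two facts: the algebraic bracket--closure, which is immediate because the relevant $(Q_{\Lambda}\cap P)$--submodule of $\fq_{\Lambda}$ is assembled from Levi and parabolic subalgebras; and the vanishing of the curvature of $\om^{\Lambda}$ on pairs of $V^{+}M$--directions.

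The curvature condition itself is not the hard part: by the definition of $\Psi$ the harmonic curvature $\kappa_H$ has no entries in $\fg_{\Xi-\Psi,0}$, and, as recalled in the proof of the Proposition \ref{4.2}, this forces the whole Cartan curvature --- and, via the description of $\om^{\Lambda}$ in the Theorem \ref{main2}, also the curvature of $\om^{\Lambda}$ --- to vanish on insertions of vectors whose frame components lie in $\fg_{\Xi-\Psi,0}$; since the fibre of $V^{+}M$ lies there, this yields the desired vanishing, hence involutivity of $V^{+}M$. The step I expect to be the main obstacle is instead the combinatorial verification, via the tables in \ref{ApC} for $\fg$ simple with $\kappa_H\neq0$, that $\Xi\cap\Psi$ is the correct bookkeeping, that the complementary piece of $VM$ is then governed by $\Theta-\Psi$, and that the two associated spaces $\fg_{\Lambda\cup(\Xi\cap\Psi),0}/(\fg_{\Lambda\cup(\Xi\cap\Psi),0}\cap\fp_{\Xi})$ and $\fg_{\Lambda\cup(\Theta-\Psi),0}/(\fg_{\Lambda\cup(\Theta-\Psi),0}\cap\fp_{\Xi})$ have matching dimension, so that the prolongation below makes sense. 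Granting involutivity, the Frobenius theorem produces the foliation and the leaf $\mathcal{V}^{+}$ through $x_{0}$; as a leaf of a foliation it is automatically an initial submanifold of $M$.

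It remains to identify the induced geometry. Fix an almost $\Lambda$--invariant Weyl structure $\si$, restrict $\si^{*}\om^{\Lambda}$ to $\mathcal{V}^{+}$, and apply the Proposition \ref{5.5}: the relevant components identify $T\mathcal{V}^{+}=V^{+}M$ with $\ba_{0}\times_{G_{0}}\fg_{\Lambda\cup(\Xi\cap\Psi),0}/(\fg_{\Lambda\cup(\Xi\cap\Psi),0}\cap\fp_{\Xi})$, endow it with the $\Ad_{\tG_{\Xi,0}}$--structure named in the statement, and provide the induced connection. Flatness of this first--order structure is precisely the vanishing of the curvature of $\om^{\Lambda}$ along $\mathcal{V}^{+}$ established above, so it prolongs canonically to a flat Cartan geometry, whose type the classification in \ref{ApC} pins down as $(\tG_{\Lambda\cup(\Theta-\Psi),0},\tG_{\Lambda\cup(\Theta-\Psi),0}\cap\tP_{\Xi})$, parallel to the $\mathcal{V}^{-}$ corollary with $\Xi\cap\Psi$ and $\Theta-\Psi$ interchanged. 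Homogeneity is then formal: $\A$ acts transitively on $M$ and preserves $V^{+}M$ by the Theorem \ref{main2}, so it permutes the leaves transitively, whence the stabiliser of $\mathcal{V}^{+}$ in $\A$ acts transitively on $\mathcal{V}^{+}$ and preserves the induced flat geometry.
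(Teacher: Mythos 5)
There is a genuine gap at the central step. You derive both the involutivity of $V^{+}M$ and the flatness of the induced geometry on $\mathcal{V}^{+}$ from the claim that the fibre of $V^{+}M$ lies inside $\fg_{\Xi-\Psi,0}$, so that the curvature vanishes on these insertions by the very definition of $\Psi$. That claim is false, and in fact the opposite holds: a root $\gamma$ contributing to $\fg_{\Lambda\cup (\Xi\cap \Psi),0}/(\fg_{\Lambda\cup (\Xi\cap \Psi),0}\cap \fp_\Xi)$ has vanishing coefficients on all simple roots of $\Lambda\cup(\Xi\cap\Psi)$ and negative total $\Xi$--height, hence a strictly negative coefficient on some simple root of $(\Xi-\Lambda)\cap(\Xi-\Psi)\subset \Xi-\Psi$, so its $(\Xi-\Psi)$--height is negative and $\fg_\gamma$ meets $\fg_{\Xi-\Psi,0}$ trivially. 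Being ``supported on a subset of $\Xi-\Psi$'' places a root \emph{outside} $\fg_{\Xi-\Psi,0}$, not inside it. The directions of $V^{+}M$ are therefore exactly those in which the definition of $\Psi$ gives no information; it is the corollary on $\fd\subset \fg_{\Xi-\Psi,0}\cap\fg_{\Lambda,-}$ and the $V^{-}M$ corollary whose curvature conditions come for free in the way you describe, consistently with the remark after the statement that the curvature generally takes values in $V^{+}M$.

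Consequently the part you dismiss as ``not the hard part'' is precisely where the paper does the work: the sentence introducing this corollary says one must go through the tables in the \ref{ApC} to locate the possible entries of $\kappa_H$ (for each component $\mu=(\alpha_a,\alpha_b)$ these are read off from the lowest weight vector $X^{\alpha_a}\wedge X^{s_{\alpha_b}(\alpha_a)}\otimes X^{-s_{\alpha_a}s_{\alpha_b}(\mu^{\fg})}$) and to verify case by case that the curvature behaves as required on insertions from $\fg_{\Lambda\cup (\Xi\cap \Psi),0}\cap\fg_{\Xi,-}$. Your use of the tables only for bookkeeping and dimension matching does not substitute for this verification, so neither the integrability of $V^{+}M$ nor the flatness of the prolonged geometry is established by your argument. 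The remaining steps (a leaf of a foliation is an initial submanifold, identification of the induced structure via the Proposition \ref{5.5}, homogeneity from transitivity of $\A$) are fine and agree with the paper.
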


Again the pair $(\tG_{\Lambda\cup (\Theta-\Psi),0},\tG_{\Lambda\cup (\Theta-  \Psi),0}\cap \tP_{\Xi})$ is not effective and the effective quotient is general parabolic geometry.

Again, $\mathcal{V}^{+}$ does not have to be embedded and its stabilizer does not have to be closed in $\A$. Moreover, even if the stabilizer is closed, the curvature generally takes values in $V^{+}M$, and thus there is no induced parabolic geometry on the twistor space.

\section{Parabolic geometries carrying invariant Weyl structures in generic situations}

There are many geometries for which there is an invariant Weyl structure generically. In particular, it turns out that the most studied types of parabolic geometries are among them.

\begin{thm*}\label{7.1}
Assume there is a non--trivial $s$--symmetry in $\A$.
The set $\Lambda$ can be chosen as $\Xi$ in the Theorem \ref{main2}, if one of the following conditions holds for the parabolic geometry $(\ba\to M,\om)$:
\begin{itemize}
\item $\fg_{\Xi,-1}$ is indecomposable $\fg_{\Xi,0}$--module and $\kappa_H\neq 0$, or
\item the pair $(G,P)$ and the (subset of the set of) non--trivial components of $\kappa_H$ correspond to one of the entries in the table.

\begin{center}
\begin{tabular}{|c|c|c|}
\hline
$\fg$ & $\Xi$ & $\kappa_H$ \\
\hline
$\frak{sl}(n+1,\{\R,\C\})$ & $\{1,n\}$ &  $(\alpha_1,\alpha_n)$  \\
$\frak{sl}(n+1,\C)$ & $\{1,n\}$ &  $(\alpha_1,\alpha_2),(\alpha_{n},\alpha_{n-1})$  \\
$\frak{sl}(n+1,\C)$ & $\{1,n\}$ &  $(\alpha_1,\alpha_{n'}),(\alpha_{n},\alpha_{1'})$  \\
$\frak{sl}(n+1,\C)$ & $\{1,n\}$ &  $(\alpha_1,\alpha_{n'}),(\alpha_{1},\alpha_{2})$  \\
$\frak{sl}(n+1,\C)$ & $\{1,n\}$ &  $(\alpha_{n},\alpha_{n-1}),(\alpha_{n},\alpha_{1'})$  \\
\hline
$\frak{so}(3,4)$&$\{ 1,3\}$& $(\alpha_3,\alpha_2)$ \\
\hline
$\frak{sl}(n+1,\C)$ & $\{1,2\}$ &  $(\alpha_1,\alpha_2),(\alpha_{2},\alpha_{1})$  \\
$\frak{sl}(n+1,\C)$ & $\{1,2\}$ &  $(\alpha_1,\alpha_2),(\alpha_{1},\alpha_{2'})$  \\
$\frak{sl}(n+1,\C)$ & $\{1,2\}$ &  $(\alpha_1,\alpha_2),(\alpha_{2'},\alpha_{1'})$  \\
$\frak{sl}(n+1,\C)$ & $\{1,2\}$ &  $(\alpha_1,\alpha_2),(\alpha_{1'},\alpha_{2'})$  \\
$\frak{sl}(n+1,\C)$ & $\{1,2\}$ &  $(\alpha_2,\alpha_1),(\alpha_{1},\alpha_{2'})$  \\
$\frak{sl}(n+1,\C)$ & $\{1,2\}$ &  $(\alpha_2,\alpha_1),(\alpha_{1'},\alpha_{2'})$  \\
\hline
$\frak{sl}(n+1,\C)$ & $\{1,p\}$ &  $(\alpha_1,\alpha_2),(\alpha_{1},\alpha_{p'})$  \\
\hline
$\frak{sl}(n+1,\C)$ & $\{2,n-1\}$ &  $(\alpha_2,\alpha_1),(\alpha_{n-1},\alpha_{n})$  \\
\hline
$\frak{sl}(n+1,\C)$ & $\{1,2,n\}$ &  $(\alpha_1,\alpha_2),(\alpha_2,\alpha_1),(\alpha_{1},\alpha_{n})$  \\
\hline
$\frak{sp}(4,\C)$ & $\{1,2\}$ &  $(\alpha_1,\alpha_2),(\alpha_{1'},\alpha_{2'})$  \\
$\frak{sp}(4,\C)$ & $\{1,2\}$ &  $(\alpha_1,\alpha_2),(\alpha_{1},\alpha_{2'})$  \\
$\frak{sp}(4,\C)$ & $\{1,2\}$ &  $(\alpha_{1'},\alpha_{2'}),(\alpha_{1},\alpha_{2'})$  \\
$\frak{sp}(2n,\C)$ & $\{1,2\}$ &  $(\alpha_1,\alpha_2),(\alpha_{2},\alpha_{1})$  \\
\hline
\end{tabular}
\end{center}
\end{itemize}
In such case, the holonomy reduction from the Theorem \ref{main2} is a Cartan geometry $(\ba_0\to M,\om^\Xi)$ of type $(Q_\Xi,G_0)$.

Moreover, the following claims hold for such parabolic geometries:
\begin{enumerate}
\item For each $s\in Z(G_0)$, there is at most one $s$--symmetry at each point of $M$. 
\item There always exists a (local) $s$--symmetry of $(\ba\to M,\om)$ at $x_0$ (and thus at each point) for some $s\neq e$ of a finite order, i.e., there is $n \in \mathbb{N}$ such that $s^{n}=e$.
\item There is a smooth manifold $N$ and surjective submersion $\pi: M\to N$ such that $Ker(T\pi)$ consists of the $1$--eigenspaces of the (local) $s$--symmetries of finite order from (2). The $s$--symmetries descend to $N$ and determine a structure of a (locally) $\mathbb{Z}_n$--symmetric space on $N$ (see \cite{kowalski}).
\item There always exist invariant Weyl structures, and the corresponding invariant Weyl connections descend to $N$ and correspond to the canonical connection of the $\mathbb{Z}_n$--symmetric space $N$.
\end{enumerate}
\end{thm*}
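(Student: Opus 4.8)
The plan is to first show that $\Lambda=\Xi$ is an admissible choice in Theorem \ref{main2}, and then to read claims (1)--(4) off the resulting reduction. By Proposition \ref{2.1} one has $\Lambda\subset\Xi-\Phi-\Theta\subset\Lambda(\fk)$, so it suffices to check $\Phi=\Theta=\emptyset$, or at least $\Lambda(\fk)=\Xi$. If $\fg_{\Xi,-1}$ is indecomposable, every $s\in Z(G_0)$ acts on it by a single scalar, which for a non--trivial $s$--symmetry is $\ne 1$ (otherwise $\Ad_s$ is trivial on $\fg_{\Xi,-1}$, hence on the subalgebra it generates and on all of $\fg$, contradicting non--triviality); thus no $\alpha_i\in\Xi$ lies in the common $1$--eigenspace and $\Theta=\emptyset$, while $\Phi=\emptyset$ because $\Phi\subset I_\mu$ by Proposition \ref{2.31}(2) and $I_\mu=\emptyset$ for these $(G,P)$ by \ref{ApC}. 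For the table entries I would argue case by case against \ref{ApC}: there $I_\mu=\emptyset$ for each listed component $\mu$, and the requirement that an $s$ carrying a non--trivial $s$--symmetry fix every listed component of $\kappa_H$ forces $\Ad_s$ to be non--trivial on each $V_{\Xi,\alpha_i}$, $\alpha_i\in\Xi$, so $\Theta=\emptyset$; in the few cases where $\Theta$ is not pinned down directly, I would instead verify $\Lambda(\fk)=\Xi$ using Theorem \ref{extension} and the absence of eigenvalue restrictions for $Z(\tG_{\Xi,0})$. Then Theorem \ref{main2} with $\Lambda=\Xi$ applies: since $\fq_\Xi\cap\fp_\Xi=\fg_{\Xi,0}$ we get $Q_\Xi\cap P=G_0$, so the reduction $(\ba_\Xi(u_0)\to M,\om^\Xi)$ is of type $(Q_\Xi,G_0)$; being a $G_0$--subbundle of $\ba$ it is identified with the underlying bundle $\ba_0$, hence the reduction is an invariant Weyl structure, unique by Proposition \ref{lamweyl} as $\fq_\Xi\cap\fp_{\Xi,+}=0$. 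Because $VM=0$, Theorem \ref{main2}(5)--(6) turn $\om^\Xi_0$ into an $\A$--invariant linear connection $\nabla$ on $TM$, and by the corollary to Proposition \ref{5.5} every $\Aut(\ba_0,\om^\Xi)$--invariant object is $\A$--invariant (recall $\A=\Aut(\ba_0,\om^\Xi)$).

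\emph{Claim (1).} First I would show every $s$--symmetry $\phi$ at $x_0$ is realized inside $\ba_0$: writing $\phi(u_0')=u_0's$ with $u_0'=v_0\exp(Z)$, $v_0\in\ba_0$, $Z\in\fp_{\Xi,+}$, $P$--equivariance gives $\phi(v_0)\exp(Z)=v_0\exp(Z)s$; since $\phi(v_0)\in\ba_0$ we have $\phi(v_0)=v_0g_0$ with $g_0\in G_0$, and $G_0\cap\exp(\fp_{\Xi,+})=\{e\}$ forces $g_0=s$, so $\phi(v_0)=v_0s$. If $\phi,\psi$ are $s$--symmetries at $x_0$ realized as $\phi(v_0)=v_0s$, $\psi(w_0)=w_0s$ with $v_0,w_0\in\ba_0$ over $x_0$ and $w_0=v_0h_0$, $h_0\in G_0$, then $s\in Z(G_0)$ gives $\psi(v_0)=v_0h_0sh_0^{-1}=v_0s=\phi(v_0)$; in particular $T_{x_0}\phi=T_{x_0}\psi$ (both equal $\Ad_s|_{\fg_{\Xi,-}}$ read in the frame $v_0$ via $\om^\Xi_-$). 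Being $\nabla$--affine, fixing $x_0$ with the same $1$--jet, and $M$ connected, $\phi$ and $\psi$ agree on $M$; hence $\phi\psi^{-1}$ is a trivial automorphism fixing $v_0s\in\ba$, therefore the identity, and $\phi=\psi$.

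\emph{Claims (2)--(4).} Let $\phi_0$ be the given non--trivial $s$--symmetry, $\phi_0(v_0)=v_0s$, $v_0\in\ba_0$. Invariance of $\kappa_H$ under $\phi_0$ means $\Ad_s$ fixes $\kappa_H(v_0)\ne 0$, and inspection of \ref{ApC} shows this forces the eigenvalues of $\Ad_s$ to be roots of unity, so $T_{x_0}\phi_0=\Ad_s|_{\fg_{\Xi,-}}$ has a finite order $n$. Since $\phi_0^k(v_0)=v_0s^k$, Claim (1) identifies the $\phi_0^k$ as the unique $s^k$--symmetries; $\phi_0^n$ has trivial $1$--jet at $x_0$, is affine and fixes $x_0$, hence covers $\id_M$, i.e.\ $\phi_0|_M$ has order $n$ --- this is (2), and by transitivity of $\A$ such a finite--order symmetry exists at every point. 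Next, $\mathfrak{e}:=\ker(\Ad_s-\id)\cap\fg_{\Xi,-}$ is a $\fg_{\Xi,0}$--submodule, so by Proposition \ref{5.5}(1)--(2) it corresponds to an $\A$--invariant $\nabla$--parallel subbundle $E\subset TM$, which is precisely the kernel of the differential of the $s$--symmetry at each point. The integrability corollaries following Theorem \ref{4.5} (or directly the places where $\kappa_H$ may take values) show $E$ is integrable, and the construction of Lemma \ref{5.4}, applied with the finite--order $s$ and $K=\A$, yields a closed $L\supset H$, hence a surjective submersion $\pi\colon M\to N:=\A/L$ with $\ker T\pi=E$. The symmetries descend to $s_{\bar x}$ on $N$, of order $n$, whose differentials on $T_{\bar x}N\cong\fg_{\Xi,-}/\mathfrak{e}$ have no $1$--eigenvalue, so $\bar x$ is an isolated fixed point; with the compatibility coming from homogeneity this is the structure of a (locally) $\mathbb{Z}_n$--symmetric space in the sense of \cite{kowalski}. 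Finally $\nabla$ is $\A$--invariant and parallelizes $E$, so it descends to a connection $\nabla_N$ on $N$ for which every $s_{\bar x}$ is affine; by the characterization of the canonical connection of a $\mathbb{Z}_n$--symmetric space, $\nabla_N$ is that canonical connection, and since the invariant Weyl connection is $\nabla$ this gives (3) and (4).

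\emph{Expected main obstacle.} The laborious part is the classification input: verifying, for the indecomposable case and for each entry of the table, that the prescribed $\kappa_H$ forces $\Phi=\Theta=\emptyset$ (or $\Lambda(\fk)=\Xi$) and that the admissible eigenvalues are roots of unity --- a finite but long inspection of \ref{ApC}. The second delicate point is the global existence of the leaf space $N$, i.e.\ closedness of $L$: here finiteness of the order of the symmetry supplies exactly the element needed in Lemma \ref{5.4}, while the remaining topological points (connectedness, freeness of the trivial automorphism group for $\fg$ simple) are handled as in \cite{GZ2}.
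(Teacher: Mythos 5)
Your overall strategy matches the paper's: verify $\Phi=\Theta=\emptyset$ from the tables in \ref{ApC}, apply Theorem \ref{main2} with $\Lambda=\Xi$ to get the reduction of type $(Q_\Xi,G_0)$ and the unique invariant Weyl connection, and then build $N$ as a leaf space of the $1$--eigenspace distribution. Your treatment of claim (1) is more detailed than the paper's one--line appeal to $\Phi=\emptyset$, but it is sound: realizing the symmetry inside $\ba_0$, matching $1$--jets, and using affineness plus freeness of trivial automorphisms at a point of $\ba$ works.

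There is, however, a genuine gap in your argument for claim (2), which then propagates into (3) and (4). You assert that invariance of $\kappa_H(v_0)\neq 0$ under $\Ad_s$ forces the eigenvalues of $\Ad_s$ to be roots of unity, so that the \emph{given} symmetry $\phi_0$ has finite order. This is false for several of the listed cases: e.g.\ for $\frak{sl}(n+1,\{\R,\C\})$ with $\Xi=\{1,n\}$ and $\mu=(\alpha_1,\alpha_n)$ the homogeneity is $(1,1)$, so the only constraint is $j_1j_n=1$ and the table explicitly allows an arbitrary $j_1$ (similarly $e^{i\phi}$ with irrational $\phi/2\pi$ for $\frak{su}(q,n+1-q)$, and $e^{r}$ entries elsewhere). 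The statement of claim (2) is deliberately weaker --- it asserts a finite--order $s$--symmetry for \emph{some} $s\neq e$, not for the given one --- and the paper's proof supplies the missing step: either the given $s$ already has finite order, or one observes that $\fq_\Xi$ and $\wedge^2\fp_{\Xi,+}\otimes\fq_\Xi$ decompose into indecomposable $\fg_{\Xi,0}$--modules according to the eigenvalues of $s$, so that there is a \emph{different} $h\in Z(G_0)$ of finite order with the same eigenspace decomposition, to which \cite[Theorem 4.1]{GZ2} applies and produces a (local) $h$--symmetry. Without this replacement your construction of $E=\ker(\Ad_s-\id)\cap\fg_{\Xi,-}$, of the leaf space $N$, and of the $\mathbb{Z}_n$--symmetric structure has no finite $n$ to work with, so claims (2)--(4) do not follow as written. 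The rest of your outline for (3) and (4) (invariant parallel integrable distribution, closedness of $L$ as a centralizer--type subgroup, descent of the invariant Weyl connection to the canonical connection) is consistent with the paper once the finite--order symmetry is in hand.
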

\begin{proof}
Clearly, if $\fg_{\Xi,-1}$ is indecomposable $\fg_{\Xi,0}$--module, then $\Phi=\emptyset$ and if $s\neq e$, then $\Theta=\emptyset$. By looking in the tables in  \ref{ApC}, we get the remaining cases, when $\Phi=\emptyset$ and $\Theta=\emptyset$ generically holds. 

Since $\Phi=\emptyset$, the claim (1) holds. It follows from the classification in the  \ref{ApC} that either the claim (2) holds for $s$, 
or that $\fq_\Xi$ and $\wedge^2\fp_{\Xi,+}\otimes \fq_\Xi$ decomposes into indecomposable $\fg_{\Xi,0}$--modules according to eigenvalues of $s$ and thus there is $h\in Z(G_0)$ of finite order satisfying assumptions of \cite[Theorem 4.1.]{GZ2} and the claim (2) follows. Then by uniqueness of (local) $s$--symmetries and homogeneity, the $1$--eigenspaces form a subbundle of $TM$, which is invariant and integrable, because it corresponds to $1$--eigenspace of $\Ad_s$ in $\fg_{\Xi,-}$. If we denote $L$ the subgroup of $\A$ fixing the leaf of this integrable distribution trough $x_0$, then $L$ corresponds to $l\in \A$ such that $\Ad_l$ commutes with $\Ad_s$ and thus it is a closed subgroup of $\A$. Thus $N=\A/L$ and the claim (3) holds. Finally, the claim (4) is a consequence of the invariance of the splitting of $\fg_{\Xi,-}$ given by the invariant Weyl structure, which clearly has to exist.
\end{proof}

Let us point out that the list in the previous Theorem is complete and there are no other types of parabolic geometries for which $\Lambda$ can be chosen as $\Xi$ generically. 

Let us now formulate the results for the distinguished classes of parabolic geometries in detail.

\begin{prop*}
Let $(G,P)$ be type of a $|1|$--graded parabolic geometry. Assume $\kappa\neq 0$ and assume there is a non--trivial $s$--symmetry of $(\ba\to M,\om)$ at $x_0$. Then $M=N$ is either (locally) symmetric or (locally) $\mathbb{Z}_3$--symmetric space, and the invariant Weyl structure is unique.
\end{prop*}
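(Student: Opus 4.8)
The plan is to read the statement off Theorem~\ref{7.1} and Proposition~\ref{lamweyl}, using only the special structure of $|1|$--graded types. I would begin by recalling that for such a type the set $\Xi$ consists of a single simple restricted root, $\fg=\fg_{\Xi,-1}\oplus\fg_{\Xi,0}\oplus\fg_{\Xi,+}$ with $\fg_{\Xi,0}\subseteq\fp_\Xi$, and $\fg_{\Xi,-1}=\fg_{\Xi,-}$ is an \emph{irreducible}, hence indecomposable, $\fg_{\Xi,0}$--module. Since $(\ba\to M,\om)$ is regular and normal, $\kappa\neq 0$ forces $\kappa_H\neq 0$ (a standard consequence of normality, cf.\ \cite{parabook}). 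So the first bullet of Theorem~\ref{7.1} applies: we may take $\Lambda=\Xi$, the reduction from Theorem~\ref{main2} is a Cartan geometry $(\ba_0\to M,\om^\Xi)$ of type $(Q_\Xi,G_0)$, and conclusions (1)--(4) of Theorem~\ref{7.1} are available.

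Next I would identify $M$ with $N$. Because $\fg_{\Xi,0}\subseteq\fp_\Xi$, the bundle $VM=\ba_0\times_{(Q_\Xi\cap P)}\fg_{\Xi,0}/(\fg_{\Xi,0}\cap\fp_\Xi)$ is trivial, i.e.\ $T^{\Xi,-}M=TM$. By conclusion (3) of Theorem~\ref{7.1}, $Ker(T\pi)$ is the union of the $1$--eigenspaces of the finite order $s$--symmetries; since $\Ad_s$ acts on the irreducible module $\fg_{\Xi,-1}\cong T^{-1}_{x_0}M$ by a single scalar $\lambda\neq 1$ (as $s$ is non--trivial) and $VM=0$, this $1$--eigenspace is $0$. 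Hence $\pi\colon M\to N$ is a local diffeomorphism, in fact a covering, so $M$ locally coincides with $N$ and inherits the (local) $\mathbb{Z}_n$--symmetric structure of $N$, where $n$ is the order of the tangent action of the symmetry.

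The step that carries the real content is showing $n\in\{2,3\}$, and I expect this to be the main point to get right. Here I would use that $\kappa_H$ is invariant under automorphisms: if $\phi$ is an $s$--symmetry with $\phi(u_0)=u_0 s$, then $\kappa_H(u_0)=\kappa_H(\phi(u_0))=\kappa_H(u_0 s)=\Ad_s.\kappa_H(u_0)$, so the nonzero value $\kappa_H(u_0)$ is $\Ad_s$--fixed. For a $|1|$--graded type the curvature function takes values in $\wedge^2\fg_{\Xi,-1}^*\otimes\fg$, so each homogeneous component of $\kappa_H$ lies in $\wedge^2\fg_{\Xi,-1}^*\otimes\fg_{\Xi,k}$ for some $k\in\{-1,0,1\}$, and $\Ad_s$ acts on it by $\lambda^{-2-k}$; hence each nonzero component forces $\lambda^{2+k}=1$. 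A component with $k=-1$ would give $\lambda=1$, excluded, so $\kappa_H$ is torsion--free; and a $k=0$ component together with a $k=1$ component would give $\lambda^2=\lambda^3=1$, again forcing $\lambda=1$. Therefore all components share the same $k$: either $k=0$, whence $\lambda^2=1$, so $\lambda=-1$, $n=2$ and $N$ is (locally) symmetric; or $k=1$, whence $\lambda$ is a primitive cube root of unity, $n=3$ and $N$ is (locally) $\mathbb{Z}_3$--symmetric (existence of such $\lambda$ in $\Ad(Z(G_0))$ being guaranteed by the assumed symmetry). The only non--formal input is that $|1|$--graded harmonic curvatures have homogeneity at most $3$, which is immediate because $\fg$ has only three grading components.

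Finally, uniqueness of the invariant Weyl structure follows from Proposition~\ref{lamweyl} with $\Xi'=\Xi$: almost $\Xi$--invariant Weyl structures form an affine space over sections of $\ba_0\times_{\Ad_{G_0}}(\fq_\Xi\cap\fp_{\Xi,+})$, and $\fq_\Xi\cap\fp_{\Xi,+}=0$, so there is exactly one, namely $\iota\colon\ba_0\to\ba_\Xi=\ba_0$; by conclusion (4) of Theorem~\ref{7.1} its Weyl connection is the canonical connection of the (locally) $\mathbb{Z}_n$--symmetric space.
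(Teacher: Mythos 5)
Your overall route coincides with the paper's: the printed proof is a one--line appeal to Theorem \ref{7.1} together with the classification tables in \ref{ApC}, and your reduction to the first bullet of Theorem \ref{7.1} (via $\fg_{\Xi,-1}$ indecomposable and $\kappa\neq0\Rightarrow\kappa_H\neq 0$), the identification $M=N$ from $VM=0$ and the triviality of the $1$--eigenspace, and the uniqueness of the invariant Weyl structure via Proposition \ref{lamweyl} with $\fq_\Xi\cap\fp_{\Xi,+}=0$ are all correct and match the intended argument.

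The step you yourself flag as carrying the real content is, however, where the proposal has a genuine gap. You replace the appeal to the tables by the claim that each component of $\kappa_H$ lies in $\wedge^2\fg_{\Xi,-1}^*\otimes\fg_{\Xi,k}$ and that $\Ad_s$ acts on it by the single scalar $\lambda^{-2-k}$. This is valid only when $\fg_{\Xi,-1}$ is absolutely irreducible, so that $\lambda$ is a real scalar. The Proposition covers all $|1|$--graded types, including those where $\fg_{\Xi,-1}$ carries an invariant complex structure and $Z(G_0)$ acts by complex scalars (c--projective structures, complex Grassmannian and Lagrangean Grassmannian structures, complex conformal structures, the $\frak{e}_6(\C)$ and $\frak{e}_7(\C)$ cases, \dots). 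There $\wedge^2\fg_{\Xi,-1}^*$ splits into $(p,q)$--types with $p+q=2$ on which $\Ad_s$ acts by $\lambda^{-p}\bar{\lambda}^{-q}$, and each of your three conclusions fails: a mixed torsion component does not force $\lambda=1$ but rather $\bar{\lambda}^{2}=\lambda$, i.e.\ $\lambda^3=1$ --- these are exactly the homogeneity $(-1,2)$ entries of the second table in \ref{ApC} (e.g.\ complex Grassmannian structures), which admit harmonic torsion together with an order--$3$ symmetry, contradicting your ``$\kappa_H$ is torsion--free''; a $(1,1)$--type curvature component only forces $|\lambda|=1$ (the $-e^{i\phi}$ entry for c--projective structures), so the assumed non--trivial symmetry need not have finite order and you must separately invoke claim (2) of Theorem \ref{7.1} to produce one of order $2$; and the exclusion of coexisting homogeneity--$2$ and homogeneity--$3$ components has to be redone type by type. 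The conclusion of the Proposition survives all of this --- the tables confirm that the finite--order symmetries occurring for non--flat $|1|$--graded types are exactly of order $2$ or $3$ --- but your derivation of the dichotomy is not valid as written; it must be repaired either by carrying out the $(p,q)$--refined eigenvalue analysis or by citing the classification in \ref{ApC}, as the paper does.
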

\begin{proof}
All claims follow directly from the Theorem \ref{7.1} and the classification in the \ref{ApC}.
\end{proof}

We remark that we investigated $s$--symmetries of $|1|$--graded parabolic geometries order $2$ in detail in \cite{LZ1,disertace}. The theory developed in this articles does not provide any new results for homogeneous $|1|$--graded geometries with these symmetries except the fact that only existence of $s$--symmetries of the parabolic geometry given by extension $(\ta,\ti)$ from Theorem \ref{extension} is sufficient.

\begin{prop*} \label{7.3}
Let $(G,P)$ be type of a parabolic contact geometry. Assume $\kappa\neq 0$ and assume there is a non--trivial $s$--symmetry of $(\ba\to M,\om)$ at $x_0$. If $\Lambda=\Xi$, then one of the following facts applies:
\begin{itemize} 
\item $ker(T_x\pi)\cong \fg_{-2}$ for each $x$ and $N$ is (locally) symmetric space. This happens in the following cases:
\begin{center}
\begin{tabular}{|c|c|c|}
\hline
$\fg$ & $\Xi$ & $\kappa_H$ \\
\hline
$\frak{sl}(n+1,\{\R,\C\})$ & $\{1,n\}$ &  $(\alpha_1,\alpha_n)$  \\
\hline
$\frak{su}(q,n+1-q)$ & $\{1,n\}$ &  $(\alpha_1,\alpha_n)$  \\
\hline
$\frak{sp}(2n,\{\R,\C\})$ & $\{1\}$ &  $(\alpha_1,\alpha_2)$  \\
$\frak{sp}(2n,\C)$ & $\{1\}$ &  $(\alpha_1,\alpha_1')$  \\
\hline
\end{tabular}
\end{center}
\item $ker(T_x\pi)\cong \fg_{-2}$ for each $x$ and $N$ is (locally) $\mathbb{Z}_3$--symmetric space. This happens in the following cases:
\begin{center}
\begin{tabular}{|c|c|c|}
\hline
$\fg$ & $\Xi$ & $\kappa_H$ \\
\hline
$\frak{sl}(n+1,\C)$ & $\{1,n\}$ &  $(\alpha_1,\alpha_2),(\alpha_{n},\alpha_{n-1})$  \\
$\frak{sl}(n+1,\C)$ & $\{1,n\}$ &  $(\alpha_1,\alpha_{n'}),(\alpha_{n},\alpha_{1'})$  \\
$\frak{sl}(n+1,\C)$ & $\{1,n\}$ &  $(\alpha_1,\alpha_{n'}),(\alpha_{1},\alpha_{2})$  \\
$\frak{sl}(n+1,\C)$ & $\{1,n\}$ &  $(\alpha_{n},\alpha_{n-1}),(\alpha_{n},\alpha_{1'})$  \\
\hline
$\frak{su}(q,n+1-q)$ & $\{1,n\}$ &  $(\alpha_1,\alpha_2)$  \\
\hline
\end{tabular}
\end{center}
\item $M=N$ is (locally) $\mathbb{Z}_3$--symmetric space. This happens in the following cases:
\begin{center}
\begin{tabular}{|c|c|c|}
\hline
$\fg$ & $\Xi$ & $\kappa_H$ \\
\hline
$\frak{sp}(4,\C)$ & $\{1\}$ &  $(\alpha_1,\alpha_2)$  \\
$\frak{sp}(2n,\C)$ & $\{1\}$ &  $(\alpha_1,\alpha_1')$  \\
\hline
\end{tabular}
\end{center}
\end{itemize}

It depends on the (non--harmonic) component of the curvature of the reduced geometry in $\fg_{\Xi,-1}^*\wedge \fg_{\Xi,-2}^*\otimes \fg_{\Xi,0}$, which of the two possible cases for $\frak{sp}(2n,\C)$ happens.
\end{prop*}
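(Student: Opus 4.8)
The plan is to reduce the statement to the Theorem \ref{7.1} and then run a finite case check against the classification in the \ref{ApC}. First I would note that the hypothesis ``$\Lambda=\Xi$'' is equivalent to $\Phi=\emptyset$ and $\Theta=\emptyset$: indeed $\Phi,\Theta\subset\Xi$ and the admissible sets in the Theorem \ref{main2} satisfy $\Lambda\subset\Xi-\Phi-\Theta$, so $\Xi$ is admissible precisely when $\Xi-\Phi-\Theta=\Xi$. Together with the standing assumptions ($(G,P)$ of parabolic contact type, $\kappa\neq 0$, a non--trivial $s$--symmetry in $\A$) this places us in the situation of the Theorem \ref{7.1}: the holonomy reduction is the Cartan geometry $(\ba_0\to M,\om^\Xi)$ of type $(Q_\Xi,G_0)$, an invariant Weyl structure exists, and the claims (1)--(4) of that Theorem hold. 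In particular there is a (local) $s'$--symmetry of some finite order $n$, the distribution $\ker(T\pi)$ of the projection $\pi\colon M\to N$ corresponds to the $1$--eigenspace of $\Ad_{s'}$ in $\fg_{\Xi,-}$, the space $N$ carries a (locally) $\mathbb{Z}_n$--symmetric structure, and the invariant Weyl connection descends to its canonical connection. So all that remains is to identify $s'$ --- hence $n$ and $\ker(T\pi)$ --- for each admissible type.

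For this I would use that $\fg_{\Xi,-}=\fg_{\Xi,-1}\oplus\fg_{\Xi,-2}$ with $\dim\fg_{\Xi,-2}=1$, that $\Ad_{s'}$ acts by a single scalar on each indecomposable $G_0$--submodule of $\fg_{\Xi,-1}$, and that the induced scalar on $\fg_{\Xi,-2}$ is the corresponding product (the bracket $\fg_{\Xi,-1}\times\fg_{\Xi,-1}\to\fg_{\Xi,-2}$ being onto). Since $\Phi=\emptyset$, the piece $\fg_{\Xi,-1}$ is either indecomposable or splits as $V_{\Xi,-\alpha_a}\oplus V_{\Xi,-\alpha_b}$ with $\{\alpha_a,\alpha_b\}=\Xi$, and since $\Theta=\emptyset$ every summand carries a non--trivial $s'$--eigenvalue. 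Going through the contact--type rows of the tables in the Theorem \ref{7.1} (equivalently the relevant entries of the \ref{ApC}) and intersecting the harmonic--curvature restriction on admissible eigenvalues with the requirement that $s'$ have finite order, I expect to obtain in each case that the only possibilities are $j=-1$ on an indecomposable $\fg_{\Xi,-1}$ (order $2$), a primitive cube root of unity $j$ on an indecomposable $\fg_{\Xi,-1}$ (order $3$), or a pair $\{j,j^{-1}\}$ of primitive cube roots on $V_{\Xi,-\alpha_a}\oplus V_{\Xi,-\alpha_b}$ (order $3$). Then the scalar induced on $\fg_{\Xi,-2}$ is $1$ in the first and third cases --- giving $\ker(T\pi)\cong\fg_{\Xi,-2}$ with $N$ symmetric resp.\ $\mathbb{Z}_3$--symmetric --- and is $\neq 1$ in the second --- giving $\ker(T\pi)=0$, i.e.\ $M=N$, $\mathbb{Z}_3$--symmetric. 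Sorting the contact rows of the Theorem \ref{7.1} accordingly reproduces the three tables of the statement, and the order $n$ read off above gives the labels ``symmetric'' versus ``$\mathbb{Z}_3$--symmetric''.

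The only place where $\kappa_H$ does not decide the outcome is $\fg=\frak{sp}(2n,\C)$ with $\Xi=\{1\}$, where both $j=-1$ and a primitive cube root of unity pass the harmonic--curvature test, yielding respectively an ordinary symmetric $N$ with $\ker(T\pi)\cong\fg_{\Xi,-2}$ and a $\mathbb{Z}_3$--symmetric $N=M$. To distinguish them I would examine the full curvature of the reduced geometry $(\ba_0\to M,\om^\Xi)$: its non--harmonic component in $\fg_{\Xi,-1}^*\wedge\fg_{\Xi,-2}^*\otimes\fg_{\Xi,0}$ lies in the eigenspace of $\Ad_{s'}$ with eigenvalue $j^{-3}$, hence must vanish if $s'$ has order $3$ but can be non--zero if $s'$ has order $2$; since this curvature is $\A$--invariant it is either identically zero or nowhere zero, which is exactly the stated criterion.

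The main obstacle is this finite but intricate bookkeeping against the \ref{ApC} --- especially the CR rows $\frak{su}(q,n+1-q)$ and the two $\frak{sp}(2n,\C)$ families, where the module structure of $\fg_{\Xi,-1}$ and the available eigenvalues in $Z(G_0)$ must be tracked carefully; everything else (existence of the invariant Weyl structure, the descent to the canonical connection, the shape of $\ker(T\pi)$, and uniqueness of the $s$--symmetries) is formal once the Theorem \ref{7.1} is in hand.
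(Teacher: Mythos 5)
Your overall route is the same as the paper's: reduce to the Theorem \ref{7.1} via the equivalence $\Lambda=\Xi\Leftrightarrow\Phi=\Theta=\emptyset$, read off the admissible finite--order eigenvalues from the tables in the \ref{ApC}, and use that the eigenvalue induced on the one--dimensional $\fg_{\Xi,-2}$ is the product of the eigenvalues on the pieces of $\fg_{\Xi,-1}$ in order to sort the entries into the three tables (order $2$ on indecomposable $\fg_{\Xi,-1}$, order $3$ on indecomposable $\fg_{\Xi,-1}$, and a conjugate pair of cube roots on a decomposable $\fg_{\Xi,-1}$). That part of the bookkeeping is correct and agrees with the paper.

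However, your resolution of the $\frak{sp}(2n,\C)$ ambiguity is backwards. The component of the curvature in $\fg_{\Xi,-1}^*\wedge\fg_{\Xi,-2}^*\otimes\fg_{\Xi,0}$ transforms under $\Ad_{s'}$ by the factor $j^{-1}\cdot j^{-2}\cdot 1=j^{-3}$ (note that $\Ad_{s'}$ is trivial on $\fg_{\Xi,0}$ because $s'\in Z(G_0)$). Invariance of the curvature under the $s'$--symmetry therefore forces this component to vanish unless $j^{3}=1$. Hence it \emph{must} vanish when $s'$ has order $2$ (then $j=-1$ and $j^{-3}=-1\neq 1$) and it \emph{can} be non--zero precisely when $s'$ has order $3$ --- the opposite of what you assert. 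This is exactly what the paper's proof records when it lists ``$\fg_{\Xi,-1}^*\wedge\fg_{\Xi,-2}^*\otimes\fg_{\Xi,0}$ for the eigenvalue $\sqrt[3]{1}$ in the $\frak{sp}(2n,\C)$--case''. The correct criterion is therefore: a non--trivial component in $\fg_{\Xi,-1}^*\wedge\fg_{\Xi,-2}^*\otimes\fg_{\Xi,0}$ excludes the order--$2$ symmetry and forces the case $M=N$ (locally) $\mathbb{Z}_3$--symmetric, while its vanishing is what permits the order--$2$ symmetry with $\ker(T\pi)\cong\fg_{\Xi,-2}$ and $N$ (locally) symmetric. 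With this direction corrected, your argument goes through and coincides with the paper's.
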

\begin{proof}
All claims follow from the Theorem \ref{7.1}, the classification in the \ref{ApC}, and the fact there are only the following possible components of curvature (in the $\fg_0$--submodules of $\wedge^2 \fg_{\Xi,-}^*\otimes \fq_{\Xi}$) of the reduced geometry:

$\fg_{\Xi,-1}^*\wedge \fg_{\Xi,-1}^*\otimes \fg_{\Xi,0}$ for the eigenvalue $e^{i\phi}$, 

$\fg_{\Xi,-1}^*\wedge \fg_{\Xi,-2}^*\otimes \fg_{\Xi,0}$ for the eigenvalue $\sqrt[3]{1}$ in $\frak{sp}(2n,\C)$--case, 

$\fg_{\Xi,-1}^*\wedge \fg_{\Xi,-1}^*\otimes \fg_{\Xi,-1}$ for the eigenvalue $\sqrt[3]{1}$, and

$\fg_{\Xi,-1}^*\wedge \fg_{\Xi,-2}^*\otimes \fg_{\Xi,-1}$ for eigenvalue $-1$.
\end{proof}

We remark that we investigated $s$--symmetries of the first kind in \cite{LZ2,G2,GZ}. The theory developed in this article provides new results for them in the homogeneous case. Moreover, if there are generalized symmetries of more types, we can strengthen the results of \cite{GZ} in the following way:

\begin{prop*}\label{6.3}
Consider the first case from the Proposition \ref{7.3} and assume there is in addition a generalized symmetry other than the one of order $2$ and the identity. Then the following holds for the parabolic geometries in question:
\begin{enumerate}
\item In the case of Lagrangean contact geometries, there is invariant para--complex structure $\bar{\mathcal{I}}$ on $TN$ induced by the para--complex structure $\mathcal{I}$ on $T^{-1}M$.
\item In the case of CR--geometries, there is invariant complex structure $\bar{\mathcal{I}}$ on $TN$ induced by the complex structure $\mathcal{I}$ on $T^{-1}M$.
\item In the case of complex Lagrangean contact geometries, there are both invariant para--complex and invariant complex structures.
\end{enumerate}
\end{prop*}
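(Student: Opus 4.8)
The construction of $\bar{\mathcal I}$ is via the reduction and fibration already in place. Since we are in the first case of Proposition~\ref{7.3} we have $\Lambda=\Xi$, so Theorem~\ref{main2} provides the reduced geometry $(\ba_0\to M,\om^\Xi)$ of type $(Q_\Xi,G_0)$ with $\A=\Aut(\ba_0,\om^\Xi)$, an invariant Weyl structure, and the $\A$--invariant splitting $TM=\ba_0\times_{G_0}\fg_{\Xi,-}$, $\fg_{\Xi,-}=\fg_{\Xi,-1}\oplus\fg_{\Xi,-2}$. Because $\ker(T\pi)\cong\fg_{\Xi,-2}$ is exactly the $\Xi$--height $-2$ part of this splitting, $\ker(T\pi)=\ba_0\times_{G_0}\fg_{\Xi,-2}$, and $T\pi$ restricts to an $\A$--equivariant bundle isomorphism $T^{-1}M\xrightarrow{\ \sim\ }TN$ (injective for dimension reasons, surjective since $\pi$ is a submersion). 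So it suffices to produce an $\A$--invariant para--complex/complex endomorphism of $T^{-1}M$ that descends along $\pi$, and then transport it.

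First I would read off $\mathcal I$ on $T^{-1}M=\ba_0\times_{G_0}\fg_{\Xi,-1}$ from the $G_0$--module structure of $\fg_{\Xi,-1}$ listed in~\ref{ApC}: in the Lagrangean contact cases $\fg_{\Xi,-1}=V_{\Xi,-\alpha_1}\oplus V_{\Xi,-\alpha_n}$ with equal--dimensional summands and $\mathcal I:=\id_{V_{\Xi,-\alpha_1}}-\id_{V_{\Xi,-\alpha_n}}$; in the CR cases $\fg_{\Xi,-1}$ carries the $G_0$--invariant complex structure it inherits from $\frak{su}(q,n+1-q)$; in the complex Lagrangean contact cases $\fg_{\Xi,-1}$ has both, and they commute. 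Each is a $G_0$--module endomorphism, hence defines a bundle endomorphism preserved by every automorphism of $(\ba_0,\om^\Xi)$ (whose frame--level action at a point is by some $\Ad_g$, $g\in G_0$); together with homogeneity this gives $\A$--invariance on $M$.

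Next comes the point where the extra generalized symmetry is used. To get a \emph{well--defined} structure on $N$ I would show the eigenspace decomposition $T^{-1}M=E\oplus F$ (resp.\ the complex structure) is $\pi$--related to a structure on $N$. The assumed $s'$--symmetry, being different from the order two symmetry and the identity, has $\Ad_{s'}$ acting by distinct scalars on $V_{\Xi,-\alpha_1}$ and $V_{\Xi,-\alpha_n}$ (resp.\ commuting with but not equal to $\pm\mathcal I$), and, by the classification in~\ref{ApC}, has $\Ad_{s'}=\id$ on $\fg_{\Xi,-2}=\ker(T\pi)$; hence by Proposition~\ref{cor}, and the uniqueness of generalized symmetries which holds here since $\Phi=\emptyset$ (Theorem~\ref{7.1}(1)), it descends to a generalized symmetry at $\pi(x_0)$, and by homogeneity to one at every point of $N$, whose eigenspaces are exactly $T\pi(E)$ and $T\pi(F)$ (resp.\ which commutes with the pushed--forward complex structure). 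This exhibits $\bar E:=T\pi(E)$, $\bar F:=T\pi(F)$ (resp.\ the pushed complex structure) as well--defined $\A$--invariant structures on $N$, and I set $\bar{\mathcal I}:=\id_{\bar E}-\id_{\bar F}$ (resp.\ the pushed complex structure; both in case (3)). Finally $\bar{\mathcal I}$ is $\A$--invariant by construction, hence determined by its value at $\pi(x_0)$ as $\A$ acts transitively on $N$; and since by Theorem~\ref{7.1}(4) the invariant Weyl connection descends to the canonical connection of the symmetric space $N$, for which any $\A$--invariant tensor is parallel, $\bar{\mathcal I}$ is parallel and therefore an integrable para--complex (resp.\ complex) structure in the usual sense, induced from $\mathcal I$ on $T^{-1}M$ as claimed.

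The main obstacle is the third step: showing that the para--complex splitting (resp.\ complex structure) of $T^{-1}M$ actually drops to $N$ and is not exchanged (resp.\ conjugated) by $\A$. This is exactly where the hypothesis of an extra generalized symmetry is indispensable, and it requires the short case analysis — running through the relevant lines of the tables in~\ref{ApC} for $\frak{sl}(n+1,\{\R,\C\})$ and $\frak{su}(q,n+1-q)$ with $\Xi=\{1,n\}$ — to verify the eigenvalue pattern of $\Ad_{s'}$ on $\fg_{\Xi,-1}$ and on $\fg_{\Xi,-2}$. Everything else is routine once the reduction of Theorem~\ref{main2} and the structure of Theorem~\ref{7.1} are in hand.
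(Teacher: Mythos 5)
Your proposal is correct and follows essentially the same route as the paper: the paper's (one‑line) proof likewise takes the eigenspaces $\fk(j_1)$ and $\fk(j_1^{-1})$ (resp.\ $\fk(\bar{j_1})$) of the additional symmetry, observes they are complementary $L$--invariant subspaces of $T_{eL}(K/L)=T_{\pi(x_0)}N$, and lets them define $\bar{\mathcal I}$. Your extra details (the identification $T\pi\colon T^{-1}M\xrightarrow{\sim}TN$ via $\ker(T\pi)\cong\fg_{\Xi,-2}$ lying in the $1$--eigenspace, and integrability via parallelism for the canonical connection) are exactly the routine verifications the paper leaves to the reader.
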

\begin{proof}
The existence of $s$--symmetry such that $s\neq-\id_{\fg_{-1}}$ implies that the eigen\-spaces $\fk(j_1)$ and $\fk(j_1^{-1})$ or $\fk(\bar{j_1})$, respectively, are complementary $L$--invariant subspaces in $T_{eL}(K/L)=T_{\pi(x_0)}N$ and provide invariant (para)--complex structure $\bar{\mathcal{I}}$. It is easy to check that it has the claimed properties.
\end{proof}

Let us finally comment briefly the claims in the Theorem \ref{7.1} for several remaining interesting types of parabolic geometries.
\begin{itemize}
\item In the case of (split)--quaternionic contact geometries and their complexifications, $M$ is a (local) reflexion space with a three--dimensional fiber over a (locally) symmetric space $N$. 
\item In the case of the $(2,3,5)$--geometry with $\fg=\frak{g}_{2}(\{2,\mathbb{C}\})$, $M$ is a (local) reflexion space with one--dimensional fiber over a (locally) symmetric space $N$.
\item In the case of the complex free $(3,6)$--distribution, $M$ is $6$--dimensional (locally) $\mathbb{Z}_3$--symmetric space with invariant polarization of $TM$ given by the eigenspaces of $\Ad_{s}$.
\end{itemize}

\appendix

\section{Notation related with the harmonic curvature}\label{ApA}

For a parabolic geometry $(\ba \to M, \om)$, we denote by $\ka$ its curvature, and by $\ka_H$ its harmonic curvature.

As mentioned in the Introduction, we assume that the geometries are regular and normal, which means for homogeneous parabolic geometries that at a point $u_0\in \ba$ (and thus at each point), the curvature $\kappa(u_0)$ viewed as an element of $\bigwedge^2 (\fg/\fp_\Xi)^*\otimes \fg$ has positive homogeneity, and $\partial^*\kappa(u_0)=0$ holds, where $\partial^*$ is the Kostant's codifferential.

Let us remind that the harmonic curvature $\kappa_H(u_0)$ is the projection of the curvature $\kappa(u_0)$ onto the kernel of the Kostant Laplacian $\Box$, see \cite[Section 3.1.12]{parabook}. According to the Kostant's version of the Bott--Borel--Weyl theorem, the kernel $Ker(\Box)$ decomposes as $\fg_{\Xi,0}$--representation into the isotypical components which we represent by ordered pairs $(\alpha_a,\alpha_b)$ meaning that the corresponding $\fp_\Xi$--dominant and $\fp_\Xi$--integral $\fg$--weight is obtained by the affine action of $s_{\alpha_a}s_{\alpha_b}$ on the highest root $\mu^\fg$ of $\fg$, where $s_{\alpha_i}$ denotes the simple reflexion along $\alpha_i$, see \cite[Section 3.2.]{parabook}. We use the notation $(\alpha_a,\alpha_b)$ for the (highest) weight viewed as the element of $H^2(\fp_{\Xi,+},\fg)$, too. The actual (lowest) weights representing indecomposable $\fg_{\Xi,0}$--submodules in $H^2(\fg_{\Xi,-},\fg)$ are obtained via duality, i.e. the lowest weight vector corresponding to $(\alpha_a,\alpha_b)$ is of the form $X^{\alpha_a}\wedge X^{s_{\alpha_b}(\alpha_a)}\otimes X^{-s_{\alpha_a}s_{\alpha_b}(\mu^{\fg})}$, where $X^\alpha$ denotes a root vector for $\alpha$. Then the homogeneity of $(\alpha_a,\alpha_b)$ with respect to $\alpha_i\in \Xi$ for $\mu^\fg=\sum k_i\alpha_i=\sum r_i\lambda_i,$ where $\lambda_i$ denotes the corresponding fundamental weight, can be computed as follows:

\begin{itemize}
\item If $i=a$, then the homogeneity is $-k_a+1+r_a-\langle \alpha_b,\alpha_a\rangle(1+r_b)$, where $\langle\  ,\  \rangle$ is the scalar product induced by the Killing form.
\item If $i=b$, then the homogeneity is $-k_b+1+r_b$.
\item If $i=c$, where $c \neq a$, $c \neq b$, then the homogeneity is $-k_c$.
\end{itemize}

Let us remark that the length $k$ of the grading of $\fg$ given by $\Xi$--heights corresponds to $\sum_{\alpha_i\in\Xi} k_i$.

Let us recall and refine several results of the article \cite{KT}. Firstly, the authors define the sets $I_\mu$ as the sets of roots $\alpha_i \in \Xi$ that satisfy $\langle (\alpha_a,\alpha_b),\alpha_i \rangle=0$ for the highest weight $\mu=(\alpha_a,\alpha_b)$ in $H^2(\fp_{\Xi,+},\fg)$ representing a component of the harmonic curvature. The authors show in \cite[Theorem 3.3.3 and Proposition 3.1.1]{KT} that each set $I_\mu$ restricts the dimensions of projections of $\fk$ into  the associated grading of the filtration of $\fp_\Xi$. In fact, stronger results hold under a condition that is always satisfied in the homogeneous setting:

\begin{prop*}\label{Aprop}
Suppose the Lie algebra of infinitesimal automorphisms with higher order fixed point splits into modules $V_{\Xi,\alpha_i}$ for $\alpha_i\in \Xi$ in the associated grading defined in \cite[Section 2.3.1]{KT}. Then the Proposition 3.1.1 from \cite{KT} holds after the restriction to $V_{\Xi,\alpha_i}$. In particular, $I_\mu$ characterizes the modules $V_{\Xi,\alpha_i}$, where the projection of the Lie algebra of infinitesimal automorphisms with higher order fixed point into the associated grading can be non--trivial.
\end{prop*}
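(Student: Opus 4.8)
The plan is to go through the proof of \cite[Proposition 3.1.1]{KT} and to check that it is insensitive to replacing the whole positive part of the symmetry algebra by one of the submodules $V_{\Xi,\alpha_i}$. Write $\fa$ for the Lie algebra of infinitesimal automorphisms of $(\ba\to M,\om)$, as in \cite[Section 2.3.1]{KT}: it carries the filtration $\fa=\fa^{-k}\supset\dots\supset\fa^0\supset\fa^1\supset\dots$ by order of vanishing at $u_0$, its associated graded $\gr(\fa)$ embeds into $\fg$ through $\om_{u_0}$, and the positive part $\gr_+(\fa):=\bigoplus_{i\geq 1}\fa^i/\fa^{i+1}$ --- which is the associated graded of the Lie algebra of infinitesimal automorphisms with higher order fixed point, i.e. the object appearing in Proposition \ref{1.4} and in the definition of $\Phi$ --- is a $G_{\Xi,0}$--submodule of $\fp_{\Xi,+}$. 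Infinitesimal automorphisms preserve $\om$, hence $\ka$, hence $\ka_H$; the argument of \cite{KT} (the Bianchi identity together with normality and the minimal homogeneity of $\ka_H$) then shows that every element of $\gr_+(\fa)$ annihilates, under the natural $\fp_{\Xi,+}$--action on $H^2(\fg_{\Xi,-},\fg)$, each non--trivial component $W_\mu$ of $\ka_H$, and this is what yields the bounds on the dimensions of the projections of $\gr_+(\fa)$ into the associated grading stated there.

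The additional input is the splitting hypothesis: $\gr_+(\fa)$ is a direct sum of some of the modules $V_{\Xi,\alpha_i}$, $\alpha_i\in\Xi$, each of which is the indecomposable $G_{\Xi,0}$--module generated by a single root space $\fg_{\alpha_i}$. The subspace $\{X\in\fp_{\Xi,+}:X\cdot W_\mu=0\}$ is $G_{\Xi,0}$--invariant, so $V_{\Xi,\alpha_i}\subset\gr_+(\fa)$ forces $\fg_{\alpha_i}$ into it, i.e. $X^{\alpha_i}\cdot\phi_\mu=0$ for a root vector $X^{\alpha_i}$ and the lowest weight vector $\phi_\mu$ of $W_\mu$. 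When $\mu$ is represented by $(\alpha_a,\alpha_b)$, the Kostant description of $H^2(\fg_{\Xi,-},\fg)$ together with the homogeneity formulas recalled in \ref{ApA} identify the condition $X^{\alpha_i}\cdot\phi_\mu=0$ with the vanishing $\langle s_{\alpha_a}s_{\alpha_b}(\mu^\fg+\delta)-\delta,\alpha_i\rangle=0$, i.e. with $\alpha_i\in I_\mu$. Thus every $V_{\Xi,\alpha_i}$ occurring in $\gr_+(\fa)$ satisfies $\alpha_i\in I_\mu$ for every non--trivial component $\mu$ of $\ka_H$ --- this is exactly \cite[Proposition 3.1.1]{KT} after restriction to $V_{\Xi,\alpha_i}$ --- and in particular the projection of the Lie algebra of infinitesimal automorphisms with higher order fixed point onto $V_{\Xi,\alpha_i}$ vanishes unless $\alpha_i\in\bigcap_\mu I_\mu$. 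Taking $\fa$ to be the full symmetry algebra, this is the bound $\Phi\subset I_\mu$ exploited in Proposition \ref{2.31}.

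The step I expect to be the main obstacle is verifying that the \cite{KT} proof genuinely ``restricts'' to a single summand. There the constraint on $\gr_+(\fa)$ is obtained iteratively --- bracketing a putative higher order symmetry direction with negative root vectors, applying the Bianchi identity, and tracking homogeneities --- and one must check that when the candidate is taken inside a single module $V_{\Xi,\alpha_i}\subset\fp_{\Xi,+}$ the higher $\Xi$--height components that it necessarily carries along do not interfere with the contradiction. This is precisely where the hypothesis is needed: it excludes ``diagonal'' $G_{\Xi,0}$--submodules of $\fp_{\Xi,+}$ in the presence of multiplicities, so that the $G_{\Xi,0}$--equivariant annihilation condition descends summand by summand; and this hypothesis is, as noted before the statement, always satisfied in the homogeneous setting relevant to the paper.
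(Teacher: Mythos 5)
Your proposal is correct and follows essentially the same route as the paper: the paper's (very terse) proof likewise consists of observing that \cite[Theorem 3.3.3]{KT} is compatible with the decomposition into the modules $V_{\Xi,\gamma}$ and that the splitting hypothesis lets the argument of \cite[Proposition 3.1.1]{KT} be run on each indecomposable summand $V_{\Xi,\alpha_i}$ separately, which is exactly the $G_{\Xi,0}$--equivariance/annihilator mechanism you spell out. Your version simply makes explicit the details (lowest weight vectors, the identification of the annihilation condition with $\alpha_i\in I_\mu$) that the paper delegates to the citations.
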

\begin{proof}
Firstly, \cite[Theorem 3.3.3]{KT} gives results compatible with the decomposition to modules $V_{\Xi,\gamma}\subset \fp_{\Xi,+}$. Then the assumption implies that the proof of \cite[Proposition 3.1.1]{KT} can be applied to indecomposable modules of $V_{\Xi,\alpha_i}$ without any change. Then the second claim is a consequence of \cite[Theorem 3.3.3]{KT}.
\end{proof}

Finally, we prove a statement which allows us to compute the set $\Psi$ (defined in the Introduction) explicitly.

\begin{prop*}
The set $\Psi$ equals to the set of all simple roots $\alpha_i$ such that $\langle \alpha_i,(\alpha_a,\alpha_b) \rangle \geq 0$ for all highest weights $(\alpha_a,\alpha_b)$ representing non--trivial components of $\kappa_H$. Moreover,

\begin{enumerate}
\item If $\langle \alpha_a,\alpha_b \rangle \neq 0$, then $\alpha_a$ is the unique simple root $\alpha$ such that $\langle \alpha,(\alpha_a,\alpha_b) \rangle <0$.
\item If $\langle \alpha_a,\alpha_b\rangle = 0$, then $\alpha_a$ and $\alpha_b$ are the unique simple roots $\alpha$ such that $\langle \alpha, (\alpha_a,\alpha_b) \rangle <0$.
\end{enumerate}
\end{prop*}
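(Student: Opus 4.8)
The plan is to split the statement into two essentially independent ingredients: a general fact about correspondence spaces that rewrites the condition defining $\Psi$ as a condition on the weights of the components of $\kappa_H$, and then an explicit computation of those weights via the affine action of $s_{\alpha_a}s_{\alpha_b}$.

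First I would recall, from \cite{Cap-correspondence} together with Kostant's version of the Bott--Borel--Weil theorem, the precise meaning of ``$\kappa_H$ has no entries in $\fg_{\Xi',0}$'' for $\Xi'\subset\Xi$: it means that $\kappa_H$ is annihilated by all insertions from $\fp_{\Xi'}/\fp_\Xi\cong\fg_{\Xi',0}\cap\fg_{\Xi,-}$, i.e. $(\ba\to M,\om)$ is locally a correspondence space over a geometry of type $(G,P_{\Xi'})$, and this holds iff every cohomology weight $(\alpha_a,\alpha_b)=s_{\alpha_a}s_{\alpha_b}(\mu^\fg+\delta)-\delta$ representing a non--trivial component of $\kappa_H$ is dominant for the Levi subalgebra $\fg_{\Xi',0}$ (integrality being automatic, since $s_{\alpha_a}s_{\alpha_b}(\mu^\fg+\delta)-\delta$ differs from $\mu^\fg$ by an element of the root lattice); equivalently the corresponding $\fg_{\Xi,0}$--submodule of $H^2(\fg_{\Xi,-},\fg)$ lies in the image of $H^2(\fg_{\Xi',-},\fg)\hookrightarrow H^2(\fg_{\Xi,-},\fg)$. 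Taking $\Xi'=\Xi-\Psi$ and using that the simple roots of $\fg_{\Xi-\Psi,0}$ are exactly the simple roots of $\fg$ outside $\Xi-\Psi$ (and that a sign of $\langle\alpha_i,\nu\rangle$ computed with the Killing form agrees with the sign of $\langle\alpha_i^\vee,\nu\rangle$), this becomes: for each non--trivial component $\mu=(\alpha_a,\alpha_b)$ the set $B_\mu:=\{\alpha:\langle\alpha,\mu\rangle<0\}$ of its ``bad'' simple roots must be contained in $\Xi-\Psi$.

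The second, and main, step is the explicit computation that proves the ``Moreover'' part and, simultaneously, shows $B_\mu\subset\{\alpha_a,\alpha_b\}\subset\Xi$. Writing $\mu^\fg=\sum r_i\lambda_i$ and using $\langle\delta,\alpha_i^\vee\rangle=1$ and dominance $r_i\geq0$, I would expand $s_{\alpha_a}s_{\alpha_b}(\mu^\fg+\delta)-\delta=\mu^\fg-q\alpha_b-p\alpha_a$ with $q=r_b+1$ and $p=(r_a+1)-(r_b+1)\langle\alpha_b,\alpha_a^\vee\rangle\geq0$, and then pair with each $\alpha_i^\vee$. A short case analysis gives: for $\alpha_i=\alpha_a$ the pairing equals $-r_a-2-(r_b+1)\langle\alpha_a,\alpha_b^\vee\rangle\langle\alpha_b,\alpha_a^\vee\rangle<0$; for $\alpha_i\neq\alpha_a,\alpha_b$ it is $\geq r_i\geq0$ since all remaining contributions have favorable sign; and for $\alpha_i=\alpha_b$ it equals $-r_b-2+(r_a+1)(-\langle\alpha_a,\alpha_b^\vee\rangle)+(r_b+1)\langle\alpha_a,\alpha_b^\vee\rangle\langle\alpha_b,\alpha_a^\vee\rangle$, which is $\geq r_a\geq0$ when $\langle\alpha_a,\alpha_b\rangle\neq0$ (the two Cartan integers are then $\geq1$) and reduces to $-r_b-2<0$ when $\langle\alpha_a,\alpha_b\rangle=0$. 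This is exactly claims (1) and (2), and in particular $B_\mu=\{\alpha_a\}$ or $B_\mu=\{\alpha_a,\alpha_b\}$ according to the two cases.

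Finally I would assemble the pieces: since $B_\mu\subset\Xi$ for every non--trivial component $\mu$, the condition ``$B_\mu\subset\Xi-\Psi$'' is equivalent to ``$\Psi\cap B_\mu=\emptyset$'', i.e. $\Psi\subset\{\alpha_i\in\Xi:\langle\alpha_i,\mu\rangle\geq0\}$; hence the admissible sets $\Psi$ are precisely the subsets of $\bigcap_\mu\{\alpha_i\in\Xi:\langle\alpha_i,\mu\rangle\geq0\}$, so this intersection is the unique maximal one, which is the asserted description. I expect the only genuinely delicate point to be the first step --- making precise the identification of ``no entries in $\fg_{\Xi-\Psi,0}$'' with $\fg_{\Xi-\Psi,0}$--dominance of the cohomology weights, via Kostant's theorem for the nilradical $\fg_{\Xi-\Psi,-}$ and the correspondence--space result of \cite{Cap-correspondence}; everything after that is bookkeeping with the Weyl action, where the only nuisance is keeping the non--simply--laced Cartan integers $-\langle\alpha_a,\alpha_b^\vee\rangle,-\langle\alpha_b,\alpha_a^\vee\rangle\geq1$ in the correct places.
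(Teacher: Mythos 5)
Your proposal is correct and follows essentially the same route as the paper: the identification of ``no entries in $\fg_{\Xi-\Psi,0}$'' with $\fg_{\Xi-\Psi,0}$--dominance of the cohomology weights (which the paper invokes tersely as ``general representation theory''), followed by the explicit expansion of $s_{\alpha_a}s_{\alpha_b}(\mu^\fg+\delta)-\delta$ and the same three-way case analysis $i=a$, $i=b$, $i\neq a,b$ with the same sign conclusions. Your version merely makes the maximality bookkeeping and the non--simply--laced Cartan integers slightly more explicit.
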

\begin{proof}
It follows by the general representation theory that the highest weight $\fg$--module of $(\alpha_a,\alpha_b)$ is naturally both $\fp_\Xi$--module and $\fp_{\Xi-\Psi}$--module and thus the component of the harmonic curvature represented by $(\alpha_a,\alpha_b)$ does not take values in $\fg_{\Xi-\Psi,0}$.

For arbitrary simple root $\alpha_i$, the direct computation implies
\begin{align*}
\langle \alpha_i,(\alpha_a,\alpha_b) \rangle&=\langle \alpha_i,s_{\alpha_a}s_{\alpha_b}(\mu^\fg+\rho)-\rho \rangle \\
&=\langle \alpha_i,\mu^\fg\rangle -\left(1+\langle \mu^\fg,\alpha_b\rangle \right)\langle \alpha_i,\alpha_b\rangle\\
&-\left(1+\langle\mu^\fg,\alpha_a\rangle-(1+\langle\mu^\fg,\alpha_b\rangle)\langle \alpha_b,\alpha_a\rangle\right)\langle\alpha_i,\alpha_a\rangle,
\end{align*}
where $\rho$ denotes the lowest weight. Thus $\langle \alpha_i,(\alpha_a,\alpha_b) \rangle\geq 0$ holds for $i \neq a,b$.

If $i=a$, then
\begin{align*}
\langle\alpha_a,(\alpha_a,\alpha_b)\rangle&=-\langle \alpha_a,\mu^\fg\rangle +\left(1+\langle \mu^\fg,\alpha_b\rangle \right)\langle \alpha_a,\alpha_b\rangle-2\\
&=-k_a+(1+k_b)\langle \alpha_a,\alpha_b\rangle-2<0.
\end{align*}

If $i= b$, then
$$\langle\alpha_b,(\alpha_a,\alpha_b)\rangle
=-\langle \alpha_b,\mu^\fg\rangle -2-\left(1+\langle\mu^\fg,\alpha_a\rangle\right)\langle\alpha_b,\alpha_a\rangle+(1+\langle\mu^\fg,\alpha_b\rangle)\langle \alpha_b,\alpha_a\rangle^2.$$
Thus if $\langle \alpha_b,\alpha_a\rangle=0$, then $\langle\alpha_b,(\alpha_a,\alpha_b)\rangle <0$. Otherwise, the last term is greater than the first term, and the third term is greater than the second term in the absolute value, and thus $\langle\alpha_b,(\alpha_a,\alpha_b)\rangle\geq 0$.
\end{proof}

\section{Example}\label{ApB}

Let us illustrate the theory on a particular example. The example also provides a counter--example for the Theorem \ref{main} to hold without considering the set $\Theta(\fk)$.

Let us consider the seventeen--dimensional Lie subgroup $K$ of $Gl(8,\mathbb{R})$ consisting of elements of the form

$$\begin{pmatrix}\pm cos(x_9) &\mp sin(x_9) & 0 & 0 & 0 & 0 & 0 & 0\cr sin(x_9) & cos(x_9) & 0 & 0 & 0 & 0 & 0 & 0\cr 0 & 0 & \pm cosh(x_9) & \pm sinh(x_9) & 0 & 0 & 0 & 0\cr 0 & 0 & sinh(x_9) & cosh(x_9) & 0 & 0 & 0 & 0\cr x_{1} & x_{2} &0 & 0 & x_{10} & x_{11} & 0 & 0\cr x_{3} & x_{4} & 0 & 0 & x_{12} & x_{13} & 0 & 0\cr 0 & 0 & x_{5} & x_{6} & 0 & 0 & x_{14} & x_{15}\cr 0 & 0 & x_{7} & x_{8} & 0 & 0 & x_{16} & x_{17}
\end{pmatrix},$$
where $x_1,\dots,x_{17}\in \mathbb{R}$ and $(x_{10}x_{13}-x_{11}x_{12})(x_{14}x_{17}-x_{15}x_{16})>0$, together with its five--dimensional (solvable) Lie subgroup $H$ consisting of elements of the form

$$h=\begin{pmatrix}\pm 1 &0 & 0 & 0 & 0 & 0 & 0 & 0\cr 0 & 1 & 0 & 0 & 0 & 0 & 0 & 0\cr 0 & 0 & \pm 1 & 0& 0 & 0 & 0 & 0\cr 0 & 0 & 0 & 1& 0 & 0 & 0 & 0\cr 0 & 0 &0 & 0 & x_{10} & x_{11} & 0 & 0\cr 0 & 0 & 0 & 0 & 0 & x_{13} & 0 & 0\cr 0 & 0 & 0 & 0 & 0 & 0 & x_{14} & x_{15}\cr 0 & 0 & 0 & 0 & 0 & 0 & 0 & x_{13}
\end{pmatrix},$$
i.e., $x_{10}x_{13}^2x_{14}>0$.

We will investigate the regular normal homogeneous parabolic geometry on $K/H$ of type $(PGl(6,\mathbb{R}),P_{1,2,5})$ (and thus $\Xi=\{\alpha_1,\alpha_2,\alpha_5\}$ and $\fg=\mathfrak{sl}(6,\mathbb{R})$) given by the extension $(\alpha,\iota)$ of $(K,H)$ to $(PGl(6,\mathbb{R}),P_{1,2,5})$  defined as follows:
We define $\iota(h)$ for $h\in H$ of the above form as:

$$\begin{pmatrix} \pm \frac{1}{\sqrt[6]{x_{10}x_{13}^{2}x_{14}}}&0 & 0 & 0 & 0 & 0 \cr 0 & \frac{1}{\sqrt[6]{x_{10}x_{13}^{2}x_{14}}}&0 & 0 & 0 & 0 \cr 0 & 0 &\frac{x_{10}}{\sqrt[6]{x_{10}x_{13}^{2}x_{14}}} & 0& x_{11} & 0 \cr 0 & 0 & 0 & \frac{x_{14}}{\sqrt[6]{x_{10}x_{13}^{2}x_{14}}}& x_{15} & x_{15} \cr 0 & 0 &0 & 0 & \frac{x_{13}}{\sqrt[6]{x_{10}x_{13}^{2}x_{14}}}& 0 \cr 0 & 0 & 0 & 0 & 0 & \frac{x_{13}}{\sqrt[6]{x_{10}x_{13}^{2}x_{14}}}
\end{pmatrix}.$$
We have chosen $PGl(6,\R)$ instead of $Sl(6,\R)$ in order to allow $\pm$ on the first position, which will turn our to be the usual (geodesic) symmetry for some underlying symmetric space. In particular, $Sl(6,\R)\subset PGl(6,\R)\subset \tG$ in this situation and $Z(\tG_{\Xi,0})\subset PGl(6,\R)$ holds. Further,
elements of $\fk$ are of the form
$$X=\begin{pmatrix}0 &-X_9 & 0 & 0 & 0 & 0 & 0 & 0\cr X_9 & 0 & 0 & 0 & 0 & 0 & 0 & 0\cr 0 & 0 & 0 & X_9 & 0 & 0 & 0 & 0\cr 0 & 0 & X_9 & 0& 0 & 0 & 0 & 0\cr X_{1} & X_{2} &0 & 0 & X_{10} & X_{11} & 0 & 0\cr X_{3} & X_{4} & 0 & 0 & X_{12} & X_{13} & 0 & 0\cr 0 & 0 & X_{5} & X_{6} & 0 & 0 & X_{14} & X_{15}\cr 0 & 0 & X_{7} & X_{8} & 0 & 0 & X_{16} & X_{17}\end{pmatrix}$$
and we define
$$
\alpha(X)=\begin{pmatrix}
C &0 & 0& 0 & 0 & 0 \cr
X_9 & C& 0 & 0 & 0 & 0 \cr
X_1 & X_2 & C+X_{10}& 0& X_{11} & 0 \cr
X_5 & X_6 & 0 & C+X_{14}& X_{15} & X_{15} \cr
X_3 & X_4 &X_{12}& 0 & C+X_{13} & 0 \cr
X_7-X_3 & X_8-X_4 & -X_{12} & X_{16} & X_{17}-X_{13} &C+X_{17}
\end{pmatrix}
,$$ where $X_1, \dots, X_{17} \in \R$ and $C=-\frac{X_{10}+X_{13}+X_{14}+X_{17}}{6}$.

The curvature of this geometry at the origin for $X,Y\in \fk$ as above is of the form

$$\begin{pmatrix}
0 &0 & 0& 0 & 0 & 0 \cr
0 & 0& 0 & 0 & 0 & 0 \cr
0 & X_1Y_9-X_9Y_1 & 0& 0& 0 & 0 \cr
0 &-(X_5Y_9-X_9Y_5) & 0 & 0& 0 & 0 \cr
0&X_3Y_9-X_9Y_3&0& 0& 0 & 0 \cr
0& \bf{-2(X_3Y_9-X_9Y_3)}\rm-(X_7-X_3)Y_9+X_9(Y_7-Y_3) & 0 & 0 & 0 &0
\end{pmatrix}
,$$ where the bold entry corresponds to the harmonic part of the curvature. Thus $\kappa_H$ has non--trivial part in the submodule represented by 
$\mu=(\alpha_1,\alpha_2)$, and $\Phi(\fk)=\emptyset$ and $\Psi=\{\alpha_2,\alpha_5\}$.

It is easy computation to check that the extension satisfies all the properties of the Theorem \ref{extension} except the fact that we are missing the connected components of $\tG$ with the outer automorphism of $\frak{sl}(6,\mathbb{R})$, which does not preserve $\fp_\Xi$. In particular, there are no other infinitesimal automorphisms of this geometry and $\fh\cap\fp_{\Xi,+}=0$. 
Thus the set $\mathcal{J}(\fk)$ consists of elements of the form for $j_2\in \mathbb{R}^\times$:

$$\begin{pmatrix} \pm \frac{1}{\sqrt[3]{j_2^2}}&0 & 0 & 0 & 0 & 0 \cr 0 & \frac{1}{\sqrt[3]{j_2^2}}&0 & 0 & 0 & 0 \cr 0 & 0 &\sqrt[3]{j_2} & 0& 0 & 0 \cr 0 & 0 & 0 & \sqrt[3]{j_2} & 0 & 0 \cr 0 & 0 &0 & 0 &\sqrt[3]{j_2} & 0 \cr 0 & 0 & 0 & 0 & 0 &\sqrt[3]{j_2}
\end{pmatrix},$$
i.e., the eigenvalues $j_i$ on $V_{\Xi,\alpha_i}$ for $\alpha_i\in \Xi$ are $j_1=\pm 1, j_5=1$ and $j_2$ can be arbitrary for the possible generalized symmetries. 
Thus $\Theta(\fk)=\{\alpha_5\}$.

Our theory has the following geometrical consequences for the geometry: It is a simple computation to check that $H$ is not $P$--conjugated to a subgroup of $Q_{\Xi-\Phi(\fk)}=Q_{1,2,5}$. On the other hand, $H\subset Q_{\Lambda(\fk)}=Q_{1,2}$ holds consistently with our theory. Since eigenvalues of all generalized symmetries on $\fg_{-k}$ equal to $-j_2$, we can apply the Theorem \ref{4.5} for $L_{\Theta(\fk)\cap\Psi}=L_{5}$, which consists of  elements of the form

$$\begin{pmatrix}\pm 1 &0 & 0 & 0 & 0 & 0 & 0 & 0\cr 0& 1 & 0 & 0 & 0 & 0 & 0 & 0\cr 0 & 0 & \pm 1 & 0 & 0 & 0 & 0 & 0\cr 0 & 0 & 0 & 1 & 0 & 0 & 0 & 0\cr 0 & 0 &0 & 0 & x_{10} & x_{11} & 0 & 0\cr 0 & 0 & 0 & 0 & x_{12} & x_{13} & 0 & 0\cr 0 & 0 & 0 & 0 & 0 & 0 & x_{14} & x_{15}\cr 0 & 0 & 0 & 0 & 0 & 0 & x_{16} & x_{17}\end{pmatrix}.$$

Since $L_5$ is reductive, $K/H$ is open in the correspondence space to the parabolic geometry of type $(PGl(6,\mathbb{R}),P_{1,2})$ given by the extension $(\bar \alpha, \bar \iota)$ of $(K,L_5)$ to $(PGl(6,\mathbb{R}),P_{1,2})$ defined as follows: The map $\bar \iota$ is $\frac{1}{\sqrt[6]{x_{10}x_{13}x_{14}x_{17}}}$ multiple of the restriction of $L_5$ to the bottom right block, and $\bar \alpha$ has the following form for $X\in \fk$ as above:

$$\bar \alpha(X)=\begin{pmatrix}
C &0 & 0& 0 & 0 & 0 \cr
X_9 & C& 0 & 0 & 0 & 0 \cr
X_1 & X_2 & C+X_{10}& X_{11}& 0 & 0 \cr
X_3 & X_4 & X_{12} & C+X_{13}& 0 & 0 \cr
X_5 & X_6 &0& 0 & C+X_{14} & X_{15} \cr
X_7 & X_8 &0 &0 & X_{16} &C+X_{17}
\end{pmatrix}
,$$ where $C=-\frac{X_{10}+X_{13}+X_{14}+X_{17}}{6}$. In particular, $K/H$ is the $K$--orbit of $pP_{1,2,5}$ for $p\in P_{1,2}$ of the form

$$p=\begin{pmatrix}
1 &0 & 0& 0 & 0 & 0 \cr
0 & 1& 0 & 0 & 0 & 0 \cr
0 & 0& 1& 0& 0 & 0 \cr
0& 0 & 0 & 0& 1 & 0 \cr
0 & 0 &0& 1& 0 & 0 \cr
0 &0 &0 &0 & 1 &1
\end{pmatrix}
.$$

Thus $\Ad_p(\alpha)=\bar \alpha$ and $p\iota(h)p^{-1}=\bar \iota(h)$. Now, $L_5\subset G_{\{1,2\},0}$ holds and the whole curvature of this geometry is harmonic, which means that the $L_5$--invariant complement of $\fl_5$ in $\fk$ provides the integrable distribution in $T(K/H)$. Let us point out that this is not the case when the whole curvature is not harmonic.

Now, we can apply the Theorem \ref{5.1} on the geometry on $K/L_5$, because $L_5\subset G_{\{1,2\},0}$. So there is the invariant Weyl connection on $K/L_5$, which is covered by a class of almost $\{1,2\}$--invariant Weyl connections on $K/H$.

If $ s\in \mathcal{J}(\fk)$ satisfies $j_2=1$, then $\Psi(1) \cap \Xi=\{2\}$, and we can apply the Theorem \ref{4.4} for the subgroup $L=L_{2,5}$ consisting of the elements of the form

$$\begin{pmatrix}\pm 1 &0 & 0 & 0 & 0 & 0 & 0 & 0\cr 0& 1 & 0 & 0 & 0 & 0 & 0 & 0\cr 0 & 0 & \pm 1 & 0 & 0 & 0 & 0 & 0\cr 0 & 0 & 0 & 1 & 0 & 0 & 0 & 0\cr 0 & x_2 &0 & 0 & x_{10} & x_{11} & 0 & 0\cr 0 & x_4 & 0 & 0 & x_{12} & x_{13} & 0 & 0\cr 0 & 0 & 0 & x_6 & 0 & 0 & x_{14} & x_{15}\cr 0 & 0 & 0 & x_8 & 0 & 0 & x_{16} & x_{17}\end{pmatrix}.$$

The pair $(K,L_{2,5})$ is reductive, and $K/L_5$ and $K/H$ are open in the correspondence spaces to the projective geometry given by the extension $(\bar \alpha, \hat \iota)$ of $(K,L_5)$ to $(PGl(6,\mathbb{R}),P_{1})$ defined as follows: The map $\hat \iota$ is $\frac{1}{\sqrt[6]{x_{10}x_{13}x_{14}x_{17}}}$ multiple of the restriction of $L_{2,5}$ to the bottom right block after the moving the $x_2,x_4$ to position that is two columns to the right, and $\bar \alpha$ is the same as above. 
In particular, $K/H$ is again the $K$--orbit of $pP_{1,2,5}$ and $K/L_5$ is the $K$--orbit of $eP_{1,2}$.

Moreover, the projective geometry on $K/L_{2,5}$ is symmetric. Thus $K/L_{2,5}$ is non--effective symmetric space, because the Lie algebra of the group generated by symmetries corresponds to the $L_5$--invariant complement of $\fl_5$ in $\fk$.

This geometry has further geometric properties, which does not have to occur in the general situation.
Firstly, the distribution $T^{\Lambda(\fk),-}(K/H)$ is integrable and corresponds to the Lie algebra of the group generated by generalized symmetries.

Further, for $\{\alpha_2\}\subset \Psi$, the subgroup $L_2$ consisting of the elements of the form
$$\begin{pmatrix}\pm 1 & 0 & 0 & 0 & 0 & 0 & 0 & 0\cr 0 & 1 & 0 & 0 & 0 & 0 & 0 & 0\cr 0 & 0 & \pm 1 & 0 & 0 & 0 & 0 & 0\cr 0 & 0 & 0 & 1 & 0 & 0 & 0 & 0\cr 0 & x_{2} &0 & 0 & x_{10} & x_{11} & 0 & 0\cr 0 & x_{4} & 0 & 0 & 0 & x_{13} & 0 & 0\cr 0 & 0 & 0 & x_{6} & 0 & 0 & x_{14} & x_{15}\cr 0 & 0 & 0 & x_{4} & 0 & 0 & 0 & x_{13}\end{pmatrix}$$
is closed in $K$, and we can apply the analogy of the Proposition \ref{4.4} for this situation. Namely, there is the Lagrangean contact geometry on $K/L_2$ given by the extension $(\alpha,\check \iota)$ of $(K,L_2)$ to $(PGl(6,\mathbb{R}),P_{1,5})$ defined as follows: We define $\check \iota(h)$ for $h\in L_2$ of the above form as:

$$\begin{pmatrix} \pm \frac{1}{\sqrt[6]{x_{10}x_{13}^{2}x_{14}}}&0 & 0 & 0 & 0 & 0 \cr 0 & \frac{1}{\sqrt[6]{x_{10}x_{13}^{2}x_{14}}}&0 & 0 & 0 & 0 \cr 0 & x_2 &\frac{x_{10}}{\sqrt[6]{x_{10}x_{13}^{2}x_{14}}} & 0& x_{11} & 0 \cr 0 & x_6 & 0 & \frac{x_{14}}{\sqrt[6]{x_{10}x_{13}^{2}x_{14}}}& x_{15} & x_{15} \cr 0 & x_4 &0 & 0 & \frac{x_{13}}{\sqrt[6]{x_{10}x_{13}^{2}x_{14}}}& 0 \cr 0 & 0 & 0 & 0 & 0 & \frac{x_{13}}{\sqrt[6]{x_{10}x_{13}^{2}x_{14}}}
\end{pmatrix}.$$

This geometry has only generalized symmetries of one type, and the generalized symmetries are the lifts of the symmetries of the above underlying symmetric projective geometry. So this geometry satisfies $\Theta(\fk)=\Psi(1)=\{\alpha_5\}$, and $K/L_2$ is the $K$--orbit of $pP_{1,5}$ in the correspondence space to the projective geometry on $K/L_{2,5}$ for $p$ as above.

Finally, we can consider the subgroup $L_{1,5}$, which is the centralizer of $ s\in Z(G_0)\cap H$ with eigenvalues $j_1=1, j_2=-1, j_5=1$ in $K$, consisting of the elements of the form

$$\begin{pmatrix}\pm cos(x_9) &\mp sin(x_9) & 0 & 0 & 0 & 0 & 0 & 0\cr sin(x_9) & cos(x_9) & 0 & 0 & 0 & 0 & 0 & 0\cr 0 & 0 & \pm cosh(x_9) & \pm sinh(x_9) & 0 & 0 & 0 & 0\cr 0 & 0 & sinh(x_9) & cosh(x_9) & 0 & 0 & 0 & 0\cr 0 & 0 &0 & 0 & x_{10} & x_{11} & 0 & 0\cr 0 & 0 & 0 & 0 & x_{12} & x_{13} & 0 & 0\cr 0 & 0 & 0 & 0 & 0 & 0 & x_{14} & x_{15}\cr 0 & 0 & 0 & 0 & 0 & 0 & x_{16} & x_{17}\end{pmatrix}.$$
Then $K/L_{1,5}$ is a symmetric space, but since there are entries containing $X_9, Y_9$ in the curvature, the parabolic geometry of type $(PGl(6,\mathbb{R}),P_{1,2,5})$ on $M$ does not descend to a  parabolic geometry of type $(PGl(6,\mathbb{R}),P_{2})$ on this underlying symmetric space.

\section{Tables}\label{ApC}

We present here tables classifying all possible generalized symmetries of non--flat parabolic geometries with $\fg$ simple. We recall that there is the construction in \cite{GZ2}, that allows to construct explicit examples of such geometries. The data in the tables can be divided into two parts. The first one describes the information on the parabolic geometry and consists of the following data:

\begin{itemize}
\item[$\fg$] distinguishes the simple Lie algebra $\fg$, where $n\geq |\Xi|$ determines the rank and $q>0$ the signature. We assume that $n>1$ for type $A$, $n>1$ for type $C$, $n>2$ for type $B$, $n>3$ for type $D$, and $n>6$ for type $BD$.
\item[$\Xi$] distinguishes the parabolic subalgebra $\fp_\Xi$, where the parameter $p$ is such that all simple roots in $\Xi$ defines a simple restricted root. In the cases dealing with mixed types of curvature, where conjugate roots have different homogeneity, we list the conjugate roots (indicated by ') in $\Xi$, too.
\item[param.] contains additional restrictions on parameters $p,q,n$.
\item[$\mu$] represents by the pair  $(\alpha_a,\alpha_b)$ the $\fg_{\Xi,0}$--submodule  to which $\kappa_H$ can have non--trivial projection.
\item[homog.] is the tuple characterizing homogeneity of $(\alpha_a,\alpha_b)$ with respect to $\alpha_i\in \Xi$ ordered in the same way as $\Xi$.
\item[$I_\mu$] characterizes all simple roots of $\Xi$ that can be contained in $\Phi(\fk)$ for this $\mu$, see  \ref{ApA}. Additional restrictions on parameters are listed to distinguish special situations.
\item[all info] indicates, when it is convenient to put all the information in one column. The information in rows (if they are not missing) are ordered as above.
\end{itemize}

Then for the given types of parabolic geometries, the second part of the data describes the possible types of generalized symmetries by characterizing the possible eigenvalues $j_i$ of $\Ad_s$ on the simple restricted root spaces $\fg_{\alpha_i}$ for $\alpha_i\in \Xi$. Let us recall that in this setting,  the only restrictions on the possible eigenvalues $j_{i}$ are the following:
\begin{itemize}
\item  if $\alpha_{i_1}$ and $\alpha_{i_2}$ are complex conjugated, then $j_{i_1}=\bar j_{i_2}$,
\item  if  $a_i$ denotes the homogeneity of $(\alpha_a,\alpha_b)$ with respect to $\alpha_i\in \Xi$, then
$$\prod_{i \in \Xi} (j_i)^{a_i}=1.$$
\end{itemize}
The first restriction follows from \cite[Proposition 3.2]{GZ2}, and the second one from the definition of homogeneity. We split the different cases depending  how the $1$--eigenspace $\fm$ of $\Ad_s$ in $\fp_{\Xi,+}$ looks like.

\begin{itemize}
\item[$j_{i_l}$] the eigenvalue of $\Ad_{s}$ on the root space of $l$--th simple root in $\Xi$. In the case of non--mixed curvature, the complex eigenvalues are possible only for $\alpha_i$ conjugated to another simple root and we write the conjugate eigenvalue only for $|\Xi|>1$. In the case of mixed curvature, we write the eigenvalues in the form $e^{r+i\phi}$ for $r\in \mathbb{R}$ and $\phi\in \mathbb{S}^1$. In both cases, $\sqrt[a]{1}$ represents $e^{\frac{2k\pi}{a}}$ for $k\in \mathbb{Z}$ not divisible by $a$.
\item[$\frak{m}$] lists the roots representing the irreducible $\fg_{\Xi,0}$--submodules of $\fp_{\Xi,+}$ in the $1$--eigenspace $\frak{m}$ of $\Ad_s$. We use $\dots$ to shorten the entry, when the weight is determined up to $\fg_{\Xi,0}$--weights.
\end{itemize}

\begin{tabular}{|c|c|c||c|c|}

\hline

$\fg$ & $\Xi$ & $\mu$& $j_{i_1}$&$\frak{m}$ \\

\hline

\hline


$\frak{sl}(n+1,\mathbb{C})$ & $\{1\}$ & $(\alpha_1,\alpha_{1'})$&$-e^{i\phi}$& \\

\hline

$\frak{sl}(n+1,\{\mathbb{R, C}\}), n>2$ & $\{1\}$& $(\alpha_1,\alpha_2)$&$-1$& \\
\hline

$\frak{sl}(n+1,\{\mathbb{R, C, H}\}), n>2$ &$\{2\}$ & $(\alpha_2,\alpha_1)$&$-1$& \\

\hline
$\frak{sp}(4,\mathbb{C})$ & $\{1\}$ & $(\alpha_1,\alpha_2)$&$\sqrt[3]{1}$&\\

\hline

$\frak{sp}(2n,\mathbb{C})$ & $\{1\}$& $(\alpha_1,\alpha_1')$ &$-1$& $2\alpha_1+\dots$\\

&& &$-e^{i\phi}$& $ $\\

\hline

$\frak{sp}(2n,\{\mathbb{R,C}\}), n>2$ & $\{1\}$ & $(\alpha_1,\alpha_2)$&$-1$& $2\alpha_1+\dots$ \\

\hline

$\frak{sp}(2n,\{\mathbb{R, C}\}), n>2$ & $\{2\}$ & $(\alpha_2,\alpha_1)$& $-1$& $2\alpha_2+\dots$ \\

\hline

$\frak{sp}(q,n-q), n>2$ & $\{2\}$ & $(\alpha_2,\alpha_1)$&$-1$& $2\alpha_2+\dots$\\

\hline

$\frak{so}(7,\mathbb{C})$ & $\{3\}$ &$(\alpha_3,\alpha_2)$&$\sqrt[3]{1}$&\\

\hline

$\frak{so}(q,n-q)$,$\frak{so}(n,\mathbb{C})$ & $\{1\}$ & $(\alpha_1,\alpha_2)$&$-1$& \\

\hline

$\frak{g}_{2}(\{2,\mathbb{C}\})$ & $\{1\}$&$(\alpha_1,\alpha_2)$&$-1$&$2\alpha_1+\alpha_2$\\

&&&$\sqrt[4]{1}$&$ $\\

\hline

\end{tabular}

\begin{tabular}{|c|c|c|c||c|}

\hline

$\fg$ & $\Xi$ & $\mu$& homog.& $j_{i_1}$ \\

\hline

\hline

$\frak{sl}(n+1,\mathbb{C})$ & $\{p,p'\}$& $(\alpha_{p'},\alpha_{p+1'})$ &$(-1,2)$ &$\sqrt[3]{1}$ \\

\hline

$\frak{sp}(2n,\mathbb{C})$ & $\{n-1,n-1'\}$& $(\alpha_{n-1'},\alpha_{n'})$&$(-2,3)$&$\sqrt[5]{1}$\\

\hline

$\frak{sp}(2n,\mathbb{C})$ & $\{n,n'\}$& $(\alpha_{n'},\alpha_{n-1'})$&$(-1,2)$&$\sqrt[3]{1}$\\

\hline

$\frak{so}(2n+1,\mathbb{C})$ & $\{n,n'\}$& $(\alpha_{n'},\alpha_{n-1'})$ & $(-2,3)$&$\sqrt[5]{1}$ \\

\hline

$\frak{so}(2n,\mathbb{C})$ & $\{n,n'\}$& $(\alpha_{n'},\alpha_{n-2'})$ & $(-1,2)$&$\sqrt[3]{1}$ \\

\hline

$\frak{so}(n,\mathbb{C})$ & $\{1,1'\}$& $(\alpha_{1'},\alpha_{2'})$ & $(-1,2)$&$\sqrt[3]{1}$ \\

\hline

$\frak{e}_{6}(\mathbb{C})$ & $\{1,1'\}$&$(\alpha_{1'},\alpha_{2'})$&$(-1,2)$ &$\sqrt[3]{1}$\\

\hline

$\frak{e}_{7}(\mathbb{C})$ & $\{1,1'\}$&$(\alpha_{1'},\alpha_{2'})$&$(-1,2)$ &$\sqrt[3]{1}$\\

\hline

\end{tabular}

\begin{tabular}{|c|c|c||c|c|c|}

\hline

$\fg$ & $\Xi$ & $I_\mu$ & $j_{i_1}$ & $j_{i_2}$ & $\frak{m}$\\

param.& $\mu$ & homog. & & & \\

\hline \hline


$\frak{sl}(n+1,\{\R,\C\})$ & $\{1,2\}$ & & $-1$ & $-1$ & $\alpha_1+\alpha_2$ \\

$n>2$& $(\alpha_1,\alpha_2)$ &$(2,0)$ & $-1$& $1$ &$\alpha_2$\\

& & & $1$& $j_2$ &$\alpha_1$\\

& & & $-1$& $j_2$ &\\

\hline

$\frak{sl}(n+1,\{\R,\C\})$ & $\{1,2\}$ & $\alpha_1$ & $1$ & $-1$ & $\alpha_1$ \\

$n>2$& $(\alpha_2,\alpha_1)$ &$(1,2)$ & $j_2^{-2}$& $j_2$ & \\

\hline

$\frak{sl}(n+1,\{\R,\C\})$ & $\{1,n\}$ & & $j_1$ & $j_1^{-1}$ & $\alpha_1+ \dots + \alpha_n$ \\

$n>2$& $(\alpha_1,\alpha_n)$ &$(1,1)$ &$ $& $ $ & \\

\hline

$\frak{su}(q,n+1-q)$ & $\{1,n\}$ & & $e^{i\phi}$ & $e^{-i\phi}$ & $\alpha_1+ \dots + \alpha_n$ \\

$n>2$& $(\alpha_1,\alpha_n)$ &$(1,1)$ &$ $& $ $ & \\

\hline

$\frak{sl}(n+1,\{\R,\C\})$ & $\{1,p\}$ & $\alpha_p, p\neq 3,n-1,n$& $-1$ & $1$ & $\alpha_p$ \\

$2<p$& $(\alpha_1,\alpha_2)$ &$(2,-1)$ &$\sqrt[3]{1}$& $\sqrt[3]{1}^2$ & $\alpha_1+\dots+\alpha_p$\\

& & &$j_1$&$j_1^2$& \\

\hline

$\frak{su}(q,n+1-q)$ & $\{1,n\}$ & &$\sqrt[3]{1}$& $\sqrt[3]{1}^2$ & $\alpha_1+\dots+\alpha_n$\\

$n>2$& $(\alpha_1,\alpha_2)$ &$(2,-1)$ &$ $& $ $ & \\

\hline

$\frak{sl}(n+1,\{\R,\C,\mathbb{H}\})$ & $\{2,p\}$ & $\alpha_p, n>p>3$& $-1$ & $1$ & $\alpha_p$ \\

$2<n,2<p$& $(\alpha_2,\alpha_1)$ &$(2,-1)$ &$\sqrt[3]{1}$&$\sqrt[3]{1}^2$&$\alpha_2+ \dots+\alpha_p$ \\

& & &$j_2$&$j_2^2$& \\

\hline

$\frak{su}(q,n-q+1)$ & $\{2,n-1\}$ & &$\sqrt[3]{1}$&$\sqrt[3]{1}^2$&$\alpha_2+ \dots+\alpha_{n-1}$ \\

$n>q>1$& $(\alpha_2,\alpha_1)$ &$(2,-1)$ & $ $ & $ $ & $ $ \\

\hline

$\frak{sl}(n+1,\{\mathbb{R, C}\})$& $\{1,p\}$& & $1$& $j_p$& $\alpha_1$ \\

$n>3$& $(\alpha_1,\alpha_p)$&$(1,0)$& $ $ & $ $ & $ $\\

\hline

$\frak{sl}(n+1,\{\mathbb{R, C}\})$& $\{p,p+1\}$&$\alpha_p$& $j_p$ & $1$ & $ \alpha_{p+1}$ \\

$n-1>p>1$& $(\alpha_{p+1},\alpha_p)$&$(0,1)$& $ $ & $ $ & $ $\\

\hline

\end{tabular}

\begin{tabular}{|c|c|c||c|c|c|}

\hline

$\fg$ & $\Xi$ & $I_\mu$ & $j_{i_1}$ & $j_{i_2}$ & $\frak{m}$\\

param.& $\mu$ & homog. & & & \\

\hline \hline


$\frak{sp}(4,\{\mathbb{R},\C\})$&$\{ 1,2\}$& & $\sqrt[3]{1}$&$1$& $\alpha_2$\\

& $(\alpha_1,\alpha_2)$& $(3,0)$ &$1$&$j_1$& $ \alpha_1$ \\

& & &$\sqrt[3]{1}$&$\sqrt[3]{1}^2$& $\alpha_1+\alpha_2$ \\

& & &$\sqrt[3]{1}$&$\sqrt[3]{1}$&$2\alpha_1+\alpha_2$ \\

& & &$\sqrt[3]{1}$&$j_2$& \\

\hline

$\frak{sp}(2n,\{\R,\C\})$ & $\{1,2\}$ & $\alpha_1$ & $1$ & $-1$ & $\alpha_1, \alpha_1+2\alpha_2+ \dots,2\alpha_1+\dots$ \\

$n>2$ & $(\alpha_2,\alpha_1)$ &$(1,2)$& $j_2^{-2}$ & $j_2$ & $\alpha_1+2\alpha_2+ \dots $ \\

\hline

$\frak{sp}(2n,\{\R,\C\})$ & $\{1,n\}$ & $\alpha_n, n>3$ & $-1$ & $1$ & $\alpha_n,2\alpha_1+\dots,2\alpha_2+\dots $ \\

$n>2$ & $(\alpha_1,\alpha_2)$ &$(2,-1)$& $\sqrt[3]{1}$ & $\sqrt[3]{1}^2$ & $\alpha_1+ \dots+\alpha_n$ \\

& & & $\sqrt[4]{1}$ & $\sqrt[4]{1}^2$ & $2\alpha_1+\dots $ \\

& & & $j_1$&$j_1^2$& \\

\hline

$\frak{sp}({n \over 2},{n \over 2})$& $\{2,n\}$ & $\alpha_n, n>3$ & $-1$ & $1$ & $\alpha_n,2\alpha_2+\dots,\alpha_1+2\alpha_2+\dots$ \\

$\frak{sp}(2n,\{\R,\C\})$ & $(\alpha_2,\alpha_1)$ &$(2,-1)$& $\sqrt[3]{1}$ & $\sqrt[3]{1}^2$ & $\alpha_2+\dots+ \alpha_n$ \\

$n>2$ & & & $\sqrt[4]{1}$ & $\sqrt[4]{1}^2$ & $2\alpha_1+\dots$ \\

& & & $j_2$&$j_2^2$& \\

\hline

$\frak{sp}(2n,\{\R,\C\})$ & $\{1,2\}$ & & $-1$ & $1$ & $\alpha_2,2\alpha_1+ \dots$ \\

$n>2$ & $(\alpha_1,\alpha_2)$ &$(2,-1)$& $\sqrt[3]{1}$ & $\sqrt[3]{1}^2$ & $\alpha_1+\alpha_2,2\alpha_1+\dots$ \\

& & & $\sqrt[5]{1}$ & $\sqrt[5]{1}^2$ & $\alpha_1+2\alpha_2+\dots$ \\

& & & $\sqrt[6]{1}$ & $\sqrt[6]{1}^2$ & $2\alpha_1+\dots$ \\

& & & $j_1$&$j_1^2$& \\

\hline

$\frak{sp}(2n,\{\mathbb{R, C}\})$& $\{n-1,n\}$& &$1$&$j_n$&$\alpha_{n-1} $ \\

$n>2$& $(\alpha_{n-1},\alpha_n)$&$(1,0)$&$ $&$ $&$ $ \\

\hline

$\frak{sp}(2n,\{\mathbb{R, C}\})$& $\{1,n\}$& &$1$&$j_n$& $\alpha_1$ \\

$n>2$& $(\alpha_1,\alpha_n)$&$(1,0)$&&& \\

\hline

\end{tabular}

\begin{tabular}{|c|c|c||c|c|c|}

\hline

$\fg$ & $\Xi$ & $I_\mu$ & $j_{i_1}$ & $j_{i_2}$ & $\frak{m}$\\

param.& $\mu$ & homog. & & & \\

\hline \hline

$\frak{so}(3,4), \frak{so}(7,\C)$&$\{ 1,3\}$& & $1$&$\sqrt[3]{1}$& $\alpha_1$\\

& $(\alpha_3,\alpha_2)$& $(-1,3)$ &$\sqrt[4]{1}^3$&$\sqrt[4]{1}$& $\alpha_1+\alpha_2+\alpha_3$ \\

& & &$\sqrt[5]{1}^3$&$\sqrt[5]{1}$& $\alpha_1+2\alpha_2 $ \\

& & &$j_3^3$&$j_3$&$ $ \\

\hline

$\frak{so}(3,4), \frak{so}(7,\C)$&$\{ 2,3\}$& & $1$& $\sqrt[3]{1}$&$\alpha_2$\\

& $(\alpha_3,\alpha_2)$& $(0,3)$ &$-1$&$1$& $\alpha_3,\alpha_1+2\alpha_2+2\alpha_3$ \\

& & &$j_2$&$1$& $ \alpha_3$ \\

& & &$\sqrt[3]{1}^2$&$\sqrt[3]{1}$&$\alpha_1+2\alpha_2+2\alpha_3$ \\

& & & $\sqrt[3]{1}$&$\sqrt[3]{1}$& $\alpha_2+2\alpha_3$\\

& & &$j_2$&$\sqrt[3]{1}$& \\

\hline

$\frak{so}(3,5)$&$\{ 3,4\}$& &$\sqrt[3]{1}$&$\sqrt[3]{1}^2$& $\alpha_1+\dots+\alpha_4$ \\

& $(\alpha_3,\alpha_2)$& $(2,-1)$ & && \\

\hline

$\frak{so}(n,n), \frak{so}(2n,\C)$&$\{ 1,n\}$&$\alpha_n,n>4$ & $-1$& $1$&$\alpha_n$\\

& $(\alpha_1,\alpha_2)$& $(2,-1)$ &$\sqrt[3]{1}$&$\sqrt[3]{1}^2$& $\alpha_1+\dots+\alpha_n$ \\

& & & $j_1$&$j_1^2$& \\

\hline

$\frak{so}(q,n-q)$&$\{ 1,2\}$&$\alpha_2$ & $1$& $-1$&$ \alpha_1,\alpha_1+2\alpha_2+\dots$\\

$\frak{so}(n,\C)$ & $(\alpha_1,\alpha_2)$& $(2,0)$ &$-1$&$1$& $\alpha_2$ \\

$q>1$ & & & $-1$ & $-1$ & $\alpha_1+\alpha_2$ \\

& & &$-1$&$ \sqrt[4]{1}$&$ \alpha_1+2\alpha_2+\dots$ \\

& & & $1$&$j_2$& $\alpha_1$\\

& & & $-1$&$j_2$& \\

\hline

$\frak{so}(q,n-q)$,$\frak{so}(n,\mathbb{C})$& $\{1,2\}$ &&$j_1$&$1$&$\alpha_2$ \\

$q>1$& $(\alpha_2,\alpha_1)$&$(0,1)$ &&& \\

\hline

$\frak{so}(q,n-q)$,$\frak{so}(n,\mathbb{C})$& $\{2,3\}$ &$\alpha_2$&$-1$&$1$& $\alpha_3, \alpha_1+2\alpha_2+\dots$\\

$n>8,q>2$& $(\alpha_3,\alpha_2)$&$(0,1)$ &$j_2$&$1$& $\alpha_3$ \\

\hline

$\frak{g}_2(\{2,\C\})$ & $\{1,2\}$ & & $\sqrt[3]{1}$ & $1$ & $\alpha_2, 3\alpha_1+2\alpha_2 $ \\

& $(\alpha_1,\alpha_2)$ &$(4,0)$ & $-1$ & $1$ & $\alpha_2, 2\alpha_1+\alpha_2$ \\

& & & $1$ & $j_2$& $\alpha_1$ \\

& & & $\sqrt[4]{1}$ & $1$ & $\alpha_2$ \\

& & & $1$ & $-1$ & $\alpha_1, 3\alpha_1+2\alpha_2$ \\

& & & $-1$ & $-1$ & $\alpha_1+\alpha_2, 3\alpha_1+\alpha_2$ \\

& & & $\sqrt[4]{1}$ & $j_2$& \\

\hline

\end{tabular}

\begin{tabular}{|c|c||c|c|c|c|c|}

\hline

$\fg$ & $\mu$ & $j_{i_1}$& $j_{i_2}$& $j_{i_3}$& $j_{i_4}$&$\frak{m}$ \\

param. & $\Xi$ & & & && \\

$I_\mu$ & homog.& && && \\

\hline

\hline

$\frak{sl}(n+1,\mathbb{C})$ & $ (\alpha_{(p+1)'},\alpha_{p'})$&$1$& $\sqrt[3]{1}$& $1$& $\sqrt[3]{1}^2$&$\alpha_p$ \\

& $\{p,p+1,p',(p+1)'\}$& $\pm e^{r}$& $1$& $\pm e^{r}$& $1$& $\alpha_{p+1}$ \\

$\alpha_p$& $(-1,-1,1,2)$& $e^{r+i\phi}$& $e^{-2/3i\phi}$& $e^{r-i\phi}$& $e^{2/3i\phi}$& \\

\hline

$\frak{sl}(n+1,\mathbb{C})$ & $(\alpha_{1},\alpha_{p'})$&$1$&$\pm e^{r}$& $1$& $\pm e^{r}$& $\alpha_1$ \\

& $\{1,p,1',p'\}$ &$\sqrt[3]{1}$&$\sqrt[3]{1}^2$&$\sqrt[3]{1}^2$& $\sqrt[3]{1}$& $\alpha_1+\dots+\alpha_p$\\

& $(1,-1,0,1)$&$e^{-2i\phi}$& $e^{r-i\phi}$& $e^{2i\phi}$& $e^{r+i\phi}$& \\

\hline

$\frak{sl}(n+1,\mathbb{C})$ & $(\alpha_{1},\alpha_{1'})$&$e^{i\phi}$& $1$& $e^{-i\phi}$& $1$&$\alpha_p$ \\

& $\{1,p,1',p'\}$&$e^{r+i\phi}$& $e^{2r}$& $e^{r-i\phi}$& $e^{2r}$&\\

$\alpha_p$&$(1,-1,1,0)$&&&&&\\

$n>p>2$&&&&&& \\

\hline

$\frak{sp}(2n,\mathbb{C})$ & $(\alpha_{(n-1)'},\alpha_{n'})$&$1$&$\pm e^{r}$&$1$&$\pm e^{r}$ &$\alpha_{n-1}$\\

& $\{n-1,n,(n-1)',n'\}$&$\sqrt[5]{1}^3$&$1$&$\sqrt[5]{1}^2$&$1$ &$\alpha_{n}$\\

&$(-2,-1,3,1)$&$\sqrt[3]{1}$&$\sqrt[3]{1}^2$&$\sqrt[3]{1}^2$&$\sqrt[3]{1}$ &$\alpha_{n-1}+\alpha_{n}$\\

&&$e^{-2/5i\phi}$& $e^{r+i\phi}$& $e^{2/5i\phi}$& $e^{r-i\phi}$& \\

\hline

$\frak{sp}(2n,\mathbb{C})$ & $(\alpha_{1},\alpha_{1'})$ &$-1$& $1$& $-1$& $1$&$\alpha_n,2\alpha_1+\dots$ \\

& $\{1,n,1',n'\}$ &$e^{i\phi}$& $1$& $e^{-i\phi}$& $1$&$\alpha_{n}$ \\

$\alpha_n$& $(1,-1,1,0)$&$e^{r+i\phi}$& $e^{2r}$& $e^{r-i\phi}$& $e^{2r}$& \\

\hline

$\frak{sp}(2n,\mathbb{C})$ & $(\alpha_{1},\alpha_{n'})$ &$\sqrt[5]{1}^3$&$\sqrt[5]{1}^4$&$\sqrt[5]{1}^2$& $\sqrt[5]{1}$& $2\alpha_1+\dots$\\

& $\{1,n,1',n'\}$ &$\sqrt[3]{1}$&$\sqrt[3]{1}^2$&$\sqrt[3]{1}^2$& $\sqrt[3]{1}$& $\alpha_1+\dots+\alpha_n$\\

& $(1,-1,0,1)$ & $\pm e^{r}$& $1$& $\pm e^{r}$& $1$& $\alpha_{n}$ \\

&&$e^{-2i\phi}$& $e^{r-i\phi}$& $e^{2i\phi}$& $e^{r+i\phi}$& \\

\hline

\end{tabular}

\begin{tabular}{|c||c|c|c|c|}

\hline

all info & $j_{i_1}$& $j_{i_2}$& $j_{i_3}$&$\frak{m}$ \\

\hline

\hline

$\frak{sl}(n+1,\{\mathbb{R, C}\})$ & $1$& $-1$&$1$&$\alpha_1,\alpha_p$ \\

$\{1,2,p\}$ &$-1 $& $ 1$& $ -1$& $\alpha_2,\alpha_1+\dots+\alpha_p $\\

$(\alpha_2,\alpha_1)$&$-1 $& $ -1$& $ -1$& $\alpha_1+\alpha_2,\alpha_2+\dots+\alpha_p $\\

$(1,2,-1)$&$1 $& $\sqrt[3]{1} $& $\sqrt[3]{1}^2 $& $ \alpha_1,\alpha_2+\dots+\alpha_p,\alpha_1+\dots+\alpha_p $\\

$\alpha_1,$&$ j_2^{-2}$& $j_2 $& $1 $& $\alpha_p $\\

$\alpha_p, n>p> 3$&$1 $& $j_2 $& $j_2^2 $& $ \alpha_1 $\\

&$j_1 $& $ 1$& $ j_1$& $ \alpha_2 $\\

&$ j_2^{-3}$& $ j_2$& $j_2^{-1} $& $\alpha_2+\dots+\alpha_p $\\

&$ j_p^{-3}$& $j_p^2 $& $j_p $& $\alpha_1+\dots+\alpha_p $\\

&$ j_1$& $ j_1^{-1}$& $j_1^{-1}$& $\alpha_1+\alpha_2 $\\

&$j_1 $& $j_2 $& $j_1j_2^2 $& $ $\\

\hline

$\frak{sl}(n+1,\{\mathbb{R, C}\})$&$-1 $& $1 $& $1 $& $ \alpha_2,\alpha_p,\alpha_2+\dots \alpha_p$\\

$\{1,2,p\}$&$\sqrt[3]{1} $& $1$& $ \sqrt[3]{1}^2$& $ \alpha_2,\alpha_1+\dots+\alpha_p $\\

$(\alpha_1,\alpha_2)$&$ 1$& $j_2 $& $1 $& $ \alpha_1,\alpha_p $\\

$(2,0,-1)$&$j_1 $& $1 $& $ j_1^2$& $ \alpha_2 $\\

$\alpha_p, n>p> 3$&$-1$&$-1$& $1$& $\alpha_p,\alpha_1+\alpha_2, \alpha_1+\alpha_2+\dots+\alpha_p $\\

&$-1 $& $ j_2$& $1 $& $ \alpha_p $\\

&$ j_1$& $j_1^{-1} $& $j_1^2 $& $ \alpha_1+\alpha_2 $\\

&$ j_1$& $ j_1^{-2} $& $ j_1^2 $& $ \alpha_2+\dots+\alpha_p$\\

&$ j_1$& $ j_1^{-3}$& $j_1^2 $&$ \alpha_1+\dots +\alpha_p $\\

&$ j_1$& $ j_2$& $j_1^2 $& $ $\\

\hline

$\frak{sl}(n+1,\{\mathbb{R, C}\})$&$1 $& $-1 $& $-1 $& $ \alpha_1,\alpha_p+\dots+\alpha_n,\alpha_1+\dots+\alpha_n $\\

$\{1,p,n\}$&$-1 $& $-1$& $ 1$& $\alpha_n,\alpha_1+\dots+\alpha_p,\alpha_1+\dots+\alpha_n $\\

$(\alpha_1,\alpha_n)$&$ 1$& $j_p $& $j_p$& $ \alpha_1 $\\

$(1,-1,1)$&$j_1 $& $j_1 $& $ 1$& $ \alpha_n $\\

$\alpha_p, p> 2$&$\sqrt[3]{1}$& $\sqrt[3]{1}^2$& $\sqrt[3]{1}$&$\alpha_1+\dots+\alpha_p,\alpha_p +\dots+\alpha_n$\\

&$j_1$& $1$& $j_1^{-1} $& $ \alpha_p, \alpha_1+\dots+\alpha_n$\\

&$j_1$& $-1$& $-j_1^{-1}$&$\alpha_1+\dots+\alpha_n$\\

&$ j_1$& $j_1^{-1} $& $j_1^{-2} $& $ \alpha_1+\dots+\alpha_p $\\

&$ j_p^2$& $ j_p $& $ j_p^{-1} $& $ \alpha_p+\dots+\alpha_n$\\

&$ j_1$& $ j_1j_n$& $j_n$&$ $\\

\hline

$\frak{su}(2,2)$&$\sqrt[3]{1} $& $1$& $ \sqrt[3]{1}^2$& $ \alpha_2,\alpha_1+\dots+\alpha_3 $\\

$\{1,2,3\}$&$ 1$& $j_2 $& $1 $& $ \alpha_1,\alpha_3 $\\

$(\alpha_1,\alpha_{2})$&$ \sqrt[3]{1}$& $ j_2$& $\sqrt[3]{1}^2 $& $ $\\

$(2,0,-1)$&$ $& $ $& $ $& $ $\\

\hline

$\frak{su}(2,2)$& $1$& $-1$&$1$&$\alpha_1,\alpha_3$\\

$\{1,2,3\}$&$-1 $& $ 1$& $ -1$& $\alpha_2,\alpha_1+\dots+\alpha_3 $\\

$(\alpha_2,\alpha_{1})$&$-1 $& $ -1$& $ -1$& $\alpha_1+\alpha_2,\alpha_2+\alpha_3 $\\

$(1,2,-1)$&$e^r $& $ 1$& $e^r$& $ \alpha_2 $\\

&$e^r$& $-1 $& $e^r$& $ $\\

\hline

$\frak{su}(n,n)$&$e^{i\phi}$& $1$& $e^{-i\phi} $& $ \alpha_n, \alpha_1+\dots+\alpha_{2n-1}$\\

$\{1,n,2n-1\}$&$ j_1$& $ j_1\bar {j_1}$& $\bar {j_1}$&$ $\\

$(\alpha_1,\alpha_{2n-1})$&$ $& $ $& $ $& $ $\\

$(1,-1,1)$&$ $& $ $& $ $& $ $\\

$\alpha_n, n>2$&$ $& $ $& $ $& $ $\\

\hline

\end{tabular}

\begin{tabular}{|c||c|c|c|c|}

\hline

all info & $j_{i_1}$& $j_{i_2}$& $j_{i_3}$&$\frak{m}$ \\

\hline

\hline

$\frak{sp}(2n,\{\mathbb{R, C}\})$&$1 $& $-1 $& $1 $& $ \alpha_1,\alpha_n,\alpha_1+2\alpha_2+\dots $\\

$\{1,2,n\}$&$ -1$& $1 $& $-1 $& $ \alpha_2,\alpha_1+\dots+\alpha_n,\alpha_1+2\alpha_2+\dots $\\

$(\alpha_2,\alpha_1)$&$1 $& $\sqrt[3]{1}$& $ \sqrt[3]{1}^2$& $ \alpha_1,\alpha_2+\dots+\alpha_n,\alpha_1+\dots+\alpha_n$\\

$(1,2,-1)$&$1 $& $ \sqrt[4]{1}$& $ \sqrt[2]{1}$& $ \alpha_1,2\alpha_2+\dots,\alpha_1+2\alpha_2+\dots,2\alpha_1+\dots$\\

$\alpha_1, $&$\sqrt[3]{1} $& $1 $& $ \sqrt[3]{1}$& $ \alpha_2,\alpha_1+\dots+\alpha_n,2\alpha_1+\dots$\\

$\alpha_n, n>3$&$\sqrt[3]{1}^2 $& $\sqrt[3]{1} $& $\sqrt[3]{1} $& $\alpha_1+\alpha_2,2\alpha_2+\dots$\\

&$\sqrt[5]{1}^2 $& $\sqrt[5]{1} $& $\sqrt[5]{1}^4 $& $\alpha_2+\dots+\alpha_n,2\alpha_1+\dots$\\

&$\sqrt[5]{1} $& $\sqrt[5]{1} $& $\sqrt[5]{1}^3 $& $\alpha_1+\dots+\alpha_n,2\alpha_2+\dots$\\

&$\sqrt[2]{1} $& $\sqrt[8]{1} $& $\sqrt[4]{1}^3 $& $2\alpha_2+\dots,2\alpha_1+\dots $\\

&$-1$& $-1 $& $-1$& $\alpha_1+\alpha_2, \alpha_2+\dots+\alpha_n,\alpha_1+2\alpha_2+\dots $\\

&$ 1$& $j_2 $& $j_2^2 $& $\alpha_1$\\

&$j_2^{-2} $& $ j_2$& $1 $& $ \alpha_n,\alpha_1+2\alpha_2+\dots$\\

&$j_1 $& $1 $& $j_1 $& $ \alpha_2 $\\

&$ j_1$& $j_1^{-1} $& $j_1^{-1} $& $\alpha_1+\alpha_2 $\\

&$j_2^{-3} $& $j_2 $& $ j_2^{-1}$& $ \alpha_2+\dots+\alpha_n $\\

&$j_2^{-4} $& $j_2 $& $ j_2^{-2}$& $ 2\alpha_2+\dots$\\

&$j_n^{-6} $& $ j_n^2$& $ j_n$& $\alpha_1+\dots+\alpha_n $\\

&$ -j_2^{-2}$& $j_2 $& $ -1$& $\alpha_1+2\alpha_2+\dots$\\

&$\sqrt[3]{j_2^{-4}} $& $j_2 $& $\sqrt[3]{j_2^2} $& $2\alpha_1+\dots $\\

&$j_1 $& $j_2 $& $j_1j_2^2 $& $ $\\

\hline

$\frak{sp}(2n,\{\mathbb{R, C}\})$&$1 $& $j_2$& $\pm j_2$& $ \alpha_1$\\

$\{1,2,p\}$&$j_p^2$& $1 $& $j_p $& $\alpha_2$\\

$n>3,n>p$&$j_p^{-2} $& $ j_p^{2}$& $ j_p$& $\alpha_1+\alpha_2$\\

$(\alpha_2,\alpha_1)$&$j_p^{6} $& $ j_p^{-2}$& $ j_p$& $\alpha_2+\dots+2\alpha_p+\dots$\\

$(1,2,-2)$&$1$& $\sqrt[3]{1}^2$& $\sqrt[6]{1}$& $\alpha_1,\alpha_2+\dots+2\alpha_p+\dots,\alpha_1+\alpha_2+\dots+2\alpha_p+\dots$\\

$\alpha_1,\alpha_p $&$j_p^{-6} $& $ j_p^4$& $ j_p$& $\alpha_1+\alpha_2+\dots+2\alpha_p+\dots$\\

&$\sqrt[3]{1}^2 $& $\sqrt[3]{1}$& $\sqrt[3]{1}^{2}$& $\alpha_1+\alpha_2,\alpha_2+\dots+\alpha_p,2\alpha_2+\dots$\\

&$\sqrt[5]{1}$& $\sqrt[5]{1}$& $\sqrt[5]{1}^4$& $\alpha_2+\dots+\alpha_p,\alpha_1+\alpha_2+\dots+2\alpha_p+\dots,2\alpha_2+\dots$\\

&$j_2^{-4} $& $ j_2$& $ j_2^{-1}$& $\alpha_2+\dots+\alpha_p,2\alpha_2+\dots$\\

&$\sqrt[3]{1}^2 $& $\sqrt[3]{1}$& $-\sqrt[3]{1}^{2}$& $\alpha_1+\alpha_2,2\alpha_2+\dots$\\

&$\sqrt[5]{1}$& $\sqrt[5]{1}$& $-\sqrt[5]{1}^4$& $\alpha_1+\alpha_2+\dots+2\alpha_p+\dots,2\alpha_2+\dots$\\

&$j_2^{-4} $& $ j_2$& $-j_2^{-1}$& $2\alpha_2+\dots$\\

&$-1$& $1 $& $\sqrt[4]{1}$& $\alpha_2, \alpha_1+\alpha_2+\dots+2\alpha_p+\dots,\alpha_1+2\alpha_2+\dots $\\

&$-1$& $-1 $& $\sqrt[4]{1}$& $\alpha_1+\alpha_2, \alpha_2+\dots+2\alpha_p+\dots,\alpha_1+2\alpha_2+\dots $\\

&$-j_2^{-2}$& $j_2 $& $\sqrt[4]{1}$& $\alpha_1+2\alpha_2+\dots $\\

&$1 $& $-1 $& $-1 $& $\alpha_1,\alpha_2+\dots +\alpha_p, \alpha_1+\dots +\alpha_p,2\alpha_p+\dots,$\\

&$ $& $ $& $ $& $2\alpha_p+\dots,2\alpha_2+\dots,\alpha_1+2\alpha_2+\dots,2\alpha_1+\dots $\\

&$\sqrt[3]{1}^2$& $1 $& $\sqrt[3]{1} $& $\alpha_2,\alpha_1+\dots +\alpha_p,2\alpha_1+\dots $\\

&$1$& $\sqrt[4]{1}^3 $& $\sqrt[4]{1}$& $\alpha_1,\alpha_2+\dots +\alpha_p,\alpha_1+\dots +\alpha_p,$\\

&$ $& $ $& $ $& $2\alpha_2+\dots, \alpha_1+2\alpha_2+\dots, 2\alpha_1+\dots $\\

&$\sqrt[5]{1}$& $\sqrt[5]{1}^3$& $\sqrt[5]{1} $& $\alpha_1+\dots +\alpha_p,\alpha_2+\dots+2\alpha_p+\dots,2\alpha_1+\dots $\\

&$-1$& $\sqrt[8]{1}^3 $& $\sqrt[8]{1}$& $\alpha_1+\dots +\alpha_p,2\alpha_2+\dots, 2\alpha_1+\dots $\\

&$j_p^{-4}$& $j_p^3 $& $j_p $& $\alpha_1+\dots +\alpha_p,2\alpha_1+\dots $\\

&$1 $& $-1$& $ 1$& $\alpha_1,\alpha_p, 2\alpha_p+\dots, 2\alpha_2+\dots,\alpha_1+2\alpha_2+\dots,2\alpha_1+\dots$\\

&$ 1$& $1 $& $-1 $& $ \alpha_1,\alpha_2,\alpha_1+\alpha_2, 2\alpha_p+\dots,\alpha_2+\dots+2\alpha_p+\dots, 2\alpha_2+\dots,$\\

&$ $& $ $& $ $& $\alpha_1+\alpha_2+\dots+2\alpha_p+\dots, \alpha_1+2\alpha_2+\dots,2\alpha_1+\dots$\\

&$1$& $-\sqrt[4]{1}^3 $& $\sqrt[4]{1}$& $\alpha_1,\alpha_2+\dots +\alpha_p,2\alpha_2+\dots, \alpha_1+2\alpha_2+\dots, 2\alpha_1+\dots $\\

&$-1$& $-\sqrt[8]{1}^3 $& $\sqrt[8]{1}$& $\alpha_2+\dots+\alpha_p,2\alpha_2+\dots, 2\alpha_1+\dots $\\

&$\sqrt[3]{1}$& $1 $& $\sqrt[6]{1} $& $\alpha_2,2\alpha_1+\dots $\\

&$\sqrt[10]{1}^6$& $-\sqrt[10]{1}^3$& $\sqrt[10]{1} $& $\alpha_2+\dots+2\alpha_p+\dots,2\alpha_1+\dots $\\

&$j_p^{-4}$& $-j_p^3 $& $j_p $& $2\alpha_1+\dots $\\

&$j_2^{-2} $& $j_2 $& $1 $& $\alpha_p,2\alpha_p+\dots,\alpha_1+2\alpha_2+\dots, $\\

&$j_2^{-2} $& $j_2 $& $-1 $& $2\alpha_p+\dots,\alpha_1+2\alpha_2+\dots, $\\

&$j_2^{-2}j_p^2 $& $j_2 $& $j_p $& $ $\\

\hline

\end{tabular}

\noindent
\resizebox{\textwidth}{!}{
\begin{tabular}{|c||c|c|c|c|}

\hline

all info & $j_{i_1}$& $j_{i_2}$& $j_{i_3}$&$\frak{m}$ \\

\hline

\hline

$\frak{so}(3,4)$, $\frak{so}(7,\mathbb{C})$&$1 $& $1 $& $\sqrt[3]{1}$& $ \alpha_1,\alpha_2, \alpha_1+\alpha_2 $\\

$\{1,2,3\}$&$ \sqrt[4]{1}^3 $& $1$& $ \sqrt[4]{1}$& $\alpha_2,\alpha_1+\alpha_2+\alpha_3$\\

$(\alpha_3,\alpha_2)$&$ \sqrt[5]{1}^3$& $1 $& $ \sqrt[5]{1} $& $\alpha_2, \alpha_1+\alpha_2+2\alpha_3,\alpha_1+2\alpha_2+2\alpha_3$\\

$(-1,0,3)$&$\sqrt[4]{1}^3 $& $\sqrt[4]{1}^3 $& $\sqrt[4]{1} $& $\alpha_2+\alpha_3, \alpha_1+\alpha_2+2\alpha_3$\\

&$-1 $& $-1 $& $-1$& $\alpha_1+\alpha_2,\alpha_2+\alpha_3,\alpha_1+\alpha_2+2\alpha_3 $\\

&$1 $& $-1$& $ 1$& $ \alpha_1,\alpha_3, \alpha_1+2\alpha_2+2\alpha_3$\\

&$-1 $& $1 $& $-1 $& $\alpha_2,\alpha_1+\alpha_2+\alpha_3,\alpha_2+2\alpha_3$\\

&$1 $& $j_2$& $ 1$& $\alpha_1,\alpha_3$\\

&$1 $& $j_2$& $\sqrt[3]{1}$& $\alpha_1$\\

&$j_3^3 $& $1 $& $j_3 $& $\alpha_2 $\\

&$ 1$& $\sqrt[3]{1}^2$& $\sqrt[3]{1}$& $\alpha_1,\alpha_2+\alpha_3, \alpha_1+\alpha_2+\alpha_3,\alpha_1+2\alpha_2+2\alpha_3$\\

&$1 $& $\sqrt[3]{1}$& $\sqrt[3]{1}$& $\alpha_1,\alpha_2+2\alpha_3, \alpha_1+\alpha_2+2\alpha_3$\\

&$1 $& $\sqrt[6]{1}$& $\sqrt[3]{1}$& $\alpha_1,\alpha_1+2\alpha_2+2\alpha_3$\\

&$ j_1$& $j_1^{-1} $& $\sqrt[3]{j_1} $& $\alpha_1+\alpha_2$\\

&$j_2^{-3} $& $j_2 $& $ j_2^{-1}$& $\alpha_2+\alpha_3$\\

&$j_3^{3} $& $j_3^{-4} $& $ j_3$& $\alpha_1+\alpha_2+\alpha_3$\\

&$j_3^{3} $& $ j_3^{-2}$& $ j_3$& $\alpha_2+2\alpha_3$\\

&$ j_3^{3}$& $j_3^{-5} $& $ j_3$& $\alpha_1+\alpha_2+2\alpha_3$\\

&$j_3^{3} $& $\pm j_3^{-5/2} $& $j_3 $& $\alpha_1+2\alpha_2+2\alpha_3$\\

&$j_3^3 $& $j_2 $& $j_3 $& $ $\\

\hline

$\frak{so}(3,5)$& $-1$ &$1 $& $ 1$& $\alpha_3,\alpha_4,2\alpha_2+\alpha_3+\alpha_4$\\

$\{2,3,4\}$&$ j_2$& $1 $& $ 1$& $ \alpha_3,\alpha_4$\\

$(\alpha_4,\alpha_2)$& $1$&$\sqrt[3]{1}$& $\sqrt[3]{1}^2 $& $\alpha_2, \alpha_2+\alpha_3+\alpha_4,2\alpha_2+\alpha_3+\alpha_4$\\

$(0,-1,2)$& $j_2 $&$\sqrt[3]{1} $& $\sqrt[3]{1}^2 $& $ $\\

\hline

$\frak{so}(n,n)$, $\frak{so}(2n,\mathbb{C})$&$-1 $& $-1 $& $1 $& $\alpha_n,\alpha_1+\alpha_2,\alpha_1+\dots+\alpha_n$\\

$\{1,2,n\}$&$1 $& $-1$& $ 1$& $\alpha_1,\alpha_n,\alpha_1+2\alpha_2+\dots$\\

$(\alpha_1,\alpha_2)$&$ -1$& $1 $& $1 $& $ \alpha_2,\alpha_n,\alpha_2+\dots+\alpha_n$\\

$(2,0,-1)$&$-1 $& $\sqrt[4]{1}$& $1$& $ \alpha_n, \alpha_1+2\alpha_2+\dots$\\

$\alpha_2,$&$-1 $& $j_2$& $1$& $ \alpha_n$\\

$\alpha_n, n>4$&$1 $& $ j_2$& $ 1$& $ \alpha_1,\alpha_n$\\

&$\sqrt[3]{1}$& $1$& $\sqrt[3]{1}^2 $& $\alpha_2, \alpha_1+\dots+\alpha_n,\alpha_1+2\alpha_2+\dots $\\

&$ j_1$& $j_1^{-1} $& $j_1^{2} $& $\alpha_1+\alpha_2$\\

&$j_1$& $j_1^{-2}$& $j_1^{2}$& $\alpha_2+\dots+\alpha_n $\\

&$j_1$& $j_1^{-3} $& $j_1^2 $& $\alpha_1+\dots+\alpha_n $\\

&$ j_1$& $\pm\sqrt{ j_1}^{-3} $& $j_1^{2} $& $\alpha_1+2\alpha_2+\dots $\\

&$j_1 $& $1 $& $j_1^2 $& $\alpha_2 $\\

&$j_1 $& $j_2 $& $j_1^2 $& $ $\\

\hline
\end{tabular}
}

\noindent
\resizebox{\textwidth}{!}{
\begin{tabular}{|c||c|c|c|c|c|}

\hline

all info & $j_{i_1}$& $j_{i_2}$& $j_{i_3}$& $j_{i_4}$&$\frak{m}$ \\

\hline

\hline

$\frak{sl}(n+1,\{\mathbb{R, C}\})$ &$1 $&$j_2 $&$j_p $&$j_2^2j_p^{-1}$&$\alpha_1 $\\

$\{1,2,p,q\}$&$1 $&$1 $&$ j_p$&$j_p^{-1} $&$\alpha_1,\alpha_2,\alpha_1+\alpha_2, \alpha_p+\dots +\alpha_q, \alpha_2+\dots +\alpha_q,\alpha_1+\dots+\alpha_q $\\

$(\alpha_2,\alpha_1)$&$1 $&$-1 $&$j_p $&$j_p^{-1} $&$\alpha_1, \alpha_p+\dots +\alpha_q $\\

$(1,2,-1,-1)$&$j_2^{-3} $&$j_2 $&$1 $&$j_2^{-1} $&$\alpha_p, \alpha_2+\dots +\alpha_q $\\

$\alpha_1,$&$1 $&$j_2 $&$j_2^2 $&$1$&$\alpha_1,\alpha_q $\\

$\alpha_p,\alpha_q$&$j_2^{-1} $&$j_2 $&$1 $&$j_2 $&$\alpha_p,\alpha_1+\alpha_2, \alpha_1+\dots \alpha_p $\\

$p,q\neq n,$&$1 $&$j_2 $&$j_2^{-1} $&$ j_2^3$&$\alpha_1,\alpha_2+\dots +\alpha_p, \alpha_1+\dots +\alpha_p $\\

$n>4,p>3$&$j_1 $&$ j_2$&$1 $&$j_1j_2^2 $&$\alpha_p $\\

&$j_1 $&$ 1$&$ 1$&$j_1 $&$\alpha_2,\alpha_p,\alpha_2+\dots +\alpha_p $\\

&$1 $&$j_2 $&$1 $&$ j_2^2$&$\alpha_1,\alpha_p $\\

&$j_2^{-2} $&$j_2 $&$1 $&$1 $&$\alpha_p,\alpha_q,\alpha_p+\dots +\alpha_q $\\

&$-1 $&$1 $&$1 $&$-1 $&$\alpha_2,\alpha_p, \alpha_2+\dots +\alpha_p,\alpha_1+\dots +\alpha_q $\\

&$-1 $&$1 $&$-1 $&$1 $&$\alpha_2,\alpha_q, \alpha_1+\dots +\alpha_p,\alpha_1+\dots +\alpha_q $\\

&$1 $&$-1 $&$1 $&$1 $&$\alpha_1,\alpha_p,\alpha_q, \alpha_p+\dots +\alpha_q $\\

&$ 1$&$ -1$&$-1 $&$-1 $&$\alpha_1,\alpha_2+\dots +\alpha_p,\alpha_p+\dots +\alpha_q,\alpha_1+\dots +\alpha_p $\\

&$ 1$&$\sqrt[3]{1} $&$1$&$\sqrt[3]{1}^2 $&$\alpha_1,\alpha_p,\alpha_2+\dots +\alpha_q,\alpha_1+\dots +\alpha_q $\\

&$ 1$&$\sqrt[3]{1} $&$j_p $&$\sqrt[3]{1}^2j_p^{-1} $&$\alpha_1,\alpha_2+\dots +\alpha_q,\alpha_1+\dots +\alpha_q $\\

&$ 1$&$\sqrt[3]{1} $&$\sqrt[3]{1}^2 $&$1 $&$\alpha_1,\alpha_q, \alpha_2+\dots +\alpha_p,\alpha_1+\dots +\alpha_p, \alpha_2+\dots +\alpha_q,\alpha_1+\dots +\alpha_q $\\

&$-1 $&$ -1$&$ 1$&$-1 $&$\alpha_p,\alpha_1+\alpha_2,\alpha_1+\dots +\alpha_p, \alpha_2+\dots +\alpha_q $\\

&$\pm \sqrt{j_2}^{-3} $&$j_2 $&$1 $&$\pm \sqrt{j_2}$&$\alpha_p,\alpha_1+\dots +\alpha_q $\\

&$j_1 $&$j_2 $&$j_1j_2^2 $&$1 $&$\alpha_q $\\

&$j_2^{-1} $&$j_2 $&$j_2 $&$1 $&$\alpha_q,\alpha_1+\alpha_2 $\\

&$j_1 $&$1 $&$ j_1$&$1 $&$\alpha_2, \alpha_q $\\

&$-1 $&$-1 $&$-1 $&$1 $&$\alpha_q,\alpha_1+\alpha_2,\alpha_2+\dots +\alpha_p,\alpha_2+\dots +\alpha_q $\\

&$j_2^{-3} $&$j_2 $&$j_2^{-1} $&$1 $&$ \alpha_q,\alpha_2+\dots +\alpha_p,\alpha_2+\dots +\alpha_q$\\

&$\pm \sqrt{j_2}^{-3} $&$j_2 $&$\pm \sqrt{j_2}$&$1$&$\alpha_q,\alpha_1+\dots +\alpha_p,\alpha_1+\dots +\alpha_q $\\

&$j_1 $&$1 $&$j_p $&$j_1j_p^{-1} $&$ \alpha_2$\\

&$ j_1$&$1 $&$j_1^{-1} $&$j_1^{2} $&$\alpha_2,\alpha_1+\dots +\alpha_p $\\

&$ -1$&$1 $&$ j_p$&$-j_p^{-1} $&$ \alpha_2,\alpha_1+\dots +\alpha_q $\\

&$j_1 $&$j_1^{-1} $&$j_p $&$(j_1j_p)^{-1}$&$\alpha_1+ \alpha_2 $\\

&$-1 $&$ -1$&$ j_p$&$-j_p^{-1} $&$\alpha_1+ \alpha_2,\alpha_2+\dots +\alpha_q $\\

&$ j_1$&$j_2 $&$ j_2^{-1}$&$ j_1j_2^3$&$\alpha_2+\dots +\alpha_p $\\

&$ j_1$&$j_1^{-1} $&$j_1 $&$j_1^{-2} $&$\alpha_1+ \alpha_2,\alpha_2+\dots +\alpha_p $\\

&$j_p^2 $&$j_p^{-1} $&$j_p $&$j_p^{-1} $&$ \alpha_2+\dots +\alpha_p,\alpha_p+\dots +\alpha_q$\\

&$\pm \sqrt{j_p}^{3} $&$j_p^{-1} $&$j_p $&$\pm \sqrt{j_p}^{-3} $&$\alpha_2+\dots +\alpha_p,\alpha_1+\dots +\alpha_q $\\

&$j_2^{-2} $&$ j_2$&$j_p $&$j_p^{-1} $&$\alpha_p+\dots +\alpha_q $\\

&$j_2^{-2} $&$ j_2$&$ j_2$&$j_2^{-1} $&$\alpha_p+\dots +\alpha_q,\alpha_1+\dots +\alpha_p $\\

&$(j_2j_p)^{-1} $&$j_2 $&$ j_p$&$j_2/j_p^2 $&$ \alpha_1+\dots +\alpha_p$\\

&$j_2^{-3} $&$j_2 $&$j_2^2 $&$j_2^{-3} $&$\alpha_1+\dots +\alpha_p,\alpha_2+\dots +\alpha_q $\\

&$j_2^{-3} $&$j_2 $&$ j_p$&$(j_1j_p)^{-1}$&$\alpha_2+\dots +\alpha_q $\\

&$\pm \sqrt{j_2}^{-3} $&$j_2 $&$j_p $&$\pm \sqrt{j_2}j_p^{-1} $&$\alpha_1+\dots +\alpha_q $\\

&$j_1 $&$j_2 $&$j_p $&$j_1j_2^2j_p^{-1} $&$ $\\

\hline

$\frak{su}(q,n-q+1)$ &$ 1$&$\sqrt[3]{1} $&$\sqrt[3]{1}^2 $&$1 $&$\alpha_1,\alpha_q, \alpha_2+\dots +\alpha_p,\alpha_1+\dots +\alpha_p, \alpha_2+\dots +\alpha_q,\alpha_1+\dots +\alpha_q $\\

$\{1,2,n-1,n\}$&$1 $&$-1 $&$-1 $&$1 $&$\alpha_1, \alpha_p+\dots +\alpha_q,\alpha_q,\alpha_1+\alpha_2 $\\

$(\alpha_2,\alpha_1)$&$e^r $&$ 1$&$1 $&$e^r$&$\alpha_2,\alpha_p,\alpha_2+\dots +\alpha_p $\\

$(1,2,-1,-1)$&$-1 $&$1 $&$1 $&$-1 $&$\alpha_p,\alpha_1+\dots +\alpha_p,\alpha_1+\dots +\alpha_q $\\

&$ e^{r-\frac32i\phi}$&$e^{i\phi}$&$ e^{-i\phi}$&$ e^{r+\frac32i\phi}$&$\alpha_2+\dots +\alpha_p $\\

&$\sqrt[2]{1} $&$\sqrt[3]{1}^{2} $&$\sqrt[3]{1}$&$\sqrt[2]{1} $&$\alpha_2+\dots +\alpha_p,\alpha_1+\dots +\alpha_q $\\

\hline

\end{tabular}
}

\end{document}